\theoremstyle{plain}
\newtheorem{theorem}{Theorem}
\newtheorem{corollary}{Corollary}
\newtheorem{proposition}{Proposition}
\newtheorem{lemma}{Lemma}
\newtheorem{assumption}{Assumption}
\theoremstyle{definition}
\newtheorem{definition}{Definition}
\newtheorem{example}{Example}
\theoremstyle{remark}
\newtheorem{remark}{Remark}
\renewenvironment{proof}[1][\relax]{\par
  \pushQED{\qed}\normalfont \topsep6\p@\@plus6\p@\relax
  \trivlist
  \item[\hskip\labelsep\itshape
    \ifx#1\relax \proofname\else\proofname{} #1\fi\@addpunct{.}]\ignorespaces
}{\popQED\endtrivlist\@endpefalse
}
\newcommand{\supp}{Appendix\xspace}
\newcommand{\supps}{Appendices\xspace}
\newcommand{\indep}{\mathrel{\text{\scalebox{1.07}{$\perp\mkern-10mu\perp$}}}}
\DeclareMathOperator{\var}{\mathrm{var}}
\def\model{{\mathcal{M}}}
\DeclareMathOperator{\E}{\mathbb{E}}
\newcommand*\dd{\mathop{}\!\mathrm{d}}
\newcommand{\I}{\mathbb{I}}
\newcommand{\dcup}{\,\dot{\cup}\,}
\newcommand{\X}{\mathfrak{X}}
\newcommand{\F}{\mathcal{F}}
\newcommand{\unif}{\mathrm{unif}}
\newcommand{\N}{\mathcal{N}}
\newcommand{\g}[1][G]{\mathcal{#1}}
\newcommand{\grev}[1][G]{\check{\mathcal{#1}}}
\newcommand{\Vset}{V}
\newcommand{\Cset}{C}
\newcommand{\Lset}{L}
\newcommand{\Nset}{N}
\newcommand{\Mset}{M}
\newcommand{\Iset}{I}
\newcommand{\Wset}{W}
\newcommand{\Oset}{O}
\newcommand{\Uset}{U}
\newcommand{\Omin}{\Oset_{\min}}
\newcommand{\opt}{\Oset}
\newcommand{\plus}{E^{+}}
\newcommand{\minus}{E^{-}}
\DeclareMathOperator{\ceq}{\stackrel{c}{\sim}}
\DeclareMathOperator{\De}{De}
\DeclareMathOperator{\Ch}{Ch}
\DeclareMathOperator{\An}{An}
\DeclareMathOperator{\Pa}{Pa}
\DeclareMathOperator{\pa}{Pa}
\DeclareMathOperator{\Adj}{Adj}
\title{Variable elimination, graph reduction and efficient g-formula}
\author[1]{F. Richard Guo \thanks{ricguo@statslab.cam.ac.uk}}
\author[2]{Emilija Perkovi\'c \thanks{perkovic@uw.edu}}
\author[3]{Andrea Rotnitzky \thanks{arotnitzky@utdt.edu}}
\affil[1]{Statistical Laboratory, University of Cambridge, Cambridge, UK}
\affil[2]{Department of Statistics, University of Washington, Seattle, USA}
\affil[3]{Department of Economics, Universidad Torcuato Di Tella, Buenos Aires, Argentina}
\date{}
\begin{document}

\maketitle
\vspace{-1em}

\begin{abstract} 
We study efficient estimation of an interventional mean associated with a point exposure treatment under a causal graphical model represented by a directed acyclic graph without hidden variables. Under such a model, it may happen that a subset of the variables are uninformative in that failure to measure them neither precludes identification of the interventional mean nor changes the semiparametric variance bound for regular estimators of it. We develop a set of graphical criteria that are sound and complete for eliminating all the uninformative variables so that the cost of measuring them can be saved without sacrificing estimation efficiency, which could be useful when designing a planned observational or randomized study. Further, we construct a reduced directed acyclic graph on the set of informative variables only. We show that the interventional mean is identified from the marginal law by the g-formula \citep{robins1986new} associated with the reduced graph, and the semiparametric variance bounds for estimating the interventional mean under the original and the reduced graphical model agree. 
This g-formula is an irreducible, efficient identifying formula in the sense that the nonparametric estimator of the formula, under regularity conditions, is asymptotically efficient under the original causal graphical model, and no formula with such property exists that only depends on a strict subset of the variables.  
\end{abstract}

\begin{keywords}Directed acyclic graph; Bayesian networks; Semiparametric efficiency; Graphical model; Conditional independence; Average treatment effect; Marginalization; Latent projection.
\end{keywords}

\section{Introduction} \label{sec:intro}

This paper contributes to a growing literature on efficient estimation of causal effects under causal graphical models \citep{rotnitzky2020efficient,bhattacharya2020semiparametric,smucler2021efficient,guo2020linear,henckel2019graphical,witte2020efficient,kuipers2022var}. We consider estimating the interventional mean of an outcome associated with a point exposure treatment when a nonparametric causal graphical model, represented by a directed acyclic graph, is assumed. Such a causal model induces a semiparametric model on the factual data law known as a Bayesian network, which associates each vertex of the graph with a random variable. Under the Bayesian network model, every variable is conditionally independent of its non-descendants given its parents in the graph.
Further, under the causal graphical model, the interventional mean is identified by a smooth functional of the factual data law given by the g-formula \citep{robins1986new}. This functional is the mean of the outcome taken with respect to a truncated law which agrees with the factual law except that the probability of treatment given its parents in the graph is replaced by a point mass  at the intervened level of the treatment. The semiparametric variance bound for this functional under the induced Bayesian network model gives the lowest benchmark for the asymptotic variance of any regular estimator of the functional and, as such, it quantifies the efficiency with which, under regularity conditions, one can hope to estimate the interventional mean under the model without posing additional assumptions. 

\citet{rotnitzky2020efficient} identified a class of directed acyclic graphs under which the semiparametric variance bound for the interventional mean is equal to the variance bound under a simpler causal graphical model, which is a directed acyclic graph consisting of the treatment, the outcome and a special set of covariates known as the optimal adjustment set \citep{henckel2019graphical}.
This implies that all the remaining variables in the original graph are uninformative in that failure to measure them has no impact on the efficiency with which one can hope to estimate the interventional mean. 
However, this earlier work left unanswered the question of identifying uninformative variables in an arbitrary directed acyclic graph that does not belong to their special class, which is the goal of this paper. 

We prove theoretical results that can guide practitioners in the design and analysis of an observational or sequentially randomized study. First, at the stage of designing a study, it informs which variables should be measured for optimally estimating the effect of interest. Designers of a study often employ directed acyclic graphs to incorporate substantive causal assumptions, including hypotheses on potential confounders and causal paths \citep[\S6]{hernan2020causal}. Our \cref{thm:criteria} provides a graphical criterion that allows the designer to read off from the graph the set of informative variables, which is the minimal set of variables to measure that permits estimating the effect of interest with maximum efficiency. 
This is useful because the cost associated with measuring uninformative variables can be saved. 

Second, for analyzing a study, our \cref{alg:graph} produces a reduced graph that assists the data analyst in constructing an efficient estimator of the effect of interest. The reduced graph is a directed acyclic graph that only contains informative variables. As formalized in \cref{thm:algo}, the reduced graph encodes all the modeling constraints required for optimally estimating the effect. In fact, among all the possible ways to identify the effect from data, we show that the g-formula associated with the reduced graph is the most efficient. This leads to developing efficient estimators that involve the fewest number of variables, and presumably, the fewest number of nuisance parameters. Even when such an estimator is considerably simpler than an efficient estimator constructed using the full graph and full data, there is no loss in performance; see \cref{sec:app-simu} for a simulation example. Finally, the whole process of variable elimination, graph reduction and deriving the associated g-formula is automated by our R package \texttt{reduceDAG}.

\section{Motivation} \label{sec:motivation}
To motivate the development in this paper, consider the causal agnostic graphical model \citep{spirtes2000causation,robins2010alternative} represented by graph $\g$ in \cref{fig:example}(a). Suppose $Y$ is an outcome and $A$ is a discrete treatment whose causal effect on $Y$ we are interested in estimating. The causal model implies the Bayesian network model on the factual data, denoted as $\model(\g,\Vset),$ for the law of $\Vset=\{A,Y,I_{1},O_{1},W_{1},W_{2},W_{3},W_{4}\}$, which is defined by the sole restriction that the joint density of $V$, with respect to some dominating measure, factorizes as 
\begin{equation*} 
p(v)=p(y\mid a,o_{1})\,p(a\mid i_{1})\,p(i_{1}\mid w_{4})\,p(o_{1}\mid
w_{4})\,p(w_{4}\mid w_{2},w_{3})p(w_{3})\,p(w_{2}\mid w_{1})\,p(w_{1}).
\end{equation*}
Each factor is either a marginal density if $V_j$ has no parent in $\g$ or a conditional density of the form $p\{v_{j}\mid \Pa(v_{j},\g)\}$, where $\Pa(v_{j},\g)$ denotes the set of parents of $V_j$ in $\g$.
These densities are unrestricted under model $\model(\g,\Vset) $ and they parameterize the model.

\begin{figure}[!htb]
\centering
\begin{tikzpicture}
\tikzset{rv/.style={circle,inner sep=1pt,fill=gray!20,draw,font=\sffamily}, 
sv/.style={circle,inner sep=1pt,fill=gray!20,draw,font=\sffamily,minimum size=1mm}, 
node distance=12mm, >=stealth, every node/.style={scale=0.9}}
\begin{scope}
\node[name=A, rv]{$A$};
\node[name=Y, rv, right of=A]{$Y$};
\node[name=I, rv, above of=A]{$I_1$};
\node[name=O, rv, above of=Y]{$O_1$};
\node[name=W1, rv, above of=I, xshift=6mm, yshift=-5mm]{$W_4$};
\node[name=W2, rv, above of=I, yshift=3mm]{$W_3$};
\node[name=W3, rv, above of=O, yshift=3mm]{$W_2$};
\node[name=W4, rv, right of=W3]{$W_1$};
\node[below of=A, xshift=6mm, yshift=4mm]{(a) $\g$};
\draw[->, very thick, color=blue] (A) -- (Y);
\draw[->, very thick, color=blue] (I) -- (A);
\draw[->, very thick, color=blue] (O) -- (Y);
\draw[->, very thick, color=blue] (W1) -- (I);
\draw[->, very thick, color=blue] (W1) -- (O);
\draw[->, very thick, color=blue] (W2) -- (W1);
\draw[->, very thick, color=blue] (W3) -- (W1);
\draw[->, very thick, color=blue] (W4) -- (W3);
\end{scope} \begin{scope}[xshift=3.5cm] \node[name=A, rv]{$A$};
\node[name=Y, rv, right of=A]{$Y$};
\node[name=W1, rv, above of=A]{$W_4$};
\node[name=O, rv, above of=Y]{$O_1$};
\node[name=W2, rv, above of=W1, yshift=-3mm, xshift=-6mm]{$W_3$};
\node[name=W3, rv, right of=W2]{$W_2$};
\node[below of=A, xshift=6mm, yshift=4mm]{(b) $\g'$};
\draw[->, very thick, color=blue] (A) -- (Y);
\draw[->, very thick, color=blue] (W1) -- (A);
\draw[->, very thick, color=blue] (O) -- (Y);
\draw[->, very thick, color=blue] (W1) -- (O);
\draw[->, very thick, color=blue] (W2) -- (W1);
\draw[->, very thick, color=blue] (W3) -- (W1);
\end{scope} \begin{scope}[xshift=6.5cm] \node[name=A, rv]{$A$};
\node[name=Y, rv, right of=A]{$Y$};
\node[name=O, rv, above of=Y]{$O_1$};
\node[name=W2, rv, above of=A, yshift=8mm, xshift=-6mm]{$W_3$};
\node[name=W3, rv, right of=W2]{$W_2$};
\node[below of=A, xshift=6mm, yshift=4mm]{(c) $\g^{\ast}$};
\draw[->, very thick, color=blue] (A) -- (Y);
\draw[->, very thick, color=blue] (O) -- (Y);
\draw[->, very thick, color=blue] (W2) -- (O);
\draw[->, very thick, color=blue] (W3) -- (O);
\draw[->, very thick, color=blue] (W2) -- (A);
\draw[->, very thick, color=blue] (W3) -- (A);
\draw[->, very thick, color=blue] (O) -- (A);
\end{scope} \begin{scope}[xshift=9.5cm] \node[name=A, rv]{$A$};
\node[name=Y, rv, right of=A]{$Y$};
\node[name=O, rv, above of=Y]{$O_1$};
\node[name=W2, rv, above of=A, yshift=8mm, xshift=-6mm]{$W_3$};
\node[name=W3, rv, right of=W2]{$W_2$};
\node[below of=A, xshift=6mm, yshift=4mm]{(d)};
\draw[->, very thick, color=blue] (A) -- (Y);
\draw[->, very thick, color=blue] (O) -- (Y);
\draw[->, very thick, color=blue] (W2) -- (O);
\draw[->, very thick, color=blue] (W3) -- (O);
\draw[->, very thick, color=blue] (W2) -- (A);
\draw[->, very thick, color=blue] (W3) -- (A);
\draw[<->, very thick, color=red] (O) -- (A);
\end{scope}\begin{scope}[xshift=12cm] 
\node[name=A, rv]{$A$};
\node[name=Y, rv, right of=A]{$Y$};
\node[name=I, rv, above of=A]{$I_1$};
\node[name=O, rv, above of=Y]{$O_1$};
\node[name=W1, rv, above of=I, xshift=6mm, yshift=-5mm]{$W_4$};
\node[name=W2, rv, above of=I, yshift=3mm]{$W_3$};
\node[name=W3, rv, above of=O, yshift=3mm]{$W_2$};
\node[name=W4, rv, right of=W3]{$W_1$};
\node[below of=A, xshift=6mm, yshift=4mm]{(e) $\grev$};
\draw[->, very thick, color=blue] (A) -- (Y);
\draw[->, very thick, color=blue] (I) -- (A);
\draw[->, very thick, color=blue] (O) -- (Y);
\draw[->, very thick, color=blue] (W1) -- (I);
\draw[->, very thick, color=blue] (W1) -- (O);
\draw[->, very thick, color=blue] (W2) -- (W1);
\draw[->, very thick, color=blue] (W3) -- (W1);
\draw[->, very thick, color=red] (W3) -- (W4);
\end{scope} \end{tikzpicture}
\caption{Causal graphs involved in the motivating example: (a) the original graph $\g$, where variables $\{I_1, W_1, W_4\}$ are uninformative, among which $\{I_1, W_1\}$ are redundant; (b) graph $\g'$ is obtained by projecting out the redundant variables $\{I_1, W_1\}$ from $\g$; (c) the reduced graph $\g^{\ast}$ that projects out all the uninformative variables using \cref{alg:graph}; (d) the latent projection \citep{verma1991equivalence} of $\g$ that marginalizes over $\{I_1, W_1, W_4\}$, where a bidirected edge between $A$ and $O$ is introduced to due to confounder $W_4$; (e) graph $\grev$ is causal Markov equivalent to $\g$, from which $\{I_1, W_1\}$ can be identified as redundant and hence uninformative.}
\label{fig:example}
\end{figure}

If $p(a\mid i_{1})>0$ for all $i_{1}$ in the range of $I_{1},$ the causal graphical model also implies that the joint density of the variables in the graph, when $A$ is intervened and set to $a$, is given by 
\begin{equation*}
p_{a}(v)=J_{a}(v) \,p(y\mid a,o_{1})\,p(i_{1}\mid w_{4})\,p(o_{1}\mid w_{4})\,p(w_{4}\mid w_{2},w_{3})p(w_{3})\,p(w_{2}\mid w_{1})\,p(w_{1}),
\end{equation*}
where $J_a(v)$ is the indicator function that the $A$ component of $\Vset$ is equal to $a$ when $V$ takes value $v$.
In particular, the mean of the outcome when $A$ is intervened and set to $a$, which we shall refer to throughout as the interventional mean and denote with $\E Y(a)$, is given by
\begin{multline} \label{eqs:ex-g-raw} 
\Psi _{a}(P;\g) \equiv \sum_{y,o,i,w_{1},w_{2},w_{3},w_{4}}y\,p(y\mid
a,o_{1})\,p(i_{1}\mid w_{4})\,p(o_{1}\mid w_{4})\,p(w_{4}\mid
w_{2},w_{3})\,p(w_{3})\,  \\
\times p(w_{2}\mid w_{1})\,p(w_{1})
\end{multline}
if all the components of $\Vset$ are discrete; otherwise $\Psi _{a}(P;\g)$ is defined with the summation replaced by an integral with respect to the dominating measure. 
We call \cref{eqs:ex-g-raw} the g-formula associated with graph $\g$ \citep{robins1986new}.

Our goal is to determine the variables in vector $\Vset$ that can be disposed of without affecting the asymptotic efficiency with which we can hope to estimate $\Psi_{a}(P;\g)$. With this goal in mind, we first observe that the term $p(i_1 \mid w_4) $ can be summed out from the right hand side of \cref{eqs:ex-g-raw} because $i_1$ does not appear in the conditioning set of any other conditional densities. Writing $p(w_{2}\mid w_{1})\,p(w_{1})=p(w_{1},w_{2})$, we also observe that we can sum out $w_{1}$ from \cref{eqs:ex-g-raw} as well. We then conclude that $\Psi _{a}(P;\g)$ is equal to 
\begin{equation} \label{eqs:ex-g-prime}
\sum_{y,o_{1},w_{2},w_{3},w_{4}}y\,p(y\mid a,o_{1})\,p(o_{1}\mid w_{4})\,p(w_{4}\mid w_{2},w_{3})\,p(w_{2})\,p(w_{3}).  
\end{equation}
Next, we notice that because both $p(w_{2}\mid w_{1})$ and $p(w_{1})$ are unrestricted under model $\model(\g,\Vset)$, so is $p\left( w_{2}\right)$. In fact, all the densities that remain in \cref {eqs:ex-g-prime} are also unconstrained under the model. Because the data on $\{I_{1},W_{1}\}$ does not help us estimate these densities, we conclude that we can discard $\{I_{1},W_{1}\}$ without affecting the efficiency in estimating $\Psi _{a}(P;\g)$. We recognize that expression \eqref{eqs:ex-g-prime} is precisely the g-formula $\Psi_{a}(P'; \g')$, where $\g'$ is the graph in \cref{fig:example}(b) and $P'$ is the marginal law of $\Vset' \equiv \Vset \setminus \{I_{1},W_{1}\}$. Moreover, under both $\model(\g, \Vset)$ and $\model(\g', \Vset')$, the densities in \cref{eqs:ex-g-prime} are unrestricted. 
Hence, as far as the efficient estimation of $\Psi_{a}(P; \g)$ is concerned, we can ignore $\{I_{1},W_{1}\}$ and pretend that our problem is to estimate the g-formula $\Psi_{a}(P';\g')$ based on a random sample of $V'$, under the assumption that $P'$ belongs to $\model(\g',\Vset')$.

In \cref{sec:g-formula}, we will review the notion of causal Markov equivalent graphs with respect to the effect of $A$ on $Y$. These are graphs that encode the same Bayesian network model and their associated g-formulae coincide under the model. For instance, graphs $\g$ and $\check{\g}$ in \cref{fig:example} are causal Markov equivalent. 
We will show that a variable for which there exists some causal Markov equivalent graph in which all directed paths towards $Y$ intersect $A$, such as $I_{1}$ in our example, is uninformative for estimating $\Psi_a(P; \g)$. Similarly, a variable that is non-ancestral to $Y$ in some causal Markov equivalent graph, such as $W_{1}$ in our example, is also uninformative. We refer to these two types of variables as redundant. 

Further, by traversing graphs in the causal Markov equivalent class, one can see that $\{I_1, W_1\}$ are the only redundant variables.  
One might believe that all variables in $\Vset'$ are needed to construct an asymptotically efficient estimator of $\Psi_a(P; \g)$. 
For instance, suppose $\Vset$ is discrete. Consider the maximum likelihood estimator $\Psi_{a}(\widehat{\mathbb{P}}_{n}';\g')$ with
\[ \widehat{\mathbb{ P}}_{n}'(a,y,o_{1},w_{4},w_{3},w_{2}) \equiv \mathbb{P}_{n}(y \mid a,o_1)\,\mathbb{P}_n(a \mid w_{4})\,\mathbb{P}_{n}(w_{4}\mid w_{2},w_{3})\,\mathbb{P}_{n}(o_{1}\mid w_{4})\,\mathbb{P}_{n}(w_{2})\,\mathbb{P}_{n}(w_{3}),\]
where $\mathbb{P}_{n}(\cdot \mid \cdot)$ and $\mathbb{P} _{n}(\cdot)$ denote, respectively, the empirical conditional and marginal probability operators. Law $\hat{\mathbb{P}}'_{n}$ is the maximum likelihood estimator for $P'$ under $\model(\g',\Vset')$. Clearly, one needs every variable in $\Vset'$ to compute this estimator. 

Surprisingly, in \cref{sec:graph} we will show that, even without using the data on $W_{4}$, we can construct an estimator with the same limiting distribution as the maximum likelihood estimator. 
Specifically, let $ P^{\ast}$ denote the marginal law of $\Vset^{\ast} \equiv \Vset^{\prime}\setminus \{ W_{4}\}$ for $\Vset' \sim P'$, and let $\g^{\ast}$ be the graph over $\Vset^{\ast}$ shown in \cref{fig:example}(c). We will show that the maximum likelihood estimator of the g-formula
\begin{equation} \label{eqs:ex-g-eff}
\Psi_{a}(P^{\ast };\g^{\ast }) \equiv \sum_{y,o_{1},w_{2},w_{3}}y\,p(y\mid a,o_{1})\,p(o_{1}\mid w_{2},w_{3})\,p(w_{2})\,p(w_{3}) 
\end{equation}
with respect to the Bayesian network model represented by $\g^{\ast}$ is asymptotically equivalent to the aforementioned $\Psi_{a}(\widehat{\mathbb{P}}_{n}';\g')$ under every law $P'$ in model $\model(\g', \Vset')$. The estimator based on \cref{eqs:ex-g-eff} does not require measuring $W_4$. This result can be useful even when $W_4$ is already measured but incorporating it into estimation is difficult, for example when $W_4$ is continuous while all the other variables are discrete. In such cases, using the maximum likelihood estimate of \cref{eqs:ex-g-eff} circumvents estimating $p(w_4 \mid w_2, w_3)$ and $p(o \mid w_4)$, which typically requires smoothing. 

More generally, we will show that
(i) when Bayesian networks are defined on a sufficiently large state space, graph $\g^{\ast}$ represents the marginal model of the law $P^{\ast}$ over $\Vset^{\ast}$ induced by $\model(\g', \Vset')$ or, equivalently, by the original $\model(\g, \Vset)$; 
(ii) $\Psi_a(P^{\ast}; \g^{\ast}) = \Psi_a(P; \g)$ for every $P \in \model(\g; \Vset)$ under a positivity condition introduced in \cref{sec:g-formula}; 
(iii) the semiparametric variance bound for $\Psi_{a}(P^\ast;\g^\ast)$ with respect to $\model(\g^{\ast},\Vset^{\ast})$ and the bound for $\Psi_{a}(P;\g)$ with respect to $\model(\g, \Vset)$ coincide. 
Therefore, for estimating the interventional mean, not only is $W_{4}$ asymptotically uninformative but, moreover, we can discard $\g$ and pretend it is the graph $\g^{\ast}$ that we started with. The same can be said for estimating the average treatment effect, e.g., $\E Y(1) - \E Y(0)$ when $A$ is binary. 
Also, $\g^{\ast}$ is different from the latent projection \citep{verma1991equivalence} of $\g$ onto $\Vset^{\ast}$, which introduces bidirected edges when a confounder is marginalized over; compare \cref{fig:example}(c) and (d).

Conceptually, the preceding results can be interpreted as follows. It is well-known that the Bayesian network $\model(\g, \Vset)$ is the set of laws that obey the conditional independencies implied by d-separations with respect to $\g$. Our results imply that estimating $\Psi_{a}(P;\g)$ under a supermodel $\bar{\model}$, which is specified by those conditional independencies in $\model(\g, \Vset)$ that do not involve variables $\{I_1, W_1, W_4\}$, is no more difficult than estimating it under $\model(\g, \Vset)$. In other words, $\model(\g, \Vset)$ is a least favorable submodel of $\bar{\model}$ \citep[\S 25.3]{van2000asymptotic} in the sense that the extra constraints it encodes are uninformative for the target parameter. 

Furthermore, in \cref{sec:uninfo}, we show that no variable can be further eliminated from $\Vset^{\ast}$ without impairing efficiency at some law in $\model(\g ,\Vset)$. It can then be argued that the g-formula associated with $\g^{\ast}$, such as \eqref{eqs:ex-g-eff}, is an irreducible, efficient identifying formula for $\Psi_a(P;\g)$. In particular, this implies that when all components of $\Vset$ are discrete, the plugin estimator of any other identifying formula either depends on a strict superset of $\Vset^{\ast}$, as is the case with \cref{eqs:ex-g-prime}, or has an asymptotic variance strictly greater than the Cram\'{e}r--Rao bound under some law in $\model(\g, \Vset)$. As an example of the latter, consider the class of adjustment formulae 
\begin{equation} \label{eqs:ex-backdoor}
\Psi_{a,L}^{\mathrm{ADJ}}(P; \g)\equiv \sum_{y,l} y\,p(y\mid a,L=l)\,p(l),
\end{equation}
which agrees with $\Psi_a(P;\g)$ in $\model(\g, \Vset)$, where $L$ is any set of variables non-descendant to $A$ that blocks all the back-door paths between $A$ and $Y$ in $\g$ \citep{pearl1993bayesian}, e.g., $L=\{O_{1}\}$, $L=\{I_{1}\}$, $L=\{W_{4}\}$ or $L = \{I_1,W_4\}$. These formulae lead to inefficient estimators $\Psi_{a,L}^{\mathrm{ADJ}}(\mathbb{P}_n; \g)$ when plugging in the empirical measure, as is confirmed by simulations in \cref{sec:app-simu}.

\section{Technical background} \label{sec:background}
\subsection{Relation to optimal adjustment} \label{sec:opt-adj}
Our problem is different from optimal adjustment. Our efficiency bound is defined relative to all regular, asymptotically linear estimators of $\Psi_a(P; \g)$ under the Bayesian network model $\model(\g, \Vset)$. In contrast, the literature on optimal adjustment (e.g., \citealp{kuroki2003covariate,hahn2004functional,rotnitzky2020efficient,henckel2019graphical}) restricts the class of estimators to those that estimate nonparametric target \cref{eqs:ex-backdoor} without imposing any conditional independence restrictions, and seeks one with the maximum efficiency within the class, which is called the optimal adjustment estimator. By definition, the asymptotic variance bound we consider is smaller than or equal to the asymptotic variance of the optimal adjustment estimator. For cases where the optimal adjustment estimator does not achieve the asymptotic variance bound we consider, see our motivating example in \cref{fig:example} and \cref{ex:two-Os,ex:three-Ms,ex:long} in \cref{sec:ex}. 

As mentioned in the introduction, under a Bayesian network model, there are certain graphs, characterized by \citet[Theorem 19]{rotnitzky2020efficient}, where the optimal adjustment estimator achieves the asymptotic variance bound considered here. In this paper we study general graphs beyond these cases. 

\subsection{Bayesian network, directed acyclic graph and vertex sets} \label{sec:sets}
For technical reasons that will be explained shortly, we define a Bayesian network model on a larger state space than typically required. For every random variable $V_j \in \Vset$, let its state space be 
\begin{equation} \label{eqs:state-space}
\X_j = \mathbb{R}^{d_j} \dcup \mathbb{W}, \quad d_j \geq 1, \quad \mathbb{W} = \{\omega_1,\omega_2,\dots\}, 
\end{equation}
where $\dcup$ denotes a disjoint union and the set $\mathbb{W}$ is a collection of symbols isomorphic to the natural numbers. That is, the state space $\X_j$ allows $V_j$ to be potentially Euclidean or discrete, or a mixed type of both, prior to observing the data on $V_j$. In \cref{sec:app-state-space}, measure $\mu_j$ and $\sigma$-algebra $\F_j$ for every $V_j$ are defined accordingly. The Bayesian network model is the set of probability measures on $\left(\X \equiv \times_{j: V_j \in \Vset} \X_j, \F \equiv \times_{j: V_j \in \Vset} \F_j \right)$ that factorize according to the graph, i.e., 
\begin{equation} \label{eqs:factorization}
\model(\g,\Vset)\equiv \left\{ P:\frac{\dd P}{\dd \mu }\left( v\right) \equiv p(v)=\prod_{j: V_{j}\in \Vset}p\{v_{j}\mid \Pa(v_{j},\g)\}\right\} ,
\end{equation}
where the density $p$ is taken with respect to the dominating measure $\mu \equiv \times_{j: V_j \in \Vset} \mu_j$. The symbol $\Pa(v_{j},\g)$ denotes the value taken by the set of parents of $V_{j}$ with respect to $\g$ when $V=v$. By the equivalence between factorization and the global Markov property, $\model(\g,\Vset)$ coincides with the set of laws that obey the conditional independences implied by d-separations with respect to $\g$; in addition, $\model(\g,\Vset)$ is also the set of laws that satisfy the local Markov property, namely a variable is independent of its non-descendants given its parents; see, e.g., \citet[Theorem 3.27]{lauritzen1996graphical}. We also refer to $\model(\g,\Vset)$ as the model represented by $\g$. 

\smallskip \begin{remark} \label{remark:marginal}
We introduce \cref{eqs:state-space} to ensure that the state space of every variable is sufficiently large so that it is essentially no different from an unconstrained state space. Consequently, the notion of an induced marginal model in the rest of the paper aligns with the notion of a marginal model typically used in the literature, where the state space of the marginalized variables is unspecified or unrestricted; see, e.g., \citet[Definition 6]{evans2016graphs}.
Following the discussion in \citet[\S 2.11]{cencov2000statistical}, a sufficiently large state space can be ensured if each $\X_j$ at least contains an interval of the real line. We make this technical requirement on the state space to rule out undesired, e.g., reduced-rank, constraints on the induced model when marginalizing out a variable with a finite or small state space \citep{mond2003stochastic}. 
\end{remark}

\begin{remark} \label{remark:mle}
The definition above by no means precludes discrete distributions that only put mass on vectors consisting of symbols in $\mathbb{W}$. In fact, when the data are discrete, the maximum likelihood estimate is well-defined and coincides with the maximum likelihood estimate under the commonly used model with $\X_j = \mathbb{W}$ for every $V_j \in \Vset$. For technical reasons, model $\model(\g, \Vset)$ considered here is larger than the commonly used Bayesian network model, but the difference is inconsequential in terms of data analysis.
\end{remark}

\medskip Throughout, we use upper-case letters to denote vertices of a graph or the random variables they represent. Lower-case letters are reserved for indices or values taken by random variables. 
We use standard notation for graphical models, summarized in \cref{sec:app-graph-notation}. Among others, we say that path $p$ from $V_1$ to $V_k$ is causal if it is of the form $V_{1}\rightarrow \dots \rightarrow V_{k}$. The notation $V_i \mapsto V_j$ is a shorthand for $V_i \in \An(V_j)$. 

For disjoint sets $A$, $B$ and $C$, we use $A \indep B \mid C$ to denote conditional independence between $A$ and $B$ given $C$ under a given law, and we use $A \indep_{\g} B \mid C$ to denote d-separation between $A$ and $B$ given $C$ in graph $\g$. For d-separation, we allow $A \cap C \neq \emptyset$ and $B \cap C \neq \emptyset$, in which case $A \indep_{\g} B \mid C$ is interpreted as $A \setminus C \indep_{\g} B \setminus C \mid C$. We also use the convention that $\emptyset \indep_{\g} B \mid C$ for any sets $B$ and $C$. Conditional independence and d-separation share similar properties: the former satisfies semi-graphoid axioms, while the latter satisfies the stronger compositional graphoid axioms; see \citet[Theorem 1 and 11]{pearl1988book}.

Two directed acyclic graphs $\g$ and $\g'$ on the same vertex set $V$ are called Markov equivalent if $\model(\g,\Vset)=\model(\g',\Vset)$.
It is well-known that two graphs are Markov equivalent if and only if they share the same adjacencies and unshielded colliders \citep{verma1991equivalence,andersson1997characterization}. Further, a Markov equivalence class can be graphically represented by a completed partially directed acyclic graph, also known as an essential graph \citep{meek1995causal,andersson1997characterization}.

\medskip 
\begin{assumption} \label{assump:A-goes-to-Y}
In directed acyclic graph $\g$, $A \mapsto Y$.
\end{assumption}

We will make this assumption throughout; otherwise the model already assumes $A$ has no effect on $Y$. As we will see, the information carried by a variable depends crucially on its ancestral relations with respect to treatment $A$ and outcome $Y$. To ease the exposition, we introduce the following taxonomy of vertices, which is illustrated in \cref{fig:taxonomy}(a).

\medskip \noindent (i) Non-ancestors of $Y$: $\Nset(\g) \equiv \Vset \setminus \An(Y,\g)$.

\noindent (ii) Indirect ancestors of $Y$: $\Iset(\g) \equiv \left\{V_{j} \in \Vset:\, V_{j} \neq A,\, V_j \mapsto Y \text{ only through } A \right\}$. These are also conditional instruments given $\Pa(\Iset,\g) \setminus \Iset$ \citep{didelez2007mendelian}.

\noindent (iii) Baseline covariates: non-descendants of $A$, but ancestors of $Y$ not only through $A$, i.e.,
\begin{equation} \label{eqs:Wset}
\Wset(\g)\equiv \left\{V_{j} \in \Vset:\, A \not\mapsto V_{j},V_{j}\mapsto Y,\,V_{j}\notin \Iset (\g) \right\}.
\end{equation}
In contrast to $\Iset(\g)$, for each $W_{j}\in \Wset(\g)$ there is a
causal path from $W_{j}$ to $Y$ that does not contain $A$.

\noindent (iv) Mediators: $\Mset(\g)\equiv \{V_{j} \in \Vset:\, V_{j}\neq A,\,A\mapsto
V_{j}\mapsto Y\}$. These are the variables that lie on the causal paths between $A$
and $Y$. With a slight abuse of the term mediators, the set $\Mset(\g)$ also contains $Y$.

\medskip It follows that the set of variables is partitioned as $\Vset=\{A\}\dcup \Nset(\g)\dcup\Iset(\g)\dcup\Wset(\g)\dcup\Mset(\g)$. The following subset of $\Wset$ is also important: the optimal adjustment set \citep{henckel2019graphical} 
\begin{equation} \label{eqs:Oset}
\Oset(\g)\equiv \Pa\{\Mset(\g),\g\} \setminus \left\{ \Mset(\g) \cup \{A\} \right\}.
\end{equation}
The set $\Oset(\g)$ consists of the parents of mediators that are not themselves mediators or the treatment; see \citet{witte2020efficient} for other
characterizations. By definition it can be empty. 
The set of baseline covariates $\Wset(\g)$ is related to its subset $\Oset(\g)$ by the
following lemma; further properties of $\Oset(\g)$ can be found in the next subsection.

\begin{lemma} \label{lem:osetandw}
Under \cref{assump:A-goes-to-Y}, $W(\g) = \An\{\Oset(\g), \g\}$.
\end{lemma}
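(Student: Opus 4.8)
The plan is to prove the two inclusions $\An\{\Oset(\g),\g\}\subseteq\Wset(\g)$ and $\Wset(\g)\subseteq\An\{\Oset(\g),\g\}$ separately. Throughout I would use acyclicity of $\g$ together with the elementary fact that any directed walk in a DAG can be pruned to a directed path whose vertex set is a subset of the walk's vertex set; in particular, concatenating a causal path $V\to\cdots\to V'$ with a causal path $V'\to\cdots\to V''$ and pruning produces a causal path from $V$ to $V''$ using only vertices of the two original paths, so the property of avoiding a fixed vertex --- here $A$ --- is preserved.

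For $\An\{\Oset(\g),\g\}\subseteq\Wset(\g)$, I would first show $\Oset(\g)\subseteq\Wset(\g)$ and then that $\Wset(\g)$ is ancestrally closed, that is, $\An\{\Wset(\g),\g\}=\Wset(\g)$; the claimed inclusion then follows by monotonicity of $\An$. To see $\Oset(\g)\subseteq\Wset(\g)$: any $O\in\Oset(\g)$ is a parent of some $M\in\Mset(\g)$ with $O\neq A$ and $O\notin\Mset(\g)$. Since every mediator, $Y$ included, is an ancestor of $Y$, the edge $O\to M$ gives $O\mapsto Y$; if $A\mapsto O$ held, then $A\mapsto O\mapsto Y$ with $O\neq A$ would force $O\in\Mset(\g)$, a contradiction, so $A\not\mapsto O$; and the walk $O\to M\to\cdots\to Y$ avoids $A$ (if $M\neq Y$, then $A$ lying on the $M$-to-$Y$ segment would give $M\mapsto A$ and $A\mapsto M$, a cycle; if $M=Y$ the walk is the single edge $O\to Y$), whence $O\notin\Iset(\g)$, so $O\in\Wset(\g)$. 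For ancestral closure, take $W\in\Wset(\g)$ and $V\in\An(W,\g)$: then $V\mapsto W\mapsto Y$; $A\mapsto V$ would give $A\mapsto W$, contradicting $W\in\Wset(\g)$; and a causal path $V\to\cdots\to W$ (which avoids $A$, by the same reasoning that gives $A\not\mapsto W$) followed by a causal path $W\to\cdots\to Y$ avoiding $A$ (one exists since $W\notin\Iset(\g)$) prunes to a causal $V$-to-$Y$ path avoiding $A$, so $V\notin\Iset(\g)$ and $V\in\Wset(\g)$.

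For $\Wset(\g)\subseteq\An\{\Oset(\g),\g\}$, fix $W\in\Wset(\g)$. Because $W\mapsto Y$, $W\neq A$ and $W\notin\Iset(\g)$, there is a causal path $\pi\colon W=V_0\to V_1\to\cdots\to V_k=Y$ not containing $A$. Since $V_k=Y\in\Mset(\g)$ while $V_0=W\notin\Mset(\g)$, the index $j\equiv\min\{i:V_i\in\Mset(\g)\}$ satisfies $1\le j\le k$; the vertex $V_{j-1}$ then has an edge into the mediator $V_j$, lies outside $\Mset(\g)$ by minimality of $j$, and is not $A$ since it lies on $\pi$, so $V_{j-1}\in\Pa\{\Mset(\g),\g\}\setminus(\Mset(\g)\cup\{A\})=\Oset(\g)$. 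The initial segment $V_0\to\cdots\to V_{j-1}$ of $\pi$ then shows $W\in\An(V_{j-1},\g)\subseteq\An\{\Oset(\g),\g\}$.

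I do not anticipate a genuinely hard step. The points needing care are the boundary cases ($M=Y$, $j-1=0$, and $\Oset(\g)=\emptyset$, the last of which forces $\Wset(\g)=\emptyset$ by the second inclusion), keeping the definitions of $\Iset(\g)$, $\Mset(\g)$ (with $Y\in\Mset(\g)$) and $\Oset(\g)$ straight, and --- the thing most easily stated incorrectly --- treating concatenations of causal paths as directed walks and invoking the pruning fact, so that ``avoids $A$'' is genuinely preserved. \cref{assump:A-goes-to-Y} is what makes $\Mset(\g)\ni Y$ a non-vacuous target for the construction in the second inclusion.
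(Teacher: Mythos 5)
Your proof is correct and follows essentially the same route as the paper: the inclusion $\Wset(\g)\subseteq\An\{\Oset(\g),\g\}$ is proved exactly as in the paper (first mediator on an $A$-avoiding causal path from $W$ to $Y$), and the reverse inclusion rests on the same two observations the paper uses (a descendant of $A$ that is an ancestor of $Y$ is a mediator, and concatenating $A$-avoiding causal paths stays $A$-avoiding). Your only organizational difference — proving $\Oset(\g)\subseteq\Wset(\g)$ plus ancestral closure of $\Wset(\g)$ rather than checking the defining conditions directly for an arbitrary ancestor of $\Oset(\g)$ — is a cosmetic repackaging of the paper's argument.
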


We also define the following subset of $\Oset(\g)$ that will be useful later:
\begin{equation*}
\Omin(\g) \,\equiv\, \text{the inclusion minimal $\Oset^{\prime }\subseteq \Oset$}(\g) \text{ such that $A\indep_{\g}\Oset(\g)\setminus \Oset^{\prime }\mid \Oset ^{\prime }$}.  
\end{equation*}
The intersection property of d-separation ensures that $\Omin(\g)$ is uniquely defined; see \citet[Lemma 7, Appendix]{rotnitzky2020efficient}.

\begin{figure}[!htb]
\centering
\begin{tikzpicture}
\tikzset{rv/.style={circle,inner sep=1pt,fill=gray!20,draw,font=\sffamily}, 
redv/.style={circle,inner sep=1pt,fill=white,draw,dashed,font=\sffamily}, 
ov/.style={circle,inner sep=1pt,fill=gray!20,draw=red,thick,font=\sffamily}, 
sv/.style={circle,inner sep=1pt,fill=gray!20,draw,font=\sffamily,minimum size=1mm}, 
node distance=12mm, >=stealth, every node/.style={scale=0.8}}
\begin{scope}
\node[rv] (A) {$A$};
\node[rv, right of=A] (M1) {$M_1$};
\node[rv, right of=M1] (M2) {$M_2$};
\node[rv, right of=M2] (Y) {$Y$};
\node[rv, left of=A] (I2) {$I_2$};
\node[rv, below of=A] (W2) {$O_1$};
\node[rv, below of=M2] (N1) {$N_1$};
\node[rv, above of=A] (I1) {$I_1$};
\node[rv, above of=M1] (W3) {$O_3$};
\node[rv, above of=M2] (M3) {$M_3$};
\node[rv, below of=M1] (O4) {$O_4$};
\node[rv, above of=I1, xshift=5mm, yshift=-3mm] (W1) {$W_1$};
\node[rv, above of=Y, yshift=9mm] (W4) {$O_2$};
\node[below=2mm of O4] (lab1) {(a)};
\draw[->, very thick, color=blue] (I2) -- (A);
\draw[->, very thick, color=blue] (A) -- (M1);
\draw[->, very thick, color=blue] (M1) -- (M2);
\draw[->, very thick, color=blue] (M2) -- (Y);
\draw[->, very thick, color=blue] (W2) -- (I2);
\draw[->, very thick, color=blue] (W2) -- (M1);
\draw[->, very thick, color=blue] (M1) -- (N1);
\draw[->, very thick, color=blue] (Y) -- (N1);
\draw[->, very thick, color=blue] (I1) -- (I2);
\draw[->, very thick, color=blue] (I1) -- (A);
\draw[->, very thick, color=blue] (W1) -- (I1);
\draw[->, very thick, color=blue] (W1) -- (W3);
\draw[->, very thick, color=blue] (W3) -- (M1);
\draw[->, very thick, color=blue] (W4) -- (W1);
\draw[->, very thick, color=blue] (W4) -- (Y);
\draw[->, very thick, color=blue] (M1) -- (M3);
\draw[->, very thick, color=blue] (M3) -- (Y);
\draw[->, very thick, color=blue] (O4) -- (W2);
\draw[->, very thick, color=blue] (O4) -- (M1);
\end{scope} \begin{scope}[xshift=5cm]
\node[rv, right=3cm of N1] (A) {$A$};
\node[rv, right of=A] (M) {$M$};
\node[rv, right of=M] (Y) {$Y$};
\node[rv, above of=M, yshift=0.5cm] (O) {$O$};
\node[below=2mm of M] (lab2) {(b)};
\draw[->,very thick, color=blue] (A) to (M);
\draw[->,very thick, color=blue] (M) to (Y);
\draw[->,very thick, color=blue] (O) to (A);
\draw[->,very thick, color=blue] (O) to (Y);
\end{scope}
\end{tikzpicture}
\caption{(a) An illustration of the taxonomy of vertices. $A$ is the treatment and $Y$ is the outcome. Vertex $\Nset = \{N_1\}$ is non-ancestral to $Y$. Set $\Iset = \{I_1,I_2\}$ consists of indirected ancestors of $Y$, which are conditional instruments given $\{W_1, O_1\}$. We have $\Wset = \{W_1, O_1, O_2, O_3,O_4\}$, of which the subset $\Oset = \{O_1, O_2, O_3, O_4\}$ is the optimal adjustment set; further, $\Omin = \{O_1, O_2, O_3\}$. Finally, $\Mset = \{M_1, M_2,M_3,Y\}$ is the set of mediators. (b) An example with multiple identifying formulae: the g-formula \cref{eqs:front-back-door-g}, the back-door formula \cref{eqs:front-back-door-back} and the front-door formula \cref{eqs:front-back-door-front}.} 
\label{fig:taxonomy}
\end{figure}

\subsection{Causal graphical model and the g-formula} \label{sec:g-formula}
Throughout, we assume a causal agnostic graphical model \citep{spirtes2000causation,robins2010alternative} represented by a directed acyclic graph $\g$ on vertex set $\Vset$, where $A \in \Vset$ is a discrete treatment and $Y \in \Vset$ is the outcome of interest. We also impose \cref{assump:A-goes-to-Y} on $\g$. The causal model implies that the law $P$ of the factual variables $V$ belongs to the Bayesian network model $\model(\g, \Vset)$ defined in \cref{eqs:factorization}.

Under \cref{assump:positivity} introduced below, the causal graphical model further posits that when $A$ is intervened and set to level $a$, the density of the variables in the graph is 
\begin{equation} \label{eqs:g-dist}
p_{a}\left( v\right) \equiv J_{a}(v) \prod_{V_{j}\in \Vset \setminus \{A\}}p\{v_{j}\mid \Pa(v_{j},\g)\},  
\end{equation}
where $J_a(v)$ is the indicator of the $A$ component of $V$ being equal to $a$ when $V=v$.
The right-hand side of \cref{eqs:g-dist} is known as the g-formula \citep{robins1986new}, the manipulated distribution formula \citep{spirtes2000causation} or the truncated factorization formula \citep{pearl2009causality} in the literature. Our target of inference, the interventional mean, which we denote with $\E Y(a)$, is therefore given by
\begin{equation} \label{eqs:g-formula}
\Psi_{a}(P;\g) \equiv \sum_{y,\, v_{j}:V_{j}\in \Vset \setminus \{A,Y\}} y \prod_{j: V_j \in \Vset\setminus \{A\}}p\left\{v_{j} \mid \left. \Pa(v_{j},\g)\right\vert_{A=a} \right\},  
\end{equation}
if all components of $V$ are discrete; otherwise $\Psi_{a}(P;\g)$ is defined with the summation replaced by an integral with respect to the dominating measure $\mu$; see also \cref{eqs:factorization}. The symbol $\left. \Pa(v_j,\g)\right\vert_{A=a}$ indicates that if $A\in \Pa(V_j,\g)$, then the value taken by $A$ when $V_j=v_j$ is set to $a$. We refer to $\Psi_a(\cdot; \g): \model(\g, \Vset) \rightarrow \mathbb{R}$ as the g-functional. 

\medskip \begin{assumption}[Positivity] \label{assump:positivity}
There exists $\varepsilon>0$, which can depend on $P$, such that the conditional probability $P\{A = a \mid \Pa(A, \g)\} > \varepsilon $ $P$-almost surely. 
\end{assumption}
\smallskip \noindent By the local Markov property, this assumption implies $P(A=a\mid \Lset)>\varepsilon$ $P$-almost surely for every $\Lset \subset \Vset$ that is non-descendant to $A$.

\medskip For the rest of this paper, $\model_0(\Vset)$ denotes the set of all laws over $\Vset$ restricted only by the inequality in \cref{assump:positivity}. Accordingly, a Bayesian network $\model(\g;\Vset)$ should be understood as the intersection of the original definition \cref{eqs:factorization} with such $\model_0(\Vset)$. We impose \cref{assump:positivity} because otherwise the semiparametric variance bound for the g-functional is undefined. 

\medskip \begin{definition}[Identifying formula] \label{def:id-formula} 
Fix a model $\model(\Vset) \subseteq \model_0(\Vset)$ and a functional $\gamma(P): \model(\Vset) \rightarrow \mathbb{R}$. The functional $\chi(P): \model_0(\Vset) \rightarrow \mathbb{R}$ is an identifying formula for $\gamma(P)$ if $\chi(P) = \gamma(P)$ for every $P \in \model(\Vset)$. 
\end{definition} 

\medskip By the definition above, the natural extension $\Psi_a(P; \g): \model_0(\Vset) \rightarrow \mathbb{R}$ according to \cref{eqs:g-formula}, called the g-formula associated with graph $\g$, is an identifying formula for the g-functional.
However, because of conditional independences in a Bayesian network, one can typically derive more than one identifying formula.
As mentioned in \cref{sec:motivation}, the adjustment $\Psi_{a,L}^{\mathrm{ADJ}}(P; \g)$ given in \cref{eqs:ex-backdoor} based on a valid choice of $\Lset$ is also an identifying formula for $\Psi_a(P; \g)$. In particular, with discrete data, choosing $\Lset = \Oset(\g)$ for estimator $\Psi_{a,L}^{\mathrm{ADJ}}(\mathbb{P}_n)$ leads to the optimal adjustment, which achieves the smallest asymptotic variance among all valid choices of $\Lset$ \citep{rotnitzky2020efficient}; further, this choice is also optimal under the subclass of linear causal graphical models \citep{henckel2019graphical}. Here is another example of multiple identifying formulae.

\smallskip \begin{example} \label{ex:front-back-door}
Consider graph $\g$ in \cref{fig:taxonomy}(b). The g-functional associated with $\g$ is 
\begin{equation} \label{eqs:front-back-door-g}
\Psi_a(P; \g) = \sum_{y,m,o} y\,p(y \mid m, o)\, p(m \mid a)\, p(o).
\end{equation}
Under $\model(\g, \Vset)$, it agrees with the adjustment or back-door formula $\Psi^{\mathrm{ADJ}}_{a,O}(\cdot; \g): \model_0(\Vset) \rightarrow \mathbb{R}$,
\begin{equation} \label{eqs:front-back-door-back}
\Psi^{\mathrm{ADJ}}_{a,O}(P; \g) = \sum_{y,o} y \, p(y \mid a, o) \, p(o),
\end{equation}
and the front-door formula \citep{pearl1995causal} $\Psi^{\mathrm{FRONT}}_a(\cdot; \g): \model_0(\Vset) \rightarrow \mathbb{R}$, 
\begin{equation} \label{eqs:front-back-door-front}
\Psi^{\mathrm{FRONT}}_a(P; \g) = \sum_{y,m} y\,p(m \mid a) \sum_{a'} p(y \mid a', m)\,p(a').
\end{equation}
\end{example}

\medskip The notion of Markov equivalence is not directly applicable to our problem, as two Markov equivalent graphs may not admit the same identifying formula for the g-functional. 
This issue is fixed by the following refinement of Markov equivalence.

\begin{definition}[Causal Markov equivalence]\label{def:c-MEC} 
Two graphs $\g$ and $\g'$ are causal Markov equivalent with respect to the effect of $A$ on $Y$, denoted as  $\g \ceq \g'$, if $\g$ and $\g'$ are Markov equivalent and $\Psi_{a}(P;\g)=\Psi_{a}(P;\g')$ for all $P \in \model(\g, \Vset)$.
\end{definition}

\smallskip \citet{guo2020enumeration} showed that a causal Markov equivalence class can be represented by a maximally oriented partially directed acyclic graph and provided a polynomial-time algorithm to find the representation. In our context, where $|A|=|Y|=1$, the following is an alternative characterization.

\smallskip \begin{proposition} \label{prop:c-MEC-Oset} 
Let $\g$ and $\g^{\prime }$ be two directed acyclic graphs on vertex set $\Vset$, which contains the treatment $A$ and outcome $Y$. Suppose $\g$ and $\g'$ satisfy \cref{assump:A-goes-to-Y}. Graphs $\g$ and $\g'$ are causal Markov equivalent with respect to the effect of $A$ on $Y$ if and only if they are Markov equivalent and share the same optimal adjustment set defined in \cref{eqs:Oset}. 
\end{proposition}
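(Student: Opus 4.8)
The plan is to prove the two implications separately. Throughout I use the fact, established in the optimal-adjustment literature \citep{henckel2019graphical,witte2020efficient}, that $\Oset(\g)$ is a valid adjustment set relative to $(A,Y)$ in $\g$ (it consists of non-descendants of $A$ and blocks every back-door path between $A$ and $Y$), so that, as discussed after \cref{def:id-formula}, the adjustment functional $\Psi^{\mathrm{ADJ}}_{a,\Oset(\g)}$ of the form \cref{eqs:ex-backdoor} is an identifying formula for the g-functional $\Psi_a(\cdot;\g)$; the analogous statement holds for $\g'$.

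For the ``if'' direction, suppose $\g$ and $\g'$ are Markov equivalent and $\Oset(\g)=\Oset(\g')=:\Oset$. Since $\Oset$ is a valid adjustment set in both graphs, for every $P$ in the common model $\model(\g,\Vset)=\model(\g',\Vset)$ we have
\[
\Psi_a(P;\g)=\sum_{y,\,o} y\,p(y\mid a,\Oset=o)\,p(o)=\Psi_a(P;\g'),
\]
the middle expression depending on $P$ only through the set $\Oset$ and not through the graph. Together with Markov equivalence this is precisely the content of \cref{def:c-MEC}, hence $\g \ceq \g'$.

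For the ``only if'' direction, suppose $\g \ceq \g'$. Markov equivalence is already part of \cref{def:c-MEC}, so it remains to prove $\Oset(\g)=\Oset(\g')$. The key observation is that the family of valid adjustment sets is an invariant of the causal Markov equivalence class: by soundness and completeness of the adjustment criterion --- completeness requiring the enlarged state spaces guaranteed by \cref{remark:marginal} --- a set $Z\subseteq \Vset\setminus\{A,Y\}$ is a valid adjustment set in $\g$ if and only if $\Psi^{\mathrm{ADJ}}_{a,Z}(P)=\Psi_a(P;\g)$ for all $P\in\model(\g,\Vset)$. Since $\g \ceq \g'$ gives $\model(\g,\Vset)=\model(\g',\Vset)$ and $\Psi_a(\cdot;\g)=\Psi_a(\cdot;\g')$ on this model, the right-hand side is unchanged on replacing $\g$ by $\g'$; thus $Z$ is a valid adjustment set in $\g$ if and only if it is one in $\g'$, and in particular $\Oset(\g)$ and $\Oset(\g')$ are valid adjustment sets in both graphs. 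It then suffices to characterize $\Oset(\g)$ among all valid adjustment sets in a graph-free manner: one may use that $\Oset(\g)$ is the unique valid adjustment set minimizing, at every law in the model, the asymptotic variance of the adjustment estimator $\Psi^{\mathrm{ADJ}}_{a,Z}(\mathbb{P}_n)$ \citep{henckel2019graphical}, and observe that this asymptotic variance is a functional of $(Z,P)$ alone --- the estimator has influence function $\I(A=a)\,P(A=a\mid Z)^{-1}\{Y-\E(Y\mid a,Z)\}+\E(Y\mid a,Z)-\Psi_a$, which does not mention $\g$. Since both the feasible set and the objective are shared by $\g$ and $\g'$, the minimizers coincide, giving $\Oset(\g)=\Oset(\g')$.

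The main obstacle is the ``only if'' direction, and more precisely the point that equality of the two \emph{scalar} g-functionals over the model does not by itself force $\Oset(\g)=\Oset(\g')$: distinct adjustment sets can induce the same adjustment functional, so one must import structure beyond the functional identity. The efficiency characterization above is one way; a purely graphical alternative is to represent the causal Markov equivalence class by the maximally oriented partially directed graph of \citet{guo2020enumeration} and to show that $\Oset(\g)=\Pa\{\Mset(\g),\g\}\setminus(\Mset(\g)\cup\{A\})$, cf.\ \cref{eqs:Oset}, is readable from that graph --- the mediator set $\Mset(\g)$ is fixed because, under \cref{assump:A-goes-to-Y}, every proper causal path from $A$ to $Y$ is fully oriented in it (via Meek's rules), and every edge incident to $\Mset(\g)$ is likewise oriented, so the parent boundary of $\Mset(\g)$ is an invariant. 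Either route also needs care to ensure the state-space richness of \cref{remark:marginal} is in force wherever completeness of the adjustment criterion (or the uniqueness of the optimal adjustment set) is invoked.
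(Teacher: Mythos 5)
Your ``if'' direction is the same as the paper's: both graphs identify the effect by adjustment for the common set $\Oset$, so the g-functionals agree on the common model. The genuine gap is in your ``only if'' direction. The step that the family of valid adjustment sets is an invariant of the causal Markov equivalence class is acceptable (granting soundness \emph{and} completeness of the adjustment criterion, which needs a citation beyond this paper), but the characterization you then lean on --- that $\Oset(\g)$ is the \emph{unique} valid adjustment set minimizing the asymptotic variance of the adjustment estimator at every law --- is false, and your conclusion ``the minimizers coincide'' rests entirely on it. Concretely, take $\g$ with edges $W \rightarrow O_1$, $O_1 \rightarrow A$, $O_1 \rightarrow Y$, $A \rightarrow Y$. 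Then $\Oset(\g) = \{O_1\}$, while $Z = \{O_1, W\}$ is also a valid adjustment set; since the model implies $A \indep W \mid O_1$ and $Y \indep W \mid A, O_1$, we have $P(A=a \mid Z) = P(A=a \mid O_1)$ and $\E(Y \mid a, Z) = \E(Y \mid a, O_1)$ almost surely at every law in the model, so the influence function you wrote down for $Z$ coincides with that for $\Oset(\g)$ and the two asymptotic variances are equal everywhere (the same happens in the linear/OLS setting of \citet{henckel2019graphical}, who prove weak optimality of $\Oset$, not uniqueness). Hence the variance-minimizing family is not a singleton, and your argument only shows that $\Oset(\g')$ is \emph{some} minimizer among the shared valid adjustment sets, not that it equals $\Oset(\g)$. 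Closing this would require an additional result, e.g.\ that $\Oset$ is the unique inclusion-minimal element of the variance-minimizing family, which you neither state nor prove.

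Your fallback ``purely graphical'' sketch is essentially the paper's actual proof: the paper applies Theorem 3 and Corollary 2 of \citet{guo2020enumeration} to obtain a maximally oriented partially directed graph representing the causal Markov equivalence class in which an adjustment set exists, and then Lemma E.7 of \citet{henckel2019graphical}, which states precisely that all DAGs represented by such a graph share the same optimal adjustment set. Your assertion that every edge incident to $\Mset(\g)$ is oriented in that representation, so that $\Pa\{\Mset(\g),\g\}$ is class-invariant, is exactly the nontrivial content of that lemma; invoking ``Meek's rules'' is not a proof of it. So either prove or cite that invariance result, or repair the efficiency route with a genuine uniqueness/minimality argument; as written, the only-if direction does not go through.
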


\smallskip For example, graphs $\g$ and $\check{\g}$ in \cref{fig:example} are causal Markov equivalent.

\subsection{Efficient influence function, uninformative variables and
efficient identifying formulae} \label{sec:eif}

We now review the elements of semiparametric theory that are relevant to our derivations. An estimator $\widehat{\gamma}$ of a functional $\gamma(P) $ based on $n$ independent observations $\Vset^{(1)},\dots, \Vset^{(n)}$ drawn from $P$ is said to be asymptotically linear at $P$ if there exists a random variable $\gamma_{P}^{1}(V)$, called the influence function of $\widehat{\gamma}$ at $P$, such that $\E_{P} \gamma_{P}^{1}(\Vset)=0$, $\var_{P} \gamma_{P}^{1}(\Vset) <\infty $ and $n^{1/2}\left\{ \widehat{\gamma}-\gamma(P) \right\} =n^{-1/2}\sum_{i=1}^{n}\gamma_{P}^{1}(\Vset^{(i)}) + o_{p}(1)$ as $ n \rightarrow \infty$. For each asymptotically linear estimator $\widehat{\gamma}$, there exists a unique such $\gamma_{P}^{1}\left( {V}\right) $. It follows that $n^{1/2}\left\{ \widehat{\gamma }-\gamma \left( P\right) \right\} $ converges in distribution to a zero-mean normal distribution with variance $\var_{P} \gamma_{P}^{1}(\Vset)$.

Given a collection of probability laws $\model(\Vset)$ over $\Vset$, an estimator $\widehat{\gamma}$ of $\gamma(P)$ is said to be regular at $P$ if its convergence to $\gamma(P) $ is locally uniform at $P$ in $\model(\Vset)$. It is known that for a regular, i.e., pathwise-differentiable, functional $\gamma$, there exists a random variable, denoted by $\gamma_{P,\text{eff}}^{1}(\Vset)$ and called the efficient influence function of $ \gamma$ at $P$ with respect to $\model(\Vset)$, such that given any regular asymptotically linear estimator $\widehat{\gamma}$ of $\gamma$ with influence function $\gamma_{P}^{1}(\Vset)$, we have $\var_{P} \gamma_{P}^{1}(\Vset) \geq \var_{P}\gamma_{P,\text{eff}}^{1} (\Vset)$. If equality holds, then the estimator $\widehat{\gamma}$ is said to be locally semiparametric efficient at $P$ with respect to model $\model(\Vset)$. Further, it is said to be globally efficient if the equality holds for all $P$ in $\mathcal{M}(\Vset)$. 
When $\model(\Vset)$ is taken to be the nonparametric model $\model_0(\Vset)$, all regular asymptotically linear estimators have the same influence function, which therefore coincides with the efficient influence function with respect to $\model_0(\Vset)$. For ease of reference, we call it the nonparametric influence function and denote it by $\gamma_{P,\text{NP}}^{1}(\Vset)$. 
For more details, see \citet[Chapter 25]{van2000asymptotic}.

\medskip To define what it means for a variable to be uninformative, we need the next result.
For a law $P$ over $\Vset$ and $\Vset' \subseteq \Vset$, let $P(\Vset')$ denote the marginal law over $\Vset'$. Similarly, for model $\model(\Vset)$ or $\model(\g, \Vset)$, we use $\model(\Vset')$ or $\model(\g, \Vset')$ to denote the induced marginal model over $\Vset'$, i.e., $\model(\Vset') \equiv \{P(\Vset'): P \in \model(V)\}$ or $\model(\g, \Vset') \equiv \{P(\Vset'): P \in \model(\g, V)\}$; see also \cref{remark:marginal}. 

\smallskip \begin{lemma}[Proposition 17, \citealp{rotnitzky2020efficient}] \label{lem:eif-marg}
Let $\model(\Vset)$ be a semiparametric model for the law of a random vector $\Vset$. Suppose $\Vset'$ is a subvector of $\Vset$. Let $ \model(\Vset')$ be the induced marginal model over $\Vset'$.

Suppose $\gamma: \model(\Vset) \rightarrow \mathbb{R}$ is a regular functional with efficient influence function at $P$ equal to $\gamma_{P,\text{eff}}^{1}\left( {V}\right)$. 
Suppose there exists a regular functional $\chi: \model(\Vset') \rightarrow \mathbb{R}$ such that $\gamma(P) = \chi(P')$ for every $P \in \model(\Vset)$ and $P' \equiv P(\Vset')$.
Suppose, furthermore, that $\gamma_{P,\text{eff}}^{1}(\Vset)$ depends on $\Vset$ only through $\Vset'$. 
Let $\chi_{P^{\prime},\text{eff}}^{1}(\Vset')$ be the efficient influence function of $\chi(P')$ in model $\model(\Vset')$ at $P'$. 
Then, for every law $P\in \model(V)$ over $\Vset$ and its corresponding marginal law $P^{\prime }\in \model (V^{\prime })$ over $\Vset'$, $\gamma_{P,\text{eff}}^{1}(\Vset)$ and $\chi_{P',\text{eff}}^{1}(\Vset')$, as functions of $\Vset$ and $\Vset'$, respectively, are identical $P$-almost everywhere.
\end{lemma}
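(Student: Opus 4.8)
The plan is to characterize both $\gamma^1_{P,\text{eff}}$ and $\chi^1_{P',\text{eff}}$ as canonical gradients in the appropriate tangent spaces, and then to check that $\gamma^1_{P,\text{eff}}$---which by hypothesis is already a function of $\Vset'$ alone---satisfies the two properties that uniquely pin down $\chi^1_{P',\text{eff}}$. Write $\mathcal{T}_P \subseteq L^2_0(P)$ for the closed linear span of the tangent set of $\model(\Vset)$ at $P$, and $\mathcal{T}_{P'} \subseteq L^2_0(P')$ for that of $\model(\Vset')$ at $P'$. Recall that $\gamma^1_{P,\text{eff}}$ is the unique element of $\mathcal{T}_P$ such that $\E_P\{\gamma^1_{P,\text{eff}}(\Vset)\, s(\Vset)\}$ equals the pathwise derivative of $\gamma$ along every regular parametric submodel of $\model(\Vset)$ through $P$ with score $s$, and likewise $\chi^1_{P',\text{eff}}$ is the unique element of $\mathcal{T}_{P'}$ with the analogous property for $\chi$ and $\model(\Vset')$.

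The first step is to relate the two tangent spaces. If $\{P_t\}$ is a regular parametric submodel of $\model(\Vset)$ through $P$ with score $s$, then the sequence of $\Vset'$-marginals $\{P_t(\Vset')\}$ is a regular parametric submodel of $\model(\Vset')$ through $P'$ with score $\E_P\{s \mid \Vset'\}$, since the Radon--Nikodym derivative of the marginals is the conditional expectation given $\Vset'$ of that of the joints. Hence $\E_P\{\cdot \mid \Vset'\}$ carries the generating tangent set of $\model(\Vset)$ into that of $\model(\Vset')$, and being a linear $L^2$-contraction it maps $\mathcal{T}_P$ into $\mathcal{T}_{P'}$. Conversely, I would lift a regular parametric submodel $\{Q_t\}$ of $\model(\Vset')$ through $P'$ with score $s'$ to $\{\tilde P_t\}$ in $\model(\Vset)$ via $d\tilde P_t(\Vset) \equiv dQ_t(\Vset')\,dP(\Vset \setminus \Vset' \mid \Vset')$; because $\model(\Vset')$ is the induced marginal model of $\model(\Vset)$, this lift lies in $\model(\Vset)$, is regular, and has score $s'(\Vset')$, which is a function of $\Vset'$ and so equals its own conditional expectation given $\Vset'$. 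Consequently $\mathcal{T}_{P'}$ equals the closure of $\E_P\{\mathcal{T}_P \mid \Vset'\}$, and every score in $\model(\Vset')$ is the $\Vset'$-conditional expectation of a score in $\model(\Vset)$.

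Two verifications then finish the argument. (i) Since $\gamma^1_{P,\text{eff}} \in \mathcal{T}_P$ and is a function of $\Vset'$, it equals $\E_P\{\gamma^1_{P,\text{eff}} \mid \Vset'\} \in \E_P\{\mathcal{T}_P \mid \Vset'\} \subseteq \mathcal{T}_{P'}$. (ii) Given a regular parametric submodel $\{Q_t\}$ of $\model(\Vset')$ through $P'$ with score $s'$, take the lift $\{\tilde P_t\}$ above, with score $s$ satisfying $\E_P\{s \mid \Vset'\} = s'$. Since $\chi(Q_t) = \gamma(\tilde P_t)$ for every $t$ by the hypothesis $\gamma(P) = \chi(P(\Vset'))$, the pathwise derivative of $\chi$ along $\{Q_t\}$ equals that of $\gamma$ along $\{\tilde P_t\}$, namely $\E_P\{\gamma^1_{P,\text{eff}}(\Vset)\, s(\Vset)\}$; by iterated expectation and the fact that $\gamma^1_{P,\text{eff}}$ depends only on $\Vset'$, this equals $\E_{P'}\{\gamma^1_{P,\text{eff}}(\Vset')\, s'(\Vset')\}$. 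Thus $\gamma^1_{P,\text{eff}}$ is a gradient of $\chi$ at $P'$ lying in $\mathcal{T}_{P'}$, so by uniqueness of the canonical gradient it coincides with $\chi^1_{P',\text{eff}}$ as an element of $L^2_0(P')$, hence $P'$-almost everywhere; as both sides depend on $\Vset$ only through $\Vset'$, the identity also holds $P$-almost everywhere.

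I expect the main obstacle to be the lifting step, i.e., verifying that $d\tilde P_t(\Vset) = dQ_t(\Vset')\, dP(\Vset \setminus \Vset' \mid \Vset')$ is a bona fide regular parametric submodel of $\model(\Vset)$. This is exactly where one must use what ``induced marginal model'' means---that $\model(\Vset)$ contains, alongside each member of $\model(\Vset')$, that marginal coupled with an essentially unconstrained conditional law of the remaining coordinates---together with a quadratic-mean-differentiability estimate transferring regularity of $\{Q_t\}$ to $\{\tilde P_t\}$; in the setting of this paper the enlarged state space of \cref{eqs:state-space}, as discussed in \cref{remark:marginal}, is what makes this available. The remaining steps are routine Hilbert-space geometry.
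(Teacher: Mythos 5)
You should first note that the paper does not prove this lemma at all: it is imported verbatim from Rotnitzky and Smucler (2020, Proposition 17), so there is no internal proof to compare against, and your argument has to stand on its own. It does not, because of the lifting step -- exactly the step you flag as the main obstacle. You justify the lift $\dd\tilde P_t(\Vset)=\dd Q_t(\Vset')\,\dd P(\Vset\setminus\Vset'\mid\Vset')$ by asserting that ``induced marginal model'' means $\model(\Vset)$ contains each member of $\model(\Vset')$ coupled with an essentially unconstrained conditional law of the remaining coordinates. That is not what $\model(\Vset')\equiv\{P(\Vset'):P\in\model(\Vset)\}$ guarantees: it only says each $Q_t$ is the margin of \emph{some} element of $\model(\Vset)$, not that $Q_t$ glued to the fixed conditional of $P$ is one. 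In the very setting where the lemma is invoked, $\model(\Vset)=\model(\g,\Vset)$ is a Bayesian network whose marginalized variables are jointly constrained with the retained ones. For instance, in the motivating example of \cref{fig:example} with $\Vset'=\Vset^{\ast}$, the model imposes $O_1\indep \{W_2,W_3\}\mid W_4$; coupling an arbitrary $Q_t\in\model(\g,\Vset^{\ast})$ with $P$'s conditional law of $\{I_1,W_1,W_4\}$ given $\Vset^{\ast}$ will in general violate this constraint, so $\tilde P_t\notin\model(\g,\Vset)$ and $\gamma(\tilde P_t)$ is not even defined, let alone pathwise differentiable along $\{\tilde P_t\}$. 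The enlarged state space of \cref{eqs:state-space} and \cref{remark:marginal} does not repair this: it is what allows the marginal model to be represented as $\model(\g^{\ast},\Vset^{\ast})$, not a license to treat the conditional law of $\Vset\setminus\Vset'$ as unconstrained.

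What your argument actually needs is weaker but still substantive: for every regular submodel $\{Q_t\}$ of $\model(\Vset')$ with score $s'$ there must exist \emph{some} regular submodel $\{P_t\}\subseteq\model(\Vset)$ with $P_t(\Vset')=Q_t$; any such lift has a score $s$ with $\E_P(s\mid\Vset')=s'$, and then your computation in (ii) goes through unchanged. Equivalently, writing $\mathcal{T}_P$ for the tangent space of $\model(\Vset)$ at $P$, you need the tangent space of $\model(\Vset')$ at $P'$ to equal the closure of $\{\E_P(s\mid\Vset'):s\in\mathcal{T}_P\}$; the inclusion you do establish (margins of joint submodels are submodels of the marginal model with score $\E_P(s\mid\Vset')$) is the easy direction, and the reverse inclusion is precisely the missing content. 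Without it, your reasoning only shows that $\gamma^{1}_{P,\text{eff}}$ represents the derivative of $\chi$ along the lifted directions, which gives $\gamma^{1}_{P,\text{eff}}=\Pi\bigl(\chi^{1}_{P',\text{eff}}\mid\mathcal{T}_P\bigr)$ and hence an inequality between the two variance bounds, not the claimed identity. Your step (i) and the chain-rule computation in (ii) are fine once a valid lift is supplied, so the skeleton is right, but the existence of smooth lifts (or the equality of tangent spaces) is the crux of the cited result and has to be argued for the models at hand; it cannot be read off from the definition of the induced marginal model, and your specific product-coupling construction fails already in the paper's own examples.
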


\smallskip This result tells us that to efficiently estimate $\gamma(P)$ under model $\model(V)$, we can discard the data on $V \setminus V'$ and recast the problem as one of efficiently estimating the functional $\chi(P')$ under model $\model(V')$. This leads us to make the following two definitions.

\smallskip \begin{definition}[Uninformative variables] \label{def:uninfo}
Given a model $\model(\Vset)$ for law $P$ over $\Vset$, we say that a subset of variables $\Uset \subseteq \Vset$ is uninformative for estimating a regular functional $\gamma(P) $ under $\model(\Vset)$ if $\Vset' = \Vset \setminus \Uset$ satisfies the assumptions of \cref{lem:eif-marg}.
\end{definition}

\smallskip \begin{definition}[Irreducible informative variables] \label{def:irr-info}
Let $\model(\Vset)$ be a model for law $P$ over variables $\Vset$. The set $\Vset^{\ast} \subseteq \Vset$ is said to be irreducible informative for estimating a regular functional $\gamma(P)$ under $\model(\Vset)$ if (i) $\Vset \setminus \Vset^{\ast}$ is uninformative and (ii) no proper superset of $\Vset \setminus \Vset^{\ast}$ is uninformative. 
\end{definition}

\smallskip \begin{lemma} \label{lem:info}
Suppose $\model(\Vset)$ is a model for law $P$ over variables $\Vset$, and let $\gamma: \model(\Vset) \rightarrow \mathbb{R}$ be a regular functional. Let $\gamma_{P,\text{eff}}^{1}(\Vset)$ be the corresponding efficient influence function. Suppose $\Vset^{\ast} \subseteq \Vset$ satisfies the following:
\begin{enumerate}[(i)]
\item $\gamma_{P,\text{eff}}^{1}(\Vset)$ depends on $\Vset$ only through $\Vset^{\ast}$ for every $P \in \model(\Vset)$;
\item there exists a functional $\chi: \model(\Vset^{\ast}) \rightarrow \mathbb{R}$ such that $\chi(P^{\ast}) = \gamma(P)$ for every $P \in \model(\Vset)$ and $P^{\ast} \equiv P(\Vset^{\ast})$;
\item for each $V_j \in \Vset^{\ast}$, there exists a nondegenerate law $P_j \in \model(\Vset)$ such that $\gamma_{P_j,\text{eff}}^{1}(\Vset)$ is not a constant function of $V_j$ with probability one.
\end{enumerate} Then $\Vset^{\ast}$ is the unique irreducible informative set. 
\end{lemma}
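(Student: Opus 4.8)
The plan is to prove \cref{lem:info} in two parts, corresponding to the two requirements in \cref{def:irr-info}, and then establish uniqueness. First I would show that $\Vset \setminus \Vset^{\ast}$ is uninformative, i.e.\ that $\Vset^{\ast}$ satisfies the hypotheses of \cref{lem:eif-marg} with $\Vset' = \Vset^{\ast}$. Conditions (i) and (ii) of \cref{lem:info} are precisely the requirements that $\gamma_{P,\text{eff}}^{1}(\Vset)$ depend on $\Vset$ only through $\Vset^{\ast}$ and that there is a functional $\chi$ on the induced marginal model agreeing with $\gamma$; regularity of $\chi$ is the one remaining item, which follows from \cref{lem:eif-marg}'s own conclusion (the efficient influence function of $\chi$ exists and equals $\gamma_{P,\text{eff}}^{1}$ almost everywhere) together with standard semiparametric arguments — since $\gamma$ is pathwise differentiable and its gradient factors through $\Vset^{\ast}$, the induced functional $\chi$ inherits pathwise differentiability along submodels of $\model(\Vset^{\ast})$. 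Hence by \cref{def:uninfo}, $\Vset \setminus \Vset^{\ast}$ is uninformative.

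Next I would show no proper superset of $\Vset \setminus \Vset^{\ast}$ is uninformative, which is where condition (iii) is used. Suppose for contradiction that $\Uset \supsetneq \Vset \setminus \Vset^{\ast}$ is uninformative, so there is some $V_j \in \Vset^{\ast} \cap \Uset$, and write $\tilde{\Vset} = \Vset \setminus \Uset \subsetneq \Vset^{\ast}$. By \cref{def:uninfo} and the hypotheses of \cref{lem:eif-marg}, $\gamma_{P,\text{eff}}^{1}(\Vset)$ must depend on $\Vset$ only through $\tilde{\Vset}$ for every $P \in \model(\Vset)$; in particular it cannot depend on $V_j$. But condition (iii) provides a nondegenerate law $P_j \in \model(\Vset)$ at which $\gamma_{P_j,\text{eff}}^{1}(\Vset)$ is, with probability one, a nonconstant function of $V_j$ — contradicting the claim that it factors through $\tilde{\Vset}$. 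This rules out every uninformative proper superset, so $\Vset^{\ast}$ satisfies (ii) of \cref{def:irr-info}, hence is irreducible informative.

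Finally, uniqueness: suppose $\Vset^{\ast\ast}$ is another irreducible informative set. Then both $\Vset \setminus \Vset^{\ast}$ and $\Vset \setminus \Vset^{\ast\ast}$ are uninformative; I would argue that the union of two uninformative sets is uninformative. Indeed if $\Uset_1, \Uset_2$ are both uninformative then $\gamma_{P,\text{eff}}^{1}$ factors through $\Vset \setminus \Uset_1$ and through $\Vset \setminus \Uset_2$, hence through $(\Vset \setminus \Uset_1) \cap (\Vset \setminus \Uset_2) = \Vset \setminus (\Uset_1 \cup \Uset_2)$ — here one uses that a measurable function of $\Vset$ that is ($P$-a.s.) a function of two sub-$\sigma$-algebras is a function of their intersection, which holds on a product space with the state spaces of \cref{eqs:state-space} large enough to avoid degeneracies. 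The existence of $\chi$ on $\model(\Vset \setminus (\Uset_1\cup\Uset_2))$ follows similarly. Thus $(\Vset\setminus\Vset^{\ast}) \cup (\Vset\setminus\Vset^{\ast\ast})$ is uninformative; since neither $\Vset\setminus\Vset^{\ast}$ nor $\Vset\setminus\Vset^{\ast\ast}$ admits a strictly larger uninformative superset, this forces $\Vset\setminus\Vset^{\ast} = (\Vset\setminus\Vset^{\ast})\cup(\Vset\setminus\Vset^{\ast\ast}) = \Vset\setminus\Vset^{\ast\ast}$, i.e.\ $\Vset^{\ast} = \Vset^{\ast\ast}$.

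The main obstacle I anticipate is the measure-theoretic step in the uniqueness argument — justifying that "depends only through $\Vset\setminus\Uset_1$" and "depends only through $\Vset\setminus\Uset_2$" together imply "depends only through the intersection." This is false in general for arbitrary random variables (the classic counterexample: $X$, $Y$ uniform and independent, $Z = X \oplus Y$; any two of them determine the third but no single one determines it), so the argument must exploit the specific structure here: these are marginal models on a product space, the efficient influence function is an explicit functional of conditional and marginal densities of the g-formula, and the enlarged state space of \cref{eqs:state-space} guarantees the relevant marginals vary freely. I would make this precise by noting that "uninformative" in the sense needed here is equivalent to the gradient being expressible as a fixed measurable function of a coordinate projection — a genuinely stronger condition than being a.s.\ equal to such a function under every $P$ — and that condition (i) in \cref{lem:info} is already stated in this stronger "for every $P$" form, which is exactly what makes the intersection property go through.
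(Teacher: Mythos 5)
Your first two steps are essentially the paper's argument: conditions (i)--(ii) of \cref{lem:info} give that $\Vset\setminus\Vset^{\ast}$ is uninformative in the sense of \cref{def:uninfo} (the paper, like you, does not belabour the regularity of $\chi$), and condition (iii) rules out any uninformative proper superset of $\Vset\setminus\Vset^{\ast}$ exactly as you argue. The problem is your uniqueness step, which departs from the paper and has a genuine gap. You reduce uniqueness to the claim that the union of two uninformative sets is uninformative, justified by an ``intersection of coordinate $\sigma$-algebras'' argument. As you yourself note, that implication is false for a.s.-defined functions, and the patch you offer does not repair it: the efficient influence function is only defined up to $P$-null sets, and the model $\model(\Vset)$ contains (limits of) laws with deterministic relationships among coordinates -- the paper's own completeness proofs construct exactly such degenerate laws -- so ``has a version depending only on $\Vset\setminus\Uset_1$'' and ``has a version depending only on $\Vset\setminus\Uset_2$'' at a fixed $P$ need not yield a version depending only on the intersection; the enlarged state space of the coordinates does not exclude such laws. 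A second warning sign is that your uniqueness argument never uses condition (iii). If the union lemma held, uniqueness of the irreducible informative set would follow for arbitrary regular functionals, but it does not: take $\Vset=(X,Y)$, the model of all laws with $X=Y$ a.s., and $\gamma(P)=\E X$; then $\{X\}$ and $\{Y\}$ are each uninformative while $\{X,Y\}$ is not, and both $\{X\}$ and $\{Y\}$ are irreducible informative. So the union of uninformative sets can fail to be uninformative, and any proof of uniqueness must genuinely invoke (iii).

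The paper's route is shorter and avoids the issue entirely. Suppose $\Vset^{\ast\ast}$ is also irreducible informative. Since $\Vset\setminus\Vset^{\ast\ast}$ is uninformative, $\gamma^{1}_{P,\text{eff}}(\Vset)$ depends on $\Vset$ only through $\Vset^{\ast\ast}$ for every $P\in\model(\Vset)$; condition (iii) then forces every $V_j\in\Vset^{\ast}$ to lie in $\Vset^{\ast\ast}$ (otherwise the law $P_j$ gives a contradiction), so $\Vset^{\ast}\subseteq\Vset^{\ast\ast}$. If the inclusion were strict, then $\Vset\setminus\Vset^{\ast}$ would be an uninformative proper superset of $\Vset\setminus\Vset^{\ast\ast}$, contradicting requirement (ii) of \cref{def:irr-info} applied to $\Vset^{\ast\ast}$. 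Hence $\Vset^{\ast}=\Vset^{\ast\ast}$. Replacing your third paragraph with this argument closes the gap.
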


\medskip From \cref{sec:g-formula}, in the context of causal graphs, we see that typically there are more than one identifying formula for the g-functional. 
Our next two definitions, based on considerations of efficiency and informativeness, help us compare and choose between different identifying formulae. 

Let us first look at efficiency. 
As before, we let $\model_0(\Vset)$ be the nonparametric model over $\Vset$ and let $\model(\Vset)$ be a semiparametric submodel. 
Suppose $\gamma(P)$ and $\chi(P)$ are two identifying formulae, i.e., regular real-valued functionals defined on $\model_0(\Vset)$, such that they agree on $\model(\Vset)$. 
As such, they must have the same efficient influence function with respect to $\model(\Vset)$, i.e., $\gamma^{1}_{P,\text{eff}}(\Vset) = \chi^{1}_{P,\text{eff}}(\Vset)$ for every $P \in \model(\Vset)$.
Suppose that $\Vset$ is discrete and consider the plugin estimators $\gamma(\mathbb{P}_n)$ and $\chi(\mathbb{P}_n)$, where $\mathbb{P}_n$ is the empirical measure. 
Then, $\gamma(\mathbb{P}_n)$ and $\chi(\mathbb{P}_n)$ are regular asymptotically linear with influence functions equal to the nonparametric influence functions $\gamma^{1}_{P,\text{NP}}(\Vset)$ and $\chi^{1}_{P,\text{NP}}(\Vset)$ for every $P \in \model_0(\Vset)$. 
Suppose that $\gamma^{1}_{P,\text{NP}}(\Vset) = \gamma^{1}_{P,\text{eff}}(\Vset)$ for every $P \in \model(\Vset)$ but that, in contrast, $\chi^{1}_{P',\text{NP}}(\Vset) \neq \chi^{1}_{P',\text{eff}}(\Vset)$ for some $P' \in \model(\Vset)$. 
Then, in view of the concepts introduced at the beginning of this subsection, with respect to the semiparametric model $\model(\Vset)$, the estimator $\gamma(\mathbb{P}_n)$ is globally efficient, but $\chi(\mathbb{P}_n)$ is not. 
Then, for estimating functional $\gamma(P) = \chi(P)$ defined on model $\model(\Vset)$, we say $\gamma(P)$ is an efficient identifying formula, but $\chi(P)$ is an inefficient identifying formula. 
This gives us a concrete way of defining whether an identifying formula is efficient.
In below, we provide a definition for the general case where $\Vset$ need not be discrete.

\smallskip \begin{definition}[Efficient identifying formula] \label{def:eff-formula}
Consider a semiparametric model $\model(\Vset) \subseteq \model_0(\Vset)$ and a regular functional $\gamma: \model(\Vset) \rightarrow \mathbb{R}$. Let $\gamma^{1}_{P,\text{eff}}(\Vset)$ be its efficient influence function with respect to $\model(\Vset)$. An identifying formula $\chi: \model_0(\Vset) \rightarrow \mathbb{R}$ for the functional $\gamma$ is called efficient if $\chi^{1}_{P,\text{NP}}(\Vset) = \gamma^{1}_{P,\text{eff}}(\Vset)$ $P$-almost everywhere for every $P \in \model(\Vset)$. 
\end{definition}

\smallskip From \cref{eqs:factorization,eqs:g-formula}, when $\Vset$ is discrete, it is clear that the maximum likelihood estimator of $\Psi_a(P; \g)$ is simply the plugin estimator $\Psi_a(\mathbb{P}_n; \g)$. More generally, we have the following result for an arbitrary vector $\Vset$.
\begin{lemma} \label{lem:effic-of-g-formula}
For graph $\g$ satisfying \cref{assump:A-goes-to-Y}, the g-formula $\Psi_a(\cdot; \g): \model_0(\Vset) \rightarrow \mathbb{R}$ in \cref{eqs:g-formula} is an efficient identifying formula for the g-functional $\Psi_a(\cdot; \g): \model(\g, \Vset) \rightarrow \mathbb{R}$.
\end{lemma}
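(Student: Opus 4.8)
The plan is to verify the defining condition of \cref{def:eff-formula} directly: with $\Psi^1_{P,\text{NP}}$ denoting the nonparametric influence function of the g-formula $\Psi_a(\cdot;\g):\model_0(\Vset)\to\mathbb{R}$ and $\Psi^1_{P,\text{eff}}$ the efficient influence function of the g-functional $\Psi_a(\cdot;\g):\model(\g,\Vset)\to\mathbb{R}$, I must show that $\Psi^1_{P,\text{NP}}=\Psi^1_{P,\text{eff}}$ $P$-almost everywhere for every $P\in\model(\g,\Vset)$. The structural input is the standard description of the tangent space of a Bayesian network: at $P\in\model(\g,\Vset)$ it is the orthogonal direct sum $T_\g(P)=\bigoplus_{j:V_j\in\Vset}T_j(P)$, where $T_j(P)\subseteq L^2_0(P)$ is the set of functions of $(V_j,\Pa(V_j,\g))$ with conditional mean zero given $\Pa(V_j,\g)$, and where orthogonality of the summands follows from the local Markov property of $P$. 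Since the g-functional on $\model(\g,\Vset)$ is a restriction of the pathwise-differentiable g-formula on $\model_0(\Vset)$, the function $\Psi^1_{P,\text{NP}}$ is in particular a gradient of the functional relative to the submodel $\model(\g,\Vset)$, so that $\Psi^1_{P,\text{eff}}=\proj[\Psi^1_{P,\text{NP}}\mid T_\g(P)]$ by the standard characterization of the canonical gradient \citep[Chapter~25]{van2000asymptotic}. It therefore suffices to prove the single claim that $\Psi^1_{P,\text{NP}}$ already lies in $T_\g(P)$, for then its projection onto $T_\g(P)$ is itself.

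To prove that claim I would first write $\Psi^1_{P,\text{NP}}$ explicitly. Fix a topological ordering of $\g$ and let $b_j$ be the iterated g-computation functions obtained by marginalizing the g-formula from $Y$ backwards along the ordering --- alternately taking conditional expectations under $P$ over the non-treatment vertices and substituting $A=a$ at the treatment vertex. Differentiating $\Psi_a(\cdot;\g)$ along one-dimensional submodels of $\model_0(\Vset)$ --- a routine computation that is valid on all of $\model_0(\Vset)$ because \cref{assump:positivity} bounds the inverse-propensity weights that arise --- yields a representation $\Psi^1_{P,\text{NP}}(\Vset)=\sum_{j:V_j\in\Vset\setminus\{A\}}\varphi_j(\Vset)$ in which $\varphi_j$ is a difference of two consecutive g-computation functions, multiplied by the weight $\I(A=a)/P\{A=a\mid\Pa(A,\g)\}$ precisely when $V_j\in\De(A,\g)$. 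By construction, each $\varphi_j$ has conditional mean zero given the vertices that precede $V_j$ in the ordering.

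The hard part --- and the only place where the Bayesian network structure of $P$ is used --- will be to show that, when $P\in\model(\g,\Vset)$, each $\varphi_j$ collapses to an element of $T_j(P)$, namely a function of $(V_j,\Pa(V_j,\g))$ alone with conditional mean zero given $\Pa(V_j,\g)$. The conditional-mean-zero half is automatic from the previous paragraph by the tower property, since $\Pa(V_j,\g)$ precedes $V_j$ in the ordering; the measurability reduction is the delicate step. It amounts to showing that the g-computation functions entering $\varphi_j$ depend on the history only through $\Pa(V_j,\g)$ --- together with $A$ when $V_j$ is a descendant of $A$ --- which I would obtain by repeatedly invoking d-separation, i.e., the local Markov property of $P$ with respect to $\g$, to collapse the iterated conditional expectations, so that the weight times those functions is genuinely $(V_j,\Pa(V_j,\g))$-measurable. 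Once this is established, $\Psi^1_{P,\text{NP}}=\sum_j\varphi_j\in\bigoplus_jT_j(P)=T_\g(P)$, and the reduction of the first paragraph finishes the proof. As a cross-check, in the discrete case --- aligning with \cref{remark:mle} --- $\Psi_a(\mathbb{P}_n;\g)$ is simultaneously the $\model_0(\Vset)$-plug-in, with influence function $\Psi^1_{P,\text{NP}}$, and the nonparametric maximum likelihood estimator within $\model(\g,\Vset)$, because $\mathbb{P}_n$ factorizes along $\g$ and its conditionals are the componentwise maximum likelihood estimates; efficiency of the latter then forces $\Psi^1_{P,\text{NP}}=\Psi^1_{P,\text{eff}}$, while the general state space of \cref{sec:sets} is covered by the tangency argument above.
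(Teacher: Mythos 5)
Your opening reduction is exactly the strategy of the paper: the proof in the \supp (\cref{lem:app-effic-of-g-general}) also argues that the nonparametric influence function is a gradient that already lies in the tangent space $\Lambda(P)=\bigoplus_{j}\Lambda_j(P)$ of $\model(\g,\Vset)$, and then invokes the projection characterization. The gap is in your explicit representation of $\Psi^{1}_{a,P,\text{NP}}$. The functional in \cref{eqs:g-formula} conditions each factor on $\Pa(V_j,\g)$, not on the full past, and the nonparametric influence function at $P\in\model(\g,\Vset)$ is determined by how the formula behaves \emph{off} the model. The telescoping form you posit---differences of consecutive sequential regressions along a topological ordering, weighted by $\I_{a}(A)/P\{A=a\mid\Pa(A,\g)\}$ on $\De(A,\g)$---is the nonparametric influence function of the full-history (adjustment-type) identifying formula, which agrees with $\Psi_a(\cdot;\g)$ only on $\model(\g,\Vset)$; it is not what differentiating \cref{eqs:g-formula} over $\model_0(\Vset)$ produces. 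Concretely, for $\g_1$ of \cref{ex:two-Os} and $b(O_1,O_2)=\E(Y\mid A=a,O_1,O_2)$, your recipe yields
\[
\frac{\I_{a}(A)}{P(A=a\mid O_1,O_2)}\{Y-b(O_1,O_2)\}+b(O_1,O_2)-\Psi_a(P;\g_1),
\]
i.e.\ the influence function of $\Psi^{\mathrm{ADJ}}_{a,\{O_1,O_2\}}(\cdot;\g_1)$, whereas the nonparametric influence function of $\Psi_a(\cdot;\g_1)$ at a Markov law $P$ is
\[
\frac{\I_{a}(A)}{P(A=a\mid O_1,O_2)}\{Y-b(O_1,O_2)\}+\sum_{o_2}b(O_1,o_2)p(o_2)+\sum_{o_1}b(o_1,O_2)p(o_1)-2\,\Psi_a(P;\g_1).
\]
Accordingly your ``hard part'' cannot go through: the term your decomposition attaches to $O_2$, namely $b(O_1,O_2)-\E\{b(O_1,O_2)\mid O_1\}$, depends on $O_1\notin\Pa(O_2,\g_1)$, and no d-separation collapses it. Indeed, if the claimed collapse always succeeded, the optimal-adjustment plug-in would be globally efficient under every $\model(\g,\Vset)$, contradicting \cref{sec:motivation} and \cref{ex:two-Os}.

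The step can be repaired by differentiating \cref{eqs:g-formula} factor by factor rather than along an ordering: perturbing only the conditional $p\{v_j\mid\Pa(v_j,\g)\}$ contributes the representer $w_j\,g_j-\E_P\{w_j g_j\mid\Pa(V_j,\g)\}$, where $p_a$ is the truncated law \cref{eqs:g-dist}, $w_j$ is the ratio of the $\{V_j\}\cup\Pa(V_j,\g)$ marginals of $p_a$ and of $P$, and $g_j=\E_{p_a}\{Y\mid V_j,\Pa(V_j,\g)\}$. Each such term is by construction a function of $(V_j,\Pa(V_j,\g))$ with conditional mean zero given $\Pa(V_j,\g)$, i.e.\ an element of $\Lambda_j(P)$, so no measurability reduction is needed; summing over $j$ places $\Psi^{1}_{a,P,\text{NP}}$ in $\Lambda(P)$, and your projection argument then finishes. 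The paper reaches the same conclusion without computing anything explicitly: it shows by an orthogonality argument that each partial influence $\nu^{1}_{j,P}$ must equal its projection onto $\Lambda_j(P)$, an argument valid for any functional depending on $P$ only through the conditionals $p\{v_j\mid\Pa(v_j,\g)\}$. Your discrete-data maximum-likelihood cross-check is consistent with this but, as you note, does not cover the general state space of \cref{sec:sets}.
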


\smallskip As mentioned in \cref{sec:motivation}, more than one efficient identifying formula may exist for the same functional, such as the g-formulae associated with $\g^{\ast}$ and $\g$ in \cref{fig:example} for our motivating example. In this case, we argue that the g-formula associated with $\g^{\ast}$ should be preferred over that associated with $\g$, as the former requires measuring fewer variables than the latter. This motivates our next definition concerning informativeness.

\smallskip \begin{definition}[Irreducible identifying formula] \label{def:irred-formula}
An identifying formula $\chi: \model_0(\Vset) \rightarrow \mathbb{R}$ for a regular functional $\gamma: \model(\Vset) \rightarrow \mathbb{R}$ is called irreducible if there exists $\Vset^{\ast} \subseteq \Vset$, which is irreducible informative for estimating $\gamma(P)$ under $\model(\Vset)$, such that 
$P(\Vset^{\ast}) = P'(\Vset^{\ast})$ implies $\chi(P) = \chi(P')$ for every $P, P' \in \model_0(\Vset)$, i.e., $\chi(P)$ depends on $P$ only through $P(\Vset^{\ast})$.
\end{definition}

\smallskip In what follows, we will first characterize the irreducible informative set $\Vset^{\ast}$ and then construct the reduced graph $\g^{\ast}$ to represent the marginal model over $\Vset^{\ast}$. In particular, our general result would imply that the g-formula associated with $\g^{\ast}$ in \cref{fig:example} is an identifying formula that is both efficient and irreducible.

\section{Characterizing the uninformative variables} \label{sec:uninfo}
\subsection{Efficient influence function} \label{sec:sub-eif}

We now specialize the concepts and results in the preceding section to show that for estimating the g-functional $\Psi_a(P; \g)$ under the Bayesian network model $\model(\g, \Vset)$, there exists a unique set of irreducible informative variables, which we denote by $\Vset^{\ast} \equiv \Vset^{\ast}(\g)$ throughout. By \cref{lem:info}, this can be established if we can find $\Vset^{\ast} \subseteq \Vset$ such that 
(i) the efficient influence function $\Psi_{a,P,\text{eff}}^{1}(\Vset)$ depends on $V$ only through $\Vset^{\ast}$ for every $P \in \model(\g,\Vset)$;
(ii) $\Psi_{a}(P;\g)$ depends on $P \in \model(\g, \Vset)$ only through the $\Vset^{\ast}$ margin of $P$;
and (iii) for every $V_j \in \Vset^{\ast}$, there exists a nondegenerate law $P \in \model(\g,\Vset)$ such that $\Psi_{a,P,\text{eff} }^{1}(\Vset) $ depends nontrivially on $V_j$. 

Without loss of generality, here we focus on finding the informative variables for the g-functional, as opposed to the average treatment effects, which are contrasts or, more generally, linear combinations of g-functionals that correspond to different treatment levels. Indeed, as shown in \cref{lem:app-extension-effect} of the \supp, the set of irreducible informative variables for these effects is identical to $\Vset^{\ast}(\g)$. 

We will perform these tasks invoking an expression for $\Psi_{a,P,\text{eff}}^{1}(V;\g)$, which is  derived in \citet{rotnitzky2020efficient} and stated in the next lemma. Let $\I_a(A)$ be the indicator of $A$ being equal to $a$. Define $T_{a,P} \equiv \I_{a}(A)Y / P\left( A=a \mid \Omin \right)$ and $b_{a,P}(O) \equiv \E_{P}\left( Y \mid A=a,\Oset \right)$, where $\Oset \equiv \Oset(\g)$ and $\Omin \equiv \Omin(\g)$. 

\smallskip \begin{lemma}[Theorem 7, \citealp{rotnitzky2020efficient}]\label{lem:EIF} 
Let $\g$ be a directed acyclic graph on a vertex set $\Vset$ satisfying \cref{assump:A-goes-to-Y}. Suppose $P\in \model(\g,\Vset)$ and that $\Wset( \g)=\{W_{1},\dots ,W_{J}\}$ and $\Mset(\g)=\{M_{1},\dots ,M_{K-1},M_{K}\equiv Y\}$ are as defined in \cref{sec:sets}. Then the efficient influence function for estimating $\Psi_{a}(P;\g)$ with respect to model $\model(\g,\Vset)$ is given by 
\begin{multline*}
\Psi_{a,P,\text{eff}}^{1}(\Vset; \g) = \sum_{j=1}^{J}\big[ \E\left\{ b_{a,P}(\Oset) \mid W_{j},\Pa(W_{j}, \g)\right\} -\E\left\{ b_{a,P}(\Oset) \mid \Pa(W_{j}, \g) \right\} \big]  \\
 +\sum_{k=1}^{K} \big[ \E\left\{ T_{a,P} \mid M_{k},\Pa(M_{k}, \g)\right\} -
\E\left\{ T_{a,P}\mid \Pa(M_{k}, \g)\right\} \big].
\end{multline*}
\end{lemma}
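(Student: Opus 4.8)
The plan is to compute $\Psi_{a,P,\text{eff}}^{1}(\Vset;\g)$ directly, exploiting the variation-independent factorization \cref{eqs:factorization}. Write $\Psi^{1}$ for the asserted right-hand side. Because the conditionals $p\{v_{j}\mid\Pa(v_{j},\g)\}$ are variation independent, the tangent space of $\model(\g,\Vset)$ at $P$ is the orthogonal direct sum $\mathcal{T}_{P}=\bigoplus_{j:V_{j}\in\Vset}\mathcal{T}_{P,j}$ with $\mathcal{T}_{P,j}=\{h(V_{j},\Pa(V_{j},\g)):\E_{P}[h\mid\Pa(V_{j},\g)]=0\}$, the summands mutually orthogonal in $L_{2}(P)$ (the standard Bayesian-network fact, proved by conditioning along a topological order). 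Each bracketed difference in $\Psi^{1}$ has the form $\E_{P}\{f\mid V_{j},\Pa(V_{j},\g)\}-\E_{P}\{f\mid\Pa(V_{j},\g)\}\in\mathcal{T}_{P,j}$, so $\Psi^{1}\in\mathcal{T}_{P}$, and its $j$-th term is orthogonal to $\mathcal{T}_{P,k}$ whenever $k\ne j$. Since the gradient of a pathwise-differentiable functional that lies in the tangent space is unique and equals the efficient influence function \citep[\S25]{van2000asymptotic}, it remains to check that $\Psi^{1}$ is a gradient. Any score decomposes as $S=\sum_{j}S_{j}$ with $S_{j}\in\mathcal{T}_{P,j}$, and applying the product rule to \cref{eqs:g-formula} shows $\partial_{t}\Psi_{a}(P_{t};\g)|_{t=0}$ equals a sum of contributions, the $j$-th coming from perturbing only the $j$-th factor; by the orthogonality just noted, $\E_{P}[\Psi^{1}S_{j}]$ reduces to $\E_{P}[(j\text{-th term of }\Psi^{1})\,S_{j}]$. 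Hence it suffices to match the two, factor by factor, along the vertex partition of \cref{sec:sets}.

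The easy cases are those where both sides vanish. If $V_{j}=A$, the $A$-factor is absent from \cref{eqs:g-formula}, so the contribution is $0$, matching the absence of an $A$-term in $\Psi^{1}$. If $V_{j}\in\Nset(\g)\cup\Iset(\g)$, I would use the interventional representation $\Psi_{a}(P_{t};\g)=\E_{(P_{t})_{a}}[Y]$, under which the contribution is $\E_{P_{a}}\{Y\,S_{j}(V_{j},\Pa(V_{j},\g))\}$. In the mutilated graph $\g_{a}$ obtained by deleting the arrows into $A$, one has $Y\in\nd(V_{j},\g_{a})$ --- for $V_{j}\in\Nset(\g)$ because $V_{j}\notin\An(Y,\g)$, and for $V_{j}\in\Iset(\g)$ because every directed $V_{j}$-to-$Y$ path of $\g$ uses an arrow into $A$ while $\g_{a}$ keeps only a subset of $\g$'s arrows --- so $\E_{P_{a}}\{Y\mid V_{j},\Pa(V_{j},\g)\}=\E_{P_{a}}\{Y\mid\Pa(V_{j},\g)\}$; combined with $\E_{P_{a}}\{S_{j}\mid\Pa(V_{j},\g)\}=0$, the contribution is $0$, matching the absence of $\Nset$- and $\Iset$-terms.

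For the two nonvanishing families I would argue as follows. If $V_{j}=W_{j}\in\Wset(\g)$, the contribution is again $\E_{P_{a}}\{Y\,S_{j}(W_{j},\Pa(W_{j},\g))\}$. Since the intervention leaves the joint law of the non-descendants of $A$ unchanged, and because $\Oset(\g)$ is a valid adjustment set that renders $Y$ conditionally mean-independent of the other non-descendants of $A$ given $\{A=a\}\cup\Oset(\g)$ (with conditional mean $b_{a,P}(\Oset)$), the tower property gives $\E_{P_{a}}\{Y\mid W_{j},\Pa(W_{j},\g)\}=\E_{P}\{b_{a,P}(\Oset)\mid W_{j},\Pa(W_{j},\g)\}$, so the contribution equals the $W_{j}$-term. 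If $V_{j}=M_{k}\in\Mset(\g)$ (including $Y$), I would instead use the valid-adjustment representation $\Psi_{a}(P;\g)=\E_{P}\{b_{a,P}(\Oset)\}$; differentiating, the contribution is $\E_{P}\{b_{a,P}(\Oset)S_{k}\}+\E_{P}\{\partial_{t}b_{a,P_{t}}(\Oset)|_{0}\}$. Every element of $\Oset(\g)$ is a non-descendant of $M_{k}$ (otherwise it would be a descendant of $A$, contradicting $\Oset(\g)\subseteq\nd(A,\g)$), so $\E_{P}\{b_{a,P}(\Oset)\mid M_{k},\Pa(M_{k},\g)\}$ depends on $M_{k}$ only through $\Pa(M_{k},\g)$, and the first term vanishes. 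Because $\{A\}\cup\Oset(\g)\subseteq\nd(M_{k},\g)$, an ancestrally closed set, the joint law of $(A,\Oset)$ under $P_{t}$ does not depend on $t$, so no reweighting correction arises and $\partial_{t}b_{a,P_{t}}(\Oset)|_{0}=\E_{P}(Y\,S_{k}\mid A=a,\Oset)$; taking expectations and using $P(A=a\mid\Oset(\g))=P(A=a\mid\Omin(\g))$ --- immediate from the definition of $\Omin(\g)$ --- the second term equals $\E_{P}(T_{a,P}S_{k})$, which coincides with the $M_{k}$-term. Putting the four cases together, $\E_{P}[\Psi^{1}S]$ equals $\partial_{t}\Psi_{a}(P_{t};\g)|_{0}$ along every submodel, so $\Psi^{1}=\Psi_{a,P,\text{eff}}^{1}(\Vset;\g)$.

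I expect the main obstacle to be the graph theory rather than the semiparametric bookkeeping: one needs that $\Oset(\g)$ screens $Y$ off from the other non-descendants of $A$ given $\{A\}\cup\Oset(\g)$ (so only $b_{a,P}$ feeds the pre-treatment contributions), that $\Oset(\g)$ is contained in the non-descendants of every mediator (so the $b_{a,P}(\Oset)$-piece cancels in the post-treatment contributions), that $P(A=a\mid\Oset(\g))=P(A=a\mid\Omin(\g))$, and the d-separation facts in $\g_{a}$ used for the $\Nset(\g)$- and $\Iset(\g)$-factors. These properties of the optimal adjustment set $\Oset(\g)$ and of $\Omin(\g)$ are exactly those developed in \citet{henckel2019graphical,rotnitzky2020efficient}; invoking them --- rather than the projection argument itself --- is what makes the four factor-by-factor computations close.
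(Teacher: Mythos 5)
Your argument is sound, but note the comparison point: the paper does not prove \cref{lem:EIF} at all --- it is imported verbatim as Theorem 7 of \citet{rotnitzky2020efficient}, whose own derivation obtains the efficient influence function by projecting the nonparametric influence function of the adjustment functional onto the factorization tangent space $\bigoplus_j \Lambda_j(P)$ and simplifying each projection via d-separation (the in-paper \cref{lem:app-effic-of-g-general} only establishes that the plug-in g-formula is efficient, not the explicit form). You instead run the dual, guess-and-verify argument: check the candidate lies in the tangent space and is a gradient, matching the pathwise derivative factor by factor, using the truncated-law representation for the $A$-, $\Nset$-, $\Iset$- and $\Wset$-factors and the adjustment representation $\E_P\{b_{a,P}(\Oset)\}$ for the $\Mset$-factors (legitimate, since both representations agree along every submodel inside $\model(\g,\Vset)$). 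This buys a shorter, projection-free verification, at the price of having to invoke the same graphical inputs that drive the original projection computation, and those are exactly where the real content sits: (i) the screening property, which in precise form is $Y \indep_{\g} \nd(A,\g)\setminus(\{A\}\cup\Oset) \mid \{A\}\cup\Oset$ --- you should also record that $\Pa(W_j,\g)\subseteq\Wset(\g)$ (ancestral closure of $\Wset$, \cref{lem:osetandw}), so the whole conditioning set $\{W_j\}\cup\Pa(W_j,\g)$ is covered by that statement, and that the adjustment identity $\E_{P_a}(Y\mid\Oset)=b_{a,P}(\Oset)$ holds because $\Oset$ is a valid adjustment set; (ii) $\{A\}\cup\Oset\subseteq\nd(M_k,\g)$, which you prove correctly; and (iii) $P(A=a\mid\Oset)=P(A=a\mid\Omin)$, which is indeed immediate from the definition of $\Omin$ plus the Markov property. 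Your treatment of the $\Nset$- and $\Iset$-factors (via $Y\in\nd(V_j,\g_a)$ and $\E_{P_a}\{S_j\mid\Pa(V_j,\g)\}=0$) and of the derivative $\partial_t b_{a,P_t}(\Oset)$ (no correction term because the $(A,\Oset)$-margin is unchanged when only the $M_k$-factor is perturbed, as $\An(\{A\}\cup\Oset,\g)$ excludes $M_k$) are both correct. So the proof closes, provided you either cite the screening property (it is established in \citealp{henckel2019graphical,rotnitzky2020efficient}, as you do) or supply its short path-tracing proof; citing it is acceptable here but is also the one place where your write-up leans on the very paper whose theorem you are reproving, so a self-contained version would want that lemma spelled out.
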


In the rest of this section, we classify the uninformative variables into two types: redundant and non-redundant.
The redundant variables are those that can be identified from causal Markov equivalent graphs. 
In contrast, identifying the non-redundant, uninformative variables is less straightforward and sometimes counterintuitive. Nevertheless, we will develop a set of graphical criteria to characterize them both. The proofs for this section are given in \cref{sec:app-uninfo}.

\subsection{Redundant variables} \label{sec:redundant}
We start with the following result, which is immediate in view of \cref{eqs:g-formula} and \cref{lem:EIF}.

\begin{lemma} \label{lem:N-I-uninformative}
Given $\g$ satisfying \cref{assump:A-goes-to-Y}, $\Nset(\g) \dcup \Iset(\g)$ is uninformative for estimating $\Psi_a(P;\g)$ under $\model(\g, \Vset)$. 
\end{lemma}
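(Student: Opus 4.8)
The plan is to verify the two conditions in the definition of uninformative variables (\cref{def:uninfo}), namely that $\Vset' \equiv \Vset \setminus (\Nset(\g) \dcup \Iset(\g))$ satisfies the hypotheses of \cref{lem:eif-marg}: (a) the g-functional $\Psi_a(P;\g)$ depends on $P$ only through its $\Vset'$-margin, and there is a regular functional $\chi$ on $\model(\g,\Vset')$ realizing this; (b) the efficient influence function $\Psi^1_{a,P,\text{eff}}(\Vset;\g)$ depends on $\Vset$ only through $\Vset'$.

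For (a), I would inspect the g-formula \cref{eqs:g-formula}. Each $V_j \in \Nset(\g) \dcup \Iset(\g)$ is, by definition, not an ancestor of $Y$ except possibly through $A$; in particular no such $V_j$ lies on a causal path into $Y$ avoiding $A$, so $V_j \notin \Oset(\g) = \Pa(\Mset(\g),\g) \setminus (\Mset(\g) \cup \{A\})$, and more strongly $V_j$ is not a parent of any mediator and not a parent of $A$'s... more carefully: the variables in $\Nset \dcup \Iset$ never appear in the conditioning set $\Pa(v_k,\g)|_{A=a}$ of a factor $p\{v_k \mid \cdot\}$ for $V_k \in \{A,Y\} \cup \Mset \cup \Wset$, because any such appearance would force $V_j$ to be an ancestor of $Y$ not only through $A$. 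Hence in \cref{eqs:g-formula} the factors $p\{v_j \mid \Pa(v_j,\g)\}$ for $V_j \in \Nset \dcup \Iset$ can be summed/integrated out — their removal leaves the remaining factors untouched — and what remains is exactly the g-formula associated with the induced subgraph on $\Vset'$. This both shows $\Psi_a(P;\g)$ depends only on $P(\Vset')$ and identifies $\chi$ as $\Psi_a(\cdot;\g[\Vset'])$, which is regular by \cref{lem:effic-of-g-formula}. I should double-check that $\g$ restricted to $\Vset'$ is a legitimate DAG on which the taxonomy is stable, i.e.\ that $\Oset$, $\Wset$, $\Mset$ are unchanged — this follows since we only removed vertices with no directed path into $Y$ off $A$.

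For (b), I would read off \cref{lem:EIF}: the efficient influence function is a sum over $W_j \in \Wset(\g)$ of terms involving $b_{a,P}(\Oset)$ conditioned on $W_j$ and $\Pa(W_j,\g)$, plus a sum over $M_k \in \Mset(\g)$ of terms involving $T_{a,P} = \I_a(A)Y/P(A=a\mid\Omin)$ conditioned on $M_k$ and $\Pa(M_k,\g)$. The ingredients are: $b_{a,P}(\Oset)$, a function of $\Oset \subseteq \Wset \subseteq \Vset'$; $T_{a,P}$, a function of $A$, $Y$, and $\Omin \subseteq \Vset'$; the conditioning variables $W_j, \Pa(W_j,\g)$ and $M_k, \Pa(M_k,\g)$. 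It remains to check that $\Pa(W_j,\g) \subseteq \Vset'$ for every $W_j \in \Wset(\g)$ and $\Pa(M_k,\g) \subseteq \Vset'$ for every $M_k \in \Mset(\g)$ — equivalently, that no variable in $\Nset(\g) \dcup \Iset(\g)$ is a parent of any baseline covariate or any mediator. This is the crux: a parent of $W_j \in \Wset$ is an ancestor of $Y$ not only through $A$ (since $W_j$ is), so it cannot be in $\Nset$ or $\Iset$; and a parent of a mediator $M_k$ (including $Y$) that is a non-descendant of $A$ would again be an ancestor of $Y$ not only through $A$, while a parent of $M_k$ that is a descendant of $A$ is itself a mediator or $A$ — in neither case is it in $\Nset \dcup \Iset$. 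Putting this together, every quantity in \cref{lem:EIF} is a measurable function of $\Vset'$, giving condition (b).

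The main obstacle I anticipate is the careful bookkeeping in showing $\Pa(W_j,\g), \Pa(M_k,\g) \subseteq \Vset'$ and that the taxonomy and in particular $\Oset(\g), \Omin(\g)$ are computed identically in $\g$ and in $\g[\Vset']$; this is where one must use \cref{lem:osetandw} ($\Wset(\g) = \An\{\Oset(\g),\g\}$) and the definition of $\Iset$ as ancestors of $Y$ \emph{only through} $A$ to rule out any edge from $\Nset \dcup \Iset$ into $\Wset \cup \Mset$. Once that ancestral-relations lemma is in hand, both (a) and (b) follow essentially by inspection of \cref{eqs:g-formula} and \cref{lem:EIF} respectively, and \cref{lem:eif-marg} (via \cref{def:uninfo}) delivers the conclusion.
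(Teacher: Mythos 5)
Your proposal is correct and takes essentially the same route as the paper: both verify the two hypotheses of \cref{lem:eif-marg} (via \cref{def:uninfo}) by (i) observing that no vertex of $\Nset(\g)\cup\Iset(\g)$ is a parent of any vertex in $\Wset(\g)\cup\Mset(\g)$, so the efficient influence function in \cref{lem:EIF} is a function of $\Vset'=\Vset\setminus\{\Nset(\g)\cup\Iset(\g)\}$ alone, and (ii) showing $\Psi_a(P;\g)$ depends on $P$ only through $P(\Vset')$ --- the paper does (ii) in one line via $\Psi_a(P;\g)=\Psi^{\mathrm{ADJ}}_{a,\Oset}(P;\g)$, whereas you sum the $\Nset\cup\Iset$ factors out of \cref{eqs:g-formula} directly, an equivalent step. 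One incidental slip: your claim that $\Nset\cup\Iset$ variables never appear in the conditioning set of the factor for $A$ is false in general (vertices of $\Iset$ are typically parents of $A$, as in the motivating example), but this is immaterial because the product in \cref{eqs:g-formula} excludes the factor for $A$, so only the (correct) claim for $V_k\in\Wset(\g)\cup\Mset(\g)$ is actually used.
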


By \cref{def:uninfo}, informativeness is a property defined with respect to a model and a functional. The notion of causal Markov equivalence leads us to the following definition. 

\begin{definition}[Redundant variables]
Given a graph $\g$ satisfying \cref{assump:A-goes-to-Y}, the set of redundant variables in $\g$ for estimating $\Psi_a(P; \g)$ under $\model(\g, \Vset)$ is 
\[ \bigcup_{\g' \ceq \g} \Nset(\g') \cup \Iset(\g'). \] 
\end{definition}

\begin{proposition} \label{prop:redundant} 
Given $\g$ satisfying \cref{assump:A-goes-to-Y}, the redundant variables are uninformative for estimating $\Psi_a(P;\g)$ under $\model(\g,V)$.
\end{proposition}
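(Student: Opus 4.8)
The plan is to show that for any graph $\g'$ with $\g' \ceq \g$, the set $\Nset(\g') \cup \Iset(\g')$ is uninformative for estimating $\Psi_a(P; \g)$ under $\model(\g, \Vset)$, and then take the union over the finite causal Markov equivalence class. The key observation that makes this reduction work is \cref{def:c-MEC} together with \cref{prop:c-MEC-Oset}: if $\g' \ceq \g$, then $\model(\g', \Vset) = \model(\g, \Vset)$ as semiparametric models, and $\Psi_a(\cdot; \g') = \Psi_a(\cdot; \g)$ as functionals on this common model. Hence estimating $\Psi_a(P; \g)$ under $\model(\g, \Vset)$ is literally the same statistical problem as estimating $\Psi_a(P; \g')$ under $\model(\g', \Vset)$ — same model, same target functional, hence the same efficient influence function. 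In particular, \cref{lem:N-I-uninformative} applied to $\g'$ tells us that $\Nset(\g') \cup \Iset(\g')$ is uninformative for estimating $\Psi_a(P; \g')$ under $\model(\g', \Vset)$, which, by the identification just noted, is exactly the statement that $\Nset(\g') \cup \Iset(\g')$ is uninformative for estimating $\Psi_a(P; \g)$ under $\model(\g, \Vset)$.

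The remaining point is that a union of uninformative sets is uninformative, so that $\bigcup_{\g' \ceq \g} \Nset(\g') \cup \Iset(\g')$ is uninformative. Here I would go back to \cref{def:uninfo} and \cref{lem:eif-marg}. Writing $\Uset = \bigcup_{\g' \ceq \g} \Nset(\g') \cup \Iset(\g')$ and $\Vset^{\dagger} = \Vset \setminus \Uset$, I need to check the three hypotheses of \cref{lem:eif-marg} with $\Vset' = \Vset^{\dagger}$. For hypothesis (iii) — that $\Psi^{1}_{a,P,\text{eff}}(\Vset; \g)$ depends on $\Vset$ only through $\Vset^{\dagger}$ — note that for each individual $\g'$ in the class, uninformativeness of $\Nset(\g') \cup \Iset(\g')$ means $\Psi^{1}_{a,P,\text{eff}}$ (which is the same object for every $\g'$ in the class, by the identification above) does not depend on the coordinates in $\Nset(\g') \cup \Iset(\g')$; intersecting these constraints over the finite class, it depends only on the coordinates in $\bigcap_{\g'} (\Vset \setminus (\Nset(\g') \cup \Iset(\g'))) = \Vset^{\dagger}$. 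For hypothesis (ii) — that $\Psi_a(P; \g)$ depends on $P$ only through $P(\Vset^{\dagger})$ — I can use \cref{lem:N-I-uninformative} / \cref{eqs:g-formula} for each $\g'$: since summing out the $\Nset(\g') \cup \Iset(\g')$ factors from the g-formula of $\g'$ leaves a formula not involving those variables, $\Psi_a(P; \g)$ depends on $P$ only through the margin on $\Vset \setminus (\Nset(\g') \cup \Iset(\g'))$ for each $\g'$, hence only through the margin on the intersection $\Vset^{\dagger}$; the required identifying functional $\chi$ on $\model(\g, \Vset^{\dagger})$ is then well-defined. Hypothesis (i), regularity of $\Psi_a$, is already known.

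I expect the main obstacle to be the bookkeeping in the "union is uninformative" step — specifically, making precise that the efficient influence function is genuinely a single object across the whole causal Markov equivalence class (so that "does not depend on these coordinates" can be intersected coherently), and that the induced marginal functional $\chi: \model(\g, \Vset^{\dagger}) \to \mathbb{R}$ is well-defined and regular. The cleanest route is probably to prove a small auxiliary fact first: if $\Uset_1$ and $\Uset_2$ are each uninformative for estimating $\gamma$ under $\model(\Vset)$, then so is $\Uset_1 \cup \Uset_2$ — this follows because uninformativeness of $\Uset_1$ lets us pass to the model $\model(\Vset \setminus \Uset_1)$ with the same EIF and an identifying functional, at which point $\Uset_2 \setminus \Uset_1$ being uninformative there (inherited from $\Uset_2$ being uninformative in the larger model, using that the EIF depends only on $\Vset \setminus \Uset_2$ and hence, restricted, only on $(\Vset \setminus \Uset_1) \setminus \Uset_2$) finishes it by \cref{lem:eif-marg}. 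Then an induction over the finitely many graphs in the class, each contributing $\Nset(\g') \cup \Iset(\g')$ via \cref{lem:N-I-uninformative} and the causal-Markov-equivalence identification, yields the proposition.
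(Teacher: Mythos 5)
Your main reduction is exactly the paper's argument: uninformativeness (\cref{def:uninfo}) depends on the graph only through the pair $\bigl(\model(\g,\Vset),\,\Psi_a(\cdot;\g)\bigr)$, and by \cref{def:c-MEC} (equivalently \cref{prop:c-MEC-Oset}) this pair is the same for every $\g'\ceq\g$, so \cref{lem:N-I-uninformative} applied to each $\g'$ makes $\Nset(\g')\cup\Iset(\g')$ uninformative for the original problem; the paper's proof consists of precisely this observation. Your extra care about the union step is reasonable (the paper leaves it implicit), and the part of it concerning the efficient influence function is fine: the EIF is a single $P$-a.e.\ defined object for the whole class, and a function that depends on $\Vset$ only through $\Vset\setminus\Uset_1$ and only through $\Vset\setminus\Uset_2$ depends on it only through $\Vset\setminus(\Uset_1\cup\Uset_2)$.

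The soft spot is how you verify the identifying-functional condition of \cref{lem:eif-marg} for $\Vset^{\dagger}=\Vset\setminus\bigcup_{\g'\ceq\g}\{\Nset(\g')\cup\Iset(\g')\}$. The inference ``$\Psi_a(P;\g)$ depends on $P$ only through $P(S_1)$ and only through $P(S_2)$, hence only through $P(S_1\cap S_2)$'' is not valid for a general submodel: two laws in $\model(\g,\Vset)$ sharing the $S_1\cap S_2$ margin need not be linked by a law sharing the $S_1$ margin with one and the $S_2$ margin with the other, so the margin-dependence statements do not intersect the way the coordinate-dependence statements for the EIF do. The same issue infects your proposed auxiliary lemma that a union of two uninformative sets is uninformative: the EIF half composes, but the existence of an identifying functional on the smaller margin does not follow from the two separate hypotheses. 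In the present setting the fix is immediate and is what the paper uses inside the proof of \cref{lem:N-I-uninformative}: on $\model(\g,\Vset)$ one has $\Psi_a(P;\g)=\Psi^{\mathrm{ADJ}}_{a,\Oset(\g)}(P;\g)$, which depends on $P$ only through the margin of $\{A,Y\}\cup\Oset(\g)$, and by \cref{prop:c-MEC-Oset} together with $\Oset(\g')\subseteq\Wset(\g')$ this set is disjoint from every $\Nset(\g')\cup\Iset(\g')$, hence contained in $\Vset^{\dagger}$. With that substitution for your hypothesis (ii), the argument is complete and coincides with the paper's.
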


Revisiting our motivating example on graph $\g$ in \cref{fig:example}(a), the redundant variables are $\{I_1, W_1\}$, which can be summed out from the g-formula; see \cref{eqs:ex-g-prime}. They can also be identified from the causal Markov equivalent graph $\check{\g}$ shown in \cref{fig:example}(e). 

A surprising phenomenon in this example, as indicated earlier in \cref{sec:motivation}, is that $W_4$, despite being nonredundant, is actually uninformative for estimating $\Psi_a(P; \g)$ under the Bayesian network model represented by $\g$. 
To see this, by \cref{lem:EIF}, observe that $\Psi_{a,P,\text{eff}}^{1}(V;\g)$ could depend on $W_{4}$ only through the sum
\begin{equation*}
\begin{split}
& \quad \E\{b_{a,P}(O_1) \mid W_4, \Pa(W_4) \} + \E\{b_{a,P}(O_1) \mid O_1, \Pa(O_1) \} - \E\{b_{a,P}(O_1) \mid \Pa(O_1) \} \\
&= \E\{b_{a,P}(O_1) \mid W_4, W_2, W_3 \} + b_{a,P}(O_1) - \E\{b_{a,P}(O_1) \mid W_4\}.
\end{split}
\end{equation*}
However, model $\model(\g,\Vset)$ implies $O_{1}\indep W_{2},W_{3} \mid W_{4}$, so the sum reduces to $b_{a,P}(O_{1})$, which does not depend on $W_{4}$. In addition, under the model, $\Psi_{a}(P;\g)$ coincides with $\Psi_{a,O_1}^{\mathrm{ADJ}}(P;\g)$, which depends on $P$ only through the marginal law $P(A,Y,O_1)$. In view of \cref{def:uninfo,lem:eif-marg}, $\{I_1, W_1, W_4\}$ are uninformative. 
Those variables that vanish like $W_4$ are called nonredundant, uninformative variables. They are more subtle as they cannot be deduced from simple ancestral relations or causal Markov equivalence.
Next, we develop graphical results towards a complete characterization. 
\subsection{Graphical criteria} \label{sec:criteria}
In this subsection we will often omit $\g$ from the vertex sets introduced in \cref{sec:sets} to reduce clutter. 
First, we show that our search for uninformative variables can be limited to $(\Wset \setminus \Oset) \cup (\Mset \setminus \{Y\})$.

\medskip \begin{lemma} \label{lem:Oset-informative} 
Suppose that $\g$ is a directed acyclic graph on $\Vset$ satisfying \cref{assump:A-goes-to-Y}.
For any $\Uset \subseteq \Vset$ that is uninformative for estimating $\Psi_a(P; \g)$ under $\model(\g, \Vset)$, we have $\Uset \cap \{\{A, Y\} \cup \Oset(\g)\} = \emptyset$. 
\end{lemma}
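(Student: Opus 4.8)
The plan is to show directly that removing any variable $V_j \in \{A,Y\}\cup\Oset(\g)$ violates one of the three conditions that define uninformativeness, namely conditions (i) and (iii) of \cref{lem:info} as applied through \cref{def:uninfo} and \cref{lem:eif-marg}; concretely, if $\Uset$ is uninformative and $V_j\in\Uset$, then $\Vset'=\Vset\setminus\Uset$ must satisfy the hypotheses of \cref{lem:eif-marg}, and in particular $\Psi_{a,P,\text{eff}}^1(\Vset;\g)$ must be a function of $\Vset'$ alone, $P$-almost everywhere, for every $P\in\model(\g,\Vset)$. So it suffices to exhibit, for each such $V_j$, a law $P\in\model(\g,\Vset)$ under which $\Psi_{a,P,\text{eff}}^1(\Vset;\g)$ genuinely depends on $V_j$. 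For $A$ and $Y$ this is essentially immediate: $Y$ appears in $T_{a,P}$, hence in the mediator sum of \cref{lem:EIF} (recall $Y=M_K$, and the $k=K$ term $\E\{T_{a,P}\mid M_K,\Pa(M_K)\}$ contains $Y$ itself), and a nondegenerate choice of $P$ (e.g.\ with a nontrivial $A\to Y$ relationship, which exists under \cref{assump:A-goes-to-Y}) makes this term nonconstant in $Y$; similarly $A$ enters through $\I_a(A)$ in $T_{a,P}$ and through $P(A=a\mid\Omin)$, and one checks it does not cancel. Since $\Psi_a(P;\g)$ also plainly fails to be identified from a margin omitting $A$ or $Y$, this disposes of $\{A,Y\}$.

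The substantive part is $\Oset(\g)$. Fix $O_i\in\Oset(\g)$. I would use \cref{lem:osetandw}, which gives $\Wset=\An\{\Oset,\g\}$, together with the definition \eqref{eqs:Oset} of $\Oset$ as parents of mediators: since $O_i\in\Oset$, there is a mediator $M_k$ with $O_i\in\Pa(M_k,\g)$, so $O_i$ appears in the conditioning set $\Pa(M_k,\g)$ in the term $\E\{T_{a,P}\mid M_k,\Pa(M_k,\g)\}-\E\{T_{a,P}\mid\Pa(M_k,\g)\}$ of \cref{lem:EIF}. The goal is to construct $P\in\model(\g,\Vset)$ for which this difference — or, more robustly, the full efficient influence function — depends nontrivially on $O_i$. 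The natural construction is to take a law in which all structural dependencies along the relevant edges are ``generic'' (e.g.\ every conditional density strictly positive and with nonzero interaction in each of its arguments, arranged so that no cancellation of the kind that killed $W_4$ in the motivating example occurs), and then to verify that $\E\{T_{a,P}\mid M_k,O_i,\dots\}$ is not almost surely equal to a function that drops $O_i$. A cleaner route, which I would prefer, is to reduce to the known efficiency benchmark: by \cref{lem:eif-marg} applied in reverse, if $O_i$ were droppable then $\Psi_{a,P,\text{eff}}^1$ would be a function of $\Vset\setminus\{O_i\}$; but $\Oset$ is the optimal adjustment set and the adjustment functional $\Psi^{\mathrm{ADJ}}_{a,\Oset}$ has nonparametric influence function depending on $O_i$ at generic $P$ (this is classical — the adjustment-set influence function $\I_a(A)\{Y-b_{a,P}(\Oset)\}/P(A=a\mid\Oset)+b_{a,P}(\Oset)-\Psi_a$ depends on $\Oset$ whenever $b_{a,P}$ does), and since the efficient influence function under $\model(\g,\Vset)$ dominates but at a sufficiently unconstrained $P\in\model(\g,\Vset)$ agrees with it on the $\Oset$-coordinates, we get a contradiction.

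I expect the main obstacle to be making precise the phrase ``generic law'' so that $O_i$ really does not cancel out: one must exhibit an explicit $P\in\model(\g,\Vset)$ — respecting all the d-separations encoded by $\g$, including any that involve $O_i$ and other variables in $\Wset\setminus\Oset$ or $\Mset$ — for which $b_{a,P}(\Oset)$ is a non-degenerate function of its $O_i$ argument and this non-degeneracy survives the conditional expectations in \cref{lem:EIF}. The enlarged state space from \cref{remark:marginal} helps here, since it guarantees enough room to place, say, a law where the $O_i\to M_k$ dependence is a simple location shift that cannot be absorbed; I would build $P$ coordinate by coordinate along a topological order of $\g$, choosing each $p\{v_j\mid\Pa(v_j,\g)\}$ to be, e.g., a Gaussian linear structural equation with all nonzero coefficients, for which $b_{a,P}$ and the various conditional expectations are linear and their dependence on $O_i$ can be read off as a nonzero coefficient by a direct path-counting argument. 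The remaining conditions (regularity of the functional, and that $\Psi_a(P;\g)=\Psi^{\mathrm{ADJ}}_{a,\Oset}(P;\g)$ on the model so that condition (ii)-type issues do not interfere) are standard and follow from the cited results.
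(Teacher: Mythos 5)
Your overall strategy---reducing the lemma to exhibiting, for each $V_j\in\{A,Y\}\cup\Oset(\g)$, a law $P\in\model(\g,\Vset)$ at which $\Psi^{1}_{a,P,\text{eff}}(\Vset;\g)$ depends nontrivially on $V_j$---is the same as the paper's, and your disposal of $A$ and $Y$ is fine. The gap is in the treatment of $O_i\in\Oset(\g)$, where neither of your two routes actually establishes the required non-cancellation. The ``cleaner route'' you prefer is not sound: the efficient influence function with respect to $\model(\g,\Vset)$ is the expression in \cref{lem:EIF}, and for a general graph it does not coincide with the nonparametric influence function of $\Psi^{\mathrm{ADJ}}_{a,\Oset}$ at any law in the model---the d-separation constraints hold at every $P\in\model(\g,\Vset)$, so there is no ``sufficiently unconstrained'' law at which the two ``agree on the $\Oset$-coordinates'' (their disagreement is precisely the premise of the paper). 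Hence the fact that the adjustment influence function depends on $O_i$ yields no contradiction with the efficient influence function dropping $O_i$.

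Your first route is the right idea but defers exactly the hard step: asserting that with ``generic'' linear-Gaussian specifications the coefficient of $O_i$ ``can be read off as a nonzero coefficient by a direct path-counting argument'' is to assert the conclusion. Cancellations of precisely this kind do occur (that is how $W_4$ vanishes in the motivating example), so one must prove that the relevant coefficient is not identically zero as a function of the parameters, which in practice means exhibiting a concrete law; moreover $A$ is discrete and $T_{a,P}=\I_a(A)Y/P(A=a\mid\Omin)$ involves an indicator and an inverse propensity, so the fully linear calculus you invoke is not directly available for the mediator terms. The paper closes this gap constructively: it isolates the part $\Gamma(O_i)$ of the efficient influence function that can involve $O_i$, passes to a subgraph $\g'$ of $\g$ containing only the edge $O_i\to M_L$ (for a suitable child $M_L$ of $O_i$ in $\Mset$), a shortest causal path $p$ from $A$ to $M_L$ and a shortest causal path $q$ from $M_L$ to $Y$, and chooses a degenerate $P$ Markov to $\g'$ with identity propagation along $p$ and $q$, $\E[M_L\mid O_i,U=1]=b(O_i)$, and constant propensity $P(A=1\mid\Omin)=c$; a term-by-term computation then gives $\Gamma(O_i)=(1-A/c)\,b(O_i)+AM_L/c$, and a sequence of non-degenerate laws in $\model(\g',\Vset)$ converging weakly to $P$ removes the degeneracy. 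Some explicit construction of this type (or an equally explicit verification that your generic-coefficient expression is not identically zero) is what your proposal is missing.
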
 

\medskip To proceed with our search for uninformative variables, it suffices to identify variables from $\Wset \setminus \Oset$ or $\Mset \setminus \{Y\}$ that vanish from the efficient influence function at every law in the model. This follows from \cref{def:uninfo} and \cref{lem:eif-marg} given that (i) $\Psi _{a}(P;\g)=$ $\Psi _{a,O}^{\mathrm{ADJ}}(P;\g)$ on $\model(\g,V)$ and (ii) $ \Psi _{a,O}^{\mathrm{ADJ}}(P;\g)$ depends on $P$ only through the marginal law of $\Oset \cup \{A,Y\}$.

Let us now identify uninformative variables in $\Wset \setminus \Oset$.  
Every $W_{j}\in \Wset\setminus \Oset$ satisfies $W_{j}\mapsto \Oset$, so $\Ch(W_{j})\cap \Wset \neq \emptyset $. Let us write $\Ch(W_{j})\cap \Wset=\{W_{j_{1}},\dots ,W_{j_{r}}\}$, indexed topologically for $j_{1}\leq \dots \leq j_{r}$ and $r \geq 1$, and define $ W_{j_{0}}\equiv W_{j}$. We observe that $\Psi_{a,P,\text{eff}}^{1}(V;\g)$ in \cref{lem:EIF} depends on $W_{j}$ only through 
\begin{equation} \label{eqs:depends-Wj}
\Gamma(W_j) \equiv \E \left\{ b_{a}(\Oset) \mid W_{j}, \Pa(W_{j}) \right\} + \sum_{t=1}^{r} \left[ \E \left\{ b_{a}(\Oset) \mid W_{j_t}, \Pa(W_{j_t}) \right\} - \E \left\{ b_{a}(\Oset) \mid  \Pa(W_{j_t}) \right\}  \right].
\end{equation}
To analyze $\Gamma(W_j)$, define $\plus_{j}$ as the smallest subset of $\Pa(W_{j})\cup \{W_{j}\}$ such that 
\begin{equation*}
\Pa(W_{j})\cup \{W_{j}\}\setminus \plus_{j}\indep_{\g}\Oset\mid \plus_{j},
\end{equation*}
and $\minus_{j}$ as the smallest subset of $\Pa(W_{j})$ such that 
\begin{equation*}
\Pa(W_{j})\setminus \minus_{j}\indep_{\g}\Oset\mid \minus_{j}.
\end{equation*}
Sets $\plus_{j}$ and $\minus_{j}$ are uniquely defined by the graphoid properties of d-separations. With these definitions and the corresponding conditional independences, \cref{eqs:depends-Wj} becomes 
\begin{equation} \label{eqs:depends-Wj-plus-minus}
\begin{split}
\Gamma(W_j) = \E \left\{ b_{a}(\Oset) \mid \plus_j \right\}  &+ \E \left\{ b_{a}(\Oset) \mid \plus_{j_1} \right\} + \dots +  \E \left\{ b_{a}(\Oset) \mid \plus_{j_{r-1}} \right\} +  \E \left\{ b_{a}(\Oset) \mid \plus_{j_r} \right\} \\
& -\E \left\{ b_{a}(\Oset) \mid \minus_{j_1} \right\} - \dots - \E \left\{ b_{a}(\Oset) \mid \minus_{j_{r-1}} \right\} - \E \left\{ b_{a}(\Oset) \mid \minus_{j_r} \right\}.
\end{split}
\end{equation}

The following lemma contains important properties of the sets $E_j^{+}$ and $E_j^{-}$.
\begin{lemma} \label{lem:plus-minus-W}
The following properties hold:
\begin{enumerate}[(i)]
\item $W_j \in \plus_j$;
\item if $r > 1$, then $W_j \in \plus_{j_t}$ for $t=1,\dots,r-1$;
\item $\minus_j = \Pa(W_j)$.
\end{enumerate}
\end{lemma}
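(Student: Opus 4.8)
The plan is to establish the three properties by working directly with the definitions of $\plus_j$ and $\minus_j$ as inclusion-minimal separators, leveraging the fact that these sets are uniquely characterized via the intersection property of d-separation (the compositional graphoid axioms).

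For (iii), I would argue that no element of $\Pa(W_j)$ can be dropped when separating $\Pa(W_j)$ from $\Oset$. Take any $P_i \in \Pa(W_j)$. Since $W_j \in \Wset \setminus \Oset$, we have $W_j \mapsto \Oset$, so there is a directed path $W_j \to \cdots \to O$ for some $O \in \Oset$. I would show $P_i$ cannot be separated from $\Oset$ by $\Pa(W_j) \setminus \{P_i\}$: the edge $P_i \to W_j$ followed by a directed path from $W_j$ to some $O \in \Oset$ would be a connecting path given $\Pa(W_j)\setminus\{P_i\}$ unless some intermediate vertex on the $W_j \to O$ segment lies in the conditioning set; but those intermediate vertices are descendants of $W_j$, hence not in $\Pa(W_j)$ (acyclicity), so the path is active. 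Hence $P_i \in \minus_j$, and since this holds for every parent, $\minus_j = \Pa(W_j)$. This also immediately forces the conditional independence $\Pa(W_j) \setminus \minus_j = \emptyset \indep_\g \Oset \mid \Pa(W_j)$, which holds by the convention $\emptyset \indep_\g \cdot \mid \cdot$, so $\Pa(W_j)$ is indeed a valid separator and minimality pins it down.

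For (i), I would similarly argue $W_j$ itself cannot be removed from $\plus_j$: the same directed path $W_j \to \cdots \to O \in \Oset$ shows $W_j$ is not d-separated from $\Oset$ given $\Pa(W_j) \cup \{W_j\} \setminus \{W_j\} = \Pa(W_j)$, because no intermediate vertex on that path (all proper descendants of $W_j$) lies in $\Pa(W_j)$. Hence $W_j \in \plus_j$. For (ii), fix $t \in \{1,\dots,r-1\}$ and consider $W_{j_t} \in \Ch(W_j) \cap \Wset$ with $W_{j_t} \notin \Oset$ (since $t \le r-1$, and the indexing is such that $W_{j_t}$ ranges over $\Wset$-children; here I would need the convention that makes $W_{j_t} \notin \Oset$ precisely for $t < r$, or more carefully, that $W_{j_t} \mapsto \Oset$ for all these children so each has a directed path into $\Oset$ avoiding $W_j$ and its parents). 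Since $W_j \to W_{j_t}$ and $W_{j_t} \mapsto \Oset$, concatenating gives an active path from $W_j$ to $\Oset$ given $\Pa(W_{j_t}) \cup \{W_{j_t}\} \setminus \{W_j\}$: the vertex $W_{j_t}$ is a non-collider on this path and it is in the conditioning set only if we keep it, but we are testing removal of $W_j$, so $W_{j_t}$ stays in the conditioning set — wait, then the path is blocked at $W_{j_t}$. I would instead route through a path that enters $W_{j_t}$ as a \emph{collider}: use $W_j \to W_{j_t} \leftarrow (\text{something})$, or rather observe that the directed path from $W_{j_t}$ into $\Oset$ starts $W_{j_t} \to \cdots$, so concatenating $W_j \to W_{j_t} \to \cdots \to O$ makes $W_{j_t}$ a non-collider; the argument that removing $W_j$ cannot block this is that all intermediate vertices are descendants of $W_{j_t}$ hence not parents of $W_{j_t}$, but $W_{j_t}$ itself IS in the conditioning set. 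So the correct claim must be that $W_j$ is needed because of a different connecting path — perhaps $W_j$ is itself an ancestor of some parent of $W_{j_t}$, or there is a path $W_j \leftarrow \cdots$ that becomes unblocked. I expect the main obstacle to be precisely this: correctly identifying which active path forces $W_j \in \plus_{j_t}$, which likely uses that $W_j \in \Pa(W_{j_t})$ together with the topological ordering $j_t \le j_r$ and the fact that $W_{j_r}$ (or $\Oset$) is reachable; I would handle it by a careful case analysis on the path from $W_j$ to $\Oset$ passing through $W_{j_t}$, using that $W_{j_t}$ appears as a collider on the path $W_j \to W_{j_t} \leftarrow P$ for a suitable co-parent $P$ of $W_{j_t}$ lying on a directed route to $\Oset$, and that the collider $W_{j_t}$ is opened because it is in the conditioning set — making the whole path active unless $W_j$ is in the set to block the $W_j \to W_{j_t}$ entry, which it cannot since we are testing its removal. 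Throughout, the uniqueness of $\plus_j, \minus_j$ (already asserted in the text via graphoid properties) lets me phrase everything as "removing this vertex destroys separation," and acyclicity of $\g$ is the workhorse ensuring descendants of $W_j$ are never among its parents.
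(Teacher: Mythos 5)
Your arguments for (i) and (iii) are correct and are essentially the paper's own: in both cases the causal path $W_j \to \dots \to O$ (prepended with $Z \to W_j$ for (iii)) is unblocked because its non-endpoint vertices are descendants of $W_j$ and hence, by acyclicity, cannot lie in $\Pa(W_j)$.

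Part (ii), however, has a genuine gap, and your own text flags it without resolving it. To get $W_j \in \plus_{j_t}$ you must exhibit a path from $W_j$ to $\Oset$ that is d-connecting given $\bigl(\{W_{j_t}\} \cup \Pa(W_{j_t})\bigr) \setminus \{W_j\}$. You correctly observe that the naive path $W_j \to W_{j_t} \to \dots \to O$ is blocked at the non-collider $W_{j_t}$, but the repair you then sketch --- a path $W_j \to W_{j_t} \leftarrow P \to \dots \to O$ with $W_{j_t}$ an opened collider and $P$ a co-parent of $W_{j_t}$ lying on a directed route to $\Oset$ --- does not work: such a co-parent $P$ need not exist at all, and even when it does, $P \in \Pa(W_{j_t})$ is itself in the conditioning set and is a non-collider on your path, so the path is blocked at $P$. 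The idea you are missing is to avoid $W_{j_t}$ entirely and route through the \emph{topologically last} child: since $r > 1$ and $t < r$, consider the causal path $W_j \to W_{j_r} \to \dots \to O$ (which exists because $W_{j_r} \in \Wset = \An(\Oset,\g)$ by \cref{lem:osetandw}). Every non-endpoint vertex on it is $W_{j_r}$ or a descendant of $W_{j_r}$, hence comes strictly after $W_{j_t}$ in a topological order, and therefore cannot equal $W_{j_t}$ nor lie in $\Pa(W_{j_t})$; the same holds for the endpoint $O$, so the path is d-connecting given $\bigl(\{W_{j_t}\} \cup \Pa(W_{j_t})\bigr) \setminus \{W_j\}$, which forces $W_j \in \plus_{j_t}$. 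This is exactly the paper's argument, and it is where the hypothesis $t < r$ (i.e.\ the topological ordering of the children) is actually used.
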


The variable $W_j$ is uninformative if $\Gamma(W_j)$ does not depend on $ W_j$; for this to happen, plausibly, in \cref{eqs:depends-Wj-plus-minus} each $\minus$ term from the second line cancels exactly with one $\plus$ term from the first line, and the remaining term in the first line does not depend on $W_j$. By \cref{lem:plus-minus-W}(ii), the remaining term must be the last term in the first line, which should satisfy $W_j \notin \plus_{j_r}$. Now suppose that $\plus_{j_{r-1}}$ cancels with $\minus_{j_{t}}$ from the second line. Then, by \cref{lem:plus-minus-W}(i) and (ii), this implies $W_{j_{r-1}} \rightarrow W_{j_t}$, which requires $t=r$ to be compatible with the topological ordering. Continuing this argument, we see that $\plus_{j_{r-2}}$ cancels with $\minus_{j_{r-1}}$, and so forth. This is summarized as follows.

\begin{lemma} \label{lem:W-cancel-plus-minus} 
Under \cref{assump:A-goes-to-Y}, variable $W_j$ is uninformative if (i) $W_j \notin \plus_{j_r}$ and (ii) $\plus_{j_{t-1}} = \minus_{j_t}$ for $t=1,\dots,r$.
\end{lemma}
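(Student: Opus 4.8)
The plan is to show that, under hypotheses (i) and (ii), the efficient influence function $\Psi_{a,P,\text{eff}}^{1}(\Vset;\g)$ of \cref{lem:EIF} does not depend on $W_j$ at any $P\in\model(\g,\Vset)$, and then to conclude via \cref{lem:eif-marg} (through \cref{def:uninfo}) with $\Uset=\{W_j\}$. From the discussion preceding the lemma we already know that, since $W_j\in\Wset\setminus\Oset$, the variable $W_j$ enters $\Psi_{a,P,\text{eff}}^{1}(\Vset;\g)$ only through $\Gamma(W_j)$, which by the definitions of $\plus_{\bullet},\minus_{\bullet}$ and the attendant conditional independences takes the telescoping-ready form \cref{eqs:depends-Wj-plus-minus}, i.e.\ $\Gamma(W_j)=\sum_{t=0}^{r}\E\{b_a(\Oset)\mid\plus_{j_t}\}-\sum_{t=1}^{r}\E\{b_a(\Oset)\mid\minus_{j_t}\}$ with $j_0\equiv j$; recall also that $r\ge 1$, so condition (ii) is non-vacuous.

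The core step is the cancellation. By condition (ii), $\minus_{j_t}=\plus_{j_{t-1}}$ for $t=1,\dots,r$, hence $\sum_{t=1}^{r}\E\{b_a(\Oset)\mid\minus_{j_t}\}=\sum_{t=0}^{r-1}\E\{b_a(\Oset)\mid\plus_{j_t}\}$, so all terms of $\Gamma(W_j)$ cancel except the last positive one, leaving $\Gamma(W_j)=\E\{b_a(\Oset)\mid\plus_{j_r}\}$. Condition (i), $W_j\notin\plus_{j_r}$, then makes this surviving conditional mean a ($P$-a.s.) function of $\Vset\setminus\{W_j\}$, so $\Psi_{a,P,\text{eff}}^{1}(\Vset;\g)$ admits a version depending on $\Vset$ only through $\Vset'\equiv\Vset\setminus\{W_j\}$, for every $P\in\model(\g,\Vset)$.

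It remains to supply the second ingredient required by \cref{lem:eif-marg}: a regular functional on the induced marginal model agreeing with $\Psi_a(\cdot;\g)$. I would take $\chi(P')\equiv\Psi^{\mathrm{ADJ}}_{a,\Oset}(P';\g)$. Since $\Oset$ is a valid adjustment set (as used repeatedly in \cref{sec:criteria}), $\Psi_a(P;\g)=\Psi^{\mathrm{ADJ}}_{a,\Oset}(P;\g)$ on $\model(\g,\Vset)$; moreover $\Psi^{\mathrm{ADJ}}_{a,\Oset}(\cdot;\g)$ depends on $P$ only through $P(\{A,Y\}\cup\Oset)$, which equals $P'(\{A,Y\}\cup\Oset)$ because $W_j\notin\{A,Y\}\cup\Oset$, and it is pathwise differentiable on $\model(\g,\Vset')$ thanks to the positivity in \cref{assump:positivity}, which is inherited by the marginal. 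Hence $\gamma=\Psi_a(\cdot;\g)$, $\chi$ and $\Vset'$ satisfy all hypotheses of \cref{lem:eif-marg}, so $W_j$ is uninformative, as claimed.

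The part needing care is purely bookkeeping: verifying that $W_j$ is genuinely absent from the remainder of \cref{lem:EIF} (it does not enter $b_{a,P}(\Oset)$ or $T_{a,P}$, and it is not a parent of any mediator, the last because otherwise $W_j\in\Oset$ by the definition \cref{eqs:Oset}), and aligning the telescoping indices so that $\minus_{j_1},\dots,\minus_{j_r}$ annihilate $\plus_{j_0},\dots,\plus_{j_{r-1}}$ in the correct order. Consistency of this alignment is exactly what the topological indexing of $\Ch(W_j)\cap\Wset$ together with \cref{lem:plus-minus-W} guarantees. Everything after the cancellation is a direct appeal to results already in hand.
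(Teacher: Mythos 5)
Your proposal is correct and takes essentially the same route as the paper: the paper's proof likewise observes that $\Psi_{a,P,\text{eff}}^{1}(\Vset;\g)$ depends on $W_j$ only through $\Gamma(W_j)$ in \cref{eqs:depends-Wj-plus-minus}, that (i) and (ii) produce exactly your telescoping cancellation leaving $\E\{b_a(\Oset)\mid \plus_{j_r}\}$, and then concludes uninformativeness via \cref{def:uninfo} and \cref{lem:eif-marg} using $\Psi^{\mathrm{ADJ}}_{a,\Oset}$, as spelled out at the beginning of \cref{sec:criteria}. Your extra bookkeeping (e.g.\ that $W_j$ cannot be a parent of a mediator, hence is absent from the $\Mset$-terms and from $T_{a,P}$) just makes explicit what the paper established in the discussion preceding the lemma.
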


\noindent These conditions are further equivalent to the following graphical criterion.

\begin{lemma}[W-criterion] \label{lem:W-criterion}
Suppose $\g$ satisfies \cref{assump:A-goes-to-Y} and that $W_j \in \Wset \setminus \Oset$ and $\Ch(W_j) \cap \Wset = \{W_{j_1}, \dots, W_{j_r}\}$, indexed topologically for $r \geq 1$; define $W_{j_0} \equiv W_j$. Then the variable $W_j$ is uninformative if the following conditions are satisfied: \begin{enumerate}[(i)]
\item $W_j \indep_{\g} \Oset \mid \{W_{j_r}\} \cup \Pa(W_{j_r}) \setminus \{W_j\} $;
\item for $t=1,\dots,r$ one has
\begin{enumerate}[(a)]
\item $W_{j_{t-1}} \rightarrow W_{j_t}$;
\item $\Pa(W_{j_t}) \subseteq \Pa(W_{j_{t-1}}) \cup \{W_{j_{t-1}}\}$;
\item $\Pa(W_{j_{t-1}}) \setminus \Pa(W_{j_t}) \indep_{\g} \Oset \mid \Pa(W_{j_t})$. 
\end{enumerate}
\end{enumerate}
\end{lemma}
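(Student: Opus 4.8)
The plan is to reduce the statement to \cref{lem:W-cancel-plus-minus}: I would verify that the graphical conditions (i) and (ii) force $W_j \notin \plus_{j_r}$ and $\plus_{j_{t-1}} = \minus_{j_t}$ for every $t = 1,\dots,r$, after which \cref{lem:W-cancel-plus-minus} delivers the conclusion. Two facts get used repeatedly: each $W_{j_t}$ with $t \ge 1$ is a child of $W_j$, so in particular $W_j \in \Pa(W_{j_r})$; and every $\plus_j$ (resp. $\minus_j$), being an inclusion-minimal d-separator, is contained in \emph{every} set satisfying its defining d-separation, by the intersection property of d-separation (the same fact that makes $\plus_j$ and $\minus_j$ well defined).

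For $W_j \notin \plus_{j_r}$, I would set $S \equiv \{W_{j_r}\} \cup \Pa(W_{j_r}) \setminus \{W_j\}$. Since $W_j \in \Pa(W_{j_r})$ and $W_j \ne W_{j_r}$, the complement of $S$ inside $\Pa(W_{j_r}) \cup \{W_{j_r}\}$ is exactly $\{W_j\}$, so condition (i) says precisely that $S$ satisfies the defining property of $\plus_{j_r}$; hence $\plus_{j_r} \subseteq S$, and since $W_j \notin S$ the claim follows.

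For $\plus_{j_{t-1}} = \minus_{j_t}$, recall that $\minus_{j_t} = \Pa(W_{j_t})$ by \cref{lem:plus-minus-W}(iii), so it suffices to prove $\plus_{j_{t-1}} = \Pa(W_{j_t})$; write $T \equiv \Pa(W_{j_{t-1}}) \cup \{W_{j_{t-1}}\}$. For the forward inclusion, (ii)(b) gives $\Pa(W_{j_t}) \subseteq T$ and (ii)(a) gives $W_{j_{t-1}} \in \Pa(W_{j_t})$, so $T \setminus \Pa(W_{j_t}) = \Pa(W_{j_{t-1}}) \setminus \Pa(W_{j_t})$, which by (ii)(c) is d-separated from $\Oset$ given $\Pa(W_{j_t})$; thus $\Pa(W_{j_t})$ is an admissible separator in the definition of $\plus_{j_{t-1}}$, giving $\plus_{j_{t-1}} \subseteq \Pa(W_{j_t})$. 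For the reverse inclusion, $\plus_{j_{t-1}}$ satisfies $T \setminus \plus_{j_{t-1}} \indep_\g \Oset \mid \plus_{j_{t-1}}$, and since $\Pa(W_{j_t}) \subseteq T$, the decomposition property of d-separation yields $\Pa(W_{j_t}) \setminus \plus_{j_{t-1}} \indep_\g \Oset \mid \plus_{j_{t-1}}$; together with $\plus_{j_{t-1}} \subseteq \Pa(W_{j_t})$ this makes $\plus_{j_{t-1}}$ admissible in the definition of $\minus_{j_t}$, so $\Pa(W_{j_t}) = \minus_{j_t} \subseteq \plus_{j_{t-1}}$, and equality follows.

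Finally, with both hypotheses of \cref{lem:W-cancel-plus-minus} verified, that lemma gives that $W_j$ is uninformative. I expect the only real care needed is to keep the inclusions among the various minimal separators pointing the right way and to invoke the correct graphoid property at each step — decomposition for one inclusion, intersection/minimality for the other — plus the small bookkeeping point that (ii)(a) collapses $T \setminus \Pa(W_{j_t})$ to $\Pa(W_{j_{t-1}}) \setminus \Pa(W_{j_t})$. If one additionally wanted the full equivalence hinted at before the statement (the graphical conditions being \emph{necessary} for the cancellation in \cref{eqs:depends-Wj-plus-minus}), the harder direction would be to run these unwindings in reverse, which would also use $W_{j_t} \in \An(\Oset,\g)$ from \cref{lem:osetandw} together with acyclicity.
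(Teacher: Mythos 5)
Your proposal is correct and follows essentially the same route as the paper: the paper also proves \cref{lem:W-criterion} by showing the W-criterion's conditions imply conditions (i) and (ii) of \cref{lem:W-cancel-plus-minus} (this is the ``$\Rightarrow$'' half of \cref{lem:app-W-plus-minus-equiv}), using exactly your identification $W_j\in\Pa(W_{j_r})$, the containment of the minimal separator in any admissible separator, and $\minus_{j_t}=\Pa(W_{j_t})$ from \cref{lem:plus-minus-W}(iii). The only local difference is the reverse inclusion $\Pa(W_{j_t})\subseteq \plus_{j_{t-1}}$: the paper argues directly that for $Z\in\Pa(W_{j_t})\setminus\{W_{j_{t-1}}\}$ the causal path $Z\rightarrow W_{j_t}\rightarrow\dots\rightarrow \Oset$ cannot be blocked by any subset of $\{W_{j_{t-1}}\}\cup\Pa(W_{j_{t-1}})\setminus\{Z\}$, whereas you instead show $\plus_{j_{t-1}}$ is admissible in the definition of $\minus_{j_t}$ (via decomposition) and then invoke $\minus_{j_t}=\Pa(W_{j_t})$ together with minimality; both arguments are valid, and yours simply reuses \cref{lem:plus-minus-W}(iii) rather than repeating the path-blocking argument, while correctly noting that only the sufficiency direction is needed for the stated lemma.
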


As an example, let us check that $W_{4}$ in \cref{fig:example}(a) satisfies
the W-criterion. Observe that $r=1$ and $W_{j_{r}}=O_1$. Condition (i) is trivial: recall that $W_{4}\indep_{\g}O_1\mid O_1$ is parsed as $W_{4}\indep_{\g}\emptyset \mid O_1$, which is true by our convention. For condition (ii), we check that
(a) $W_{4}\rightarrow O_1$, (b) $W_{4}\subset \{W_{2},W_{3},W_{4}\}$ and
(c) $W_{2},W_{3} \indep_{\g} O_1 \mid W_{4}$. In contrast, we see that $W_{2}$
and $W_{3}$ fail the W-criterion, in particular condition (ii)(b).

\medskip By a similar line of reasoning, we derive the corresponding criterion for the set of mediators. 

\begin{lemma}[M-criterion] \label{lem:M-criterion}
Suppose $\g$ satisfies \cref{assump:A-goes-to-Y} and that $M_i \in \Mset \setminus \{Y\}$ and $\Ch(M_i) \cap \Mset = \{M_{i_1}, \dots, M_{i_k}\}$, indexed topologically for $k \geq 1$; define $M_{i_0} \equiv M_i$. Then the variable $M_i$ is uninformative if the following conditions are satisfied: \begin{enumerate}[(i)]
\item $M_i \indep_{\g} \{A, Y\} \cup \Omin \mid \{M_{i_k}\} \cup \Pa(M_{i_k}) \setminus \{M_i\} $;
\item for $t=1,\dots,k$ one has
\begin{enumerate}[(a)]
\item $M_{i_{t-1}} \rightarrow M_{i_t}$;
\item $\Pa(M_{i_t}) \subseteq \Pa(M_{i_{t-1}}) \cup \{M_{i_{t-1}}\}$;
\item $\Pa(M_{i_{t-1}}) \setminus \Pa(M_{i_t}) \indep_{\g} \{A,Y\} \cup \Omin \mid \Pa(M_{i_t})$. 
\end{enumerate}
\end{enumerate}
\end{lemma}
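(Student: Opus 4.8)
The statement is the mediator analogue of \cref{lem:W-criterion}, obtained by replacing the set $\Oset$ there by the target set $\{A,Y\}\cup\Omin$ and the regression $b_{a,P}(\Oset)$ by $T_{a,P}$, and the plan is to prove it along exactly the same lines, through the mediator versions of \cref{lem:plus-minus-W,lem:W-cancel-plus-minus}. The goal is to show that, under conditions (i)--(ii), the efficient influence function $\Psi_{a,P,\text{eff}}^{1}(\Vset;\g)$ of \cref{lem:EIF} does not depend on $M_i$ for any $P\in\model(\g,\Vset)$. Once this is in hand, \cref{def:uninfo} and \cref{lem:eif-marg} finish the argument, because (as observed just before \cref{lem:Oset-informative}) $\Psi_{a}(P;\g)=\Psi_{a,\Oset}^{\mathrm{ADJ}}(P;\g)$ on $\model(\g,\Vset)$ depends on $P$ only through the law of $\Oset\cup\{A,Y\}$, and $M_i\notin\Oset\cup\{A,Y\}$ since $M_i\in\Mset\setminus\{Y\}$.

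\textbf{Step 1 (isolate the $M_i$-dependent part).} I would first argue that in \cref{lem:EIF} the $\Wset$-sum cannot involve $M_i$: the integrand $b_{a,P}(\Oset)$ is a function of $\Oset$, which contains no mediator, and since $M_i\in\De(A)$ while every baseline covariate and every parent of one is a non-descendant of $A$, we have $M_i\notin\{W_j\}\cup\Pa(W_j)$ for each $j$. Likewise $T_{a,P}$ is a measurable function of $(A,Y,\Omin)$ and $M_i\notin\{A,Y\}\cup\Omin$. Hence in the $\Mset$-sum $M_i$ can appear only in the summand indexed by $M_i$ itself and in the summands indexed by those mediators $M'$ with $M_i\in\Pa(M')$, i.e.\ $M'\in\Ch(M_i)\cap\Mset=\{M_{i_1},\dots,M_{i_k}\}$; and within the $M_i$-summand the piece $\E\{T_{a,P}\mid\Pa(M_i)\}$ is $\Pa(M_i)$-measurable, hence $M_i$-free. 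So the efficient influence function depends on $M_i$ only through
\[
\Gamma(M_i)\equiv\E\{T_{a,P}\mid M_i,\Pa(M_i)\}+\sum_{t=1}^{k}\Big[\E\{T_{a,P}\mid M_{i_t},\Pa(M_{i_t})\}-\E\{T_{a,P}\mid\Pa(M_{i_t})\}\Big],
\]
every conditional expectation being well-defined and finite because $P(A=a\mid\Omin)>\varepsilon$ by \cref{assump:positivity}.

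\textbf{Step 2 (reduce the conditioning sets, telescope, conclude).} Mirroring the passage from \cref{eqs:depends-Wj} to \cref{eqs:depends-Wj-plus-minus}, for $t=0,\dots,k$ (with $M_{i_0}\equiv M_i$) I would define $\plus_{i_t}$ as the inclusion-minimal subset of $\Pa(M_{i_t})\cup\{M_{i_t}\}$ such that $\Pa(M_{i_t})\cup\{M_{i_t}\}\setminus\plus_{i_t}\indep_\g\{A,Y\}\cup\Omin\mid\plus_{i_t}$, and, for $t=1,\dots,k$, define $\minus_{i_t}$ as the inclusion-minimal subset of $\Pa(M_{i_t})$ such that $\Pa(M_{i_t})\setminus\minus_{i_t}\indep_\g\{A,Y\}\cup\Omin\mid\minus_{i_t}$; both are well-defined by the intersection property of d-separation. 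Since d-separation implies conditional independence under $P\in\model(\g,\Vset)$ and $T_{a,P}$ is a function of $(A,Y,\Omin)$, each conditional expectation in $\Gamma(M_i)$ collapses onto the corresponding reduced set, yielding $\Gamma(M_i)=\E\{T_{a,P}\mid\plus_{i_0}\}+\sum_{t=1}^{k}[\E\{T_{a,P}\mid\plus_{i_t}\}-\E\{T_{a,P}\mid\minus_{i_t}\}]$. I would then prove the mediator version of \cref{lem:plus-minus-W}(iii), that $\minus_{i_t}=\Pa(M_{i_t})$ --- every parent of $M_{i_t}$ lies in $\Mset\cup\{A\}\cup\Oset$, and each such parent $P$ is d-connected to $\{A,Y\}\cup\Omin$ given $\Pa(M_{i_t})\setminus\{P\}$, either because $P\in\{A\}\cup\Omin$ or via the directed path $P\to M_{i_t}\to\dots\to Y$ (an edge $P\to Y$ when $M_{i_t}=Y$), which no parent of $M_{i_t}$ can block --- and the mediator version of \cref{lem:W-cancel-plus-minus}, that conditions (ii)(a)--(c) force $\plus_{i_{t-1}}=\Pa(M_{i_t})$: by (ii)(a)--(b) the set $\Pa(M_{i_t})$ is one of those over which $\plus_{i_{t-1}}$ is minimized thanks to (ii)(c), giving $\plus_{i_{t-1}}\subseteq\Pa(M_{i_t})$, and the same directed-path argument shows each element of $\Pa(M_{i_t})$ remains d-connected to $\{A,Y\}\cup\Omin$ given $\plus_{i_{t-1}}$, giving the reverse inclusion. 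Thus $\plus_{i_{t-1}}=\minus_{i_t}$ for $t=1,\dots,k$, the sum telescopes to $\Gamma(M_i)=\E\{T_{a,P}\mid\plus_{i_k}\}$, and finally condition (i) exhibits $(\{M_{i_k}\}\cup\Pa(M_{i_k}))\setminus\{M_i\}$ as one of the sets over which $\plus_{i_k}$ is minimized, so $M_i\notin\plus_{i_k}$ and $\Gamma(M_i)$ does not depend on $M_i$, as required.

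\textbf{Main obstacle.} The delicate work is the last step: determining $\minus_{i_t}$ and $\plus_{i_{t-1}}$ and verifying $\plus_{i_{t-1}}=\minus_{i_t}$ and $M_i\notin\plus_{i_k}$ \emph{purely} from the graphical conditions (i)--(ii). In contrast with \cref{lem:W-criterion}, where the target $\Oset$ sits conveniently below $W_j$, here $\{A,Y\}\cup\Omin$ contains $Y$, a common descendant of every mediator, and contains $A$, which may itself be a parent of $M_i$; one must therefore lean on the convention for d-separation with overlapping sets, check that $A$ and $Y$ are automatically forced into the relevant $\plus$-sets, and argue carefully that conditioning on a $\plus$- or $\minus$-set neither re-routes nor blocks the directed mediator paths used to connect parents to $Y$. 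A secondary, more routine point is that because $T_{a,P}$ depends on $Y$ nonlinearly, the collapses in Step 2 are justified through the measurability of $T_{a,P}$ with respect to $(A,Y,\Omin)$ and the equivalence between d-separation and conditional independence in $\model(\g,\Vset)$, rather than by manipulating an explicit regression function.
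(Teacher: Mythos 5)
Your argument is correct, and its skeleton coincides with the paper's: reduce the claim to showing that $\Psi_{a,P,\text{eff}}^{1}(\Vset;\g)$ is free of $M_i$ for every $P\in\model(\g,\Vset)$ (the adjustment identification $\Psi_a(P;\g)=\Psi_{a,\Oset}^{\mathrm{ADJ}}(P;\g)$ then closes the loop via \cref{lem:eif-marg}), isolate the same quantity $\Gamma(M_i)$ from \cref{lem:EIF}, telescope, and cancel using (i) and (ii)(a)--(c). Where you differ is in the mechanism of the cancellation: you transplant the minimal-separator machinery $\plus_{i_t},\minus_{i_t}$ from the W-case (analogues of \cref{lem:plus-minus-W,lem:W-cancel-plus-minus} and of \cref{lem:app-W-plus-minus-equiv}), and therefore must pin these sets down exactly --- proving $\minus_{i_t}=\Pa(M_{i_t})$ and $\plus_{i_{t-1}}=\Pa(M_{i_t})$ via the directed-path arguments $Z\to M_{i_t}\to\dots\to Y$, and $M_i\notin\plus_{i_k}$ from (i). The paper's own proof of \cref{lem:M-criterion} skips this identification entirely: after the same telescoping, (ii)(a)--(b) are used only to rewrite the conditioning set $\{M_{i_{t-1}}\}\cup\Pa(M_{i_{t-1}})$ as $\Pa(M_{i_t})\cup\{\Pa(M_{i_{t-1}})\setminus\Pa(M_{i_t})\}$, (ii)(c) drops the second block, and (i) drops $M_i$ from the last term --- a purely one-directional use of the hypotheses that needs no minimality, no intersection property, and no path arguments. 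Your reverse-inclusion steps are sound (the causal path from a parent of $M_{i_t}$ through $M_{i_t}$ to $Y$ meets no vertex of $\Pa(M_{i_{t-1}})\cup\{M_{i_{t-1}}\}$ by acyclicity, and vertices of $\{A\}\cup\Omin$ lying in the parent sets are forced into the separators under the paper's overlap convention), so what your route buys is the explicit mediator analogue of the $\plus/\minus$ characterization --- information that is only needed for the completeness direction in \cref{thm:criteria}, not for the soundness statement being proved --- at the cost of extra verification that the paper's shorter argument avoids.
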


We show the soundness of W- and M-criterion in \cref{sec:app-sound}. Our first main result shows that our graphical characterization is also complete.

\begin{theorem}[Graphical criteria for irreducible, informative variables] \label{thm:criteria}
Let $\g$ be a directed acyclic graph on a vertex set $\Vset$ that satisfies \cref{assump:A-goes-to-Y}. Suppose that $A \in \Vset$ is a discrete treatment and $Y \in \Vset$ is the outcome of interest. Then there exists a unique set of irreducible informative variables for estimating $\Psi _{a}(P;\g)$ under $ \model(\g,\Vset)$, 
\begin{multline*}
\Vset^{\ast }(\g) \equiv \{A,Y\}\cup \Oset\cup \{W_{j}\in \Wset\setminus \Oset:
\text{$W_{j}$ fails the W-criterion}\} \\
\cup \{M_{i}\in \Mset\setminus \{Y\}:\text{$M_{i}$ fails the M-criterion}\},
\end{multline*}
where $\Oset \equiv \Oset(\g)$, $\Wset \equiv \Wset(\g)$ and $\Mset \equiv \Mset(\g)$ are defined in \cref{sec:sets}.
\end{theorem}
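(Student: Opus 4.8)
The plan is to verify, for the particular set $\Vset^{\ast}(\g)$ exhibited in the statement, the three hypotheses of \cref{lem:info}; that lemma then gives at once that $\Vset^{\ast}(\g)$ is an irreducible informative set and, moreover, the unique one, which is exactly the assertion of the theorem. Hypotheses (i) and (ii) of \cref{lem:info} form the ``soundness'' half. By the soundness of the W- and M-criteria established in \cref{sec:app-sound}, together with \cref{lem:N-I-uninformative}, every variable in $\Nset\dcup\Iset$, every $W_j\in\Wset\setminus\Oset$ passing the W-criterion, and every $M_i\in\Mset\setminus\{Y\}$ passing the M-criterion is individually uninformative; a routine union argument---using that in \cref{lem:EIF} the dependence of $\Psi^{1}_{a,P,\text{eff}}(\Vset;\g)$ on each such variable is confined to its own $\Gamma$-term, e.g.\ to $\Gamma(W_j)$ in \cref{eqs:depends-Wj}---shows that deleting all of them simultaneously still leaves $\Psi^{1}_{a,P,\text{eff}}(\Vset;\g)$ a function of $\Vset^{\ast}(\g)$ alone, for every $P\in\model(\g,\Vset)$. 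By \cref{def:uninfo} and \cref{lem:eif-marg} this is hypothesis (i); for hypothesis (ii) one takes $\chi=\Psi^{\mathrm{ADJ}}_{a,\Oset}(\cdot;\g)$, which agrees with $\Psi_a(\cdot;\g)$ on $\model(\g,\Vset)$ and depends on $P$ only through the $\{A,Y\}\cup\Oset$-margin, hence through the $\Vset^{\ast}(\g)$-margin because $\{A,Y\}\cup\Oset\subseteq\Vset^{\ast}(\g)$.

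The substance is hypothesis (iii): for every $V_j\in\Vset^{\ast}(\g)$ one must produce a nondegenerate $P_j\in\model(\g,\Vset)$ at which $\Psi^{1}_{a,P_j,\text{eff}}(\Vset;\g)$ is not almost surely constant in $V_j$. I would argue case by case along the three pieces of $\Vset^{\ast}(\g)$. For $V_j\in\{A,Y\}\cup\Oset$ this is read straight off \cref{lem:EIF}, using $T_{a,P}=\I_a(A)Y/P(A=a\mid\Omin)$ and $b_{a,P}(\Oset)=\E_P(Y\mid A=a,\Oset)$: a law under which the outcome regression genuinely depends on the coordinate $O_\ell$ keeps the pair $\E\{b_{a}(\Oset)\mid O_\ell,\Pa(O_\ell)\}-\E\{b_{a}(\Oset)\mid\Pa(O_\ell)\}$ non-constant in $O_\ell$, and the $Y$- and $A$-dependence carried by $T_{a,P}$ likewise persists; this is essentially \cref{lem:Oset-informative}. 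For $V_j=W_j$ failing the W-criterion I would negate the criterion and, via the cancellation bookkeeping preceding \cref{lem:W-cancel-plus-minus,lem:W-criterion}, locate the term of $\Gamma(W_j)$ in \cref{eqs:depends-Wj-plus-minus} that fails to cancel: if condition (i) of \cref{lem:W-criterion} fails then $W_j\in\plus_{j_r}$, so the last first-line summand $\E\{b_a(\Oset)\mid\plus_{j_r}\}$ still involves $W_j$; if instead some part of condition (ii) fails at index $t$ then $\plus_{j_{t-1}}\neq\minus_{j_t}$ and these two conditional expectations do not annihilate. In either event one is left with conditional expectations of $b_a(\Oset)$ on distinct conditioning sets, at least one of which contains $W_j$ together with a d-connection from $W_j$ to $\Oset$ not blocked by the remainder of the set. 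The mediator case goes through mutatis mutandis, with $\{A,Y\}\cup\Omin$ and $T_{a,P}$ playing the roles of $\Oset$ and $b_{a,P}(\Oset)$, and \cref{lem:M-criterion} in place of the W-criterion.

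It remains to turn this ``essential'' dependence into an actual witnessing law, and this is where I expect the main difficulty. Because each state space in \cref{eqs:state-space} contains a real interval, $\model(\g,\Vset)$ contains, for instance, laws in which $A$ is Bernoulli with a logistic dependence on its parents while every other variable is linear Gaussian given its parents; by a genericity (faithfulness) argument one can choose the path coefficients and error variances so that the conditional independences holding in $P$ are exactly the d-separations of $\g$, with \cref{assump:positivity} still in force. Under such a law every conditional expectation in \cref{lem:EIF} is an explicit smooth function of finitely many parameters, so ``$\Gamma(W_j)$ is constant in $W_j$'' becomes the vanishing of a fixed analytic function of those parameters, and it suffices to show this function is not identically zero whenever the W-criterion fails---then it is nonzero on a dense open set and any parameter value there furnishes $P_j$. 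The obstacle is ruling out an \emph{accidental} cancellation among the unpaired terms of \cref{eqs:depends-Wj-plus-minus,eqs:depends-Wj}, i.e., proving that no identity among the relevant conditional expectations beyond those forced by d-separation can erase the surviving $W_j$-dependence. I would meet it by engineering the witnessing model so that the surviving term conditions on a set of variables strictly different from that of every other term in the expression---so a cancellation would entail a spurious conditional independence, contradicting faithfulness---while keeping the $\Oset$-dependence of $b_a$ (respectively the $\{A,Y\}\cup\Omin$-dependence of $T_a$) non-degenerate; the same bookkeeping then closes the M-criterion case and completes the verification of \cref{lem:info}(iii).
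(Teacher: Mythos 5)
Your skeleton coincides with the paper's: verify the three hypotheses of \cref{lem:info}, with (i) supplied by the soundness results (\cref{lem:N-I-uninformative,lem:W-criterion,lem:M-criterion}), (ii) by $\chi=\Psi^{\mathrm{ADJ}}_{a,\Oset}(\cdot;\g)$ together with $\{A,Y\}\cup\Oset\subseteq\Vset^{\ast}(\g)$, and the $\{A,Y\}\cup\Oset$ part of (iii) by the construction behind \cref{lem:Oset-informative}. The genuine gap is hypothesis (iii) for a $W_j$ failing the W-criterion (and, symmetrically, an $M_i$ failing the M-criterion). Observing that the last summand of \cref{eqs:depends-Wj-plus-minus} ``still involves $W_j$'', or that a particular pair $\plus_{j_{t-1}},\minus_{j_t}$ ``does not annihilate'', does not show that $\Gamma(W_j)$ depends on $W_j$: the conditioning sets $\plus_{j_t},\minus_{j_t}$ are fixed by the graph, several of them may coincide, and the alternating sum can lose its $W_j$-dependence through cancellations other than the one pairing you rule out. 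Your proposed remedy---a generic logistic/linear-Gaussian family plus faithfulness, on the grounds that a surviving cancellation would force a spurious conditional independence---does not close this: an identity asserting that a signed sum of conditional expectations of the \emph{single} function $b_a(\Oset)$ is constant in $W_j$ is strictly weaker than any conditional independence statement, so faithfulness does not forbid it; and ``engineering the surviving term to condition on a set strictly different from every other term'' is not at your disposal, because those sets are dictated by $\g$, not by the law you choose. Proving that your fixed analytic function of the parameters is not identically zero is precisely the completeness claim, so the argument is circular at the decisive step.

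The paper closes this step constructively rather than generically. It first shows (\cref{sec:app-aug-graph}) that failure of the W-criterion forces one of a few explicit configurations in $\g$: two non-adjacent children of $W_j$; a collider $W_j\rightarrow W_{j_k}\leftarrow W_i$ with $W_i\notin\Adj(W_j,\g)$; or a parent $W_i\in\Pa(W_j,\g)\setminus\Pa(W_{j_k},\g)$ with a path to $\Oset$ that is d-connecting given $\Pa(W_{j_k},\g)$, the last case being split further according to how the two relevant paths intersect. For each configuration it then builds a law Markov to a chosen subgraph of $\g$, with identity relations along paths, product structures such as $b(\Oset)=W_{j_r}W_{j_k}$, and zero conditional means, so that every term of $\Gamma(W_j)$ except one vanishes exactly and the survivor visibly depends on $W_j$; one subcase needs an inducing-path argument plus the strong completeness of d-separation, and degenerate laws are replaced by nearby non-degenerate ones via weak convergence. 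None of this case analysis or construction is present in your proposal (and the mediator case, which you defer ``mutatis mutandis'' with $\{A,Y\}\cup\Omin$ and $T_{a,P}$, needs the same machinery), so the completeness half of the theorem remains unproven as written.
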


To prove \cref{thm:criteria}, for each variable in $\Wset\setminus \Oset$ and $\Mset\setminus \{Y\}$ that fails the corresponding criterion, we show in \cref{sec:app-uninfo,sec:app-W-complete,sec:app-M-complete} that there exists a nondegenerate law $P\in \model(\g,\Vset)$ such that $ \Psi^{1}_{a,P,\text{eff}}(V;\g)$ depends nontrivially on the variable.

\section{Graph reduction and the efficient irreducible g-formula} \label{sec:graph}
\subsection{Marginal model} \label{sec:marginal}

The results of the preceding section imply that we do not
lose information by discarding the variables excluded from the set $\Vset ^{\ast }\equiv \Vset^{\ast }(\g)$ in \cref{thm:criteria}. 
In what follows, we will write $P^{\ast}$ for the marginal law $P(\Vset^{\ast})$. 
Also, recall from \cref{sec:eif} that $\model(\g; \Vset^{\ast})$ refers to the marginal model over $P^{\ast}$ induced by $P \in \model(\g, \Vset)$.
In this section, we will characterize the marginal model $\model(\g; \Vset^{\ast})$ and then re-express the g-functional as a functional of $P^{\ast}$ in $\model(\g; \Vset^{\ast})$.

Characterizing the marginal model is nontrivial, even when the state space of the variables that are marginalized over is unrestricted. In general, the margin of a Bayesian network can be a complicated statistical model subject to both equality and inequality constraints. The equalities consist of conditional independences and their generalizations known as the nested Markov properties; see \citet{shpitser2014introduction,evans2018margins}. The inequalities are related to Bell's inequalities \citep{gill2014statistics} and are often hard to characterize  \citep{pearl2013testability,bonet2013instrumentality}. Fortunately, we can avoid these complications because, as will be shown later, under our definition of Bayesian networks in \cref{sec:sets} where the state space of each variable is sufficiently large, the marginal model $\model(\g; \Vset^{\ast})$ is exactly a Bayesian network model represented by a certain directed acyclic graph $\g^{\ast}$ over vertices $\Vset^{\ast}$. Further, the g-formula associated with $\g^{\ast}$ immediately identifies the g-functional of $P$ as a functional of $P^{\ast}$. Finally, this formula is irreducible and efficient.  

The construction of $\g^{\ast}$ can be viewed as iteratively projecting out all the uninformative variables, such that each time a variable or a set of variables are projected out, the resulting graph represents the marginal model over the remaining variables. We will start by projecting out variables in $\Nset(\g)$ and $\Iset(\g)$ altogether. 

\subsection{Projecting out $\Nset(\g)$ and $\Iset(\g)$}
\begin{lemma}[Marginalizing over $\Nset(\g)$ and $\Iset(\g)$] \label{lem:marg-over-NI}
Let $\g$ be a directed acyclic graph on a vertex set $V$ satisfying \cref{assump:A-goes-to-Y}. Let $\Nset(\g)$ and $\Iset(\g)$ be defined as in \cref{sec:sets} and let $\Vset^0 \equiv \Vset \setminus \{\Nset(\g) \cup \Iset(\g)\}$. Let graph $\g^0$ be constructed from $\g$ as follows. First, for every $V_i, V_j \in \Vset^0$ such that $V_i \mapsto V_j$ through a causal path on which every non-endpoint vertex is in $\Iset(\g)$, add an edge $V_i \rightarrow V_j$ if the edge is not present. Next, remove vertices in $\Nset(\g) \cup \Iset(\g)$ and their associated edges. 
Call the resulting graph $\g^0$.
Then $\g^0$ is a directed acyclic graph over $\Vset^0$ and $\model(\g, \Vset^0) = \model(\g^0, \Vset^0)$.
\end{lemma}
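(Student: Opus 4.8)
The plan is to show that $\g^0$ is a well-defined directed acyclic graph and then establish the model equality by a d-separation argument, splitting the equivalence into two inclusions. First I would verify that $\g^0$ has no directed cycles: any edge $V_i \to V_j$ added in the first step corresponds to $V_i \mapsto V_j$ in $\g$, so the ancestral order in $\g$ restricted to $\Vset^0$ is respected by all edges of $\g^0$; since $\g$ is acyclic, so is $\g^0$. I would also note that $\Iset(\g) \cup \Nset(\g)$ contains no ancestor of any vertex in $\Vset^0$ other than through $A$ — more precisely, a vertex in $\Nset(\g)$ is a non-ancestor of $Y$ and every vertex of $\Vset^0 \setminus \{A\}$ other than $Y$ lies on a causal path to $Y$, so $\Nset(\g)$ vertices are non-ancestors of all of $\Vset^0$; and a vertex $I \in \Iset(\g)$ reaches $Y$ (hence any mediator or $Y$) only through $A$, so on a causal path from $I$ into $\Vset^0$ that avoids $A$, all intermediate vertices must themselves be in $\Iset(\g)$. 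This is what makes the "bypass" edges in step one capture exactly the relevant ancestral structure.

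Next I would prove $\model(\g, \Vset^0) = \model(\g^0, \Vset^0)$ by showing the two graphs encode the same conditional independence structure on $\Vset^0$, using the characterization of $\model(\g, \Vset)$ as the set of laws obeying all d-separations (global Markov property) together with the fact, from \cref{remark:marginal}, that with sufficiently large state spaces the induced marginal model is exactly the set of laws over $\Vset^0$ satisfying the d-separations of $\g$ that involve only vertices in $\Vset^0$. So it suffices to show: for disjoint $A', B', C' \subseteq \Vset^0$, $A' \indep_{\g} B' \mid C'$ if and only if $A' \indep_{\g^0} B' \mid C'$. The cleanest route is to recognize $\g^0$ as (a subgraph of, or equal to, after pruning isolated additions) the latent projection of $\g$ onto $\Vset^0$, specialized to the case where the marginalized set $\Nset(\g) \cup \Iset(\g)$ never induces bidirected edges among $\Vset^0$. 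Indeed, a bidirected edge $V_i \leftrightarrow V_j$ would arise in the latent projection only if some marginalized vertex is an ancestor of both $V_i$ and $V_j$ via directed paths with all intermediate vertices marginalized; but a marginalized vertex $V \in \Nset(\g) \cup \Iset(\g)$ that is an ancestor of $V_i \in \Vset^0$ via such a path must be in $\Iset(\g)$ (by the bypass observation above), and an $\Iset(\g)$ vertex has no children outside $\Iset(\g) \cup \{A\}$ along such paths, so it can only point into $A$ — hence the only bidirected edge that could appear would be at $A$, but $A$ has no parents among $\Vset^0$ that get confounded since... — here I would argue directly that every ancestral path through $\Iset(\g)$ terminates at $A$ via a directed edge into $A$, so it contributes a directed edge $V_i \to A$ (already added in step one with $V_j = A$), not a bidirected edge. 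Thus the latent projection of $\g$ onto $\Vset^0$ is the \emph{directed} graph $\g^0$, and the standard result that latent projection preserves d-separation/m-separation statements among the retained vertices \citep{verma1991equivalence} gives the equivalence of the independence models, hence $\model(\g, \Vset^0) = \model(\g^0, \Vset^0)$.

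The main obstacle I anticipate is the careful bookkeeping in the "only directed edges are induced" claim: one must rule out bidirected edges and also confirm that no spurious \emph{directed} edges are created that would remove a true d-separation, and conversely that every d-connecting path in $\g$ between $\Vset^0$ vertices that routes through $\Nset(\g) \cup \Iset(\g)$ is matched by a d-connecting path in $\g^0$. For the latter, a path through $\Nset(\g)$ would have to pass through a collider in $\Nset(\g)$ whose descendant in the conditioning set lies in $\An(Y)$ — but $\Nset(\g)$ vertices and their descendants-within-$\Nset(\g)$ are all non-ancestors of $Y$, hence not in $\Vset^0 \supseteq C'$, so such a collider is never opened; thus active paths only ever traverse $\Iset(\g)$ as non-colliders (or as colliders whose descendants reach $A$, which is then in $C'$ or not), and each maximal $\Iset(\g)$-segment of such a path is either a directed segment (collapsed to a step-one bypass edge) or a "fork" at a common $\Iset(\g)$ ancestor, which as argued points ultimately into $A$. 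Handling the fork/collider cases at $A$ requires noting $A$'s role relative to $C'$; I would organize this as a short case analysis on whether $A \in C'$. Once the path-translation in both directions is pinned down, invoking the global Markov property and \cref{remark:marginal} closes the argument.
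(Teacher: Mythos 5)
Your structural observations are sound and in fact reproduce the first line of the paper's argument: $\g^0$ is exactly the latent projection of $\g_{\Vset\setminus\Nset(\g)}$ onto $\Vset^0$, and because every $\Iset(\g)$ vertex has all its $\Vset^0$-children equal to $A$ and $\Nset(\g)$ contains no ancestor of $\Vset^0$, no bidirected edges arise, so the projection is a directed acyclic graph and is acyclic for the reason you give. The gap is in how you pass from this to $\model(\g,\Vset^0)=\model(\g^0,\Vset^0)$. Preservation of d-separation under latent projection gives you only one inclusion: a margin of a law Markov to $\g$ satisfies all d-separations of $\g$ among $\Vset^0$, hence (by the global Markov characterization) lies in $\model(\g^0,\Vset^0)$. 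It does \emph{not} give the converse, namely that every $P^0$ Markov to $\g^0$ is realizable as the $\Vset^0$-margin of some $P\in\model(\g,\Vset)$. Your appeal to \cref{remark:marginal} to bridge this — reading it as saying that the induced marginal model is exactly the set of laws over $\Vset^0$ obeying the d-separations of $\g$ restricted to $\Vset^0$ — is a misreading: that remark only guarantees the state spaces are rich enough to avoid rank-type artifacts, and the paper explicitly warns in \cref{sec:marginal} that margins of Bayesian networks are in general \emph{strictly smaller} than their induced independence models (nested Markov and inequality constraints). So "equivalence of the independence models, hence $\model(\g,\Vset^0)=\model(\g^0,\Vset^0)$" is exactly the step that needs a proof, and nothing in your proposal supplies it; note also that the naive extension (making the deleted variables degenerate or independent) is not Markov to $\g$, since e.g.\ the local Markov property for $A$ in $\g$ conditions on $\Iset$-parents rather than on $\Pa(A,\g^0)$.

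The paper closes this gap with machinery you do not invoke: first it restricts to the ancestral set $\Vset\setminus\Nset(\g)$, where the marginal model is the induced-subgraph model by \cref{lem:ancestral-marg} \citep{lauritzen1996graphical}; then it removes the $\Iset(\g)$ vertices one at a time in reverse topological order, using the fact that each has sole child $A$, via exogenization (\cref{lem:exo-graph}) followed by deletion of a parentless vertex with at most one child (\cref{lem:rm-single-child}), both due to \citet{evans2016graphs}, which are precisely the results that certify the margin is again a DAG model at every step; finally commutativity of latent projection identifies the end product with $\g^0$. To repair your argument you would need either to follow this route, or to invoke the theorem of \citet{evans2016graphs} that the marginal model depends on the hidden-variable DAG only through its mDAG projection (so a projection with no nontrivial bidirected facets forces the margin to equal the projected DAG model), or to construct explicitly, for an arbitrary $P^0\in\model(\g^0,\Vset^0)$, a law on $\Vset$ Markov to $\g$ whose margin is $P^0$. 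As written, the proposal proves only the forward inclusion.
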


\smallskip See \cref{sec:app-marg-over-NI} for a proof. The graph $\g^0$ is a reformulation of the graph produced by \citet[Algorithm 1]{rotnitzky2020efficient}. As an example, in \cref{fig:example}, projecting out $\Nset(\grev) \cup \Iset(\grev) = \{W_1, I_1\}$ from graph $\grev$ leads to graph $\g'$.

\subsection{Projecting out the remaining uninformative variables}
By exploiting the graphical structures in the W- and M-criterion and using the results on graphs for representing margins of Bayesian networks due to \citet{evans2018margins}, we show in \cref{sec:app-proj} that the remaining uninformative variables in $\Wset(\g) \cup \Mset(\g)$ can be projected out as well, one at a time. The projection is defined as follows. 
\begin{definition} \label{def:project-DAG}
Let $\g$ be a directed acyclic graph on a vertex set $\Vset$. For $V_i \in \Vset$, suppose that $\Ch(V_i, \g)$ is topologically ordered as $\pi = (V_{i_1}, \dots, V_{i_l})$ for $l \geq 1$ and let $V_{i_0} \equiv V_i$. Let $\g_{-V_i, \pi}$ be a graph on vertices $\Vset \setminus \{V_i\}$, formed by adding an edge $V_k \rightarrow V_{i_j}$ to $\g$ if the edge is not already present, for every $V_k \in \Pa(V_i, \g) \cup \{V_{i_0}, \dots, V_{i_{j-1}}\}$ and every $j=1,\dots, l$, and then removing $V_i$ and its associated edges.
\end{definition}
\smallskip In other words, all edges from $\Pa(V_i, \g)$ to $\Ch(V_i, \g)$ and all edges among $\Ch(V_i, \g)$ that are compatible with the topological ordering $\pi$ are saturated before $V_i$ is removed. In contrast to the latent projection of \citet{verma1991equivalence}, the projection defined above results in a directed acyclic graph; compare \cref{fig:example}(c) and (d). 

\medskip \begin{lemma} \label{lem:project-DAG}
Let $\g$ be a directed acyclic graph on vertices $\Vset$. Let $V_i \in \Vset$, whose children are topologically sorted as $\pi = (V_{i_1}, \dots, V_{i_l})$ for $l \geq 1$. 
Suppose that
\begin{equation} \label{eqs:projection-pa}
\Pa(V_{i_j}, \g) \subseteq \{V_{i_{j-1}}\} \cup \Pa(V_{i_{j-1}}, \g), \quad j=1,\dots, l-1,
\end{equation}
where $V_{i_0} \equiv V_i$. 
Then, $\g_{-V_i,\pi}$ is a directed acyclic graph on $\Vset \setminus \{V_i\}$ and $\model(\g, \Vset \setminus \{V_i\}) = \model(\g_{-V_i,\pi}, \Vset \setminus \{V_i\})$.
\end{lemma}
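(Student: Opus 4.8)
\textbf{Proof proposal for \cref{lem:project-DAG}.}

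The plan is to prove the two assertions in turn: first that $\g_{-V_i,\pi}$ is acyclic, and then that it represents the marginal model. For acyclicity, I would argue that the only new edges are of the form $V_k \rightarrow V_{i_j}$ with $V_k \in \Pa(V_i,\g) \cup \{V_{i_0},\dots,V_{i_{j-1}}\}$, so every new edge points from a vertex that is either a parent of $V_i$ or an earlier child of $V_i$ (in the order $\pi$) toward a later child. Using a topological order of $\g$ that is consistent with $\pi$ on $\Ch(V_i,\g)$, one checks that each new edge goes from a lower-ranked to a higher-ranked vertex, so no directed cycle can be created once $V_i$ is deleted; hence $\g_{-V_i,\pi}$ is a DAG on $\Vset\setminus\{V_i\}$.

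The heart of the argument is the model equality $\model(\g,\Vset\setminus\{V_i\}) = \model(\g_{-V_i,\pi},\Vset\setminus\{V_i\})$, and here I would invoke the results of \citet{evans2018margins} on graphical representations of margins of Bayesian networks, exactly as the excerpt anticipates. The key point is that, because the state spaces in \cref{sec:sets} are taken sufficiently large (see \cref{remark:marginal}), marginalizing a single variable $V_i$ out of a Bayesian network over a sufficiently rich state space yields again a Bayesian network, provided no bidirected edge is forced. A bidirected edge between two children $V_{i_a}$, $V_{i_b}$ of $V_i$ is avoided precisely because, after saturating all edges from $\Pa(V_i,\g)$ into $\Ch(V_i,\g)$ and all $\pi$-compatible edges among the children, the latent-projection bidirected edge can be replaced by a directed edge $V_{i_a} \rightarrow V_{i_b}$ — and this directed edge is consistent with acyclicity exactly under hypothesis \cref{eqs:projection-pa}, which guarantees that $\Pa(V_{i_j},\g)$ is absorbed into $\{V_{i_{j-1}}\}\cup\Pa(V_{i_{j-1}},\g)$ so that the saturated parent sets are nested along $\pi$. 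I would make this precise by showing that $\g_{-V_i,\pi}$ has the same adjacencies and the same unshielded colliders — hence is Markov equivalent, by \citet{verma1991equivalence,andersson1997characterization} — to the DAG obtained by first taking the latent projection of $\g$ over $\Vset\setminus\{V_i\}$ and then orienting the resulting bidirected edges into directed edges compatible with $\pi$; the latter DAG represents the margin by \citet{evans2018margins} under the large state space assumption.

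Concretely, the steps I would carry out are: (1) fix a topological order of $\g$ refining $\pi$ on $\Ch(V_i,\g)$ and verify acyclicity of $\g_{-V_i,\pi}$; (2) record the adjacency structure of $\g_{-V_i,\pi}$ — two vertices $V_k,V_l \in \Vset\setminus\{V_i\}$ are adjacent in $\g_{-V_i,\pi}$ iff they were adjacent in $\g$ or both lie in $\Pa(V_i,\g)\cup\Ch(V_i,\g)$ with at least one being a child of $V_i$; (3) using \cref{eqs:projection-pa}, show that every path in $\g$ through $V_i$ of the form $V_k \rightarrow V_i \rightarrow V_{i_j}$ or $V_{i_a} \leftarrow V_i \rightarrow V_{i_b}$ is ``shortcut'' by an edge already present in $\g_{-V_i,\pi}$, so that d-separation statements not involving $V_i$ are preserved in both directions; (4) conclude that $\g_{-V_i,\pi}$ and the margin-representing graph of \citet{evans2018margins} have identical adjacencies and unshielded colliders, hence encode the same conditional independences, hence the same Bayesian network model over the large state space; (5) combine with \cref{remark:marginal} to identify this model with the induced marginal model $\model(\g,\Vset\setminus\{V_i\})$.

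I expect the main obstacle to be step (3)–(4): carefully verifying that saturating the parent-to-children and $\pi$-compatible child-to-child edges neither creates nor destroys an unshielded collider relative to the true margin. The subtlety is that a collider $V_a \rightarrow V_{i_j} \leftarrow V_b$ might be shielded in $\g_{-V_i,\pi}$ by one of the newly added edges, and one must check this matches the collider structure of the Evans margin graph; the nesting condition \cref{eqs:projection-pa} is exactly what makes the two agree, and spelling out that implication cleanly is the crux of the proof. Everything else — acyclicity, adjacency bookkeeping, and the appeal to the large-state-space results — is routine given the cited machinery.
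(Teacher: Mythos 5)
There is a genuine gap in the central step. Your plan establishes, at best, that $\g_{-V_i,\pi}$ and the latent projection of $\g$ onto $\Vset\setminus\{V_i\}$ impose the same d-separation constraints, and then tries to conclude model equality by a Markov-equivalence argument (same adjacencies and unshielded colliders). But that criterion compares the conditional-independence models of two DAGs, whereas the object you must compare against is the induced marginal model $\model(\g,\Vset\setminus\{V_i\})$, which is not a priori characterized by its conditional independences at all (margins of Bayesian networks carry nested Markov and inequality constraints in general, as \cref{sec:marginal} itself notes). In particular, the inclusion $\model(\g_{-V_i,\pi},\Vset\setminus\{V_i\}) \subseteq \model(\g,\Vset\setminus\{V_i\})$ requires exhibiting, for each law Markov to $\g_{-V_i,\pi}$, an extension over $\Vset$ that is Markov to $\g$; no amount of checking that d-separations are preserved, or that collider patterns match, can deliver that. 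Moreover, your intermediate object --- ``the DAG obtained by orienting the bidirected edges of the latent projection compatibly with $\pi$'' --- is not known to represent the margin; that claim is essentially \cref{lem:project-DAG} itself in disguise, so invoking it is circular. You also misattribute the role of hypothesis \cref{eqs:projection-pa}: it is not about making the added directed edges acyclic (acyclicity is the easy part, which you handle correctly), but about controlling parent sets in the projected graph.

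The paper's proof avoids this by working with the mDAG formalism of \citet{evans2016graphs}: marginalizing $V_i$ produces an mDAG with directed edges from $\Pa(V_i,\g)$ into $\Ch(V_i,\g)$ and a single bidirected \emph{facet} over all of $\Ch(V_i,\g)$ (not merely pairwise bidirected edges), and then \cref{lem:mDAG} (Proposition 5 of \citealp{evans2016graphs}) is applied $l$ times, each time splitting off the $\pi$-earliest remaining child $C=\{V_{i_j}\}$ from the facet and replacing the facet by directed edges $V_{i_j}\rightarrow D$, which preserves the \emph{marginal model} at every step. The hypotheses of that proposition are a face condition and the parent-containment condition $\Pa(C,\mathcal{H})\subseteq\Pa(D_j,\mathcal{H})$, and it is exactly \cref{eqs:projection-pa} that verifies the latter (e.g., it gives $\Pa(V_{i_1},\mathcal{H}^1)=\Pa(V_i,\g)$, which is contained in the parent set of every later child by construction). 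If you want to salvage your outline, you would need to replace steps (3)--(5) by this facet-splitting argument (or by an explicit construction of the extending law for the nontrivial inclusion), rather than by a Markov-equivalence comparison; also note the relevant machinery is \citet{evans2016graphs}, not \citet{evans2018margins}.
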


\smallskip \cref{lem:project-DAG} can be specialized to any uninformative vertex in $W$ or $M$ as follows. 

\medskip \begin{lemma} \label{lem:iterate-proj}
Let $\g$ be a directed acyclic graph on a vertex set $\Vset$. Suppose that $\g$ satisfies \cref{assump:A-goes-to-Y} and $\Nset(\g) = \Iset(\g) = \emptyset$. Consider a vertex $V_i \in \Vset \setminus \Vset^{\ast}(\g)$. If $V_i \in \Wset(\g)$, suppose $V_i \equiv W_i$ and let 
\begin{equation} \label{eqs:pi-W}
\pi = \begin{cases} (W_{i_1}, \dots, W_{i_l}), \quad &A \notin \Ch(W_i, \g) \\
(W_{i_1}, \dots, W_{i_l}, A), \quad & A \in \Ch(W_i, \g) 
\end{cases},
\end{equation}
where $\Ch(W_i, \g) \cap \Wset(\g) = \{W_{i_1}, \dots, W_{i_l}\}$ is uniquely topologically sorted. 
Otherwise, $V_i \equiv M_i$ for some $M_i \in \Mset(\g)$ and let 
\begin{equation} \label{eqs:pi-M}
\pi = (M_{i_1}, \dots, M_{i_l}) = \Ch(M_i, \g), 
\end{equation}
which is uniquely topologically sorted. Then
\[ \model(\g, \Vset \setminus \{V_i\}) = \model(\g_{-V_i, \pi}, \Vset \setminus \{V_i\}), \quad \Vset^{\ast}(\g_{-V_i, \pi}) = \Vset^{\ast}(\g). \]
\end{lemma}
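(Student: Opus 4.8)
The plan is to dispose of the model equality by a direct appeal to \cref{lem:project-DAG} and then concentrate on showing $\Vset^{\ast}(\g_{-V_i,\pi})=\Vset^{\ast}(\g)$. For the first claim, observe that since $V_i\in\Vset\setminus\Vset^{\ast}(\g)$ and $\Nset(\g)=\Iset(\g)=\emptyset$, \cref{thm:criteria} forces either $V_i=W_i\in\Wset(\g)\setminus\Oset(\g)$ satisfying the W-criterion, or $V_i=M_i\in\Mset(\g)\setminus\{Y\}$ satisfying the M-criterion. In the former case I would first note $\Ch(W_i,\g)\subseteq\Wset(\g)\cup\{A\}$ — a child outside this set would be a mediator or $Y$, placing $W_i$ in $\Pa(\Mset(\g),\g)$ and hence in $\Oset(\g)$, a contradiction — so, as baseline covariates precede $A$ in every topological ordering, the $\pi$ of \cref{eqs:pi-W} is a valid topological order of $\Ch(W_i,\g)$; in the latter case $\Ch(M_i,\g)\subseteq\Mset(\g)$ since $\Nset(\g)=\emptyset$, so the $\pi$ of \cref{eqs:pi-M} is valid. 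In both cases condition (ii)(b) of the relevant criterion is precisely the hypothesis \cref{eqs:projection-pa} of \cref{lem:project-DAG} for this $\pi$, and that lemma then yields both that $\g_{-V_i,\pi}$ is a directed acyclic graph on $\Vset\setminus\{V_i\}$ and that $\model(\g,\Vset\setminus\{V_i\})=\model(\g_{-V_i,\pi},\Vset\setminus\{V_i\})$.

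Write $\g^{\ast}=\g_{-V_i,\pi}$. For the second claim my plan is to verify the characterization of \cref{thm:criteria} for $\g^{\ast}$ directly, establishing: (a) $\g^{\ast}$ satisfies \cref{assump:A-goes-to-Y}; (b) the taxonomy transfers, namely $\Nset(\g^{\ast})=\Iset(\g^{\ast})=\emptyset$, $\Oset(\g^{\ast})=\Oset(\g)$, $\Wset(\g^{\ast})=\Wset(\g)\setminus\{V_i\}$, $\Mset(\g^{\ast})=\Mset(\g)\setminus\{V_i\}$, and $\Omin(\g^{\ast})=\Omin(\g)$; and (c) each $V_m$ in $(\Wset(\g)\setminus\Oset(\g))\setminus\{V_i\}$, resp.\ $\Mset(\g)\setminus\{Y,V_i\}$, fails the W-criterion, resp.\ M-criterion, in $\g^{\ast}$ if and only if it fails it in $\g$. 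Granting (a)--(c), \cref{thm:criteria} applied to $\g^{\ast}$ expands $\Vset^{\ast}(\g^{\ast})$ into $\{A,Y\}\cup\Oset(\g)$ together with the $W$- and $M$-vertices failing their criteria in $\g^{\ast}$, which by (b) and (c) is the same list as for $\g$; since $V_i$ itself satisfies its criterion, it is absent from $\Vset^{\ast}(\g)$, so the two sets coincide.

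For (a) and most of (b) I would use the single observation that a directed $\g$-path between two vertices of $\Vset\setminus\{V_i\}$ passing through $V_i$ reroutes to a directed $\g^{\ast}$-path by collapsing the detour $\cdot\!\to\! V_i\!\to\!\cdot$ into one added edge, while every added edge of $\g^{\ast}$ inflates back to a directed $\g$-path through $V_i$ or through the directed chain $\Ch(V_i,\g)\setminus\{A\}$ supplied by criterion (ii)(a). With a little extra care when $A\in\Ch(W_i)$, this gives $\De(A,\g^{\ast})=\De(A,\g)\setminus\{V_i\}$ and $\An(Y,\g^{\ast})=\An(Y,\g)\setminus\{V_i\}$, whence the $\Nset,\Iset,\Wset,\Mset$ identities; $\Oset(\g^{\ast})=\Oset(\g)$ follows because $V_i\notin\Oset(\g)$ is not a parent of any mediator and the added edges create no new non-$A$, non-mediator parent of a mediator (for $V_i=M_i$ the added edges merely redirect $M_i$'s own parents, which are already in $\Oset(\g)$ or are mediators or $A$); and $\Omin(\g^{\ast})=\Omin(\g)$ follows from the model equality established above, since it and completeness of d-separation force $\g$ and $\g^{\ast}$ to entail the same d-separations among subsets of $\Vset\setminus\{V_i\}$, a set containing $\{A\}\cup\Oset(\g)$.

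The hard part will be (c), specifically when $V_m$ is adjacent to $V_i$ — above all when $V_m$ is one of $V_i$'s children, whose parent set and whose relevant child set both change under the projection. Here I would use the full strength of the criterion imposed on $V_i$: condition (ii)(b) on $V_i$'s children guarantees the parents newly acquired by $V_m$ are already upstream of $V_m$; condition (ii)(a) pins down the new edges among $V_i$'s children; and condition (ii)(c), together with the d-separation invariance above, handles the conditional-independence clauses (i) and (ii)(c) of $V_m$'s own criterion. The bookkeeping is most delicate when $A\in\Ch(W_i)$ (the projection adds edges into $A$, enlarging $\An(A,\g^{\ast})$, and one must check no baseline covariate becomes a descendant of $A$ and no indirect ancestor is reclassified) and when $V_m$ lies further down $V_i$'s child-chain; I expect this case analysis to be the only genuine obstacle. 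Finally, I note that (c) can be bypassed once (a) and $\Oset(\g^{\ast})=\Oset(\g)$ are in hand: the model equality, the identity $\Psi_a(P;\g)=\Psi_{a,\Oset(\g)}^{\mathrm{ADJ}}(P;\g)$ on $\model(\g,\Vset)$, and the validity of $\Oset(\g)$ as an adjustment set in $\g^{\ast}$ show that $\Psi_a(\cdot;\g^{\ast})$, read through the $\Vset\setminus\{V_i\}$-margin, agrees with $\Psi_a(\cdot;\g)$ on $\model(\g,\Vset)$; since $V_i$ is uninformative, \cref{lem:eif-marg} then equates the efficient influence functions of the two problems and \cref{lem:info} identifies the irreducible informative set of each, giving $\Vset^{\ast}(\g^{\ast})=\Vset^{\ast}(\g)$.
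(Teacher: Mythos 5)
Your treatment of the model equality is exactly the paper's: use \cref{thm:criteria} to conclude that $V_i$ satisfies the W- or M-criterion, check that $\pi$ is a (unique) topological ordering of the relevant children and that condition (ii)(b) of the criterion yields \cref{eqs:projection-pa}, and invoke \cref{lem:project-DAG}. For $\Vset^{\ast}(\g_{-V_i,\pi})=\Vset^{\ast}(\g)$, the argument you actually complete is your closing ``bypass'' --- model equality plus the fact that $\Psi_a(P;\g)=\Psi^{\mathrm{ADJ}}_{a,\Oset(\g)}(P;\g)$ depends on $P$ only through $P(\Vset\setminus\{V_i\})$, then \cref{lem:eif-marg} together with \cref{def:irr-info}/\cref{lem:info} --- which is precisely how the paper concludes; the lengthy direct re-verification of the taxonomy and of the W-/M-criteria in $\g_{-V_i,\pi}$ (your steps (a)--(c)) is never carried out (you flag (c) as an unresolved obstacle), so it should be dropped in favour of the bypass rather than offered as the main route.
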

In other words, by projecting out an uninformative variable $V_i \in \Wset \cup \Mset$ from a graph $\g$ whose $\Nset(\g)$ and $\Iset(\g)$ are empty, the resulting graph $\g_{-V_i, \pi}$ represents the marginal model over the remaining variables, and preserves the same set of irreducible informative variables given by \cref{thm:criteria}.

\subsection{Graph reduction algorithm and properties of the reduced graph}
The graph reduction procedure is presented in \cref{alg:graph}. In the algorithm each vertex is visited once. As checking any d-separation takes a polynomial time of $|V|$, the algorithm also finishes in a polynomial time of $|V|$. The algorithm is implemented in the R package \texttt{reduceDAG}, available from \texttt{https://github.com/richardkwo/reduceDAG}.

\begin{algorithm}[!htb]
\DontPrintSemicolon \SetNoFillComment \caption{Graph reduction algorithm} \label{alg:graph}
\KwIn{Graph $\g$ on vertex set $\Vset$ satisfying \cref{assump:A-goes-to-Y}}
\KwOut{Reduced graph $\g^{\ast}$ that represents $\model(\g, \Vset^{\ast})$}
$\Vset^{\ast} \gets \{A\} \cup \Wset(\g) \cup \Mset(\g)$\;
$\g^{\ast} \gets \g^{0}$ defined in  \cref{lem:marg-over-NI}\;
\For{$V_i \in \Vset^{\ast} \setminus \{\{A, Y\} \cup \Oset(\g)\}$} {
	\uIf{$V_i \in \Wset $ and $V_i$ satisfies the W-criterion in \cref{lem:W-criterion}} {
		$\Vset^{\ast} \gets \Vset^{\ast} \setminus \{V_i\}$\;
		$\g^{\ast} \gets \g^{\ast}_{-V_i,\pi}$ with $\pi$ defined in \cref{eqs:pi-W}\; 
	}
	\uElseIf{$V_i \in \Mset$ and $V_i$ satisfies the M-criterion in \cref{lem:M-criterion}} {
		$\Vset^{\ast} \gets \Vset^{\ast} \setminus \{V_i\}$\;
		$\g^{\ast} \gets \g^{\ast}_{-V_i,\pi}$ with $\pi$ defined in \cref{eqs:pi-M}\; 
	}
}
\Return{$\g^{\ast}$}\;
\end{algorithm}

The properties of the reduced graph are summarized by our next main result; see \cref{sec:app-reduction} for its proof.

\begin{theorem} \label{thm:algo}
Let $\g$ be a directed acyclic graph on a vertex set $\Vset$ that satisfies \cref{assump:A-goes-to-Y}. Suppose $A \in \Vset$ is a discrete treatment and $Y \in \Vset$ is the outcome of interest. Let $\g^{\ast}$ be the output of \cref{alg:graph} resulting from input $\g$. 
Let $\Vset^{\ast} \equiv \Vset^{\ast}(\g)$ be the set of irreducible informative variables given in \cref{thm:criteria}. Also, let $P^{\ast} \equiv P(\Vset^{\ast})$ and define $\Psi_a(P; \g^{\ast}) \equiv \Psi_a(P^{\ast}; \g^{\ast})$. 
The graph $\g^{\ast}$ satisfies the following properties:
\begin{enumerate}[(i)]
\item $\g^{\ast}$ is a directed acyclic graph on vertices $\Vset^{\ast}$;
\item $\g^{\ast}$ does not depend on the order in which vertices are visited in the for-loop of \cref{alg:graph};
\item $\model(\g, \Vset^{\ast}) = \model(\g^{\ast}, \Vset^{\ast})$; 
\item $\Psi_a(P; \g) = \Psi_a(P; \g^{\ast})$ for every $P \in \model(\g; \Vset)$;
\item for every $P \in \model(\g, \Vset)$, the efficient influence functions $\Psi_{a,P,\text{eff}}^{1}(\Vset; \g)$ and $\Psi_{a,P^{\ast},\text{eff}}^{1}(\Vset^{\ast}; \g^{\ast})$, as functions of $\Vset$ and $\Vset^{\ast}$, respectively, are identical $P$-almost everywhere;
\item the g-formula $\Psi_a(\cdot; \g^{\ast}): \model_0(\Vset) \rightarrow \mathbb{R}$ is an irreducible, efficient identifying formula for the g-functional defined on $\model(\g, \Vset)$. 
\end{enumerate}
\end{theorem}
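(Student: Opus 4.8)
The plan is to obtain (i)--(vi) by assembling the marginalization results of the previous two sections, with only part (ii) requiring a genuinely new argument. Throughout write $\Vset^{\ast}\equiv\Vset^{\ast}(\g)$, $\Oset\equiv\Oset(\g)$.

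For (i) and (iii) I would argue by induction along the for-loop. The base case is the graph $\g^0$ of \cref{lem:marg-over-NI}: it is a directed acyclic graph on $\Vset^0=\Vset\setminus\{\Nset(\g)\cup\Iset(\g)\}$ with $\model(\g^0,\Vset^0)=\model(\g,\Vset^0)$, and one checks directly that $\Nset(\g^0)=\Iset(\g^0)=\emptyset$, $\Oset(\g^0)=\Oset$ and $\Vset^{\ast}(\g^0)=\Vset^{\ast}$. For the inductive step, whenever the loop projects out a vertex $V_i$ satisfying the W- or M-criterion in the current graph $\g^{\ast}_{\mathrm{cur}}$, \cref{lem:iterate-proj} applies (after checking that the projection again yields empty $\Nset$ and $\Iset$, so the lemma can be reused): the projection $(\g^{\ast}_{\mathrm{cur}})_{-V_i,\pi}$ is again a directed acyclic graph, it represents the marginal model over the remaining vertices, and it has the same irreducible informative set. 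By \cref{thm:criteria}, a vertex of the current graph lying in $\Wset\setminus\Oset$ (resp.\ $\Mset\setminus\{Y\}$) satisfies its criterion exactly when it does \emph{not} belong to $\Vset^{\ast}$; hence the loop removes precisely the vertices of $\Vset^0\setminus\Vset^{\ast}$, so $\g^{\ast}$ is a directed acyclic graph on $\Vset^{\ast}$, which is (i). Composing the model equalities along the loop and using that a margin of a margin is a margin gives $\model(\g,\Vset^{\ast})=\model(\g^0,\Vset^{\ast})=\cdots=\model(\g^{\ast},\Vset^{\ast})$, which is (iii).

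For (iv), (v), (vi) I would exploit that on $\model(\g,\Vset)$ the g-functional equals the adjustment functional $\Psi^{\mathrm{ADJ}}_{a,\Oset}(\cdot;\g)$, which depends on $P$ only through $P(\{A,Y\}\cup\Oset)$ (this is the fact recalled after \cref{lem:Oset-informative}), with $\{A,Y\}\cup\Oset\subseteq\Vset^{\ast}$ and $\Oset(\g^{\ast})=\Oset$. Since $\model(\g^{\ast},\Vset^{\ast})=\model(\g,\Vset^{\ast})$ by (iii), for $P\in\model(\g,\Vset)$ with margin $P^{\ast}=P(\Vset^{\ast})$ we get $\Psi_a(P;\g)=\Psi^{\mathrm{ADJ}}_{a,\Oset}(P^{\ast})=\Psi_a(P^{\ast};\g^{\ast})=\Psi_a(P;\g^{\ast})$, giving (iv). Part (v) follows from \cref{lem:eif-marg} with $\gamma=\Psi_a(\cdot;\g)$, $\chi=\Psi_a(\cdot;\g^{\ast})$, $\Vset'=\Vset^{\ast}$: both functionals are regular, $\gamma=\chi$ on the model by (iv), and $\Psi_{a,P,\mathrm{eff}}^{1}(\Vset;\g)$ depends on $\Vset$ only through $\Vset^{\ast}$ --- this last point being exactly property~(i) of \cref{lem:info} that is verified in the proof of \cref{thm:criteria}. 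For (vi): $\Psi_a(\cdot;\g^{\ast})$ is an identifying formula for the g-functional by (iv); it is efficient because it depends on $P$ only through $P^{\ast}$, so its nonparametric influence function at $P$ equals that of the g-formula of $\g^{\ast}$ viewed on $\model_0(\Vset^{\ast})$, which by \cref{lem:effic-of-g-formula} (applied to $\g^{\ast}$) equals $\Psi_{a,P^{\ast},\mathrm{eff}}^{1}(\Vset^{\ast};\g^{\ast})$ on $\model(\g^{\ast},\Vset^{\ast})$, which in turn equals $\Psi_{a,P,\mathrm{eff}}^{1}(\Vset;\g)$ by (v); and it is irreducible because $\Vset^{\ast}$ is the unique irreducible informative set by \cref{thm:criteria} and, by definition, $\Psi_a(P;\g^{\ast})$ depends on $P$ only through $P(\Vset^{\ast})$.

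The main obstacle is part (ii), order-independence of $\g^{\ast}$. I would prove it by a local confluence (``diamond'') argument: if $U_1,U_2\in\Vset^0\setminus\Vset^{\ast}$ can both be projected out of an intermediate graph $\h$, then $(\h_{-U_1,\pi_1})_{-U_2,\pi_2'}=(\h_{-U_2,\pi_2})_{-U_1,\pi_1'}$, where the orderings are the canonical ones from \cref{eqs:pi-W,eqs:pi-M}. Establishing this requires a short case analysis on the adjacency relation between $U_1$ and $U_2$ (non-adjacent; $U_1\to U_2$ or $U_2\to U_1$; a common parent or a common child), using the nestedness of parent sets guaranteed by conditions (ii)(b) of the W- and M-criteria to check that the edge sets added are the same in either order --- in particular that the ``sibling'' orientations forced when a common parent is removed (e.g.\ the edge $O_1\to A$ in \cref{fig:example}(c)) are consistent across orders. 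Since the loop terminates after $|\Vset^0\setminus\Vset^{\ast}|$ steps, Newman's lemma then yields order-independence of the final graph. An alternative, which I would fall back on if the case analysis becomes unwieldy, is to give a closed-form description of the edges of $\g^{\ast}$ directly in terms of $\g^0$ and $\Vset^{\ast}$ (directed paths through removed vertices, together with the forced sibling orientations that place $A$ late in the ordering) and to check that the iterative construction reproduces it irrespective of the visiting order; either route, the bookkeeping around the added edges is where the real work lies.
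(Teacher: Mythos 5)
Your proposal follows essentially the same route as the paper: (i) and (iii) by iterating \cref{lem:marg-over-NI} and \cref{lem:iterate-proj} while noting that $\Vset^{\ast}$ is preserved at each step, (iv) via the adjustment functional using $\{A,Y\}\cup\Oset(\g)\subseteq\Vset^{\ast}$ and (iii), (v) via \cref{lem:eif-marg}, and (vi) by combining (iv), (v) and \cref{lem:effic-of-g-formula} applied to $\g^{\ast}$. For (ii), your pairwise ``diamond'' commutation argument is exactly the content of the paper's \cref{lem:order-indep-algo}, whose proof consists of the very edge-set bookkeeping (split on adjacency of the two removed vertices and on whether $A$ is the last child, using the nested parent sets from condition (ii)(b) of the criteria) that you correctly identify but leave unexecuted.
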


\medskip \begin{corollary} \label{cor:asymp-equiv-g}
Suppose the conditions in \cref{thm:algo} are satisfied and that variables in $\Vset$ are discrete. Then under every $P \in \model(\g, \Vset)$, 
\[ n^{1/2} \left\{\Psi_a(\mathbb{P}^{\ast}_n; \g^{\ast}) - \Psi_a(\mathbb{P}_n; \g)\right\} = o_{p}(1) \] 
as $n \rightarrow \infty$, where $\mathbb{P}_n$ and $\mathbb{P}^{\ast}_n$ are respectively the empirical measures based on $n$ independent copies of $\Vset$ and $\Vset^{\ast}$.
\end{corollary}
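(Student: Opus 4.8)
The plan is to show that, when $\Vset$ is discrete, the two plugin estimators are regular and asymptotically linear for the same target with the \emph{same} influence function, evaluated at \emph{coupled} observations; their first-order expansions then differ by a sum of terms that are identically zero almost surely, which gives the claimed $o_p(1)$.

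First I would establish the asymptotic linearity of each plugin. Because $\Vset$ is discrete, $\Psi_a(\cdot;\g):\model_0(\Vset)\to\mathbb{R}$ is a smooth (in fact rational) function of finitely many cell probabilities on the region where $P\{A=a\mid\Pa(A,\g)\}$ is bounded away from zero; since $\mathbb{P}_n\to P$ almost surely, $\mathbb{P}_n$ eventually lies in that region, so $\Psi_a(\mathbb{P}_n;\g)$ is almost surely well defined for large $n$. As recorded in \cref{sec:eif}, the plugin of a regular functional at the empirical measure is then regular and asymptotically linear with influence function equal to the nonparametric influence function, so $\Psi_a(\mathbb{P}_n;\g)$ has influence function $\Psi_{a,P,\text{NP}}^{1}(\Vset;\g)$, which by \cref{lem:effic-of-g-formula} equals the efficient influence function $\Psi_{a,P,\text{eff}}^{1}(\Vset;\g)$ relative to $\model(\g,\Vset)$ at every $P\in\model(\g,\Vset)$. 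I would then run the same argument on $\g^{\ast}$ over $\Vset^{\ast}$ — observing that $\g^{\ast}$ satisfies \cref{assump:A-goes-to-Y}, since the projection of \cref{def:project-DAG} retains, for each removed vertex, a causal path from $A$ to $Y$ avoiding it — and note that $\mathbb{P}^{\ast}_n$ is exactly the $\Vset^{\ast}$-margin of $\mathbb{P}_n$. This gives that $\Psi_a(\mathbb{P}^{\ast}_n;\g^{\ast})$ is regular and asymptotically linear for $\Psi_a(P^{\ast};\g^{\ast})$ with influence function $\Psi_{a,P^{\ast},\text{NP}}^{1}(\Vset^{\ast};\g^{\ast})=\Psi_{a,P^{\ast},\text{eff}}^{1}(\Vset^{\ast};\g^{\ast})$ for $P^{\ast}\in\model(\g^{\ast},\Vset^{\ast})=\model(\g,\Vset^{\ast})$, using \cref{thm:algo}(iii).

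Then I would invoke \cref{thm:algo} to align the two expansions. Part~(iv) gives $\Psi_a(P;\g)=\Psi_a(P^{\ast};\g^{\ast})$ for every $P\in\model(\g,\Vset)$, so the two estimators are consistent for a common quantity, and part~(v) gives that $\Psi_{a,P,\text{eff}}^{1}(\Vset;\g)$ and $\Psi_{a,P^{\ast},\text{eff}}^{1}(\Vset^{\ast};\g^{\ast})$ agree $P$-almost everywhere, as functions of $\Vset$ and of its $\Vset^{\ast}$-subvector, respectively. Combining with the previous step, $\Psi_{a,P,\text{NP}}^{1}(\Vset;\g)=\Psi_{a,P^{\ast},\text{NP}}^{1}(\Vset^{\ast};\g^{\ast})$ for $P$-almost every $\Vset$. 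Writing $\Vset^{(1)},\dots,\Vset^{(n)}\iid P$ and letting $\Vset^{\ast(i)}$ denote the $\Vset^{\ast}$-subvector of $\Vset^{(i)}$ (so that $\mathbb{P}^{\ast}_n$ is built from $\Vset^{\ast(1)},\dots,\Vset^{\ast(n)}$), subtracting the two asymptotic-linearity expansions and using $\Psi_a(P;\g)=\Psi_a(P^{\ast};\g^{\ast})$ yields
\[
n^{1/2}\left\{\Psi_a(\mathbb{P}^{\ast}_n;\g^{\ast})-\Psi_a(\mathbb{P}_n;\g)\right\}
= n^{-1/2}\sum_{i=1}^{n}\left\{\Psi_{a,P^{\ast},\text{NP}}^{1}(\Vset^{\ast(i)};\g^{\ast})-\Psi_{a,P,\text{NP}}^{1}(\Vset^{(i)};\g)\right\}+o_p(1).
\]
Since each summand vanishes $P$-almost surely, the sum is identically zero almost surely and the conclusion follows.

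The substantive content is already carried by \cref{thm:algo}; what remains is essentially bookkeeping, and the only point I expect to require genuine care is the asymptotic-linearity step — verifying that $\mathbb{P}_n$ and $\mathbb{P}^{\ast}_n$ eventually satisfy \cref{assump:positivity} so the plugins are well defined, that $\Psi_a(\cdot;\g)$ is differentiable at each $P$ with derivative the nonparametric influence function, and, crucially, keeping $\mathbb{P}_n$ and $\mathbb{P}^{\ast}_n$ on a common probability space with $\mathbb{P}^{\ast}_n$ the $\Vset^{\ast}$-margin of $\mathbb{P}_n$, so that the two empirical averages in the display above cancel term by term rather than merely agreeing in distribution.
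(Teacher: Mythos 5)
Your proposal is correct and follows essentially the same route as the paper: asymptotic linearity of the two discrete plugins with influence functions equal to the nonparametric ones, identification of both with the common efficient influence function $\Psi_{a,P,\text{eff}}^{1}(\Vset;\g)$ via \cref{lem:effic-of-g-formula} and \cref{thm:algo}, and term-by-term cancellation of the coupled empirical averages. The only difference is cosmetic: the paper cites \cref{thm:algo}(vi) directly, whereas you re-derive its content from parts (iii)--(v) together with \cref{lem:effic-of-g-formula} applied to $\g^{\ast}$, which is exactly how (vi) is proved in the appendix.
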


In light of \cref{cor:asymp-equiv-g}, in \cref{sec:app-simu} we compare the two estimators for the example in \cref{fig:example} with simulations based on discrete data; their performances seem extremely close even for finite samples.

\section{Examples} \label{sec:ex}
To ease the notation, we omit the graph from vertex sets when it is clear from the context.

\addtocounter{example}{-1}
\begin{example}[continued]
By \cref{thm:criteria}, $\Vset^{\ast} = \Vset$ for \cref{fig:taxonomy}(b). Hence, the graph cannot be further reduced; g-formula \cref{eqs:front-back-door-g} is efficient, while \cref{eqs:front-back-door-back,eqs:front-back-door-front} are not.
\end{example}

\medskip \begin{example} \label{ex:strange}
Consider graph $\g_1$ in \cref{fig:strange}. Note that $\Oset_{\min} = \emptyset$. Variable $M$ is uninformative by checking against the M-criterion: (i) $M \indep_{\g} A, Y \mid A, Y, O$; (ii) (a) $M \rightarrow Y$, (b) $\Pa(Y) \subset \{A, O, M\}$ and (c) $O \indep_{\g} A, Y \mid A, M$. The graph $\g_1$ is reduced to $\g_1^{\ast}$, which prescribes an irreducible, efficient g-formula
\begin{equation} \label{eqs:g-strange}
\Psi_a(P; \g_1^{\ast}) = \sum_{o} \E(Y \mid A=a, o)\, p(o). 
\end{equation}
This result also follows from \citet[Theorem 19]{rotnitzky2020efficient}.

On the other hand, suppose we add edge $O \rightarrow A$ as in $\g_2$. Now, we have $\Oset_{\min}(\g_2) = \{O\}$ and $M$ fails the M-criterion. Hence, if $A$ is randomized conditionally on $O$, then \cref{eqs:g-strange} is still an identifying formula for the g-functional, but is no longer efficient. Since $\g_2 = \g_2^{\ast}$, g-formula $\Psi_a(P; \g_2)$ is irreducible and efficient.

Furthermore, suppose the edge between $A$ and $O$ is added in the reverse direction, as shown in $\g_3$, where $O$ is relabeled as $M'$. The variables $\{M, M'\}$ are uninformative by checking against the M-criterion or, alternatively, by recognizing that they are non-ancestors of $Y$ in a causal Markov equivalent graph $\g_3'$. In this case, an irreducible, efficient identifying formula is simply
\[ \Psi_a(P; \g_3^{\ast}) = \E(Y \mid A=a).  \]

\begin{figure}[!htb]
\centering
\begin{tikzpicture}
\tikzset{rv/.style={circle,inner sep=1pt,fill=gray!20,draw,font=\sffamily}, 
redv/.style={circle,inner sep=1pt,fill=white,draw,dashed,font=\sffamily}, 
sv/.style={circle,inner sep=1pt,fill=gray!20,draw,font=\sffamily,minimum size=1mm}, 
node distance=12mm, >=stealth, every node/.style={scale=0.8}}
\begin{scope}
\node[name=A, rv]{$A$};
\node[name=M, redv, right of=A]{$M$};
\node[name=Y, rv, right of=M]{$Y$};
\node[name=O, rv, above of=M]{$O$};
\node[below of=M, yshift=4mm]{(a) $\g_1$};
\draw[->, very thick, color=blue] (A) -- (M);
\draw[->, very thick, color=blue] (M) -- (Y);
\draw[->, very thick, color=blue, bend right] (A) to (Y);
\draw[->, very thick, color=blue] (O) -- (M);
\end{scope} \begin{scope}[xshift=3cm]
\node[name=A, rv]{$A$};
\node[name=Y, rv, right of=A]{$Y$};
\node[name=O, rv, above of=Y]{$O$};
\node[above of=A, xshift=-7mm, yshift=-4mm]{\Large $\Rightarrow$};
\node[below of=Y, xshift=-5mm, yshift=4mm]{(b) $\g_1^{\ast}$};
\draw[->, very thick, color=blue] (A) -- (Y);
\draw[->, very thick, color=blue] (O) -- (Y);
\end{scope} \begin{scope}[xshift=5cm]
\node[name=A, rv]{$A$};
\node[name=M, rv, right of=A]{$M$};
\node[name=Y, rv, right of=M]{$Y$};
\node[name=O, rv, above of=M]{$O$};
\node[below of=M, yshift=4mm]{(c) $\g_2 = \g_2^{\ast}$};
\draw[->, very thick, color=blue] (A) -- (M);
\draw[->, very thick, color=blue] (M) -- (Y);
\draw[->, very thick, color=blue, bend right] (A) to (Y);
\draw[->, very thick, color=blue] (O) -- (M);
\draw[->, very thick, color=blue] (O) -- (A);
\end{scope} \begin{scope}[xshift=8cm]
\node[name=A, rv]{$A$};
\node[name=M, redv, right of=A]{$M$};
\node[name=Y, rv, right of=M]{$Y$};
\node[name=O, redv, above of=M]{$M'$};
\node[below of=M, yshift=4mm]{(d) $\g_3$};
\draw[->, very thick, color=blue] (A) -- (M);
\draw[->, very thick, color=blue] (M) -- (Y);
\draw[->, very thick, color=blue, bend right] (A) to (Y);
\draw[->, very thick, color=blue] (O) -- (M);
\draw[->, very thick, color=blue] (A) -- (O);
\end{scope} \begin{scope}[xshift=10.7cm]
\node[name=A, rv]{$A$};
\node[name=M, redv, right of=A]{$M$};
\node[name=Y, rv, right of=M]{$Y$};
\node[name=O, redv, above of=M]{$M'$};
\node[below of=M, yshift=4mm]{(e) $\g_3'$};
\node[above of=A, xshift=-5mm, yshift=-4mm]{\Large $\ceq$};
\draw[->, very thick, color=blue] (A) -- (M);
\draw[->, very thick, color=blue] (Y) -- (M);
\draw[->, very thick, color=blue, bend right] (A) to (Y);
\draw[->, very thick, color=blue] (M) -- (O);
\draw[->, very thick, color=blue] (A) -- (O);
\end{scope}
\begin{scope}[xshift=13.5cm]
\node[name=A, rv]{$A$};
\node[name=Y, rv, right of=A]{$Y$};
\node[below of=A, yshift=4mm, xshift=5mm]{(f) $\g_3^{\ast}$};
\node[above of=A, xshift=-5mm, yshift=-4mm]{\Large $\Rightarrow$};
\draw[->, very thick, color=blue] (A) -- (Y);
\end{scope}
\end{tikzpicture}
\caption{Reduction of graphs $\g_1, \g_2, \g_3$ in \cref{ex:strange}.}
\label{fig:strange}
\end{figure}
\end{example}

\medskip \begin{example}[optimal adjustment] \label{ex:two-Os}
Consider the graphs in \cref{fig:two-Os}. Recall that the optimal adjustment estimator is the sample version of \cref{eqs:ex-backdoor} when $L = \Oset$. 
When the optimal adjustment estimator is efficient, such as under $\g_3$, it holds that $\Vset^{\ast}$ only consists of the optimal adjustment set, $A$ and $Y$. However, the reverse need not be true. Consider the graph $\g_1$ where $\Vset^{\ast}(\g_1) = \Oset(\g_1) \cup \{A,Y\}$, but the optimal adjustment estimator is inefficient because it does not exploit the independence between $O_1$ and $O_2$; compare with the g-formula associated with $\g_1$.
\begin{figure}[!htb]
\centering
\begin{tikzpicture}
\tikzset{rv/.style={circle,inner sep=1pt,fill=gray!20,draw,font=\sffamily}, 
redv/.style={circle,inner sep=1pt,fill=white,draw,dashed,font=\sffamily}, 
sv/.style={circle,inner sep=1pt,fill=gray!20,draw,font=\sffamily,minimum size=1mm}, 
node distance=12mm, >=stealth, every node/.style={scale=0.8}}
\begin{scope}
\node[name=A, rv]{$A$};
\node[name=M, right of=A]{};
\node[name=Y, rv, right of=M]{$Y$};
\node[name=O1, rv, above of=A]{$O_1$};
\node[name=O2, rv, above of=Y]{$O_2$};
\node[below of=M, yshift=4mm]{(a) $\g_1 = \g_1^{\ast}$};
\draw[->, very thick, color=blue] (A) -- (Y);
\draw[->, very thick, color=blue] (O1) -- (A);
\draw[->, very thick, color=blue] (O1) -- (Y);
\draw[->, very thick, color=blue] (O2) -- (A);
\draw[->, very thick, color=blue] (O2) -- (Y);
\end{scope} \begin{scope}[xshift=4cm]
\node[name=A, rv]{$A$};
\node[name=M, right of=A]{};
\node[name=Y, rv, right of=M]{$Y$};
\node[name=O1, rv, above of=A]{$O_1$};
\node[name=O2, rv, above of=Y]{$O_2$};
\node[name=W, rv, above of=M, yshift=7mm]{$W$};
\node[below of=M, yshift=4mm]{(b) $\g_2 = \g_2^{\ast}$};
\draw[->, very thick, color=blue] (A) -- (Y);
\draw[->, very thick, color=blue] (O1) -- (A);
\draw[->, very thick, color=blue] (O1) -- (Y);
\draw[->, very thick, color=blue] (O2) -- (A);
\draw[->, very thick, color=blue] (O2) -- (Y);
\draw[->, very thick, color=blue] (W) -- (O1);
\draw[->, very thick, color=blue] (W) -- (O2);
\end{scope} \begin{scope}[xshift=8cm]
\node[name=A, rv]{$A$};
\node[name=M, right of=A]{};
\node[name=Y, rv, right of=M]{$Y$};
\node[name=O1, rv, above of=A]{$O_1$};
\node[name=O2, rv, above of=Y]{$O_2$};
\node[name=W, redv, above of=M, yshift=7mm]{$W$};
\node[below of=M, yshift=4mm]{(c) $\g_3$};
\draw[->, very thick, color=blue] (A) -- (Y);
\draw[->, very thick, color=blue] (O1) -- (A);
\draw[->, very thick, color=blue] (O1) -- (Y);
\draw[->, very thick, color=blue] (O2) -- (A);
\draw[->, very thick, color=blue] (O2) -- (Y);
\draw[->, very thick, color=blue] (W) -- (O1);
\draw[->, very thick, color=blue] (W) -- (O2);
\draw[->, very thick, color=blue] (O1) -- (O2);
\end{scope} \begin{scope}[xshift=11.5cm]
\node[name=A, rv]{$A$};
\node[name=M, right of=A]{};
\node[name=Y, rv, right of=M]{$Y$};
\node[name=O1, rv, above of=A]{$O_1$};
\node[name=O2, rv, above of=Y]{$O_2$};
\node[below of=M, yshift=4mm]{(d) $\g_3^{\ast}$};
\draw[->, very thick, color=blue] (A) -- (Y);
\draw[->, very thick, color=blue] (O1) -- (A);
\draw[->, very thick, color=blue] (O1) -- (Y);
\draw[->, very thick, color=blue] (O2) -- (A);
\draw[->, very thick, color=blue] (O2) -- (Y);
\draw[->, very thick, color=blue] (O1) -- (O2);
\node[left= 2mm of O1]{\Large $\Rightarrow$};
\end{scope} \end{tikzpicture}
\caption{Reduction of graphs $\g_1, \g_2, \g_3$ in \cref{ex:two-Os}. The optimal adjustment estimator is inefficient for $\g_1$ even though $\Vset^{\ast}(\g_1) = \Oset(\g_1) \cup \{A,Y\}$.}
\label{fig:two-Os}
\end{figure}
\end{example}

\medskip \begin{example} \label{ex:three-Ms}
Consider graph $\g$ in \cref{fig:three-Ms}. By \cref{thm:criteria}, $A, Y, O_1$ and $O_2$ are included in $\Vset^{\ast}$. Note that $\Oset_{\min} = \{O_1\}$. By projecting out $I_1$, an indirect ancestor of $Y$, $\g$ is reduced to $\g^0$. Now let us check the M-criterion for $M_1$, $M_2$ and $M_3$.
First, $M_1$ fails the M-criterion because $M_1 \not\indep_{\g} A, Y, O_1 \mid Y, M_3$. Second, $M_2$ satisfies the criterion as it can be checked that (i) $M_2 \indep_{\g} A, Y, O_1 \mid M_1, M_3$; (ii)(a) $M_2 \rightarrow M_3$, (b) $\Pa(M_3) \subseteq \Pa(M_2) \cup \{M_2\}$ and (c) $\Pa(M_2) \setminus \Pa(M_3) = \emptyset$ so the corresponding d-separation trivially holds. Third, $M_3$ also satisfies the criterion: (i) $M_3 \indep_{\g} A, Y, O_1 \mid Y, M_1$; (ii) (a) $M_3 \rightarrow Y$, (b) $\Pa(Y) \subset \Pa(M_3) \cup \{M_3\}$ and (c) $M_2 \indep_{\g} A, Y, O_1 \mid M_1, M_3$. By further projecting out $M_2$ and $M_3$, we get $\g^{\ast}$. Consequently, an irreducible, efficient g-formula is
\[ \Psi_a(P; \g^{\ast}) = \sum_{m_1} \E(Y \mid m_1) \sum_{o_1, o_2} P(m_1 \mid A=a, o_1, o_2)\, p(o_1) \, p(o_2). \]

\begin{figure}[htb]
\centering
\begin{tikzpicture}
\tikzset{rv/.style={circle,inner sep=1pt,fill=gray!20,draw,font=\sffamily}, 
redv/.style={circle,inner sep=1pt,fill=white,draw,dashed,font=\sffamily}, 
sv/.style={circle,inner sep=1pt,fill=gray!20,draw,font=\sffamily,minimum size=1mm}, 
node distance=12mm, >=stealth, every node/.style={scale=0.75}}
\begin{scope}
\node[name=A, rv]{$A$};
\node[name=M1, rv, right of=A]{$M_1$};
\node[name=M2, redv, right of=M1]{$M_2$};
\node[name=M3, redv, right of=M2]{$M_3$};
\node[name=Y, rv, right of=M3]{$Y$};
\node[name=I, redv, above of=A, xshift=-5mm]{$I_1$};
\node[name=O1, rv, above of=A, xshift=5mm]{$O_1$};
\node[name=O2, rv, above of=A, xshift=15mm]{$O_2$};
\node[below of=M2, yshift=2mm]{(a) $\g$};
\draw[->,very thick, color=blue] (A) to (M1);
\draw[->,very thick, color=blue] (M1) to (M2);
\draw[->,very thick, color=blue] (M2) to (M3);
\draw[->,very thick, color=blue] (M3) to (Y);
\draw[->,very thick, color=blue, bend right] (M1) to (Y);
\draw[->,very thick, color=blue, bend left] (M1) to (M3);
\draw[->,very thick, color=blue] (I) to (A);
\draw[->,very thick, color=blue] (O1) to (A);
\draw[->,very thick, color=blue] (O1) to (M1);
\draw[->,very thick, color=blue] (O2) to (M1);
\end{scope} \begin{scope}[xshift=4.2cm]
\node[name=A, rv]{$A$};
\node[name=M1, rv, right of=A]{$M_1$};
\node[name=M2, redv, right of=M1]{$M_2$};
\node[name=M3, redv, right of=M2]{$M_3$};
\node[name=Y, rv, right of=M3]{$Y$};
\node[name=O1, rv, above of=A, xshift=5mm]{$O_1$};
\node[name=O2, rv, above of=A, xshift=15mm]{$O_2$};
\node[left=4mm of O1]{\Large $\Rightarrow$};
\node[below of=M2, yshift=2mm]{(b) $\g^0$};
\draw[->,very thick, color=blue] (A) to (M1);
\draw[->,very thick, color=blue] (M1) to (M2);
\draw[->,very thick, color=blue] (M2) to (M3);
\draw[->,very thick, color=blue] (M3) to (Y);
\draw[->,very thick, color=blue, bend right] (M1) to (Y);
\draw[->,very thick, color=blue, bend left] (M1) to (M3);
\draw[->,very thick, color=blue] (O1) to (A);
\draw[->,very thick, color=blue] (O1) to (M1);
\draw[->,very thick, color=blue] (O2) to (M1);
\end{scope} \begin{scope}[xshift=8.4cm]
\node[name=A, rv]{$A$};
\node[name=M1, rv, right of=A]{$M_1$};
\node[name=M3, redv, right of=M1]{$M_3$};
\node[name=Y, rv, right of=M3]{$Y$};
\node[name=O1, rv, above of=A, xshift=5mm]{$O_1$};
\node[name=O2, rv, above of=A, xshift=15mm]{$O_2$};
\node[left=4mm of O1]{\Large $\Rightarrow$};
\node[below of=M1, yshift=2mm, xshift=2mm]{(c) $\g^1$};
\draw[->,very thick, color=blue] (A) to (M1);
\draw[->,very thick, color=blue] (M1) to (M3);
\draw[->,very thick, color=blue] (M3) to (Y);
\draw[->,very thick, color=blue, bend right] (M1) to (Y);
\draw[->,very thick, color=blue] (O1) to (A);
\draw[->,very thick, color=blue] (O1) to (M1);
\draw[->,very thick, color=blue] (O2) to (M1);
\end{scope} \begin{scope}[xshift=12cm]
\node[name=A, rv]{$A$};
\node[name=M1, rv, right of=A]{$M_1$};
\node[name=Y, rv, right of=M1]{$Y$};
\node[name=O1, rv, above of=A, xshift=5mm]{$O_1$};
\node[name=O2, rv, above of=A, xshift=15mm]{$O_2$};
\node[left=4mm of O1]{\Large $\Rightarrow$};
\node[below of=M1, yshift=2mm]{(d) $\g^{\ast}$};
\draw[->,very thick, color=blue] (A) to (M1);
\draw[->,very thick, color=blue] (M1) to (Y);
\draw[->,very thick, color=blue] (O1) to (A);
\draw[->,very thick, color=blue] (O1) to (M1);
\draw[->,very thick, color=blue] (O2) to (M1);
\end{scope}
\end{tikzpicture}
\caption{Graph reduction for \cref{ex:three-Ms}, where $\Vset \setminus \Vset^{\ast} = \{I_1, M_2, M_3\}$.}
\label{fig:three-Ms}
\end{figure}
\end{example}

\begin{example} \label{ex:long}
Let $\g$ be the graph drawn as \cref{fig:long}(a), for which $\Oset = \{O_1, O_2, O_3\}$. Again, variable $I_1$ is an indirect ancestor of $Y$ and hence uninformative. It can be checked that, variables $W_3, W_4, W_5$ fail the W-criterion, in particular its condition (i). It can also be checked that variables $W_1, W_2, W_6$ satisfy the W-criterion. For example, for $W_2$ observe that: (i) $W_2 \indep_{\g} O_1, O_2, O_3 \mid O_1, W_5$; (ii)(a) $W_2 \rightarrow O_1$, (b) $\Pa(O_1) \subset \Pa(W_2) \cup \{W_2\}$ and (c) $W_3, W_4 \indep_{\g} O_1, O_2, O_3 \mid W_2, W_5$. By iteratively projecting out $I, W_1, W_2$ and $W_6$, graph $\g$ is reduced to $\g^{\ast}$, from which we can derive an irreducible, efficient g-formula
\begin{multline*}
\Psi_a(P; \g^{\ast}) = \sum_{o_1, o_2, o_3} \E(Y \mid A=a, o_1, o_2, o_3) p(o_3) \\
  \times \sum_{w_3, w_4} p(w_3) p(w_4) \sum_{w_5} p(o_1 \mid w_3, w_4, w_5) p(o_2 \mid w_5) p(w_5).
\end{multline*}

\begin{figure}[htb]
\centering
\begin{tikzpicture}
\tikzset{rv/.style={circle,inner sep=1pt,fill=gray!20,draw,font=\sffamily}, 
redv/.style={circle,inner sep=1pt,fill=white,draw,dashed,font=\sffamily}, 
sv/.style={circle,inner sep=1pt,fill=gray!20,draw,font=\sffamily,minimum size=1mm}, 
node distance=12mm, >=stealth, every node/.style={scale=0.7}}
\begin{scope}
\node[name=A, rv]{$A$};
\node[name=Y, rv, right of=A, xshift=5mm]{$Y$};
\node[name=I, redv, left of=A]{$I_1$};
\node[name=W1, redv, above of=I]{$W_1$};
\node[name=O3, rv, left of=I]{$O_3$};
\node[name=O1, rv, above of=Y, xshift=-5mm]{$O_1$};
\node[name=W2, redv, above of=A]{$W_2$};
\node[name=W3, rv, above of=W2, xshift=-5mm]{$W_3$};
\node[name=W4, rv, above of=W2, xshift=5mm]{$W_4$};
\node[name=O2, rv, above of=Y, xshift=5mm]{$O_2$};
\node[name=W5, rv, above of=Y, yshift=10mm]{$W_5$};
\node[name=W6, redv, right of=W5, xshift=2mm]{$W_6$};
\node[below of=A, yshift=0mm]{(a) $\g$};
\draw[->,very thick, color=blue] (A) to (Y);
\draw[->,very thick, color=blue] (I) to (A);
\draw[->,very thick, color=blue] (W1) to (A);
\draw[->,very thick, color=blue] (W1) to (I);
\draw[->,very thick, color=blue] (W1) to (O3);
\draw[->,very thick, color=blue, bend right] (O3) to (Y);
\draw[->,very thick, color=blue] (O1) to (Y);
\draw[->,very thick, color=blue] (O1) to (A);
\draw[->,very thick, color=blue] (W2) to (A);
\draw[->,very thick, color=blue] (W2) to (O1);
\draw[->,very thick, color=blue] (W3) to (W2);
\draw[->,very thick, color=blue] (W4) to (W2);
\draw[->,very thick, color=blue] (O2) to (Y);
\draw[->,very thick, color=blue] (W5) to (O2);
\draw[->,very thick, color=blue] (W5) to (O1);
\draw[->,very thick, color=blue] (W5) to (W2);
\draw[->,very thick, color=blue] (W5) to (W6);
\draw[->,very thick, color=blue] (W6) to (O2);
\end{scope} \begin{scope}[xshift=4cm]
\node[name=A, rv]{$A$};
\node[name=Y, rv, right of=A, xshift=5mm]{$Y$};
\node[name=W1, redv, left of=A]{$W_1$};
\node[name=O3, rv, left of=W1]{$O_3$};
\node[name=O1, rv, above of=Y, xshift=-5mm]{$O_1$};
\node[name=W2, redv, above of=A]{$W_2$};
\node[name=W3, rv, above of=W2, xshift=-5mm]{$W_3$};
\node[name=W4, rv, above of=W2, xshift=5mm]{$W_4$};
\node[name=O2, rv, above of=Y, xshift=5mm]{$O_2$};
\node[name=W5, rv, above of=Y, yshift=10mm]{$W_5$};
\node[name=W6, redv, right of=W5, xshift=2mm]{$W_6$};
\node[below of=A, yshift=0mm]{(b) $\g^0$};
\node[left=8mm of W2]{\Large $\Rightarrow$};
\draw[->,very thick, color=blue] (A) to (Y);
\draw[->,very thick, color=blue] (W1) to (A);
\draw[->,very thick, color=blue] (W1) to (O3);
\draw[->,very thick, color=blue, bend right] (O3) to (Y);
\draw[->,very thick, color=blue] (O1) to (Y);
\draw[->,very thick, color=blue] (O1) to (A);
\draw[->,very thick, color=blue] (W2) to (A);
\draw[->,very thick, color=blue] (W2) to (O1);
\draw[->,very thick, color=blue] (W3) to (W2);
\draw[->,very thick, color=blue] (W4) to (W2);
\draw[->,very thick, color=blue] (O2) to (Y);
\draw[->,very thick, color=blue] (W5) to (O2);
\draw[->,very thick, color=blue] (W5) to (O1);
\draw[->,very thick, color=blue] (W5) to (W2);
\draw[->,very thick, color=blue] (W5) to (W6);
\draw[->,very thick, color=blue] (W6) to (O2);
\end{scope} \begin{scope}[xshift=8cm]
\node[name=A, rv]{$A$};
\node[name=Y, rv, right of=A, xshift=5mm]{$Y$};
\node[name=O3, rv, left of=A, xshift=-3mm]{$O_3$};
\node[name=O1, rv, above of=Y, xshift=-5mm]{$O_1$};
\node[name=W2, redv, above of=A]{$W_2$};
\node[name=W3, rv, above of=W2, xshift=-5mm]{$W_3$};
\node[name=W4, rv, above of=W2, xshift=5mm]{$W_4$};
\node[name=O2, rv, above of=Y, xshift=5mm]{$O_2$};
\node[name=W5, rv, above of=Y, yshift=10mm]{$W_5$};
\node[name=W6, redv, right of=W5, xshift=2mm]{$W_6$};
\node[below of=A, yshift=0mm]{(c) $\g^1$};
\node[left=8mm of W2]{\Large $\Rightarrow$};
\draw[->,very thick, color=blue] (A) to (Y);
\draw[->,very thick, color=blue] (O3) to (A);
\draw[->,very thick, color=blue, bend right] (O3) to (Y);
\draw[->,very thick, color=blue] (O1) to (Y);
\draw[->,very thick, color=blue] (O1) to (A);
\draw[->,very thick, color=blue] (W2) to (A);
\draw[->,very thick, color=blue] (W2) to (O1);
\draw[->,very thick, color=blue] (W3) to (W2);
\draw[->,very thick, color=blue] (W4) to (W2);
\draw[->,very thick, color=blue] (O2) to (Y);
\draw[->,very thick, color=blue] (W5) to (O2);
\draw[->,very thick, color=blue] (W5) to (O1);
\draw[->,very thick, color=blue] (W5) to (W2);
\draw[->,very thick, color=blue] (W5) to (W6);
\draw[->,very thick, color=blue] (W6) to (O2);
\end{scope} \begin{scope}[xshift=2cm, yshift=-3cm]
\node[name=A, rv]{$A$};
\node[name=Y, rv, right of=A, xshift=3mm]{$Y$};
\node[name=O3, rv, left of=A, xshift=-3mm]{$O_3$};
\node[name=O1, rv, above of=Y, xshift=-5mm]{$O_1$};
\node[name=W3, rv, above of=A, xshift=-8mm, yshift=2mm]{$W_3$};
\node[name=W4, rv, above of=A, xshift=1mm, yshift=8mm]{$W_4$};
\node[name=O2, rv, above of=Y, xshift=5mm]{$O_2$};
\node[name=W5, rv, above of=Y, yshift=10mm]{$W_5$};
\node[name=W6, redv, right of=W5, xshift=2mm]{$W_6$};
\node[below of=A, yshift=0mm]{(d) $\g^2$};
\node[left=3mm of W3]{\Large $\Rightarrow$};
\draw[->,very thick, color=blue] (A) to (Y);
\draw[->,very thick, color=blue] (O3) to (A);
\draw[->,very thick, color=blue] (W3) to (A);
\draw[->,very thick, color=blue] (W4) to (A);
\draw[->,very thick, color=blue, bend right] (O3) to (Y);
\draw[->,very thick, color=blue] (O1) to (Y);
\draw[->,very thick, color=blue] (O1) to (A);
\draw[->,very thick, color=blue] (O2) to (Y);
\draw[->,very thick, color=blue] (W5) to (O2);
\draw[->,very thick, color=blue] (W5) to (O1);
\draw[->,very thick, color=blue] (W3) to (O1);
\draw[->,very thick, color=blue] (W4) to (O1);
\draw[->,very thick, color=blue] (W5) to (W6);
\draw[->,very thick, color=blue] (W6) to (O2);
\draw[->,very thick, color=blue, bend left, out=45] (W5) to (A);
\end{scope} \begin{scope}[xshift=6cm, yshift=-3cm]
\node[name=A, rv]{$A$};
\node[name=Y, rv, right of=A, xshift=3mm]{$Y$};
\node[name=O3, rv, left of=A, xshift=-3mm]{$O_3$};
\node[name=O1, rv, above of=Y, xshift=-5mm]{$O_1$};
\node[name=W3, rv, above of=A, xshift=-8mm, yshift=2mm]{$W_3$};
\node[name=W4, rv, above of=A, xshift=1mm, yshift=8mm]{$W_4$};
\node[name=O2, rv, above of=Y, xshift=5mm]{$O_2$};
\node[name=W5, rv, above of=Y, yshift=10mm]{$W_5$};
\node[below of=A, yshift=0mm]{(e) $\g^{\ast}$};
\node[left=3mm of W3]{\Large $\Rightarrow$};
\draw[->,very thick, color=blue] (A) to (Y);
\draw[->,very thick, color=blue] (O3) to (A);
\draw[->,very thick, color=blue] (W3) to (A);
\draw[->,very thick, color=blue] (W4) to (A);
\draw[->,very thick, color=blue, bend right] (O3) to (Y);
\draw[->,very thick, color=blue] (O1) to (Y);
\draw[->,very thick, color=blue] (O1) to (A);
\draw[->,very thick, color=blue] (O2) to (Y);
\draw[->,very thick, color=blue] (W5) to (O2);
\draw[->,very thick, color=blue] (W5) to (O1);
\draw[->,very thick, color=blue] (W3) to (O1);
\draw[->,very thick, color=blue] (W4) to (O1);
\draw[->,very thick, color=blue, bend left, out=45] (W5) to (A);
\end{scope}
\end{tikzpicture}
\caption{Graph reduction for \cref{ex:long}, where $\Vset \setminus \Vset^{\ast} = \{I_1, W_1, W_2, W_6\}$.}
\label{fig:long}
\end{figure}

\end{example}

\section{Concluding remarks} \label{sec:conclusion}
When all variables in the graph are discrete, an asymptotically efficient estimator based on the set of irreducible informative variables is readily available as $\Psi_{a}(\mathbb{P}_{n}^{\ast};\g^{\ast}) $. Unfortunately, when not all components of $\Vset^{\ast}$ are discrete, the plug-in estimator $\Psi_{a}(\widehat{P^{\ast}}; \g^{\ast})$ for $\widehat{P^{\ast}} \in \model(\g^{\ast},\Vset^{\ast})$ based on smooth nonparametric estimators of the conditional densities $\left\{p\{v_j|\pa(v_j, \g^{\ast})\}: V_j \in \Vset^{\ast} \right\}$ will generally fail to even be root-$n$-consistent. This is because $\Psi_{a}(\widehat{P^{\ast}}; \g^{\ast})$ will typically inherit the bias and thus the rate of convergence of the nonparametric density estimators. The one-step estimator $\widehat{\Psi}_{a}=\Psi_{a}(\widehat{P^{\ast }},\g^{\ast }) + \mathbb{P}_{n} \left\{ \Psi^{1}_{a,\widehat{P^{\ast}},\text{eff}}(\Vset)\right\} $ corrects the bias, and under smoothness or complexity assumptions on the conditional densities it converges at the root-$n$ rate and is asymptotically efficient. However, the calculation of $\Psi^{1}_{a,\widehat{P^{\ast}},\text{eff}}(\Vset)$ will typically require evaluating  complicated integrals involved in the computation of each $\mathbb{E}_{\widehat{P^{\ast}}}\left\{ b_{a, \widehat{P^\ast}}(\Oset) \mid W_{j}, \Pa(W_{j}, \g^{\ast}) \right\} $ and each $\mathbb{E}_{\widehat{P^{\ast}}}\left\{ T_{a, \widehat{P^\ast}} \mid M_{k}, \Pa(M_{k}, \g^{\ast}) \right\} $; see \cref{lem:EIF}. Further work exploring methods that facilitate these calculations is warranted. 

In this article we have considered estimating the mean of an outcome under an intervention that sets a point exposure to a fixed value in the entire population. This is just one out of the many functionals of interest in causal inference. We hope this work sparks interest in the characterization of informative irreducible variables for other functionals. In particular, we are currently studying the extension of the present work to interventions that set the treatment to a value that depends on covariates, i.e., the so-called dynamic treatment regimes. Extensions to time-dependent interventions in graphs with time-dependent confounding is also of interest, but appears to be more difficult because an optimal time-dependent adjustment set does not exist \citep{rotnitzky2020efficient}. Other functionals of interest include the pure direct effect and the treatment effect on the treated.

\section*{Acknowledgement}
The authors thank Thomas Richardson and James Robins for valuable comments and discussions, as well as the referees and the associate editor for helpful suggestions. Part of this work was done while the authors were visiting the Simons Institute for the Theory of Computing.
Rotnitzky is partially supported by the U.S. National Institutes of Health and is also affiliated with CONICET, Argentina. 

\bibliographystyle{plainnat}

\newpage
\appendix

The \supps are organized as follows. 
\Cref{sec:app-DAG} provides additional background on Bayesian networks and graphical models. 
\Cref{sec:app-simu} contains simulation results based on the motivating example. In \cref{sec:app-graph}, we prove graphical results \cref{lem:osetandw,prop:c-MEC-Oset}. In \cref{sec:app-reduction}, we prove the results for the graph reduction procedure including \cref{thm:algo}. In \cref{sec:app-eff-formula}, we establish that the g-formula is an efficient identifying formula for the g-functional. In \cref{sec:app-uninfo}, we prove results for graphically characterizing the irreducible informative set $\Vset^{\ast}$, culminating in the proof of \cref{thm:criteria}. To prove \cref{thm:criteria}, we construct laws in the model such that the efficient influence function non-trivially depends on the variables should the variable fail the corresponding W- or M-criterion; these constructions are detailed in \cref{sec:app-W-complete,sec:app-M-complete}. Further, these constructions are based on the certain graphical configurations that must exist should the W- or M-criterion fail, which are provided in \cref{sec:app-aug-graph}.  

Throughout the \supps, we will often omit the graph from the vertex sets introduced in \cref{sec:sets} when it is clear from the context.

\section{Bayesian network and directed acyclic graph} \label{sec:app-DAG}
\subsection{Bayesian network on a large state space} \label{sec:app-state-space}
For every random variable $V_j \in \Vset$, its state space $\X_j$ is defined in \cref{eqs:state-space}, which allows $V_j$ to be potentially continuous or discrete or a mixed type of both. 
Any set $A \subseteq \X_j$ can be decomposed as $A = A_{C} \dcup A_{D}$ for a continuous part $A_C \equiv A \cap \mathbb{R}^{d_j}$ and a discrete part $A_D \equiv A \cap \mathbb{W}$. Let $\F_j$ be the $\sigma$-algebra generated by unions of Borel sets on $\mathbb{R}^{d_j}$ and on $\mathbb{W}$. Define measure $\mu_j(A) \equiv \text{Leb}(A_C) +  |A_D|$ for $A \in \F_j$. Finally, the measurable space for random vector $\Vset$ is $(\X, \F)$, where $\X \equiv \bigtimes_{j: V_j \in \Vset} \X_j$ and $\F \equiv \bigtimes_{j: V_j \in \Vset} \F_j$. 

\subsection{Notations for graphical models} \label{sec:app-graph-notation}
Symbol $V_{i}\rightarrow V_{j}$ or $V_j \leftarrow V_i$ denotes a directed edge from $V_{i}$ to $V_{j}$. In such case, we say that $V_{i}$ is a parent of $V_{j}$, $V_{j}$ is a child of $V_{i}$ and denoted with $V_i \in \Pa(V_j, \g)$, $V_j \in \Ch(V_i, \g)$. We say $V_{i}$ and $V_{j}$ are adjacent if $V_i \rightarrow V_j$ or $V_i \leftarrow V_j$. A path $p=\langle V_{1},\dots ,V_{k}\rangle $ with $k\geq 2$ is a sequence of distinct vertices, such that $V_{i}$ and $V_{i+1}$ are adjacent in the graph for $i=1,\dots ,k-1$. When $p$ is of the form $V_{1}\rightarrow \dots \rightarrow V_{k}$, we say that $p$ is a directed path. In a configuration $ V_{i}\rightarrow V_{k}\leftarrow V_{j}$, $V_{k}$ is said to be a collider; further, if $V_{i}$ and $V_{j}$ are non-adjacent, then $V_{k}$ is an unshielded collider. We say $V_{i}$ is an ancestor of $V_{j}$, or equivalently $V_{j}$ is a descendant of $V_{i}$, if either $V_{i}=V_{j}$ or there is a directed path from $V_{i}$ to $V_{j}$. This is denoted as $V_{i}\mapsto V_{j}$. Symbol $V_i \not\mapsto V_j$ means $V_i$ is not an ancestor of $V_j$. The set of ancestors of $V_{j}$ with respect to graph $\g$ is denoted by $\An (V_{j},\g)$, and similarly, the set of descendants of $V_{i}$ is denoted by $ \De(V_{i},\g)$; by definition, we have $V_{j}\in \An(V_{j},\g)$ and $ V_{i}\in \De(V_{i},\g)$. The definitions of relational sets extend disjunctively to a set of vertices $\Cset$, e.g., $\Pa(\Cset,\g)\equiv \bigcup_{V_{j}\in \Cset}\Pa(V_{j},\g)$, $\Ch(\Cset,\g)\equiv \bigcup_{V_{j}\in \Cset}\Ch(V_{j},\g)$, $\An(\Cset,\g)\equiv \bigcup_{V_{j}\in \Cset}\An(V_{j},\g)$, $\De(\Cset,\g)\equiv \bigcup_{V_{j}\in \Cset}\De (V_{j},\g)$, etc. Moreover, a set $\Cset$ is called ancestral if $V_{j}\in \Cset$ implies $\An(V_{j},\g)\subseteq \Cset$. Vertices $V_{1},\dots ,V_{k}$ are topologically ordered if $V_i \mapsto V_j$ for $i \neq j$ implies $i < j$.

\section{Simulations} \label{sec:app-simu}
We report simulation results based on the motivating example in \cref{fig:example}. For simplicity, we get rid of $I_1$ and $W_1$ and fix $\g'$ as the original graph, according to which data is generated. 
We consider discrete data generating mechanisms according to $\g'$. Let $A, O_1, Y$ be binary valued, i.e. taking value from $\{0,1\}$. Suppose $W_4$ takes value from $\{0,\dots,k-1\}$; while both $W_2$ and $W_3$ take value from $\{0,\dots,m-1\}$. 

We specify the data generating mechanism as follows. First, we draw
\[ W_2, W_3 \sim \unif(\{0, \dots, m-1\}) \]
independently. Then, we reserve $\{0,\dots,4\}$ in the support of $W_4$ as a special set of values and draw
\[ W_4 \mid W_2, W_3 \sim \begin{cases} \unif(\{0, \dots, 4\}), &\quad W_2 = W_3 \\
Q, & \quad W_2 \neq W_3 \end{cases}, \]
where 
\[ Q(w_4) \propto \begin{cases} \left[1 + \exp\{-(w_4+1) / 5\} \right]^{-1}, \quad & w_4 \notin \{0,\dots,4\} \\
0, \quad & \text{otherwise} \end{cases}.\]
Further, $O$'s distribution depends on whether $W_4$ takes a special value:
\[ P(O=1 \mid W_4 = w_4) = \begin{cases} 0.99, \quad & w_4 \in \{0,\dots,4\} \\
0.01, \quad & \text{otherwise} \end{cases}. \]
We use these configurations to introduce the interaction between $W_2$ and $W_3$, so that the marginal independence between $W_2$ and $W_3$ must be utilized to minimize the variance. 
Finally, we draw $A$ and $Y$ according to 
\[ P(A=1 \mid W_4=w_4) = \left[1 + \exp\{2 -(w_4+1) / 5\} \right]^{-1} \]
and 
\[ P(Y=1 \mid A=a, O_1=o_1) = \left[1 + \exp\{-(o_1 - 1/2)(9a + 5)\} \right]^{-1}. \]

We consider three estimators for $\Psi_{1}(P; \g)$:
\begin{enumerate}
\item Plugin g-formula $\Psi_{1}(\mathbb{P}_n; \g')$ based on the original graph $\g'$.
\item Plugin g-formula $\Psi_{1}(\mathbb{P}_n; \g^{\ast})$ based on the reduced graph $\g^{\ast}$, which does not use $W_4$.
\item Optimal adjustment $\Psi_{1,O_1}^{\text{ADJ}}(\mathbb{P}_n; \g') = \sum_{o_1} \mathbb{P}_n(Y=1 \mid A=1, O_1 = o_1) \mathbb{P}_n(o_1)$, which does not use $W_2, W_3, W_4$.   
\end{enumerate}

We perform simulations in two settings: (a) $m=5$, $k=50$ and (b) $m=50$, $k=10$. The results are reported in \cref{tab:simu}. We select different sample sizes in each setting; the smallest sample size is chosen such that all levels of $(W_2, W_4)$ appear in the data. In both settings, while $\Psi_{1}(\mathbb{P}_n; \g')$ and $\Psi_{1}(\mathbb{P}_n; \g^{\ast})$ achieve a significantly smaller variance compared to $\Psi_{1,O_1}^{\text{ADJ}}(\mathbb{P}_n; \g')$, there is no discernible difference between their performances. 

\begin{table}[!hbtp]
\begin{center}
\caption{Variance of the estimator multiplied by sample size $n$. The number of replications is large enough such that the standard error is within the next significant digit.} 
\label{tab:simu}
\begin{tabular}{rlccc}
\toprule[1.2pt]
& $n$ & $\Psi_{1,O_1}^{\text{ADJ}}(\mathbb{P}_n; \g')$ & $\Psi_{1}(\mathbb{P}_n; \g')$ & $\Psi_{1}(\mathbb{P}_n; \g^{\ast})$ \\ 
\midrule
\textbf{(a)} &&&&\\ 
1&200&0.163&0.013&0.013\\ 
2&500 &0.164&0.012&0.012\\ 
3&1000 &0.164&0.012&0.012\\ 
4&10000 &0.165&0.012&0.012\\ 
5&25000 &0.163&0.012&0.012\\ 
6&50000 &0.168&0.012&0.012\\ 
7&100000 &0.161&0.011&0.011\\ 
\textbf{(b)}&&&&\\ 
8&25000 &0.031&0.012&0.013\\ 
9&50000 &0.031&0.012&0.013\\ 
10&100000 &0.031&0.012&0.012\\ 
\bottomrule[1.2pt]
\end{tabular}
\end{center}
\end{table}

\section{Various graphical results} \label{sec:app-graph}

\subsection{Proof of \cref{lem:osetandw}}
\begin{proof}
First, we show that $\Wset \subseteq \An(\Oset)$. Fix any $W_j \in \Wset$, we want to show that $W_j \mapsto O$. By definition in \cref{eqs:Wset}, there exists a causal path $p: W_j \rightarrow Z_1 \rightarrow \dots \rightarrow Z_k = Y$ such that $p$ does not contain $A$. By definition of set $\Mset$, $Z_i \in \Mset$ implies $Z_{i+1} \in \Mset$ for $i=1,\dots,k-1$. Let $l$ be the smallest index such that $Z_l \in \Mset$. If $l=1$, then $W_j \in \Oset$ by definition in \cref{eqs:Oset}; otherwise, $Z_{l-1} \in \Oset$ by definition as $Z_{l-1} \rightarrow Z_l \in \Mset$ but $Z_{l-1} \notin \De(\Mset)$, $Z_{l-1} \neq A$. In either case, $W_j \mapsto \Oset$. 

Now we show that $\An(\Oset) \subseteq \Wset$. Pick any $V_i$ such that $V_i \mapsto O_k$ for $O_k \in \Oset$. Showing that $V_i \in \Wset$ boils down to showing (i) $V_i \mapsto Y$ not through $A$ and (ii) $V_i \notin \De(A)$. Note (i) is follows from $V_i \mapsto O_k \mapsto Y$, which need not go through $A$. To see (ii), suppose $V_i \in \De (A)$. Then, it follows that $O_k \in \De(A)$, which implies $O_k \in \Mset$, contradicting the definition of $\Oset$. 
\end{proof}

\subsection{Proof of \cref{prop:c-MEC-Oset}}

\begin{proof}
Suppose $\g$ and $\g'$ are causal Markov equivalent. By \cref{def:c-MEC}, $\g$ and $\g'$ are then Markov equivalent and $\Psi_{a}(P;\g)=\Psi_{a}(P;\g')$ for all $P \in \model(\g, \Vset)$. Given that $\g$ and $\g'$ satisfy \cref{assump:A-goes-to-Y}, by Theorem 3 of \cite{guo2020enumeration}, $\g$ and $\g'$ are then represented by a maximally oriented partially directed acyclic graph $\tilde{\g}$, given which the total causal effect of $A$ on $Y$ is identified. Furthermore, since $|A| = |Y| = 1$, by Corollary 2 of \cite{guo2020enumeration}, there exists an adjustment set with respect to the effect of $A$ on $Y$ in $\tilde{\g}$. Then by Lemma E.7 of \cite{henckel2019graphical}, the optimal adjustment set with respect to the effect of $A$ on $Y$ is the same in $\tilde{\g}$, $\g$, and $\g'$.

Conversely, suppose that $\g$ and $\g'$ are Markov equivalent and that the optimal adjustment set with respect to the effect of $A$ on $Y$ is the same in $\g$ and $\g'$. Then using the fact that $\Psi_{a}(P;\g)= \Psi_{a,\Oset}^{\text{ADJ}}(P;\g)$  for all $P \in \model(\g, \Vset)$ and $\Psi_{a,\Oset}^{\text{ADJ}}(P;\g')= \Psi_{a}(P;\g')$ for all $P \in \model(\g', \Vset)$, and that $\g$ and $\g'$ are Markov equivalent, we have, $\Psi_{a}(P;\g)=\Psi_{a}(P;\g')$ for all $P \in \model(\g, \Vset)$. 
\end{proof}

\section{Graph reduction} \label{sec:app-reduction}

\subsection{Proof of \cref{lem:marg-over-NI}} \label{sec:app-marg-over-NI}
Latent projection is introduced by \citet{verma1991equivalence} to represent the marginal model of a directed acyclic graph. Recall that for $L \subseteq \Vset$, $\model(\g, L)$ denotes the marginal model of $L$ induced by $\model(\g, \Vset)$.
\begin{definition}[Latent projection] \label{def:latent-proj}
Let $\g$ be a directed acyclic graph on vertices $\Vset \,\dot{\cup}\, \Uset$, where $\Uset$ is the set of latent variables. The latent projection of $\g$ on $\Vset$, denoted by $\g(\Vset)$, is a mixed graph on $\Vset$ with directed and bidirected edges:
\begin{enumerate}
\item $V_i \rightarrow V_j$ if there is a directed path $V_i \mapsto V_j$ in $\g$ on which every non-endpoint vertex is in $\Uset$,
\item $V_i \leftrightarrow V_j$ if there exists a path of the form $V_i \leftarrow \dots \rightarrow V_j$ in $\g$ on which every non-endpoint vertex is a non-collider and contained in $\Uset$. 
\end{enumerate}
\end{definition}

For graph $\g$ on vertices $\Vset$ and $\Vset' \subseteq \Vset$, let $\g_{\Vset'}$ be the subgraph induced by $\Vset'$. 
\begin{lemma}[Proposition 3.22, \citealp{lauritzen1996graphical}] \label{lem:ancestral-marg}
Let $\g$ be a directed acyclic graph on vertices $\Vset$. Let $\Vset'$ be an ancestral subset of $\Vset$. Then, $\model(\g, \Vset') = \model(\g_{\Vset'}, \Vset')$.
\end{lemma}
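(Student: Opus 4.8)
The plan is to prove the two set inclusions $\model(\g_{\Vset'}, \Vset') \subseteq \model(\g, \Vset')$ and $\model(\g, \Vset') \subseteq \model(\g_{\Vset'}, \Vset')$ separately. The single structural fact that drives both directions is that an ancestral set $\Vset'$ satisfies $\Pa(V_j, \g) \subseteq \Vset'$ for every $V_j \in \Vset'$, and that the edges of $\g$ pointing into such a $V_j$ are precisely the edges of the induced subgraph, so that $\Pa(V_j, \g) = \Pa(V_j, \g_{\Vset'})$; in addition, $\g$ admits a topological order in which every vertex of $\Vset'$ precedes every vertex of $\Vset \setminus \Vset'$. I would record these two observations first, as everything else is essentially bookkeeping around them.

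For the inclusion $\model(\g_{\Vset'}, \Vset') \subseteq \model(\g, \Vset')$, I would take a law $P'$ factorizing according to $\g_{\Vset'}$, with conditional densities $p'\{v_j \mid \Pa(v_j, \g_{\Vset'})\}$, and extend it to a law $P$ on $\Vset$ by keeping these factors for $V_j \in \Vset'$ and assigning, for each $V_j \in \Vset \setminus \Vset'$, an arbitrary conditional density $p\{v_j \mid \Pa(v_j, \g)\}$ with respect to the dominating measure (and, if one is working inside $\model_0(\Vset)$, chosen to respect \cref{assump:positivity}). By the structural fact above the retained factors are genuine functions of the $\Vset'$-coordinates only, so $p(v) = \prod_{V_j \in \Vset} p\{v_j \mid \Pa(v_j,\g)\}$ lies in $\model(\g, \Vset)$. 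Marginalizing $P$ over $\Vset \setminus \Vset'$ by integrating out those variables in reverse topological order — each one occurs in its own factor only, once the later ones have already been removed, and that factor integrates to one against $\mu_j$ — returns exactly $\prod_{V_j \in \Vset'} p'\{v_j \mid \Pa(v_j, \g_{\Vset'})\} = p'(v')$, so $P' = P(\Vset') \in \model(\g, \Vset')$.

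For the reverse inclusion $\model(\g, \Vset') \subseteq \model(\g_{\Vset'}, \Vset')$, I would take $P \in \model(\g, \Vset)$ and $P' \equiv P(\Vset')$, write the $\g$-factorization of $p$ in the topological order above, and integrate out $\Vset \setminus \Vset'$ from the end exactly as in the previous paragraph; what remains is $p'(v') = \prod_{V_j \in \Vset'} p\{v_j \mid \Pa(v_j, \g)\}$. Since $\Pa(v_j, \g) = \Pa(v_j, \g_{\Vset'})$, this exhibits $p'$ as a product of factors each of which is, for fixed parent values, a probability density in $v_j$, with parent structure given by the acyclic graph $\g_{\Vset'}$; a routine induction in topological order identifies each factor with the corresponding conditional density of $P'$, so $P'$ factorizes according to $\g_{\Vset'}$, i.e., $P' \in \model(\g_{\Vset'}, \Vset')$.

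The only genuine work is the measure-theoretic bookkeeping of the marginalization step on the mixed continuous–discrete state spaces of \cref{eqs:state-space}: checking that each factor integrates to one against $\mu_j$, that the product-of-conditionals form is preserved after every integration, and that the arbitrary extension in the first inclusion can be taken to preserve positivity. I expect this to be the most tedious but least conceptually demanding part; indeed the statement is essentially \citet[Proposition 3.22]{lauritzen1996graphical}, and I would present the argument above mainly for completeness and to make the mixed-state-space setting of this paper explicit.
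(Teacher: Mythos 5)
Your proof is correct. Note that the paper does not prove this lemma itself: it is imported as Proposition 3.22 of \citet{lauritzen1996graphical}, whose content is essentially your second inclusion (a $\g$-factorized density, marginalized over $\Vset\setminus\Vset'$ in reverse topological order, factorizes according to $\g_{\Vset'}$), while your first inclusion --- extending a law Markov to $\g_{\Vset'}$ by attaching arbitrary conditional densities for the vertices of $\Vset\setminus\Vset'$ and then marginalizing back --- is the easy converse needed for the stated equality of induced marginal model and subgraph model. One small simplification is available in your second inclusion: no induction is needed to identify the surviving factors with conditionals of $P'$, since for $V_j\in\Vset'$ ancestrality gives $\{V_j\}\cup\Pa(V_j,\g)\subseteq\Vset'$, so $p\{v_j\mid\Pa(v_j,\g)\}$ is determined by a margin common to $P$ and $P'$ and is therefore already the conditional density under $P'$, with $\Pa(V_j,\g)=\Pa(V_j,\g_{\Vset'})$; the positivity bookkeeping you flag is likewise harmless because the relevant conditional $P\{A=a\mid\Pa(A,\g)\}$ involves only variables in $\Vset'$ whenever $A\in\Vset'$.
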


The following definition and two lemmas are due to \citet{evans2016graphs}.
\begin{definition}[Exogenized graph] \label{def:exo-graph}
Let $\g$ be a directed acyclic graph containing vertex $U_i$. The exogenized graph $\mathfrak{r}(\g, U_i)$ is a directed acyclic graph transformed from $\g$ as follows. For each $V_j \in \Pa(U_i)$ and $V_k \in \Ch(U_i)$, add edge $V_j \rightarrow V_k$ if the edge is not already present. Then, remove all edges between $\Pa(U_i)$ and $U_i$. 
\end{definition}

\begin{lemma} \label{lem:exo-graph}
Let $\g$ be a directed acyclic graph on vertices $\Vset \cup \{U_i\}$. Let $\g' = \mathfrak{r}(\g, U_i)$. Then, $\model(\g, \Vset) = \model(\g', \Vset)$. 
\end{lemma}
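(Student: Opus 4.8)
The plan is to prove the two inclusions $\model(\g, \Vset) \subseteq \model(\g', \Vset)$ and $\model(\g', \Vset) \subseteq \model(\g, \Vset)$ by reparametrizing the marginalized vertex $U_i$ while leaving every equation attached to a vertex of $\Vset$ untouched, so that the induced $\Vset$-margin is automatically preserved. Two preliminary observations will be used. First, $\g' = \mathfrak{r}(\g, U_i)$ is a directed acyclic graph on $\Vset \cup \{U_i\}$: deleting edges cannot create a cycle, and each added edge $V_j \to V_k$ with $V_j \in \Pa(U_i, \g)$ and $V_k \in \Ch(U_i, \g)$ points from a vertex to one of its $\g$-descendants (via $V_j \to U_i \to V_k$), so acyclicity is preserved; moreover in $\g'$ the vertex $U_i$ is a source that retains its children, $\Pa(V_k, \g') = \Pa(V_k, \g) \cup \Pa(U_i, \g)$ for $V_k \in \Ch(U_i, \g)$, and all other parent sets coincide with those in $\g$. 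Second, because every state space in \cref{eqs:state-space} is a standard Borel space, a law lies in $\model(\g, \Vset \cup \{U_i\})$ if and only if it admits a functional (structural-equation) representation $V_j = f_j\{\Pa(V_j, \g), \epsilon_j\}$ and $U_i = f_{U_i}\{\Pa(U_i, \g), \epsilon_{U_i}\}$ with mutually independent errors that may be taken uniform on $[0,1]$; the analogous statement holds for $\g'$.

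For $\model(\g, \Vset) \subseteq \model(\g', \Vset)$, I would take such a representation of an arbitrary $P \in \model(\g, \Vset \cup \{U_i\})$, substitute the equation for $U_i$ into the equations of its children, and then redefine the vertex $U_i$ to be the error $\epsilon_{U_i}$ itself, which is legitimate since $[0,1] \subseteq \X_{U_i}$. Each $V_k \in \Ch(U_i, \g)$ is then a function of $\Pa(V_k, \g) \setminus \{U_i\}$, of $\Pa(U_i, \g)$, of the redefined $U_i$, and of $\epsilon_k$, i.e.\ a function of $\Pa(V_k, \g')$ and $\epsilon_k$; the (now trivial) equation for $U_i$ and the unchanged equations for the vertices of $\Vset \setminus \Ch(U_i, \g)$ also conform to $\g'$. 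The $\Vset$-components are the same deterministic functions of the same errors as before, so the $\Vset$-margins of the two laws agree.

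For the reverse inclusion I would run the construction backwards. Starting from a functional representation relative to $\g'$ in which $U_i$ is a source, redefine $U_i$ to be the pair consisting of the value of $\Pa(U_i, \g)$ together with its former value, realized as a deterministic function of $\Pa(U_i, \g)$ and a fresh error. Both coordinates are recoverable from the new $U_i$, so each child equation of $\g'$ rewrites as a function of $\Pa(V_k, \g)$ --- which contains $U_i$ --- and $\epsilon_k$; all remaining equations are unchanged, yielding a functional representation relative to $\g$ with the same $\Vset$-margin. Here the new $U_i$ takes values in $\bigl(\prod_{V_j \in \Pa(U_i, \g)} \X_j\bigr) \times [0,1]$, a finite product of standard Borel spaces; this is Borel-isomorphic to a measurable subset of $\mathbb{R}^{d} \subseteq \X_{U_i}$, and one transports the construction along such an isomorphism.

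The step I expect to be the main nuisance is exactly this last piece of measure-theoretic bookkeeping --- checking that the reparametrized $U_i$ can always be housed inside an admissible state space of the form $\mathbb{R}^{d} \dcup \mathbb{W}$ --- which is where the ``sufficiently large state space'' stipulation behind \cref{eqs:state-space} and \cref{remark:marginal} is genuinely invoked; the graph-theoretic checks and the substitutions themselves are routine. One could alternatively argue entirely in terms of Markov kernels and densities, but then the error $\epsilon_{U_i}$ shared among the children of $U_i$ must be propagated by hand, which the functional representation avoids. This is in essence the argument of \citet{evans2016graphs}, recast for the present state-space convention.
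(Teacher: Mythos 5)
Your route is the one the paper itself relies on: the paper gives no proof of \cref{lem:exo-graph}, attributing it to \citet{evans2016graphs}, and your two-directional reparametrization of $U_i$ (absorb $U_i$'s structural equation into its children and replace $U_i$ by its error; conversely let the new $U_i$ be the pair consisting of $\Pa(U_i,\g)$ and the old exogenous $U_i$) is exactly the argument of that cited source, recast in structural-equation form. So at the level of ideas there is nothing to object to, and the first inclusion can indeed be made fully rigorous within the paper's conventions (take the error uniform on $[0,1]^{d_{U_i}}$ rather than $[0,1]$ so that the law of the redefined $U_i$ is absolutely continuous with respect to $\mu_{U_i}$; one then checks, via the disintegration over $\epsilon_{U_i}$, that the whole joint remains dominated by $\mu$).

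The genuine gap is in the reverse inclusion, precisely at the step you dismiss as bookkeeping. Under the paper's literal definition \cref{eqs:factorization}, membership of the reconstructed joint in $\model(\g,\Vset\cup\{U_i\})$ requires a density with respect to $\mu$, hence in particular an absolutely continuous conditional law of the new $U_i$ given $\Pa(U_i,\g)$. But your new $U_i$ deterministically encodes the values of $\Pa(U_i,\g)$: if these parents have a non-purely-atomic distribution, the conditional laws of the new $U_i$ given distinct parent values must be carried by disjoint subsets of $\X_{U_i}$, and since $\mu_{U_i}$ is $\sigma$-finite at most countably many disjoint sets can have positive $\mu_{U_i}$-measure, so almost all of these conditionals are singular with respect to $\mu_{U_i}$. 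Consequently the constructed law has no density with respect to $\mu$, and transporting along a Borel isomorphism into $\mathbb{R}^{d}\subseteq\X_{U_i}$ cannot repair this, because no variable that almost surely determines a continuously distributed parent can have an absolutely continuous conditional given that parent. In other words, the pair trick proves the lemma in the setting of \citet{evans2016graphs}, where the marginalized vertex has an unrestricted state space and no dominating measure is imposed on it; to obtain the statement as literally formalized here one must either invoke the alignment asserted in \cref{remark:marginal} between the induced marginal model of \cref{eqs:state-space} and that unrestricted-state-space notion (which is exactly what the paper does implicitly by citation, and which itself requires an argument showing that the relevant kernels can be factored through an absolutely continuous intermediate variable), or supply a different construction of the latent that carries the parents' information only distributionally rather than deterministically. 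As written, your proof of $\model(\g',\Vset)\subseteq\model(\g,\Vset)$ does not produce an element of $\model(\g,\Vset\cup\{U_i\})$ in the paper's sense whenever $U_i$ has a continuous parent.
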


\begin{lemma} \label{lem:rm-single-child}
Let $\g$ be a directed acyclic graph on vertices $\Vset \cup \{U_i\}$, such that $U_i$ has no parent and at most one child. Then, $\model(\g, \Vset) = \model(\g_{-U_i}, \Vset)$, where graph $\g_{-U_i}$ denotes the graph from removing vertex $U_i$. 
\end{lemma}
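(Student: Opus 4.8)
The plan is to establish the two inclusions that constitute the claimed equality of marginal models, working directly from the factorization \cref{eqs:factorization}, and splitting into the trivial case in which $U_i$ is isolated and the case in which $U_i$ has a single child $V_c$. In the latter case I would fix the notation $B \equiv \Pa(V_c,\g)\setminus\{U_i\}$ and record the elementary graphical bookkeeping: $\Pa(V_j,\g)=\Pa(V_j,\g_{-U_i})$ for every $V_j\in\Vset\setminus\{V_c\}$, while $\Pa(V_c,\g)=B\dcup\{U_i\}$ and $B=\Pa(V_c,\g_{-U_i})$. Since $A\in\Vset$, we have $U_i\ne A$, so the only constraint of \cref{assump:positivity} to be tracked concerns $A$.

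For the inclusion $\model(\g,\Vset)\subseteq\model(\g_{-U_i},\Vset)$, I would take $P\in\model(\g,\Vset\cup\{U_i\})$ and integrate out $U_i$. Its factor is the marginal density $p(u_i)$ because $\Pa(U_i,\g)=\emptyset$; and because $\g$ is acyclic, every vertex in $B$ is a nondescendant of $U_i$ (a descendant in $B$ would close a cycle through the only child $V_c$), so the local Markov property of $P$ gives $U_i\indep B$. Hence $\int p(u_i)\,p\{v_c\mid b,u_i\}\,\dd\mu_{U_i}(u_i)=\int p(u_i\mid b)\,p\{v_c\mid b,u_i\}\,\dd\mu_{U_i}(u_i)=p(v_c\mid b)$, so the $\Vset$-margin of $P$ factorizes as $p(v_c\mid b)\prod_{V_j\ne V_c}p\{v_j\mid\Pa(v_j,\g_{-U_i})\}$, which is exactly the factorization according to $\g_{-U_i}$. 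The positivity inequality is inherited: if $A\ne V_c$ then $P(A=a\mid\Pa(A,\g))$ is already a function of the $\Vset$-coordinates; if $A=V_c$ then $\Pa(A,\g_{-U_i})=B$ is nondescendant to $A$ in $\g$, and the remark following \cref{assump:positivity} yields $P(A=a\mid B)>\varepsilon$ $P$-almost surely.

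For the reverse inclusion, given $P'\in\model(\g_{-U_i},\Vset)$ I would exhibit an extension with $\Vset$-margin $P'$, namely the product law $P\equiv P'\times Q$ with $Q$ an arbitrary density on $\X_{U_i}$ (for instance the point mass at $\omega_1$, available since the state space \cref{eqs:state-space} is large enough). Under $P$ one has $V_c\indep U_i\mid B$, so $p\{v_c\mid b,u_i\}=p'(v_c\mid b)$, and together with $\Pa(V_j,\g)=\Pa(V_j,\g_{-U_i})$ for $V_j\ne V_c$ this shows $p(v,u_i)$ equals the $\g$-factorization of $P$; positivity transfers because $A\indep U_i$ under the product. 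The isolated-$U_i$ case is the same two computations with the $V_c$-factor absent.

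All of this is elementary; the care points, rather than genuine obstacles, are (i) the parent-set bookkeeping after deleting $U_i$, (ii) verifying $B\subseteq\nd(U_i,\g)$, which is the single place where acyclicity and the ``at most one child'' hypothesis enter, and (iii) carrying \cref{assump:positivity} through both directions via the remark that follows it. I expect organizing this bookkeeping cleanly to be the most time-consuming part, but no step is genuinely hard.
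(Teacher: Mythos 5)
Your proof is correct. Note, however, that the paper itself does not prove \cref{lem:rm-single-child}: it is stated as imported from \citet{evans2016graphs}, so there is no in-paper argument to compare against, and your two-inclusion argument serves as a sound, self-contained substitute. The forward inclusion correctly isolates the only two factors of \cref{eqs:factorization} that involve $u_i$, and the identification $\int p(u_i)\,p(v_c\mid b,u_i)\,\dd\mu_{U_i}(u_i)=p(v_c\mid b)$ is legitimate precisely because $B=\Pa(V_c,\g)\setminus\{U_i\}\subseteq\nd(U_i,\g)$, which is exactly where the no-parent and at-most-one-child hypotheses enter (a member of $B$ descending from $U_i$ would close a cycle through the unique child $V_c$), so the local Markov property yields $U_i\indep B$. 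The reverse inclusion via the product extension $P=P'\times Q$ is also fine: a point mass at $\omega_1$ has a density with respect to $\mu_{U_i}$, so it is an admissible extension, and the verification that the product law factorizes according to $\g$ is complete. Your tracking of \cref{assump:positivity} is more than the lemma as stated strictly requires (positivity only becomes relevant once $A\in\Vset$ and the models are intersected with $\model_0$), but it is harmless and is in fact what is needed for the way the lemma is deployed inside \cref{lem:marg-over-NI}.
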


\medskip 
\begin{proof}[of \cref{lem:marg-over-NI}]
It is easy to see that
\[ \g^0 = \g_{\Vset \setminus \Nset(\g)}(\Vset^0), \]
namely the graph obtained from first removing vertices $\Nset(\g)$ and then projecting out $\Iset(\g)$ with latent projection. 

First, let $\tilde{\Vset}^0 \equiv \Vset \setminus \N(\g)$ and $\tilde{\g}^0 \equiv \g_{\Vset \setminus \Nset(\g)}$. Because $\tilde{\Vset}^0$ is ancestral, by \cref{lem:ancestral-marg}, we have
\[ \model(\g,\tilde{\Vset}^0) = \model(\tilde{\g}^0, \tilde{\Vset}^0). \]

Now we iteratively project out vertices in $\Iset(\g)$. Suppose $\Iset(\g) = \{I_1, \dots, I_L\}$ is topologically indexed. By definition of $\Iset$ and the fact that every vertex an ancestor of $Y$ in $\tilde{\g}^0$, it must hold that $\Ch(I_L, \tilde{\g}^0) = \{A\}$. Let $\tilde{\g}_{e}^{1} = \mathfrak{r}(\tilde{\g}^0, I_L)$ be the graph from exogenizing $I_L$. In $\tilde{\g}_{e}^{1}$, $A$ is still the only child of $I_L$. Also, let $\tilde{\g}^{1}$ be the graph by removing $I_L$ from $\tilde{\g}_{e}^{1}$. Then, with $\tilde{V}^1 \equiv \tilde{V}^0 \setminus \{I_L\}$, by \cref{lem:exo-graph,lem:rm-single-child}, we have
\[ \model(\tilde{\g}^0, \tilde{V}^1) = \model(\tilde{\g}_{e}^1, \tilde{V}^1) = \model(\tilde{\g}^1, \tilde{V}^1).\]
Comparing \cref{def:latent-proj,def:exo-graph}, it is easy to see that $\tilde{\g}^1 = \tilde{\g}^0(\tilde{V}^1)$ and $\tilde{\g}^1$ is a directed acyclic graph. Now $I_{L-1}$ becomes the vertex with sole child $A$ in $\tilde{\g}^1$. Continuing this operation for $I_{L-1}, \dots, I_1$, we arrive at graph $\tilde{\g}^{L}$ on vertices $\tilde{V}^L \equiv \tilde{\Vset}^0 \setminus \Iset = \Vset \setminus \{\Nset(\g) \cup \Iset(\g)\} = \Vset^0$. Because the marginal model is taken iteratively, we have
\[ \model(\g, \Vset^0) = \model(\tilde{\g}^{L}, \Vset^0). \]
Further, because $\tilde{\g}^{0} = \g_{\tilde{V}^0} = \g(\tilde{V}^0), \tilde{\g}^{1} = \tilde{\g}^{0}(\tilde{V}^1), \dots, \tilde{\g}^{L} = \tilde{\g}^{L-1}(\tilde{V}^L)$ with iterative latent projections and every projection remains a directed acyclic graph, we have 
\[ \tilde{\g}^L = \g(\Vset^0) = \g_{\Vset \setminus \Nset(\g)}(\Vset^0) =  \g^0\]
by commutativity of latent projection; see \citet[Theorem 1]{evans2016graphs}. It also follows that $\tilde{\g}^L$ is a directed acyclic graph over vertices $\Vset^0$. 
\end{proof}

\subsection{Projecting out uninformative variables in $\Wset(\g) \cup \Mset(\g)$} \label{sec:app-proj}
In the following, we use the mDAG representation to prove \cref{lem:project-DAG}. Marginal directed acyclic graphs, or mDAGs, are a class of hyper-graphs introduced by \citet{evans2016graphs} to represent the marginal model of directed acyclic graphs. As opposed to the latent projection of \citet{verma1991equivalence} that introduces bidirected edges, mDAGs use hyper-edges to signify latent variables that confound two or more observed variables.  An mDAG $\mathcal{H} = (V, \mathcal{E}, \mathcal{B})$ is a hyper-graph on vertices $V$ with directed edges $\mathcal{E}$ and hyper-edges $\mathcal{B}$, where $\mathcal{B}$ is an abstract simplicial complex. Elements of $\mathcal{B}$ are called bidirected faces. A face is called trivial if it is a  singleton set. The inclusion maximal elements of $\mathcal{B}$ are called facets. A directed acyclic graph is an mDAG with trivial bidirected facets. 

It can be shown that if two projections lead to the same mDAG, they the marginal models must be the same. For an mDAG $\mathcal{H}$, let $\model_m(\mathcal{H})$ denote the marginal model represented by $\mathcal{H}$. More concretely, $\model_m(\mathcal{H})$ can be taken as the marginal model of the canonical directed acyclic graph associated with $\mathcal{H}$, which replaces every non-trivial facet with an exogenous latent variable. 

\begin{lemma}[Proposition 5, \citealp{evans2016graphs}] \label{lem:mDAG}
Let $\mathcal{H}$ be an mDAG containing a bidirected facet $B = C \dcup D$ such that 
\begin{enumerate}[(i)]
\item every bidirected face containing any $C_i \in C$ is a subset of $B$; and
\item $\Pa(C, \mathcal{H}) \subseteq \Pa(D_j, \mathcal{H})$ for every $D_j \in D$.
\end{enumerate}
Let $\mathcal{H}'$ be the mDAG defined from $\g$ by removing facet $B$ and replacing it with $C$ and $D$, and adding edges $C_i \rightarrow D_j$ for each $C_i \in C$ and $D_j \in D$, if the edge is not already present. Then, 
\[ \model_m(\mathcal{H}) = \model_m(\mathcal{H}'). \]
\end{lemma}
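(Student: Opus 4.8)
The plan is to pass to canonical directed acyclic graphs and establish the two set inclusions by explicitly realizing one model inside the other. Write $G \equiv \DAG(\mathcal{H})$ and $G' \equiv \DAG(\mathcal{H}')$ for the canonical directed acyclic graphs obtained by replacing every nontrivial facet with an exogenous latent, so that $\model_m(\mathcal{H}) = \model(G, \Vset)$ and $\model_m(\mathcal{H}') = \model(G', \Vset)$, where $\Vset$ is the observed vertex set and the latents are marginalized out. Thus $G$ carries an exogenous latent $U_B$ with $U_B \rightarrow V$ for every $V \in B = C \dcup D$, whereas $G'$ carries two exogenous latents $U_C, U_D$ with $U_C \rightarrow C_i$ and $U_D \rightarrow D_j$, together with the added edges $C_i \rightarrow D_j$; all remaining latents and directed edges coincide. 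Because the state spaces are unrestricted (\cref{remark:marginal}), every law in either margin admits a structural representation driven by mutually independent exogenous latents, and, crucially, condition (i) guarantees that $U_B$ is the only latent adjacent to any $C_i$. Hence we may absorb all private randomness feeding $C$ into $U_B$ and take each $C_i = g_i(U_B, \Pa(C_i, \mathcal{H}))$ to be a deterministic function of $U_B$ and the observed parents of $C_i$; the same normalization applies to $C$ in $G'$ with $U_B$ replaced by $U_C$.

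For the inclusion $\model_m(\mathcal{H}') \subseteq \model_m(\mathcal{H})$, I would start from such a structural realization of a law $P$ on $G'$ and reproduce its $\Vset$-margin on $G$. Let $U_B$ carry the pair $(U_C, U_D)$, and realize each $C_i$ by its $G'$-mechanism $g_i(U_C, \Pa(C_i, \mathcal{H}))$, which is legitimate since $U_C$ is a component of $U_B$. For each $D_j$, its parents in $G$ are $U_B$ and the observed set $\Pa(D_j, \mathcal{H})$, and condition (ii) gives $\Pa(C, \mathcal{H}) \subseteq \Pa(D_j, \mathcal{H})$; consequently, from $U_B$ and $\Pa(D_j, \mathcal{H})$ one can first recompute every value $C_i = g_i(U_C, \Pa(C_i, \mathcal{H}))$ and then apply the $G'$-mechanism of $D_j$, which takes $U_D$, the recomputed $C$, and $\Pa(D_j, \mathcal{H})$ as inputs. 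All other vertices keep their $G'$-mechanisms verbatim, so the $\Vset$-margins agree. The deterministic-$C$ reduction together with condition (ii) is precisely what makes the parents of $D_j$ rich enough to reconstruct $C$ inside $G$.

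The reverse inclusion $\model_m(\mathcal{H}) \subseteq \model_m(\mathcal{H}')$ is the main obstacle, since splitting $U_B$ into independent $U_C$ and $U_D$ discards the confounding between $C$ and $D$, and one must show the newly added edges $C_i \rightarrow D_j$ fully compensate for it. The approach is again to realize $C$ deterministically through $U_C \equiv U_B$, so that $C$ and all upstream vertices are reproduced exactly, and then to reveal the realized values of $C$ to each $D_j$ through the new edges, while the residual within-$D$ and downstream dependence previously routed through $U_B$ is re-created by the dedicated latent $U_D$ and by the other facets of $D$, which condition (i) leaves untouched. Making this precise is where the real work lies: I would argue at the level of the district, or c-component, factorization of \citet{richardson2003markov}, showing that the ordinary Markov factorizations induced by $\mathcal{H}$ and $\mathcal{H}'$ determine the same collection of $\Vset$-laws once $C$ is a deterministic function of its own latent, with condition (i) ensuring that $C$ forms its own district in $\mathcal{H}'$ and condition (ii) ensuring that the head of $D$'s district is always conditioned on a tail that already contains $C$. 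The delicate point is that this is an equality of \emph{realizable} laws rather than a mere agreement of conditional independences; a purely Markov-property argument would be insufficient because the margin of a Bayesian network is in general cut out by inequality constraints as well, and it is exactly the unrestricted latent state spaces that allow the direct edges to absorb the full shared-latent dependence.
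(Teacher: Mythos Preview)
The paper does not prove this lemma; it is quoted verbatim as Proposition~5 of \citet{evans2016graphs} and used as a black box in the proof of \cref{lem:project-DAG}. So there is no ``paper's own proof'' to compare your proposal against.

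Your approach via canonical DAGs and explicit structural realizations is the natural one and is essentially how Evans establishes the result. The inclusion $\model_m(\mathcal{H}') \subseteq \model_m(\mathcal{H})$ is handled correctly: bundling $(U_C,U_D)$ into $U_B$ and using condition~(ii) to recompute $C$ inside each $D_j$-mechanism is exactly right, and condition~(i) is what justifies the deterministic normalization $C_i = g_i(U_B,\Pa(C_i,\mathcal{H}))$.

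For the reverse inclusion you identify the difficulty but do not close it; you write ``Making this precise is where the real work lies'' and then gesture at a district factorization. That detour is unnecessary and slightly off-target, since equality of nested/district Markov models is weaker than equality of marginal models. The missing construction is direct and short. Given a realization on $G$ with $C_i = g_i(U_B,\Pa(C_i,\mathcal{H}))$ deterministic (licensed by~(i)), let $U_C$ be an independent copy of $U_B$ and set $C_i' = g_i(U_C,\Pa(C_i,\mathcal{H}))$; this reproduces the law of $C$ and everything upstream. For $D$, let $U_D$ be a sufficiently rich independent latent and use it to draw $\tilde U_B$ from the posterior $P\!\left(U_B \,\middle|\, C,\Pa(C,\mathcal{H})\right)$, which is a function of $(U_D,C,\Pa(C,\mathcal{H}))$ and is available at every $D_j$ precisely because $\Pa(C,\mathcal{H}) \subseteq \Pa(D_j,\mathcal{H})$ by~(ii). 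Then apply the original $G$-mechanism $D_j = h_j(\tilde U_B,\text{other latents of }D_j,\Pa(D_j,\mathcal{H}))$. A one-line computation shows that, conditional on the upstream observed variables and on the other latents of $D$, the pair $(C,D)$ has the same law as in $G$, so the $\Vset$-margins agree. Condition~(i) is used once more here to guarantee that the posterior of $U_B$ given $C$ does not involve any latent shared with $D$, so $\tilde U_B$ is genuinely a function of $(U_D,C,\Pa(C,\mathcal{H}))$ alone.

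In short: your plan is sound, but the hard direction should be finished by the explicit posterior-sampling coupling above rather than by invoking district factorizations.
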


\medskip 
\begin{proof}[of \cref{lem:project-DAG}]
Let $\mathcal{H}^1$ be the mDAG from projecting out $V_i$; see \citet{evans2016graphs} for details. Graph $\mathcal{H}^1$ is a graph on $\Vset$ consisting of both directed edges and a bidirected facet, which represents the marginal model of $\g$ when $V_i$ is marginalized over, as denoted by
\[ \model_m(\mathcal{H}^1) = \model(\g, \Vset \setminus \{V_i\}). \]
By construction, we have $V_k \rightarrow V_{i_j}$ in $\mathcal{H}^1$ for every $V_k \in \Pa(V_i, \g)$ and $j=1,\dots,l$. Set $B^1 = \{V_{i_1}, \dots, V_{i_l}\}$ is the only bidirected facet in $\mathcal{H}^1$. Partition $B^1$ into $C^1 = \{V_{i_1}\}$ and $D^1 = \{V_{i_2}, \dots, V_{i_l}\}$. We observe that (i) every bidirected face that contains $V_{i_1} \in C^1$ is a subset of $B^1$, which follows trivially from $B^1$ being the only facet. We also observe that (ii) $\Pa(C^1, \mathcal{H}^1) = \Pa(V_{i_1}, \mathcal{H}^1) \subseteq \Pa(D_j, \mathcal{H}^1)$ for every $D_j \in D^1$. Statement (ii) follows from 
\[ \Pa(V_{i_1}, \mathcal{H}^1) = \Pa(V_i, \g), \]
which holds because (a) $\Pa(V_i, \g) \subseteq \Pa(V_{i_1}, \mathcal{H}^1)$ by construction of $\mathcal{H}^1$ and (b) $\Pa(V_i, \g) \supseteq \Pa(V_{i_1},\mathcal{H}^1)$ by \cref{eqs:projection-pa} when $j=1$. By \cref{lem:mDAG}, we have
\[ \model_m(\mathcal{H}^1) = \model_m(\mathcal{H}^2), \]
where in $\mathcal{H}^2$, an mDAG on the same set of vertices, facet $B^1$ is replaced by facet $D^1$ and edges $\{V_{i_1} \rightarrow D_j: D_j \in D^1\}$ are added, if not already present. 

In graph $\mathcal{H}^2$, $B^2 = \{V_{i_2}, \dots, V_{i_l}\}$ is the only bidirected facet
 and can partitioned into $C^2 = \{V_{i_2}\}$ and $D^2 = \{V_{i_3}, \dots, V_{i_l}\}$. We claim that 
\[ \Pa(V_{i_2}, \mathcal{H}^2) = \{V_{i_1}\} \cup \Pa(V_i, \g),\]
which follows from (a) $\{V_{i_1}\} \cup \Pa(V_i, \g) \subseteq \Pa(V_{i_2}, \mathcal{H}^2)$ by construction of $\mathcal{H}^2$, and (b) $\{V_{i_1}\} \cup \Pa(V_i, \g) \supseteq \Pa(V_{i_2}, \mathcal{H}^2)$ by \cref{eqs:projection-pa} and construction of $\mathcal{H}^2$. Therefore, we again have (i) every bidirected face containing $V_{i_2} \in C^2$ is a subset of $B^2$, and (ii) $\Pa(C^2, \mathcal{H}^2) \subseteq \Pa(D_i, \mathcal{H}^2)$ for every $D_j \in D^2$. Applying \cref{lem:mDAG} again, we have
\[ \model_m(\mathcal{H}^2) = \model_m(\mathcal{H}^3),\]
where $\mathcal{H}^3$ is the mDAG formed by replacing facet $B^2$ by facet $D^2$ and adding edges $\{V_{i_2} \rightarrow D_j: D_j \in D^2\}$ if not already present. 

Iterating this process, we get a sequence of mDAGs $\mathcal{H}^1, \mathcal{H}^2, \dots, \mathcal{H}^l$; see \cref{fig:project-DAG} for an example. The last graph $\mathcal{H}^l$ contains no non-trivial bidirected facet and it is easy to see that $\mathcal{H}^l$ is a directed acyclic graph on vertices $\Vset \setminus \{V_i\}$. Further, graph $\mathcal{H}^l$ is identical to $\g_{-V_i,\pi}$ described in the lemma. The proof is complete upon noting
\[ \model(\g, \Vset \setminus \{V_i\}) = \model_m(\mathcal{H}^1) = \model_m(\mathcal{H}^2) =\dots = \model_m(\mathcal{H}^l) = \model(\mathcal{H}^l, \Vset \setminus \{V_i\}). \]
\end{proof}

\begin{figure}[!htb]
\centering
\begin{tikzpicture}
\tikzset{rv/.style={circle,inner sep=1pt,fill=gray!20,draw,font=\sffamily}, 
redv/.style={circle,inner sep=1pt,fill=white,draw,dashed,font=\sffamily}, 
sv/.style={circle,inner sep=1pt,fill=gray!20,draw,font=\sffamily,minimum size=1mm},
uv/.style={circle, minimum size=2mm, inner sep=0mm, fill=red}, 
node distance=12mm, >=stealth, every node/.style={scale=0.6}}
\begin{scope}
\node[name=Wj, rv]{$W_j$};
\node[name=Wl, rv, above of=Wj, xshift=-6mm]{$W_l$};
\node[name=Wk, rv, above of=Wj, xshift=6mm]{$W_k$};
\node[name=Wj1, rv, below of=Wj, xshift=-6mm]{$W_{j_1}$};
\node[name=Wj2, rv, below of=Wj, xshift=6mm]{$W_{j_2}$};
\node[name=Wj3, rv, right of=Wj2]{$W_{j_3}$};
\node[name=A, rv, right of=Wj3, xshift=2mm]{$A$};
\node[name=U, below of=Wj2, xshift=2mm]{};
\node[below of=U, yshift=6mm] {(a) $\g$};
\draw[->, very thick, color=blue] (Wl) -- (Wj);
\draw[->, very thick, color=blue] (Wk) -- (Wj);
\draw[->, very thick, color=blue] (Wj) -- (Wj1);
\draw[->, very thick, color=blue] (Wj) -- (Wj2);
\draw[->, very thick, color=blue] (Wj) -- (Wj3);
\draw[->, very thick, color=blue] (Wj) to (A);
\draw[->, very thick, color=blue] (Wj1) -- (Wj2);
\draw[->, very thick, color=blue] (Wj2) -- (Wj3);
\draw[->, very thick, color=blue, bend right] (Wl) to (Wj1);
\end{scope} \begin{scope}[xshift=3cm]
\node[name=Wj]{};
\node[name=Wl, rv, above of=Wj, xshift=-6mm]{$W_l$};
\node[name=Wk, rv, above of=Wj, xshift=6mm]{$W_k$};
\node[name=Wj1, rv, below of=Wj, xshift=-6mm]{$W_{j_1}$};
\node[name=Wj2, rv, below of=Wj, xshift=6mm]{$W_{j_2}$};
\node[name=Wj3, rv, right of=Wj2]{$W_{j_3}$};
\node[name=A, rv, right of=Wj3, xshift=2mm]{$A$};
\node[name=U, uv, below of=Wj2, xshift=6mm]{};
\node[left of=Wj, xshift=-1mm] {\Large $\Rightarrow$};
\node[below of=U, yshift=6mm] {(b) $\mathcal{H}^1$};
\draw[->, very thick, color=blue] (Wj1) -- (Wj2);
\draw[->, very thick, color=blue] (Wj2) -- (Wj3);
\draw[->, very thick, color=blue] (Wl) to (Wj1);
\draw[->, very thick, color=blue] (Wk) to (Wj1);
\draw[->, very thick, color=blue] (Wl) to (Wj2);
\draw[->, very thick, color=blue] (Wk) to (Wj2);
\draw[->, very thick, color=blue] (Wl) to (Wj3);
\draw[->, very thick, color=blue] (Wk) to (Wj3);
\draw[->, very thick, color=blue] (Wl) to (A);
\draw[->, very thick, color=blue] (Wk) to (A);
\draw[->, very thick, color=red] (U) to (Wj1);
\draw[->, very thick, color=red] (U) to (Wj2);
\draw[->, very thick, color=red] (U) to (Wj3);
\draw[->, very thick, color=red] (U) to (A);
\end{scope} \begin{scope}[xshift=6cm]
\node[name=Wj]{};
\node[name=Wl, rv, above of=Wj, xshift=-6mm]{$W_l$};
\node[name=Wk, rv, above of=Wj, xshift=6mm]{$W_k$};
\node[name=Wj1, rv, below of=Wj, xshift=-6mm]{$W_{j_1}$};
\node[name=Wj2, rv, below of=Wj, xshift=6mm]{$W_{j_2}$};
\node[name=Wj3, rv, right of=Wj2]{$W_{j_3}$};
\node[name=A, rv, right of=Wj3, xshift=2mm]{$A$};
\node[name=U, uv, below of=Wj3, xshift=0mm]{};
\node[left of=Wj, xshift=-1mm] {\Large $\Rightarrow$};
\node[below of=U, yshift=6mm, xshift=-5mm] {(c) $\mathcal{H}^2$};
\draw[->, very thick, color=blue] (Wj1) -- (Wj2);
\draw[->, very thick, color=blue] (Wj2) -- (Wj3);
\draw[->, very thick, color=blue] (Wl) to (Wj1);
\draw[->, very thick, color=blue] (Wk) to (Wj1);
\draw[->, very thick, color=blue] (Wl) to (Wj2);
\draw[->, very thick, color=blue] (Wk) to (Wj2);
\draw[->, very thick, color=blue] (Wl) to (Wj3);
\draw[->, very thick, color=blue] (Wk) to (Wj3);
\draw[->, very thick, color=blue] (Wl) to (A);
\draw[->, very thick, color=blue] (Wk) to (A);
\draw[->, very thick, color=blue, bend right] (Wj1) to (Wj3);
\draw[->, very thick, color=blue, bend right] (Wj1) to (A);
\draw[->, very thick, color=red] (U) to (Wj2);
\draw[->, very thick, color=red] (U) to (Wj3);
\draw[->, very thick, color=red] (U) to (A);
\end{scope} \begin{scope}[xshift=9cm]
\node[name=Wj]{};
\node[name=Wl, rv, above of=Wj, xshift=-6mm]{$W_l$};
\node[name=Wk, rv, above of=Wj, xshift=6mm]{$W_k$};
\node[name=Wj1, rv, below of=Wj, xshift=-6mm]{$W_{j_1}$};
\node[name=Wj2, rv, below of=Wj, xshift=6mm]{$W_{j_2}$};
\node[name=Wj3, rv, right of=Wj2]{$W_{j_3}$};
\node[name=A, rv, right of=Wj3, xshift=2mm]{$A$};
\node[name=U, uv, below of=Wj3, xshift=7mm]{};
\node[left of=Wj, xshift=-1mm] {\Large $\Rightarrow$};
\node[below of=U, yshift=6mm, xshift=-10mm] {(d) $\mathcal{H}^3$};
\draw[->, very thick, color=blue] (Wj1) -- (Wj2);
\draw[->, very thick, color=blue] (Wj2) -- (Wj3);
\draw[->, very thick, color=blue] (Wl) to (Wj1);
\draw[->, very thick, color=blue] (Wk) to (Wj1);
\draw[->, very thick, color=blue] (Wl) to (Wj2);
\draw[->, very thick, color=blue] (Wk) to (Wj2);
\draw[->, very thick, color=blue] (Wl) to (Wj3);
\draw[->, very thick, color=blue] (Wk) to (Wj3);
\draw[->, very thick, color=blue] (Wl) to (A);
\draw[->, very thick, color=blue] (Wk) to (A);
\draw[->, very thick, color=blue, bend right] (Wj1) to (Wj3);
\draw[->, very thick, color=blue, bend right] (Wj1) to (A);
\draw[->, very thick, color=blue, bend right] (Wj2) to (A);
\draw[->, very thick, color=red] (U) to (Wj3);
\draw[->, very thick, color=red] (U) to (A);
\end{scope} \begin{scope}[xshift=12cm]
\node[name=Wj]{};
\node[name=Wl, rv, above of=Wj, xshift=-6mm]{$W_l$};
\node[name=Wk, rv, above of=Wj, xshift=6mm]{$W_k$};
\node[name=Wj1, rv, below of=Wj, xshift=-6mm]{$W_{j_1}$};
\node[name=Wj2, rv, below of=Wj, xshift=6mm]{$W_{j_2}$};
\node[name=Wj3, rv, right of=Wj2]{$W_{j_3}$};
\node[name=A, rv, right of=Wj3, xshift=2mm]{$A$};
\node[name=U, below of=Wj3, xshift=0mm]{};
\node[left of=Wj, xshift=-1mm] {\Large $\Rightarrow$};
\node[below of=U, yshift=6mm, xshift=-5mm] {(e) $\mathcal{H}^4 = \g_{-W_j,\pi}$};
\draw[->, very thick, color=blue] (Wj1) -- (Wj2);
\draw[->, very thick, color=blue] (Wj2) -- (Wj3);
\draw[->, very thick, color=blue] (Wl) to (Wj1);
\draw[->, very thick, color=blue] (Wk) to (Wj1);
\draw[->, very thick, color=blue] (Wl) to (Wj2);
\draw[->, very thick, color=blue] (Wk) to (Wj2);
\draw[->, very thick, color=blue] (Wl) to (Wj3);
\draw[->, very thick, color=blue] (Wk) to (Wj3);
\draw[->, very thick, color=blue] (Wl) to (A);
\draw[->, very thick, color=blue] (Wk) to (A);
\draw[->, very thick, color=blue, bend right] (Wj1) to (Wj3);
\draw[->, very thick, color=blue, bend right] (Wj1) to (A);
\draw[->, very thick, color=blue, bend right] (Wj2) to (A);
\draw[->, very thick, color=blue] (Wj3) to (A);
\end{scope} \end{tikzpicture}
\caption{Application of \cref{lem:project-DAG} to $V_i = W_j$ with $\pi = (W_{j_1}, W_{j_2},W_{j_3},A)$.}
\label{fig:project-DAG}
\end{figure}

\begin{proof}[of \cref{lem:iterate-proj}]
Since $V_i \in \Vset \setminus \Vset^{\ast}(\g)$ and $\Nset(\g) = \Iset(\g) = \emptyset$, by \cref{thm:criteria}, we have either (1) $V_i \equiv W_i \in \Wset(\g) \setminus \Oset(\g)$ and $W_i$ satisfies the W-criterion or (2) $V_i \equiv M_i \in \Mset(\g) \setminus \{Y\}$ and $M_i$ satisfies the M-criterion. We now check that \cref{lem:project-DAG} can be applied.

In Case (1), we have $\Wset(\g) \cup \{A\} \supset \Ch(W_i, \g)$, for which $\pi$ is a topological ordering by W-criterion's (ii)(a) and the fact that $A \not\mapsto \Wset$. Further, the implied ordering of $\Ch(W_i, \g) \cap \Wset(\g)$ is unique. Condition \cref{eqs:projection-pa} is implied by W-criterion's (ii)(b). 

In Case (2), we have $\Mset(\g) \supset \Ch(M_i, \g)$, for which $\pi$ is the unique topological ordering by M-criterion's (ii)(a). Condition \cref{eqs:projection-pa} is implied by M-criterion's (ii)(b). 

By \cref{lem:project-DAG}, we have $\model(\g, \Vset \setminus \{V_i\}) = \model(\g_{-V_i, \pi}, \Vset \setminus \{V_i\})$. Further, for $P \in \model(\g, \Vset)$ under \cref{assump:positivity}, $\Psi_a(P; \g)$ depends on $P$ only through $P(\Vset \setminus \{V_i\})$ because $\Psi_a(P; \g) = \E[\E\{Y \mid A=a, \Oset(\g)\}]$ and $V_i \notin \{A, Y\} \cup \Oset(\g)$. 
By \cref{lem:eif-marg} and \cref{def:irr-info}, we have $\Vset^{\ast}(\g) = \Vset^{\ast}(\g_{-V_i,\pi})$. 
\end{proof}

\subsection{Commutativity of $\g_{-V_i, \pi}$ projection}
The following result shows that projection $\g_{-V_i, \pi}$ given by \cref{def:project-DAG}, when applied to uninformative variables in $\Wset(\g)  \cup \Mset(\g)$, is commutative. 

\smallskip \begin{lemma}\label{lem:order-indep-algo}
Suppose $\g$ is a directed acyclic graph on vertex set $\Vset$ that satisfies \cref{assump:A-goes-to-Y}. Suppose $A \in \Vset$ is the treatment and $Y \in \Vset$ is the outcome. Let $V_i, V_j $ be two distinct vertices in $\{\Wset(\g) \cup \Mset(\g)\} \setminus (\Oset(\g) \cup \{Y\})$ such that $V_i$ and $V_j$ satisfy \cref{lem:W-criterion} or \cref{lem:M-criterion}.
Let $\pi_i$, $\pi_j$ be defined according to \cref{eqs:pi-W} or \cref{eqs:pi-M}, depending which criterion is applicable. It holds that 
\[ (\g_{-V_i,\pi_i})_{-V_j, \pi_j} \equiv (\g_{-V_j,\pi_j})_{-V_i, \pi_i}. \]
\end{lemma}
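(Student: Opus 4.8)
The plan is to prove the identity by a direct combinatorial comparison of the two composite graphs, vertex by vertex. The starting point is a closed-form description of a single projection. Let $\g$ be a directed acyclic graph, let $V_i$ have children topologically ordered as $\pi = (V_{i_1}, \dots, V_{i_l}) = \Ch(V_i, \g)$, and suppose \cref{eqs:projection-pa} holds. Unwinding \cref{def:project-DAG}, the graph $\g' \equiv \g_{-V_i, \pi}$ has vertex set $\Vset \setminus \{V_i\}$, with $\Pa(V_m, \g') = \Pa(V_m, \g)$ for every $V_m \notin \Ch(V_i, \g)$ and $\Pa(V_{i_j}, \g') = \big(\Pa(V_i, \g) \cup \{V_{i_1}, \dots, V_{i_{j-1}}\} \cup \Pa(V_{i_j}, \g)\big) \setminus \{V_i\}$ for $j = 1, \dots, l$; equivalently, an edge $V_a \to V_b$ is present in $\g'$ if and only if it is present in $\g$, or $V_i \to V_b$ holds in $\g$, $V_b = V_{i_j}$, and $V_a \in \Pa(V_i, \g) \cup \{V_{i_1}, \dots, V_{i_{j-1}}\}$. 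I would also record that, under the hypotheses of the lemma, the ordering $\pi$ prescribed in \cref{eqs:pi-W} or \cref{eqs:pi-M} is the \emph{unique} topological order of the relevant child set (as noted in \cref{lem:iterate-proj}), so whenever a vertex is projected out — whether from $\g$ or from an already-projected graph — the ordering to use is unambiguously determined.

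Both iterated projections are legitimate and yield directed acyclic graphs on $\Vset \setminus \{V_i, V_j\}$: after projecting $V_i$ out, one checks that $\Nset$ and $\Iset$ remain empty and, by \cref{lem:iterate-proj}, $\Vset^{\ast}$ is unchanged, so $V_j$ still lies in $\{\Wset \cup \Mset\} \setminus (\Oset \cup \{Y\})$ and satisfies its criterion, whence \cref{lem:iterate-proj} applies a second time; the other order is symmetric. It thus suffices to check that $\Pa(V_m, (\g_{-V_i,\pi_i})_{-V_j,\pi_j}) = \Pa(V_m, (\g_{-V_j,\pi_j})_{-V_i,\pi_i})$ for every $V_m$, which I would do by cases on the mutual position of $V_i$ and $V_j$ in $\g$. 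When $V_i, V_j$ are non-adjacent with no common child, the two projections touch disjoint parts of the graph and commute trivially. When they are non-adjacent but share children, the added edges interleave; using the closed form one computes that for a common child $V_m$ the parent set after both projections equals $\big(\Pa(V_i,\g) \cup \Pa(V_j,\g) \cup \{\text{children of } V_i \text{ preceding } V_m\} \cup \{\text{children of } V_j \text{ preceding } V_m\} \cup \Pa(V_m,\g)\big) \setminus \{V_i, V_j\}$ in either order, using that the child ordering entering the second projection is the restriction of the original one. When $V_i$ and $V_j$ are adjacent, say $V_i \to V_j$, projecting $V_i$ first turns the children of $V_j$ into children of everything that pointed to $V_j$ — in particular of $\Pa(V_i,\g)$ and of the children of $V_i$ preceding $V_j$ — so $V_j$'s parent and child sets change, and one must track these changes and the induced ordering before projecting $V_j$, then compare with projecting $V_j$ first (which makes $\Ch(V_j, \g)$ into children of $V_i$). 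In each subcase the verification reduces to set-algebra on parent sets, where the nested inclusions \cref{eqs:projection-pa} imposed by \cref{lem:W-criterion}(ii)(b) and \cref{lem:M-criterion}(ii)(b) ensure that each ``$\Pa(V_m,\g)$'' remainder term is already absorbed except at the last child, which is handled symmetrically.

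The main obstacle is the adjacent case together with the bookkeeping of the topological orderings in the intermediate graphs: after the first projection the surviving vertex has a new child set, so one must verify that the order prescribed by \cref{eqs:pi-W} or \cref{eqs:pi-M} in the intermediate graph is compatible with the orderings in $\g$, and that the edges saturated among the children coincide in both processing orders. This is exactly where the nested-parent structure forced by the W- and M-criteria (parts (ii)(a)--(b)) is indispensable: it pins down the orderings and guarantees that the extra edges created by one projection are precisely those the other projection would also create. As a conceptual cross-check — which I would mention but not rely on — the identity is consistent with the order-independence of marginalization of mDAGs \citep[Theorem 1]{evans2016graphs}: by the proof of \cref{lem:project-DAG}, each $\g_{-V_i, \pi_i}$ is the exhaustive facet-reduction (via \cref{lem:mDAG}) of the mDAG obtained by marginalizing $V_i$, and marginalizing $\{V_i, V_j\}$ as an mDAG does not depend on order; turning this into a proof would still require the confluence of the reduction step, which is essentially the combinatorial argument sketched above.
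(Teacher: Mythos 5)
Your proposal is correct and follows essentially the same route as the paper's proof: a direct comparison of the two composite edge sets, split on whether $V_i$ and $V_j$ are adjacent, with the closed-form description of a single projection, the nested-parent inclusions from \cref{lem:W-criterion}(ii) and \cref{lem:M-criterion}(ii) doing the work in the adjacent case, and the bookkeeping of the orderings in the intermediate graphs (including the case where $A$ is the last child) handled exactly as you anticipate. One small slip in your sketch: projecting out $V_i$ does not act on $\Ch(V_j,\g)$; rather $V_j$, being a child of $V_i$, acquires the later children of $V_i$ as new children and $\Pa(V_i,\g)$ together with the earlier children of $V_i$ as new parents — the paper tracks precisely these sets and supplies the one piece you flag but do not prove, namely an acyclicity (shortest causal path) argument showing the merged child set admits a topological order consistent with both processing orders.
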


\begin{proof}
For simplicity let $\g_1 \equiv (\g_{-V_i,\pi_i})_{-V_j, \pi_j} $ and let $\g_2 \equiv  (\g_{-V_j,\pi_j})_{-V_i, \pi_i}$.
By construction of $\g_1, \g_2$ and \cref{lem:marg-over-NI,lem:project-DAG}, both $\g_1$ and $\g_2$ are directed acyclic graphs on $\Vset \setminus \{V_1, V_2\}$. 
Hence, for \cref{lem:order-indep-algo} to hold, we only need to show that the set of edges in $\g_1$ and $\g_2$ are identical. 

 The only edges that differ between $\g_1$ and $\g_2$ as compared to $\g$ involve vertices $V_i, V_j$, $\pa(V_i, \g), \pa(V_j, \g)$, $\pi_{i}$ and $\pi_j$.   Let $E^1$ be the set of edges in $\g_1$ and $E^2$ be the set of edges in $\g_2$. 
Without loss of generality, we will suppose that $V_i$ precedes $V_j$ in the topological ordering of $\Vset$ in $\g$. 
If  $V_i \to V_j$ is not in $\g$, then 
\[ \pa(V_i, \g_{-V_j, \pi_j}) =\pa(V_i, \g),\quad \Ch(V_i, \g_{-V_j, \pi_j}) = \Ch(V_i, \g), \]
and 
\[ \pa(V_j, \g_{-V_i, \pi_i}) =\pa(V_j, \g), \quad \Ch(V_j, \g_{-V_i, \pi_i}) =\Ch(V_j, \g).\]
Therefore, by construction of $\g_1$ and $\g_2$, $E^1= E^2$.

For the rest of the proof, we suppose that  $V_i \to V_j$ is in $\g$. Then $V_i, V_j$ are both in $\Wset(\g)$, or $V_i, V_j$ are both in $\Mset$. If $V_i, V_j \in \Wset(\g)$, let $Q \equiv \Wset(\g)$, otherwise, let $Q \equiv \Mset(\g)$. Let $\pa(V_i, \g) \cap Q = \{V_i^1, \dots , V_j^{l_1}\}$ and $\pa(V_j, \g) \cap Q =  \{V_i^1, \dots , V_j^{l_2}\}$, such that $(V_i^1, \dots , V_j^{l_1})$ and  $(V_i^1, \dots , V_j^{l_2})$ are  topologically ordered in $\g$. Additionally, suppose that $\pi_i = (V_{i_{1}}, \dots , V_{i_{r_1}})$ and $\pi_j = (V_{j_{1}}, \dots , V_{j_{r_2}})$. 

Let $l(i) \in \{1, \dots, l_2\}$ such that $V_i = V_j^{l(i)}$ and let $r(j) \in \{1, \dots r_1\}$ such that $V_j = V_{i_{r(j)}}$. Below, we will tackle the most general case, that is $1 \neq l(i) \neq l_2$ and $1 \neq r(j) \neq r_1$. The proof for special cases when $l(i) \in \{1, l_2\}$ or $r(j) \in \{1, r_1\}$ follows the same logic and drops a few of the arguments below.

Let $E$ be the set of edges in $\g$ and let $E'$ be the  set 
\begin{align*}
E' =  E \setminus \Big( & \big\{V_{i}^{l} \to V_{i_{r}}:  1\le l \le l_1, 0 \le r \le r_1 \big\}  \cup \big\{V_{i_{r'}} \to V_{i_{r''}}: 0 \le r' <r'' \le r_1 \big\} \\
&\cup \big\{V_{j}^{l} \to V_{j_{r}}: 1\le l \le l_2, 0 \le r \le r_2\big\} \cup  \big\{V_{j_{r'}} \to V_{j_{r''}}: 0 \le r' <r'' \le r_2   \big\} \Big).
\end{align*}
Then $E' \subset E^1$ and $E' \subset E^2$. Additionally, the edges $E'$ are also in $\g_{-V_j, \pi_j}$ and $\g_{-V_i, \pi_i}$ as well as in $\g_1$ and $\g_2$. In order to specify the edges in $E^1 \setminus E'$ and $E^2 \setminus E'$ we need to consider edges in $\g_{-V_j, \pi_j}$ and $\g_{-V_i, \pi_i}$.
Hence, consider the parents and children of $V_i$ and $V_j$ in   $\g_{-V_j, \pi_j}$ and $\g_{-V_i, \pi_i}$:
\begin{align}
&\pa(V_i, \g_{-V_j, \pi_j}) \cap Q =\pa(V_i, \g) \cap Q = \{V_{i_1},\dots, V_{i_{r_1}}\}, \label{eq:pavi}\\
&\Ch(V_i, \g_{-V_j, \pi_j}) \cap (Q \cup \{A\}) =  \{V_{i_1}, \dots , V_{i_{r^{j}-1}},  V_{i_{r^{j}+1}}, \dots, V_{i_{r_1}}\} \cup \{V_{j_1}, \dots, V_{j_{r_2}}\},\label{eq:chvi1}\\
&\pa(V_j, \g_{-V_i, \pi_i}) \cap Q = \{V_j^{1}, \dots, V_j^{l(i) -1}, V_j^{l(i) + 1}, \dots, V_j^{l_2}\} \cup \{V_{i}^1, \dots, V_{i}^{l_1}\} \cup  \{V_{i_1}, \dots, V_{i_{r(j) -1}}\},\label{eq:pavj1}\\
&\Ch(V_j, \g_{-V_i, \pi_i})  \cap (Q \cup \{A\}) = \{V_{j_1}, \dots ,V_{j_{r_2}}\} \cup \{V_{i_{r(j) +1}}, \dots, V_{i_{r_1}}\}. \label{eq:chvj1}
\end{align}

Since $V_i \in Q$ and $V_j \in \Ch(V_i,\g) \cap Q$ and since $V_i$ satisfies \cref{lem:W-criterion} or \cref{lem:M-criterion}, it holds that 
\begin{align}\label{eq:prop-pa}
\pa(V_j,\g) \cap Q \subseteq (\pa(V_i, \g) \cap Q) \cup \{V_i, V_{i_1}, \dots, V_{i_{r(j)-1}}\}.
\end{align}
Additionally, by \cref{eq:pavj1} 
\begin{align*}
\pa(V_j, \g_{-V_i, \pi_i}) \cap Q = \left[(\pa(V_j,\g) \cap Q) \setminus\{V_i\} \right] \cup   (\pa(V_i, \g) \cap Q) \cup \{V_i, V_{i_1}, \dots, V_{i_{r(j) -1}}\},
\end{align*}
and \cref{eq:pavj1} can be rewritten as 
\begin{align}\label{eq:pavj}
&\pa(V_j, \g_{-V_i, \pi_i}) \cap Q = \{V_{i}^1, \dots, V_{i}^{l_1}, V_{i_1}, \dots, V_{i_{r(j) -1}}\},
\end{align}
where all the vertices are listed in a topological ordering consistent with $\g$ and $\g_{-V_i, \pi_i}$.

Next we consider how to simplify and topologically order in $\g$ the set $\{V_{j_1}, \dots ,V_{j_{r_2}}\} \cup \{V_{i_{r(j) +1}}, \dots, V_{i_{r_1}}\}$ which is contained in both $\Ch(V_j, \g_{-V_i, \pi_i}) \cap (Q \cup \{ A \})$ and $\Ch(V_i, \g_{-V_j, \pi_j}) \cap (Q \cup \{A\})$; see \cref{eq:chvi1,eq:chvj1}.

Let 
\[ \{V_{{ij}_1},  \dots V_{{ij}_{r_3}}\} =  (\{V_{j_1}, \dots ,V_{j_{r_2}}\} \cap Q) \setminus \{V_{i_{r(j) +1}}, \dots, V_{i_{r_1}}\} \]
be a vertex set such that $(V_{{ij}_1},  \dots V_{{ij}_{r_3}} )$ is topologically ordered in $\g$.  Since the topological ordering of $ (V_{j_1}, \dots ,V_{j_{r_2}}) $ is unique by \cref{lem:iterate-proj}, the ordering  $(V_{{ij}_1},  \dots V_{{ij}_{r_3}} )$  does not conflict with  the ordering $(V_{j_1}, \dots ,V_{j_{r_2}})$. Additionally, by construction of $\g_{-V_i,\pi_i}$ and $\g_{-V_j,\pi_j}$ from $\g$, $(V_{{ij}_1},  \dots V_{{ij}_{r_3}} )$ is also  topologically ordered in $\g_{-V_i, \pi_i}$ and $\g_{-V_j, \pi_j}$.

The proof is split into two cases.

\begin{enumerate}[(a)]
\item $V_{i_{r_1}} \neq A \neq V_{j_{r_2}}$.

\medskip We have $V_{i_{r_1}} \neq A$, $A \notin \{V_{{ij}_1},  \dots V_{{ij}_{r_3}} \}$ and both $(V_{i_{r(j) +1}}, \dots, V_{i_{r_1}})$ and  $(V_{{ij}_1},  \dots V_{{ij}_{r_3}})$ are  topologically ordered in $\g$, $\g_{-V_i, \pi_i}$, and $\g_{-V_j, \pi_j}$. Hence, for $(V_{i_{r(j) +1}}, \dots, V_{i_{r_1}},V_{{ij}_1},  \dots V_{{ij}_{r_3}})$ to be topologically ordered in $\g$, as well as in $\g_{-V_i, \pi_i}$, and $\g_{-V_j, \pi_j}$, it is enough to prove that a vertex $B_1 \in \{V_{{ij}_1},  \dots V_{{ij}_{r_3}} \} = ( \{V_{j_1}, \dots ,V_{j_{r_2}}\} \cap Q) \setminus \{V_{i_{r(j) +1}}, \dots, V_{i_{r_1}}\}$ cannot be an ancestor of a vertex $B_2 \in \{V_{i_{r(j) +1}}, \dots, V_{i_{r_1}}\} \cap Q$  in $\g$.

Suppose for a contradiction that such a pair of vertices exists in $\g$ and  choose $B_1$ and $B_2$ to be a pair of such vertices with a shortest causal path $p$ from $B_1$ to $ B_2$ in $\g$. By choice of $B_1$ and $B_2$, no other vertex on $p$ is in  $\{V_{i_{r(j) +1}}, \dots, V_{i_{r_1}}\} \cap Q$. Since $B_2 \in \Ch(V_i, \g) \cap Q$, by \cref{lem:W-criterion,lem:M-criterion}, $\pa(B_2, \g) \subseteq \pa(V_i, \g) \cup \{V_i, V_{i_1}, \dots , V_{i_{r(j)}}\}$. This, in turn, implies that $B_1$ is an ancestor of a vertex in $\pa(V_i, \g) \cup \{V_i, V_{i_1}, \dots , V_{i_{r(j)}}\}$ through $p$, which together with $B_1 \in \Ch(V_j, \g) \cap Q$ and $V_i \to V_j$ in $\g$, implies that a directed cycle is present in $\g$, which is a contradiction. 

Since $A \neq V_{i_{r_1}}$ and $A \neq V_{j_{r_2}}$, we also have that the vertex sets on the right-hand-side of equations \eqref{eq:chvi} and \eqref{eq:chvj} below are listed in a topological ordering consistent with $\g$, $\g_{-V_i, \pi_i}$,  and $\g_{-V_j, \pi_j}$.

\begin{align}
&\Ch(V_i, \g_{-V_j, \pi_j}) \cap (Q \cup \{A\}) =  \{V_{i_1}, \dots , V_{i_{r(j)-1}}, V_{i_{r(j) +1}}, \dots, V_{i_{r_1}},V_{{ij}_1},  \dots V_{{ij}_{r_3}}\},\label{eq:chvi}\\
&\Ch(V_j, \g_{-V_i, \pi_i})  \cap (Q \cup \{A\}) = \{V_{i_{r(j) +1}}, \dots, V_{i_{r_1}},V_{{ij}_1},  \dots V_{{ij}_{r_3}}\}. \label{eq:chvj}
\end{align}

\medskip Now consider the set of edges $E^1$, which can be decomposed as $E^1 = E' \cup E^1_i \cup E^2_j$. Edges $E^1_i$ given by 
\begin{multline} \label{eq:e1i}
E^{1}_i  = \big\{V_{i}^{l} \to V_{i_{r}}:  1\le l \le l_1, 1 \le r \le r_1, r \neq r(j) \big\} \\
\cup  \big\{V_{i_{r'}} \to V_{i_{r''}}: 1 \le r' <r'' \le r_1, r' \neq r(j) \neq r'' \big\},
\end{multline}
are the edges in $\g_1$ that are added by transforming $\g$ into $\g_{-V_i, \pi_i}$. Meanwhile, edges $E^2_j$ given by 
\begin{multline} \label{eq:e2j}
E^{2}_j = \big\{V_{i}^{l} \to V_{{ij}_{r}}: 1\le l \le l_1, 1 \le r \le r_3\big\} \\
\cup  \big\{V_{{i}_{r'}} \to V_{{ij}_{r''}}: 1\le r' < r_1, 1 \le r'' \le r_3, r' \neq r(j) \big\} \\
\cup \big\{V_{{ij}_{r'}} \to V_{{ij}_{r''}}: 1 \le r' < r'' \le r_3 \big\}.
\end{multline}
are the edges added to $\g_1$, by transforming $\g_{-V_i, \pi_i}$ into $\g_1 = (\g_{-V_i,\pi_i})_{-V_j, \pi_j}$. Equation \cref{eq:e2j} uses \cref{eq:pavj,eq:chvj}. 

By \cref{eq:e1i,eq:e2j}, we have that $E^{1}_i \cup E^{2}_j = E^1 \setminus E'$ is equal to
\begin{multline}\label{eq:all1}
E^{1}_i   \cup E^{2}_j =\big\{V_{i}^{l} \to V_{i_{r}}:  1\le l \le l_1, 1 \le r \le r_1, r \neq r(j) \big\}  \cup \big\{V_{i}^{l} \to V_{{ij}_{r}}: 1\le l \le l_1, 1 \le r \le r_3\big\} \\
\cup  \big\{V_{i_{r'}} \to V_{i_{r''}}: 1 \le r' <r'' \le r_1, r' \neq r(j) \neq r'' \big\} \\
\cup  \big\{V_{{i}_{r'}} \to V_{{ij}_{r''}}: 1\le r' < r_1, 1 \le r'' \le r_3, r' \neq r(j) \big\} \\
\cup \big\{V_{{ij}_{r'}} \to V_{{ij}_{r''}}: 1 \le r' < r'' \le r_3 \big\}.
\end{multline}

\medskip Next, consider the set of edges $E^2$, which can be decomposed as $E^2 = E' \cup E^1_j \cup E^2_i$. Edges $E^1_j$ given by 
\begin{equation} \label{eq:e1j}
E^{1}_j = \big\{V_{j}^{l} \to V_{j_{r}}: 1\le l \le l_2, 1 \le r \le r_2, l \neq l(i) \big\} \cup \big\{V_{j_{r'}} \to V_{j_{r''}}: 1 \le r' <r'' \le r_1   \big\},
\end{equation}
are the edges in $\g_2$ that are added by transforming $\g$ into $\g_{-V_j, \pi_j}$. Meanwhile, edges $E^2_i$ given by 
\begin{multline} \label{eq:e2i}
E^{2}_i =  \big\{V_{i}^{l} \to V_{i_{r}}:  1\le l \le l_1, 1 \le r \le r_1, r \neq r(j) \big\} \cup \big\{V_{i}^{l} \to V_{{ij}_{r}}:  1\le l \le l_1, 1 \le r \le r_3 \big\} \\
\cup \big\{V_{i_{r'}} \to V_{{i}_{r''}}: 1 \le r' < r'' \le r_1, r' \neq r(j)  \big\} \\
\cup \big\{V_{i_{r'}} \to V_{{ij}_{r''}}: 1 \le r' \le r_1, 1 \le r'' \le r_3, r' \neq r(j)  \big\} \\
\cup \big\{V_{{ij}_{r'}} \to V_{{ij}_{r''}}: 1 \le r' < r'' \le r_3 \big\},
\end{multline}
are the edges added to $\g_2$, by transforming $\g_{-V_j, \pi_j}$ into $\g_2 = (\g_{-V_j,\pi_j})_{-V_i, \pi_i}$. Equation \cref{eq:e2i} uses \cref{eq:pavi,eq:chvi}.

Based on \cref{eq:e1j,eq:e2i}, we have that $E^{1}_j \cup E^{2}_i = E^2 \setminus E'$ is equal to
\begin{multline} \label{eq:all2}
E^{1}_j \cup E^{2}_i  =  \big\{V_{j}^{l} \to V_{j_{r}}: 1\le l \le l_2, 1 \le r \le r_2, l \neq l(i) \big\} \cup \big\{V_{j_{r'}} \to V_{j_{r''}}: 1 \le r' <r'' \le r_1   \big\} \\
\cup\big\{V_{i}^{l} \to V_{i_{r}}:  1\le l \le l_1, 1 \le r \le r_1, r \neq r(j) \big\} \cup \big\{V_{i}^{l} \to V_{{ij}_{r}}:  1\le l \le l_1, 1 \le r \le r_3 \big\} \\
\cup  \big\{V_{i_{r'}} \to V_{{i}_{r''}}: 1 \le r' < r'' \le r_1, r' \neq r(j) \neq r'' \big\} \\
\cup  \big\{V_{i_{r'}} \to V_{{ij}_{r''}}: 1 \le r' \le r_1, 1 \le r'' \le r_3, r' \neq r(j)  \big\}\\
\cup \big\{V_{{ij}_{r'}} \to V_{{ij}_{r''}}: 1 \le r' < r'' \le r_3 \big\}.
\end{multline}
By \cref{eq:prop-pa}, we have $\{V_j^{1}, \dots, V_j^{l_2}\} \subseteq \{V_i^{1}, \dots , V_i^{l_1}\} \cup \{V_i, V_{i_1}, \dots, V_{i_{r(j)-1}}\}$. Using this property, together with the fact that 
\[\{V_{{ij}_1}, \dots , V_{{ij}_{r_3}}\} =  \left[\{V_{j_1}, \dots ,V_{j_{r_2}}\} \cap (Q \cup \{A\})\right] \setminus \{V_{i_{r(j) +1}}, \dots, V_{i_{r_1}}\},\]
equation \cref{eq:all2} simplifies to
\begin{multline} \label{eq:all22}
E^{1}_j \cup E^{2}_i  =   \big\{V_{i}^{l} \to V_{i_{r}}:  1\le l \le l_1, 1 \le r \le r_1, r \neq r(j) \big\} \cup \big\{V_{i}^{l} \to V_{{ij}_{r}}:  1\le l \le l_1, 1 \le r \le r_3 \big\}  \\
\cup \big\{V_{i_{r'}} \to V_{{i}_{r''}}: 1 \le r' < r'' \le r_1, r' \neq r(j) \neq r''  \big\} \\
\cup \big\{V_{i_{r'}} \to V_{{ij}_{r''}}: 1 \le r' \le r_1, 1 \le r'' \le r_3, r' \neq r(j) \big\}\\
\cup   \big\{V_{{ij}_{r'}} \to V_{{ij}_{r''}}: 1 \le r' < r'' \le r_3 \big\}.
\end{multline}
By \cref{eq:all1,eq:all22}, we have $E^{1}_j \cup E^{2}_i=E^{1}_i \cup E^{2}_j $ and therefore $E_1 = E_2$.

\medskip \item $A = V_{i_{r_1}}$ or  $ A = V_{j_{r_2}}$.
In this case, the same argument as above can be used to show that $(V_{i_{r(j) +1}}, \dots, V_{i_{r_1-1}},V_{{ij}_1},  \dots V_{{ij}_{r_3}})$
 is topologically ordered in $\g$, $\g_{-V_i, \pi_i}$ and $\g_{-V_j, \pi_j}$. Then, using the 
 fact that $V_{i_{r_1}} = A$ or $V_{j_{r_2}} = A$ we know that $V_i, V_j \in \Wset(\g)$. 
 Hence, we know $(V_{i_{r(j) +1}}, \dots, V_{i_{r_1 -1}},V_{{ij}_1},  \dots V_{{ij}_{r_3}}, A)$ is a 
 topological ordering in $\g$. Since  $V_i \in \Wset(\g_{-V_j, \pi_j})$ and $V_j \in \Wset(\g_{-V_i, \pi_i})$, by \cref{def:project-DAG,lem:iterate-proj}, $(V_{i_{r(j) +1}}, \dots, V_{i_{r_1 -1}},V_{{ij}_1},  \dots V_{{ij}_{r_3}}, A)$ is also a topological ordering in $\g_{-V_i, \pi_i}$  and $\g_{-V_j, \pi_j}$.

Then let $V_{{ij}_{r_3 +1}} \equiv A$. Equations \cref{eq:chvi,eq:chvj} in this case become
\begin{align*}
&\Ch(V_i, \g_{-V_j, \pi_j}) \cap (Q \cup \{A\}) =  \{V_{i_1}, \dots , V_{i_{r(j)-1}}, V_{i_{r(j) +1}}, \dots, V_{i_{r_1}},V_{{ij}_1},  \dots V_{{ij}_{r_3 +1}}\},\\
&\Ch(V_j, \g_{-V_i, \pi_i})  \cap (Q \cup \{A\}) = \{V_{i_{r(j) +1}}, \dots, V_{i_{r_1}},V_{{ij}_1},  \dots V_{{ij}_{r_3+1}}\}.
\end{align*}
The rest of the argument follows in exactly the same fashion as above, except that we replace $r_3$ with $r_3 +1$ in the definition of  $E_2^{j}$ and $E_2^i$.

\end{enumerate}
\end{proof}

\subsection{Proof of \cref{thm:algo}} \label{sec:app-proof-thm-algo}
\begin{proof}
\begin{enumerate}[(i)]
\item By construction, graph $\g^{\ast}$ has vertex set $\Vset^{\ast}$. By \cref{lem:marg-over-NI,lem:project-DAG}, $\g^{\ast}$ is a directed acyclic graph. 
\item This is a consequence of \cref{lem:order-indep-algo}.
\item Let graph $\g^{0}$ be graph $\g^{\ast}$ after the executing the second line of \cref{alg:graph}. By \cref{lem:marg-over-NI}, for $\Vset^0 \equiv \Vset \setminus (\Nset(\g) \cup \Iset(\g))$, we have 
\begin{equation} \label{eqs:proof-thm-algo}
\model(\g, \Vset^0) = \model(\g^0, \Vset^0) 
\end{equation}
If $\Vset^0 = \Vset^{\ast}(\g)$, then the algorithm returns $\g^{\ast} = \g^0$ and there is nothing more to prove. Otherwise, suppose $V_i$ is the next uninformative variable that the algorithm visits. By \cref{lem:eif-marg} and the fact that $\Vset^{\ast}(\cdot)$ is the irreducible informative set according to \cref{thm:criteria}, it is easy to see that 
\[ \Vset^{\ast}(\g) = \Vset^{\ast}(\g^0). \]
It then follows that $V_i \in \Vset^{0} \setminus \Vset^{\ast}(\g^0)$. By \cref{lem:iterate-proj}, with $\g^1 \equiv \g_{-v, \pi}$ and $\Vset^{1} \equiv \Vset^{0} \setminus \{V_i\}$, we have
\[ \model(\g^1, \Vset^1) = \model(\g^0, \Vset^1) \stackrel{(a)}{=} \model(\g, \Vset^1) \quad \text{and} \quad \Vset^{\ast}(\g^1) = \Vset^{\ast}(\g^0) = \Vset^{\ast}(\g) \]
where equality (a) follows from \cref{eqs:proof-thm-algo}. 
The proof is completed by iterating this argument for the remaining uninformative variables. 
\item Since $\{A,Y\} \cup \Oset(\g) \subseteq \Vset^{\ast}$, for every $P \in \model(\g, \Vset)$, we have
\[ \Psi_a(P; \g) \stackrel{(b)}{=} \E[\E\{Y \mid A=a, \Oset(\g)\}] \stackrel{(c)}{=} \Psi_a(P^{\ast}; \g^{\ast}), \]
where (b) holds because $\Oset(\g)$ is an adjustment set in graph $\g$. Note that (c) also holds because $P^{\ast} \in \model(\g^{\ast}, \Vset^{\ast})$ by property (iii) and $\Oset(\g^{\ast}) = \Oset(\g)$ is an adjustment set in graph $\g^{\ast}$. By $\Psi_a(P; \g^{\ast}) \equiv \Psi_a(P^{\ast}; \g^{\ast})$, the proof is complete. 

\item This follows from applying \cref{lem:eif-marg} with $V' = \Vset^{\ast}$, for which the conditions are fulfilled by \cref{thm:criteria} and property (iii).

\item By property (iv), $\Psi_{a}(\cdot; \g^{\ast}): \model_0(\Vset) \rightarrow \mathbb{R}$ is an identifying formula for the g-functional $\Psi_a(P; \g)$ defined on $\model(\g, \Vset)$. Now we show that the identifying formula is efficient and irreducible. 

\smallskip First, we show that it is efficient. For every $P \in \model_0(\Vset)$, recall that $\Psi_a(P; \g^{\ast}) \equiv \Psi_a(P^{\ast}; \g^{\ast})$. For any $P \in \model(\g, \Vset)$, we have the corresponding $P^{\ast} \in \model(\g^{\ast}, \Vset^{\ast})$ and 
\[\Psi_{a,P,\text{NP}}^{1}(\Vset; \g^{\ast}) = \Psi_{a,P^{\ast},\text{NP}}^{1}(\Vset^{\ast}; \g^{\ast}) \stackrel{(a)}{=} \Psi_{a,P^{\ast},\text{eff}}^{1}(\Vset^{\ast}; \g^{\ast}) \stackrel{(b)}{=} \Psi_{a,P,\text{eff}}^{1}(\Vset; \g),  \text{ $P$-almost everywhere},\]
where equality (a) follows from applying \cref{lem:effic-of-g-formula} to graph $\g^{\ast}$, equality (b) follows from property (v). Then, by \cref{def:eff-formula}, $\Psi_{a}(\cdot; \g^{\ast}): \model_0(\Vset) \rightarrow \mathbb{R}$ is an efficient identifying formula. 

\smallskip Then, to see that it is irreducible, observe that $\Psi_a(P; \g^{\ast}) \equiv \Psi_a(P^{\ast}; \g^{\ast})$, where $P^{\ast}$ is the $\Vset^{\ast}$ margin of $P$. Set $\Vset^{\ast}$ is irreducible informative by \cref{thm:criteria}.

\end{enumerate}
\end{proof}

\subsection{Proof of \cref{cor:asymp-equiv-g}}
\begin{proof}
Estimators $\Psi_a(\mathbb{P}_n; \g^{\ast})$ and $\Psi_a(\mathbb{P}_n; \g)$ are regular asymptotically linear. We have
\begin{equation*}
\begin{split}
\Psi_a(\mathbb{P}_n; \g^{\ast}) - \Psi_a(P; \g)  &= \frac{1}{n} \sum_{i=1}^n \Psi_{a,P,\text{NP}}^{1}(\Vset^i; \g^{\ast}) + o_p(n^{-1/2}), \\
\Psi_a(\mathbb{P}_n; \g) - \Psi_a(P; \g)  &= \frac{1}{n} \sum_{i=1}^n \Psi_{a,P,\text{NP}}^{1}(\Vset^i; \g) + o_p(n^{-1/2}), \\
\end{split}
\end{equation*}
where $V^i$ is the $i$-th observation of $\Vset \sim P$. Under $P \in \model(\g; \Vset)$, by \cref{lem:effic-of-g-formula} and \cref{thm:algo}(vi), we have $\Psi_{a,P,\text{NP}}^{1}(\Vset; \g^{\ast}) = \Psi_{a,P,\text{NP}}^{1}(\Vset; \g) = \Psi_{a,P,\text{eff}}^{1}(\Vset; \g)$. The result then follows. 
\end{proof}

\section{Efficient identifying formula} \label{sec:app-eff-formula}
For elements of semiparametric efficiency theory, see \cref{sec:eif} and references therein.

\subsection{Proof of \cref{lem:effic-of-g-formula}}
\begin{proof}[of \cref{lem:effic-of-g-formula}]
It is a consequence of the following general result.
\end{proof}

\smallskip \begin{lemma} \label{lem:app-effic-of-g-general}
Given a directed acyclic graph $\mathcal{G}$ with vertex $\{ V_{1},\dots,V_{J}\} $. Suppose the formula $\varphi: \model_0(\Vset) \rightarrow \mathbb{R}$ is a regular functional on $ \model_{0}(\Vset)$ such that 
\[ P, P' \in \model_0(\Vset): p(v_{j} \mid \Pa(v_j,\g)) = p'(v_{j} \mid \Pa(v_j,\g)), \quad j=1,\dots,J, \]
implies $\varphi(P) =\varphi(P')$. 
Then, we have $\varphi _{P,\text{NP}}^{1}(\Vset) =\varphi _{P,\text{eff}}^{1}(\Vset) $ holds $P$-almost everywhere for every $P\in \model(\g, \Vset)$, where $\varphi_{P,\text{eff}}^{1}(\Vset)$ is the efficient influence function of $\varphi(P)$ at $P$ with respect to $\model(\g, \Vset)$.
\end{lemma}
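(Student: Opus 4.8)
The plan is to show that the nonparametric influence function $\varphi^{1}_{P,\text{NP}}(\Vset)$ already belongs to the tangent space of $\model(\g,\Vset)$ at $P$. Since $\model(\g,\Vset)\subseteq\model_0(\Vset)$ and $\varphi^{1}_{P,\text{NP}}(\Vset)$ is a gradient of $\varphi$ in the nonparametric model $\model_0(\Vset)$, it is also a gradient of $\varphi$ in the submodel $\model(\g,\Vset)$, and the canonical gradient there --- that is, $\varphi^{1}_{P,\text{eff}}(\Vset)$ --- is the $L^{2}(P)$-projection of $\varphi^{1}_{P,\text{NP}}(\Vset)$ onto the tangent space of $\model(\g,\Vset)$ at $P$; see \citet[Chapter 25]{van2000asymptotic}. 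Hence once $\varphi^{1}_{P,\text{NP}}(\Vset)$ is shown to lie in that tangent space it equals its own projection, and the lemma follows.

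First I would recall, for $P\in\model(\g,\Vset)$, the tangent space $\mathcal{T}(P)\subseteq L^{2}_{0}(P)$ of the Bayesian network model: by the factorization \eqref{eqs:factorization} and positivity it is the orthogonal direct sum $\mathcal{T}(P)=\bigoplus_{j=1}^{J}\mathcal{T}_{j}$, where $\mathcal{T}_{j}=\{s_{j}(V_{j},\Pa(V_{j},\g)): \E_{P}[s_{j}\mid \Pa(V_{j},\g)]=0\}$, and the $L^{2}(P)$-projection of any $h\in L^{2}_{0}(P)$ onto $\mathcal{T}_{j}$ equals $\E_{P}[h\mid V_{j},\Pa(V_{j},\g)]-\E_{P}[h\mid \Pa(V_{j},\g)]$. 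Consequently $g\in L^{2}_{0}(P)$ lies in $\mathcal{T}(P)^{\perp}$ if and only if $\E_{P}[g\mid V_{j},\Pa(V_{j},\g)]=\E_{P}[g\mid \Pa(V_{j},\g)]$ $P$-a.s.\ for every $j$. It therefore suffices to verify $\E_{P}[\varphi^{1}_{P,\text{NP}}(\Vset)\,g(\Vset)]=0$ for every such $g$; a routine truncation-and-projection argument reduces this to bounded $g\in\mathcal{T}(P)^{\perp}$.

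Fix a bounded $g\in\mathcal{T}(P)^{\perp}$ and, for $|t|$ small, introduce two regular parametric submodels of $\model_0(\Vset)$ through $P$. The first is the exponential tilt $dP_{t}(v)=c_{t}^{-1}e^{t g(v)}\,dP(v)$, which has score $g$ at $t=0$; a direct computation gives $\tfrac{d}{dt}\log p_{t}(v_{j}\mid \Pa(v_{j},\g))\big|_{t=0}=\E_{P}[g\mid V_{j},\Pa(V_{j},\g)]-\E_{P}[g\mid \Pa(V_{j},\g)]$, which is $0$ because $g\in\mathcal{T}(P)^{\perp}$. The second is $\tilde{p}_{t}(v)\equiv\prod_{j=1}^{J}p_{t}(v_{j}\mid \Pa(v_{j},\g))$, the law obtained by reassembling the DAG factorization from the \emph{same} conditionals as $P_{t}$; it lies in $\model(\g,\Vset)\subseteq\model_0(\Vset)$ and, being the product of regular conditional submodels, is a regular submodel whose score at $t=0$ is $\sum_{j=1}^{J}\big(\E_{P}[g\mid V_{j},\Pa(V_{j},\g)]-\E_{P}[g\mid \Pa(V_{j},\g)]\big)=0$. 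Pathwise differentiability of $\varphi$ on $\model_0(\Vset)$ then yields $\tfrac{d}{dt}\varphi(\tilde{P}_{t})\big|_{t=0}=\E_{P}[\varphi^{1}_{P,\text{NP}}(\Vset)\cdot 0]=0$. On the other hand, $P_{t}$ and $\tilde{P}_{t}$ have identical conditional densities $\{p_{t}(v_{j}\mid \Pa(v_{j},\g))\}_{j}$ for every $t$, so the hypothesis on $\varphi$ forces $\varphi(P_{t})=\varphi(\tilde{P}_{t})$ and hence $\tfrac{d}{dt}\varphi(P_{t})\big|_{t=0}=0$; but pathwise differentiability along $P_{t}$ gives $\tfrac{d}{dt}\varphi(P_{t})\big|_{t=0}=\E_{P}[\varphi^{1}_{P,\text{NP}}(\Vset)\,g(\Vset)]$. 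Combining the two evaluations yields $\E_{P}[\varphi^{1}_{P,\text{NP}}(\Vset)\,g(\Vset)]=0$, which is what we wanted, and the proof concludes as described in the first paragraph.

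The step I expect to be the main obstacle is the measure-theoretic bookkeeping for the submodels: checking that $t\mapsto p_{t}(v_{j}\mid \Pa(v_{j},\g))$ is a regular (differentiable-in-quadratic-mean) conditional submodel, that the DAG product $\tilde{p}_{t}$ is then a regular submodel of $\model_0(\Vset)$ with score equal to the sum of the conditional scores, and the conditional-score computation for the tilt $P_{t}$ --- together with the truncation reduction to bounded $g$ and the justification that $\mathcal{T}(P)=\bigoplus_{j}\mathcal{T}_{j}$ with the stated projection formula. These are all standard facts about Bayesian network tangent spaces, but they are the technical heart of the argument; the genuinely new ingredient is merely the observation that, because $\varphi$ factors through the conditionals, the manifestly vanishing derivative of $\varphi$ along $\tilde{P}_{t}$ can be transported to the derivative along $P_{t}$.
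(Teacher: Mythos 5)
Your proof is correct in substance and shares the paper's overarching strategy---observe that $\varphi^{1}_{P,\text{NP}}$ is also a gradient of $\varphi$ relative to the submodel $\model(\g,\Vset)$, show that it lies in the Bayesian-network tangent space $\Lambda(P)=\oplus_{j}\Lambda_{j}(P)$ with $\Lambda_j(P)=\{s_j(V_j,\Pa(V_j,\g)):\E_P[s_j\mid\Pa(V_j,\g)]=0\}$, and conclude that it equals its own projection, i.e.\ the efficient influence function---but the mechanism you use to establish tangent-space membership is genuinely different. The paper works node by node: it writes $\varphi(P)=\nu(p_1,\dots,p_J)$, introduces partial gradients $\nu^{1}_{j,P}$ so that $\varphi^{1}_{P,\text{NP}}=\sum_j \nu^{1}_{j,P}$, decomposes an arbitrary score $S$ into the three pieces corresponding to the parent marginal, the conditional of $V_j$ given its parents, and the conditional law of the remaining variables, and uses the invariance of $\nu$ in its $j$-th argument under the first and third perturbations, together with projection identities, to conclude $\nu^{1}_{j,P}\in\Lambda_j(P)$. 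You instead argue by duality: you characterize $\Lambda(P)^{\perp}$ via $\E_P[g\mid V_j,\Pa(V_j,\g)]=\E_P[g\mid\Pa(V_j,\g)]$ for all $j$, and for bounded $g$ in this orthocomplement you compare the exponential tilt $P_t$ with its DAG reassembly $\tilde P_t$, invoking the hypothesis at the level of laws (two laws with identical conditionals have equal $\varphi$) rather than infinitesimally. Your route uses the hypothesis exactly as stated and avoids the construction of the partial gradients $\nu^{1}_{j,P}$, whose existence and chain-rule decomposition the paper takes largely for granted; its price is the regularity bookkeeping you yourself flag: differentiability in quadratic mean of the reassembled path $\tilde P_t$ (with score zero) and of the conditional tilts, positivity of $P_t$ and $\tilde P_t$ for small $t$ so that both remain in $\model_0(\Vset)$, and the truncation/projection step showing that bounded elements are dense in $\Lambda(P)^{\perp}$ (here the mutual orthogonality of the $\Lambda_j(P)$ under a factorizing $P$ gives the explicit, boundedness-preserving projection $\sum_j\{\E_P[g\mid V_j,\Pa(V_j,\g)]-\E_P[g\mid\Pa(V_j,\g)]\}$). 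These checks are standard and should be written out in a complete version, but none of them is a gap in the argument.
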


\smallskip \begin{proof}
Let $p_{j}$ denote the conditional density of $V_{j}$ given $\Pa(V_{j},\g)$.
Because by assumption $\varphi \left( P\right) $ depends on $P$ only through $p_{1},\dots,p_{J}$, we can write 
\[ \varphi \left( P\right) =\nu ( p_{1},\dots,p_{J}). \]
Consider a regular submodel $P_{t}$ for $t \in [0, \varepsilon]$ for $\varepsilon > 0$ with $P_{t=0}=P$. Denote the score at $t=0$ with $S$. Also, let the $p_{j,t}$ be the conditional density of $V_{j}$ given $\Pa(V_j, \g)$ associated with $P_t$. We have
\begin{equation*}
\begin{split}
\left. \frac{d}{dt}\varphi \left( P_{t}\right) \right\vert _{t=0} &=\sum_{j=1}^{J}\left. \frac{d}{dt}\nu \left(p_{1},\dots,p_{j,t},\dots,p_{J}\right) \right\vert _{t=0} \\
&=\sum_{j=1}^{J} \E_{P}\left[ \nu _{j,P}^{1}\left( V\right) S\right] = \E_{P}\left[ \left\{ \sum_{j=1}^{J}\nu _{j,P}^{1}\left( V\right) \right\} S\right].
\end{split}
\end{equation*}
Thus, we have
\[ \varphi _{P,\text{NP}}^{1}(\Vset) = \sum_{j=1}^{J}\nu_{j,P}^{1}\left( V\right). \]
To prove the result it then suffices to show that for $P\in \mathcal{M}\left( \mathcal{G},V\right) ,$ $\sum_{j=1}^{J}\nu _{j,P}^{1}\left( V\right) $ is an element of the tangent space of $\mathcal{M }\left( \mathcal{G},V\right) $ at $P$, i.e., $\Lambda \left( P\right) =\oplus _{j=1}^{J}\Lambda _{j}\left( P\right) $ with orthogonal subspaces 
\[ \Lambda _{j}\left( P\right) =\left\{ Q_j \equiv q_{j}(V_{j},\Pa(V_{j},\g)): \E_{P}\left( Q_{j} \mid \Pa(V_{j},\g) \right)=0 \right\}, \quad j=1,\dots,J. \]
We will show this by proving 
\[ \nu_{j,P}^{1}\left( V\right) \in \Lambda_{j}( P), \quad j=1,\dots,J. \]

To do so, for $W_{j} \equiv V \setminus \left[ \left\{ V_{j}\right\} \cup \Pa( V_{j},\g) \right]$, 
decompose
\[ p_{t}(V) = p_{t}(\Pa(V_{j}, \g)) \, p_{t}(V_{j} \mid \Pa(V_{j}, \g)) \, p_{t} (W_{j} \mid V_j, \Pa(V_{j}, \g)). \]
Then, we can write 
\[ S=s_{1}(\Pa(V_{j}, \g))) + s_{2}( V_{j} \mid \Pa(V_{j}, \g)) + s_{3}( W_{j} \mid V_j, \Pa(V_{j}, \g)),\]
where $s_{1}\left( \Pa(V_{j}, \g) \right) $ is the score of the submodel 
\begin{equation} \label{eqs:app-s1} 
t \mapsto p_{t}(\Pa(V_{j}, \g)) \, p(V_{j} \mid \Pa(V_{j}, \g)) \, p (W_{j} \mid V_j, \Pa(V_{j}, \g)), 
\end{equation}
$s_{2}\left(V_{j} \mid \Pa(V_{j}, \g) \right)$ is the score of the submodel 
\begin{equation}  \label{eqs:app-s2} 
t \mapsto p(\Pa(V_{j}, \g)) \, p_t(V_{j} \mid \Pa(V_{j}, \g)) \, p (W_{j} \mid V_j, \Pa(V_{j}, \g)), \end{equation}
and $s_{3}\left(W_j \mid V_{j}, \Pa(V_{j}, \g) \right)$ is the score of the submodel 
\begin{equation} \label{eqs:app-s3} 
t \mapsto p(\Pa(V_{j}, \g)) \, p(V_{j} \mid \Pa(V_{j}, \g)) \, p_t (W_{j} \mid V_j, \Pa(V_{j}, \g)).
\end{equation}

But since $\nu(p_{1},\dots,p_{j,t},\dots,p_{J})$ remains constant under the submodels \cref{eqs:app-s2,eqs:app-s3}, we have 
\[ \E_{P}\left\{ \nu_{j}^{1}(V)s_{1}\left( \Pa(V_{j}, \g) \right) \right\} =  \E_{P}\left\{ \nu_{j}^{1}(V) s_{3}\left(W_j \mid V_{j}, \Pa(V_{j}, \g) \right)  \right\} =0. \]
Furthermore, $s_{1}$ and $s_{3}$ are uncorrelated under $P$ with the elements of $\Lambda
_{j}\left( P\right) .$ So, it also holds that  
\[ \E_{P}\left\{ \nu_{j}^{1}(V)s_{1}\left( \Pa(V_{j}, \g) \right) \right\} = \E_{P}\left\{ \Pi\left[ \nu_{j}^{1}(V) \mid \Lambda_j(P) \right] s_{1}\left( \Pa(V_{j}, \g) \right) \right\} = 0\]
and 
\[ \E_{P}\left\{ \nu_{j}^{1}(V) s_{3}\left(W_j \mid V_{j}, \Pa(V_{j}, \g) \right)  \right\} = \E_{P}\left\{ \Pi\left[\nu_{j}^{1}(V) \mid \Lambda_j(P) \right]s_{3}\left(W_j \mid V_{j}, \Pa(V_{j}, \g) \right)  \right\}  = 0,\]
where $\Pi \left[ \cdot \mid \Lambda _{j}( P)\right]$ is the projection operator in $L_{2}(P) $ onto $\Lambda_{j}(P)$. In addition, since $s_{2}\left(V_{j} \mid \Pa(V_{j}, \g) \right) \in \Lambda_j(P)$, we also have 
\[ \E_{P}\left\{ \nu_{j}^{1}(V) s_{2}\left(V_{j} \mid \Pa(V_{j}, \g) \right) \right\} = \E_{P}\left\{  \Pi\left[\nu_{j}^{1}(V) \mid \Lambda_j(P) \right]s_{2}\left(V_{j} \mid \Pa(V_{j}, \g) \right) \right\}. \]
Therefore, we have
\begin{eqnarray*}
\E_{P}\left\{ \nu _{j}^{1}\left( V\right) S\right\}  &=&\E_{P}\left\{ \nu
_{j}^{1}\left( V\right) s_{1}\left( \Pa(V_{j}, \g) \right) \right\} + \E_{P}\left\{ \nu_{j}^{1}\left( V\right) s_{2}\left(V_{j} \mid \Pa(V_{j}, \g) \right) \right\}  \\
&&+ \E_{P}\left\{ \nu _{j}^{1}\left( V\right) s_{3}\left(W_j \mid V_{j}, \Pa(V_{j}, \g) \right)\right\}  \\
&=&\E_{P}\left\{ \Pi \left[ \left. \nu_{j}^{1}\left( V\right) \right\vert
\Lambda _{j}\left( P\right) \right] s_{1}\left( \Pa(V_{j}, \g) \right) \right\} +\E_{P}\left\{ \Pi \left[ \left. \nu
_{j}^{1}\left( V\right) \right\vert \Lambda _{j}\left( P\right) \right]
s_{2}\left(V_{j} \mid \Pa(V_{j}, \g) \right) \right\}
\\
&&+\E_{P}\left\{ \Pi \left[ \left. \nu _{j}^{1}\left( V\right) \right\vert
\Lambda _{j}\left( P\right) \right] s_{3}\left(W_j \mid V_{j}, \Pa(V_{j}, \g) \right) \right\}  \\
&=&\E_{P}\left\{ \Pi \left[ \left. \nu _{j}^{1}\left( V\right) \right\vert
\Lambda _{j}\left( P\right) \right] S\right\},
\end{eqnarray*}
and hence
\[\E_{P}\left( \left\{ \nu _{j}^{1}\left( V\right) -\Pi \left[ \left. \nu_{j}^{1}\left( V\right) \right\vert \Lambda _{j}\left( P\right) \right] \right\} S\right) = 0.\]
The above holds for all scores $S$ in the unrestricted model $\model_0(\Vset)$, i.e., for all mean-zero functions in $L_2(P)$. In particular, choosing $S = \nu _{j}^{1}\left( V\right) -\Pi \left[ \left. \nu_{j}^{1}\left( V\right) \right\vert \Lambda _{j}\left( P\right) \right]$, we have
\[\E_{P} \left\{ \nu _{j}^{1}\left( V\right) -\Pi \left[ \left. \nu_{j}^{1}\left( V\right) \right\vert \Lambda _{j}\left( P\right) \right] \right\}^2 = 0, \]
from which we deduce $\nu_{j,P}^{1}\left( V\right) \in \Lambda_{j}( P)$ for $j=1,\dots,J$. This concludes the proof.
\end{proof}

\section{Informative variables and their characterization} \label{sec:app-uninfo}
\subsection{Extension to average treatment effects} \label{sec:app-extension-effect}
\begin{lemma} \label{lem:app-extension-effect}
Let $\g$ be a directed acyclic graph on vertex set $\Vset$ that satisfies \cref{assump:A-goes-to-Y}. Suppose $A \in \Vset$ is a discrete treatment and $Y \in \Vset$ is the outcome of interest. Let $(a_1, \dots, a_J)$ be $J$ distinct treatment levels and let $(c_1, \dots, c_J)$ be non-zero constants. Consider the functional
\[ \Psi_{c}(P; \g) \equiv \sum_{j=1}^J c_j \Psi_{a_j}(P; \g), \]
where $\Psi_{a_j}(P; \g)$ is the g-functional for treatment level $a_j$. Then, $\Vset^{\ast}(\g)$ given by \cref{thm:criteria} is the unique set of irreducible informative variables for estimating $\Psi_c(P; \g)$ under $\model(\g, \Vset)$. 
\end{lemma}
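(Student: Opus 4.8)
The plan is to verify, for the functional $\gamma(P)\equiv\Psi_c(P;\g)$ on the model $\model(\g,\Vset)$, the three hypotheses of \cref{lem:info} with candidate set $\Vset^{\ast}(\g)$ from \cref{thm:criteria}; \cref{lem:info} then delivers both that $\Vset^{\ast}(\g)$ is the unique irreducible informative set and the identification asserted in the statement. The key enabling remark is that $\Vset^{\ast}(\g)$ is defined \emph{purely graphically}: the sets $\Oset(\g),\Wset(\g),\Mset(\g),\Omin(\g)$ and the W- and M-criteria of \cref{lem:W-criterion,lem:M-criterion} make no reference to the level $a$. Hence the irreducible informative set is the same set $\Vset^{\ast}(\g)$ for every $\Psi_{a_j}(\cdot;\g)$, $j=1,\dots,J$, and it is the natural candidate for $\Psi_c$.

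Hypotheses (i) and (ii) are immediate. For (ii): on $\model(\g,\Vset)$ one has $\Psi_{a_j}(P;\g)=\Psi_{a_j,\Oset}^{\mathrm{ADJ}}(P;\g)=\E_P\{\E_P(Y\mid A=a_j,\Oset)\}$, which depends on $P$ only through the $\{A,Y\}\cup\Oset$ margin, hence only through the $\Vset^{\ast}(\g)$ margin; summing against $c_1,\dots,c_J$ gives the same for $\Psi_c$. For (i): a finite linear combination of pathwise-differentiable functionals is pathwise differentiable with efficient influence function the corresponding linear combination, so $\Psi_{c,P,\text{eff}}^{1}(\Vset;\g)=\sum_{j=1}^J c_j\,\Psi_{a_j,P,\text{eff}}^{1}(\Vset;\g)$ for every $P\in\model(\g,\Vset)$; by \cref{thm:criteria} each $\Psi_{a_j,P,\text{eff}}^{1}(\Vset;\g)$ depends on $\Vset$ only through $\Vset^{\ast}(\g)$, so the sum does too.

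The substance is hypothesis (iii): for each $V_k\in\Vset^{\ast}(\g)$ I must produce a nondegenerate $P\in\model(\g,\Vset)$ at which $\sum_j c_j\,\Psi_{a_j,P,\text{eff}}^{1}(\Vset;\g)$ is not $P$-almost surely a constant function of $V_k$. The only danger is cancellation across the $J$ summands, and the plan is to neutralize it by isolating one level. Inspecting \cref{lem:EIF}, the level $a$ enters $\Psi_{a,P,\text{eff}}^{1}$ only through $b_{a,P}(\Oset)=\E_P(Y\mid A=a,\Oset)$ and $T_{a,P}=\I_a(A)Y/P(A=a\mid\Omin)$, and the whole summand $\Psi_{a,P,\text{eff}}^{1}$ vanishes identically once $Y=0$ holds $P$-almost surely on the event $\{A=a\}$. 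Fix $j_0$. Since the proof of \cref{thm:criteria} proceeds by verifying the hypotheses of \cref{lem:info} at level $a_{j_0}$, it supplies a nondegenerate law at which $\Psi_{a_{j_0},P,\text{eff}}^{1}(\Vset;\g)$ is not $P$-a.s.\ constant in $V_k$; starting from such a law I would pass to a law $P\in\model(\g,\Vset)$ that forces $Y=0$ $P$-almost surely on $\{A=a_j\}$ for every $j\ne j_0$ while leaving the conditional law of $Y$ on $\{A=a_{j_0}\}$, and every other conditional density that $\Psi_{a_{j_0},P,\text{eff}}^{1}$ sees, untouched. Then $\Psi_{a_j,P,\text{eff}}^{1}\equiv 0$ for $j\ne j_0$ and $\Psi_{a_{j_0},P,\text{eff}}^{1}$ is still nonconstant in $V_k$, so $\sum_j c_j\,\Psi_{a_j,P,\text{eff}}^{1}=c_{j_0}\,\Psi_{a_{j_0},P,\text{eff}}^{1}$, which is not $P$-a.s.\ constant in $V_k$ because $c_{j_0}\ne 0$; \cref{lem:info} then concludes. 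It may be cleaner to build $P$ directly, adapting the constructions of \cref{sec:app-W-complete,sec:app-M-complete} so that $Y$ returns $0$ whenever $A\ne a_{j_0}$ and is otherwise engineered to make $\Psi_{a_{j_0},P,\text{eff}}^{1}$ depend on $V_k$.

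The step I expect to be the main obstacle is exactly this construction of $P$ in (iii): arranging $Y\equiv 0$ off the slice $\{A=a_{j_0}\}$ while keeping $\Psi_{a_{j_0},P,\text{eff}}^{1}$, and hence its dependence on $V_k$, intact. When $A\in\Pa(Y,\g)$ it is harmless — one simply replaces $p(y\mid\Pa(y,\g))$ by a point mass at $0$ on the sub-slice where the $A$-coordinate lies in $\{a_j:j\ne j_0\}$, which is admissible in $\model(\g,\Vset)$ by the enlarged state space convention (see \cref{eqs:state-space} and \cref{remark:marginal}) and alters nothing else. When $A$ is only an ancestor of $Y$ the zeroing must be routed down a causal path $A\mapsto Y$ through the mediators, and one must take care that this does not perturb the restriction of $P$ to $\{A=a_{j_0}\}$ nor the joint laws of $(\Oset,W_i,\Pa(W_i,\g))$ and of $(M_k,\Pa(M_k,\g))$ that appear in the efficient influence function at level $a_{j_0}$; controlling this coupling — e.g.\ via the graphical configurations of \cref{sec:app-aug-graph} — is the one genuinely new piece of work, with the remainder following from \cref{thm:criteria,lem:info,lem:EIF} as above.
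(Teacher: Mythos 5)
Your proposal follows essentially the same route as the paper: the paper also verifies the three hypotheses of \cref{lem:info}, writing the efficient influence function of $\Psi_c$ by linearity as $\sum_{j} c_j \Psi_{a_j,P,\text{eff}}^{1}$ (i.e.\ \cref{lem:EIF} with $b_{c,P}=\sum_j c_j b_{a_j,P}$ and $T_{c,P}=\sum_j c_j T_{a_j,P}$), observes that conditions (i)--(ii) carry over unchanged, and disposes of (iii) by asserting that the law constructions from the completeness proofs can be adapted ``similarly.'' Your single-level reduction for (iii) --- forcing $Y=0$ off $\{A=a_{j_0}\}$ so that $b_{c,P}=c_{j_0}b_{a_{j_0},P}$ and $T_{c,P}=c_{j_0}T_{a_{j_0},P}$ --- is a concrete and legitimate way of carrying out exactly that adaptation, and it correctly flags the cancellation danger (e.g.\ when $\sum_j c_j=0$) that the paper's one-line remark leaves implicit.
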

\begin{proof}
By \cref{lem:EIF} and linearity of influence functions, for $P \in \model(P; \g)$, the efficient influence function for $\Psi_{c}(P; \g)$ with respect to $\model(\g, \Vset)$ is given by 
\begin{multline*}
\Psi_{c,P,\text{eff}}^{1}(\Vset; \g) = \sum_{j=1}^{J}\big[ \E\left\{ b_{c,P}(\Oset) \mid W_{j},\Pa(W_{j}, \g)\right\} -\E\left\{ b_{c,P}(\Oset) \mid \Pa(W_{j}, \g) \right\} \big]  \\
 +\sum_{k=1}^{K} \big[ \E\left\{ T_{c,P} \mid M_{k},\Pa(M_{k}, \g)\right\} -
\E\left\{ T_{c,P}\mid \Pa(M_{k}, \g)\right\} \big],
\end{multline*}
where 
\[ b_{c,P} \equiv \sum_{j=1}^J c_j b_{a_j,P}, \quad T_{c,P} \equiv \sum_{j=1}^J c_j T_{a_j,P}. \]
Inspecting the proof of \cref{thm:criteria}, we see that conditions (i) and (ii) still apply. Condition (iii) can be established by choosing the appropriate laws in the model similarly. 
\end{proof}

\subsection{Proof of \cref{lem:info}}
\begin{proof}
First, we show $\Vset^{\ast}$ is irreducible informative. Condition (i) of \cref{def:irr-info} is fulfilled by (i) and (ii). We claim condition (ii) is implied by (iii). Suppose not and there must exist $V_j \in \Vset^{\ast}$ such that $(\Vset \setminus \Vset^{\ast}) \cup \{V_j\}$ is uninformative, which by \cref{def:uninfo} implies that $\gamma_{P,\text{eff}}^{1}(\Vset)$ is a function of $\Vset^{\ast} \setminus \{V_j\}$, contradicting (iii). 

Now we show $\Vset^{\ast}$ is the only set that is irreducible informative. Suppose set $\Vset^{\ast\ast}$ is also irreducible informative. By (iii), we must have $\Vset^{\ast} \subseteq \Vset^{\ast \ast}$. Further, this inclusion cannot be strict by applying condition (ii) of \cref{def:irr-info} to $\Vset^{\ast \ast}$. 
\end{proof}

\subsection{Proof of \cref{lem:N-I-uninformative}}
\begin{proof}
First, according vertex sets defined in \cref{sec:sets}, for any $V_i \in \Nset(\g) \cup \Iset(\g)$, it holds that $V_i \notin \Pa(\Wset(\g) \cup \Mset(\g), \g)$. Then, by \cref{lem:EIF}, $\Psi_{a,P,\text{eff}}^{1}(\Vset; \g)$ does not depend on $\Nset(\g) \cup \Iset(\g)$ with probability one for every $P \in \model(\g, \Vset)$. Second, for $P \in \model(\g, \Vset)$, observe that $\Psi_a(P; \g) = \Psi_{a,\Oset}^{\text{ADJ}}(P; \g)$, which implies that $\Psi_a(P; \g)$ depends on $P$ only through $P(\Vset \setminus \Nset(\g) \setminus \Iset(\g))$. By \cref{lem:eif-marg,def:uninfo}, it follows that $\Nset(\g) \cup \Iset(\g)$ is uninformative for estimating $\Psi_a(P; \g)$ under $\model(\g, \Vset)$. 
\end{proof}

\subsection{Proof of \cref{prop:redundant}}
\begin{proof} 
Set $U$ in \cref{def:uninfo} depends on $\g$ only through $\model( \g,\Vset)$ and $\Psi (P;\g):\model(\g,\Vset)\rightarrow \mathbb{R}$. The result then follows from \cref{lem:N-I-uninformative} and \cref{def:c-MEC}. 
\end{proof}

\subsection{Proof of \cref{lem:Oset-informative}} \label{sec:proof-lem:Oset-informative}
\begin{proof} 
\begin{figure}[!htb]
\centering
\begin{tikzpicture}
\tikzset{rv/.style={circle,inner sep=1pt,fill=gray!20,draw,font=\sffamily}, 
sv/.style={circle,inner sep=0pt,fill=gray!20,draw,font=\sffamily,minimum size=3mm}, 
node distance=12mm, >=stealth}
\node[rv] (Ml) {$M_L$};
\node[rv, left of=Ml] (Oi) {$O_i$};
\node[rv, above of=Oi] (U) {$U$};
\node[right of=Ml] (e1) {$\dots$};
\node[rv, right of=e1] (Y) {$Y$};
\node[left of=U] (e2) {$\dots$};
\node[rv, left of=e2] (A) {$A$};
\draw[->, very thick, color=blue] (Oi) -- (Ml);
\draw[->, very thick, color=red] (Ml) -- (e1);
\draw[->, very thick, color=red] (e1) -- (Y);
\draw[->, very thick, color=brown] (U) -- (Ml);
\draw[->, very thick, color=brown] (A) -- (e2);
\draw[->, very thick, color=brown] (e2) -- (U);
\end{tikzpicture}
\caption{Proof that $O_i$ is informative: path $p$ ({\color{brown} $\bm{-}$}), path $q$ ({\color{red} $\bm{-}$})}
\label{fig:proof-nec-O}
\end{figure} 

From \cref{lem:EIF}, it is easy to see that $A$ and $Y$ cannot be uninformative. Next, we show that every $O_i \in \Oset$ cannot be contained in any uninformative set.

Fix $O_i \in \Oset(\g)$. We label the children of $O_i$: $\Ch(O_i, \g) \cap \Wset(\g)$ are numbered topologically as $\{W_1, \dots, W_K\}$ for $K \geq 0$, and $\Ch(O_i, \g) \cap \Mset(\g)$ are numbered topologically as $\{M_1, \dots, M_L\}$ for $L \geq 1$. Let $q$ be the shortest causal path from $M_L$ to $Y$, with $M_L \equiv Y$ as a special case. Let $p$ be the shortest causal path from $A$ to $M_L$. Let $U$ be the vertex that precedes $M_L$ on $p$, with $U \equiv A$ as a special case. Without loss of generality, we take $a=1$ and replace $\I_{a}(A)$ with $A$. To show $O_i$ is not contained by any uninformative variable set, it suffices to show that 
\begin{multline*}
\Gamma(O_i) \equiv \E[b(\Oset) \mid O_i, \Pa(O_i, \g)] \\
+ \sum_{k \leq K} \left\{\E[b(\Oset) \mid W_k, \Pa(W_k,\g)] - \E[b(\Oset) \mid \Pa(W_k,\g)] \right\} \\
+ \sum_{l \leq L} \left\{\E[ AY/P(A=1 \mid \Omin) \mid M_l, \Pa(M_l,\g)] - \E[AY/P(A=1 \mid \Omin) \mid \Pa(M_l,\g)] \right\},
\end{multline*}
the part of $\Psi_{a,P,\text{eff}}^{1}(\Vset; \g)$ that could depend on $O_i$, indeed non-trivially depends on $O_i$ under some degenerate law that is Markov to $\g$. 

For this purpose, we shall choose $P$ that is Markov to a subgraph $\g'$ of $\g$.
Let $\g'$ be a subgraph of $\g$ that consists of the same set of vertices but only includes edges $O_i \rightarrow M_L$ and those on $p$, $q$; see \cref{fig:proof-nec-O}. Let $P$ be chosen that is Markov to $\g'$ such that the following hold almost surely:
\begin{enumerate}[(1)]
\item $A = \dots = U$ along path $p$;
\item $M_L = \dots = Y$ along path $q$;
\item $\E[M_L \mid O_i, U=1] = b(O_i)$ for some function $b$;
\item $P(A=1 \mid \Omin) = c$ for some constant $c \in (0,1)$.
\end{enumerate}
Note that it follows that $b(\Oset) = b(O_i)$ almost surely under $P$.

Let us compute $\Gamma(O_i)$ term by term. First, it is easy to see
\[ \E[b(\Oset) \mid O_i, \Pa(O_i, \g)] = b(O_i). \]
Then, we have
\begin{equation*}
\E[b(\Oset) \mid W_k, \Pa(W_k,\g)] - \E[b(\Oset) \mid \Pa(W_k,\g)] = 0
\end{equation*}
because $b(\Oset) = b(O_i)$ and $O_i \in \Pa(W_k, \g)$. For any $l < L$, we claim
\begin{multline*}
\E[ AY/P(A=1 \mid \Omin) \mid M_l, \Pa(M_l,\g)] - \E[AY/P(A=1 \mid \Omin) \mid \Pa(M_l,\g)] \\
= c^{-1}\left\{\E[ AY \mid M_l, \Pa(M_l,\g)] - \E[AY \mid \Pa(M_l,\g)] \right\} = 0
\end{multline*}
because $A, Y$ are non-descendants of $M_l$ on $\g'$. In particular, $M_l$ cannot be on $q$ by topological ordering. For $l=L$, we have
\begin{equation*}
\begin{split}
&\quad \E[ AY/P(A=1 \mid \Omin) \mid M_L, \Pa(M_L,\g)] - \E[AY/P(A=1 \mid \Omin) \mid \Pa(M_L,\g)] \\
&= c^{-1}\left\{\E[ A M_L \mid M_L, \Pa(M_L,\g)] - \E[A M_L \mid \Pa(M_L,\g)] \right\} \\
&= c^{-1}\left\{A M_L - A \E[ M_L \mid \Pa(M_L,\g)] \right\} \\
&= c^{-1}A (M_L - \E[ M_L \mid O_i, U]) \\
&= c^{-1}A (M_L - \E[ M_L \mid O_i, A]) \\
&= c^{-1}A (M_L - \E[ M_L \mid O_i, A=1]) = c^{-1}A (M_L - b(O_i)).
\end{split}
\end{equation*}
Finally, we arrive at
\[ \Gamma(O_i) = (1 - A/c)\, b(O_i) + A M_L / c,\]
which depends on $O_i$ through $b(O_i)$. To remedy the fact that the chosen $P$ is degenerate, consider a sequence of non-degenerate laws $P_n \in \model(\g', \Vset)$ that weakly converges to $P$. For large enough $n$, $\Gamma(O_i)$ depends on $O_i$ under $P_n$. 
\end{proof}

\subsection{Soundness of W-criterion and M-criterion} \label{sec:app-sound}
\begin{proof}[of \cref{lem:plus-minus-W}]
For (i), note that the causal path from $W_{j}$ to $\Oset$ cannot be blocked by any subset of $\Pa(W_{j})$. For (ii), note that the causal path $ W_{j}\rightarrow W_{j_{r}}\rightarrow \dots \rightarrow O$ cannot be blocked by any subset of $\{W_{j_{t}}\} \cup \Pa(W_{j_{r}})\setminus \{W_{j}\}$ for $t<r$ by topological ordering. Statement (iii) holds trivially if $\Pa (W_{j})=\emptyset $. When $\Pa(W_{j})\neq \emptyset $, (iii) also holds because for every $Z\in \Pa(W_{j})$, the causal path $Z\rightarrow W_{j}\rightarrow \dots \rightarrow \Oset$ is not blocked by any subset of $ \Pa(W_{j})\setminus \{Z\}$. 
\end{proof}

\medskip \begin{proof}[of \cref{lem:W-cancel-plus-minus}]
For $P \in \model(\g, \Vset)$, $\Psi_{a,P,\text{eff}}^{1}(\Vset; \g)$ depends on $W_j$ only through $\Gamma(W_j)$ given by \cref{eqs:depends-Wj-plus-minus}. Hence, under (i) and (ii), $\Psi_{a,P,\text{eff}}^{1}(\Vset; \g)$ is does not depend on $W_j$ for every $P \in \model(\g, \Vset)$. This shows that $\{W_j\}$ is uninformative for estimating the g-functional under $\model(\g, \Vset)$; see the beginning of \cref{sec:criteria}.
\end{proof}

\medskip \begin{lemma} \label{lem:app-W-plus-minus-equiv}
Conditions (i) and (ii) in \cref{lem:W-cancel-plus-minus} are equivalent to the conditions (i) and (ii) in \cref{lem:W-criterion}.
\end{lemma}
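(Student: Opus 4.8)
The plan is to prove the two implications separately, after first using \cref{lem:plus-minus-W} to put both sets of conditions in a common form. By \cref{lem:plus-minus-W}(iii) we have $\minus_{j_t} = \Pa(W_{j_t})$ for every $t$, so condition (i) of \cref{lem:W-cancel-plus-minus} is $W_j \notin \plus_{j_r}$ and condition (ii) there is equivalent to $\plus_{j_{t-1}} = \Pa(W_{j_t})$ for $t = 1,\dots,r$. It therefore suffices to show (1) that $W_j \notin \plus_{j_r}$ is equivalent to condition (i) of the W-criterion, and (2) that, for each fixed $t$, the equality $\plus_{j_{t-1}} = \Pa(W_{j_t})$ is equivalent to conditions (ii)(a)--(c) of the W-criterion; conjoining over $t$ then gives the statement. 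Throughout I will freely use the compositional graphoid properties of d-separation.

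For (1), note that $W_j \in \Pa(W_{j_r})$ because $W_{j_r}\in\Ch(W_j)$ by construction. Write $T \equiv (\Pa(W_{j_r})\cup\{W_{j_r}\})\setminus \plus_{j_r}$, so that the defining property of $\plus_{j_r}$ reads $T \indep_{\g} \Oset \mid \plus_{j_r}$. If $W_j \notin \plus_{j_r}$ then $W_j \in T$, and weak union (moving $T\setminus\{W_j\}$ into the conditioning set) gives $W_j \indep_{\g} \Oset \mid \{W_{j_r}\}\cup\Pa(W_{j_r})\setminus\{W_j\}$, which is W-criterion (i). Conversely, if W-criterion (i) holds, then $S \equiv \{W_{j_r}\}\cup\Pa(W_{j_r})\setminus\{W_j\}$ is a subset of $\Pa(W_{j_r})\cup\{W_{j_r}\}$ with $(\Pa(W_{j_r})\cup\{W_{j_r}\})\setminus S = \{W_j\}$, so $S$ satisfies the d-separation defining $\plus_{j_r}$; by the uniqueness of the inclusion-minimal choice, $\plus_{j_r}\subseteq S$, hence $W_j\notin\plus_{j_r}$.

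For (2), fix $t$; I would use repeatedly that every vertex $V\in\Wset$ satisfies $V\in\plus_V$, which follows by the same reasoning as in the proof of \cref{lem:plus-minus-W}(i) together with $\Wset=\An(\Oset)$ (\cref{lem:osetandw}). Suppose first $\plus_{j_{t-1}}=\Pa(W_{j_t})$. Then W-criterion (ii)(b) holds since $\plus_{j_{t-1}}\subseteq\Pa(W_{j_{t-1}})\cup\{W_{j_{t-1}}\}$; (ii)(a) holds since $W_{j_{t-1}}\in\plus_{j_{t-1}}=\Pa(W_{j_t})$; and (ii)(c) holds by substituting $\Pa(W_{j_t})$ for $\plus_{j_{t-1}}$ in the defining d-separation of $\plus_{j_{t-1}}$ and using (ii)(a) to delete $W_{j_{t-1}}$ from the separated set. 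Conversely, assume (ii)(a)--(c). By (ii)(b) the set $\Pa(W_{j_t})$ is an admissible candidate, and by (ii)(c) together with (ii)(a) it satisfies the d-separation defining $\plus_{j_{t-1}}$; minimality then gives $\plus_{j_{t-1}}\subseteq\Pa(W_{j_t})$.

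The main obstacle is the reverse inclusion $\Pa(W_{j_t})\subseteq\plus_{j_{t-1}}$, i.e.\ that no parent of $W_{j_t}$ may be dropped. I would argue by contradiction: if $Z\in\Pa(W_{j_t})\setminus\plus_{j_{t-1}}$, then $Z\in(\Pa(W_{j_{t-1}})\cup\{W_{j_{t-1}}\})\setminus\plus_{j_{t-1}}$ by (ii)(b), so decomposition yields $\{Z\}\indep_{\g}\Oset\mid\plus_{j_{t-1}}$. But since $W_{j_t}\in\Wset=\An(\Oset)$, there is a directed path $Z\to W_{j_t}\to\cdots\to O$ with $O\in\Oset$; its non-endpoint vertices all lie in $\De(W_{j_t})$, whereas $\plus_{j_{t-1}}\subseteq\Pa(W_{j_t})$ is disjoint from $\De(W_{j_t})$ by acyclicity. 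Hence this path, having no colliders, is not blocked by $\plus_{j_{t-1}}$ (and $Z,O\notin\plus_{j_{t-1}}$, the latter since $O\in\Pa(W_{j_t})$ would create a cycle), contradicting $\{Z\}\indep_{\g}\Oset\mid\plus_{j_{t-1}}$. This forces $\Pa(W_{j_t})\subseteq\plus_{j_{t-1}}$ and hence $\plus_{j_{t-1}}=\Pa(W_{j_t})$, completing (2) and the proof. The analogous equivalence underlying the M-criterion would be handled in exactly the same way, with $\{A,Y\}\cup\Omin$ in place of $\Oset$.
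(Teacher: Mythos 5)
Your proof is correct and follows essentially the same route as the paper's: you handle condition (i) by combining weak union with the minimality (uniqueness) of $\plus_{j_r}$, and condition (ii) by reducing it, via \cref{lem:plus-minus-W}(iii), to the identity $\plus_{j_{t-1}} = \Pa(W_{j_t})$, proving one inclusion from minimality and the other by the unblockable causal path $Z \rightarrow W_{j_t} \rightarrow \dots \rightarrow \Oset$. The only cosmetic difference is that you condition on $\plus_{j_{t-1}} \subseteq \Pa(W_{j_t})$ in the contradiction step where the paper conditions on subsets of $\{W_{j_{t-1}}\} \cup \Pa(W_{j_{t-1}})$, which is an equivalent use of acyclicity.
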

\begin{proof}
We first show that W-criterion's (i)  $\iff$ (i) of \cref{lem:W-cancel-plus-minus}. For ``$\Rightarrow$'', by definition of $\plus_{j_r}$, we know $\plus_{j_r} \subseteq  \{W_{j_r}\} \cup \Pa(W_{j_r}) \setminus \{W_j\}$ and hence $W_j \notin \plus_{j_r}$. For ``$\Leftarrow$'', again by definition, we have $\{W_{j_r}\} \cup \Pa(W_{j_r}) \setminus \plus_{j_r} \indep_{\g} \Oset \mid \plus_{j_r}$. Since $W_j \rightarrow W_{j_r}$, the result then follows from the weak union property of d-separation.

Noting \cref{lem:plus-minus-W}(iii), we shall show that W-criterion's (ii) above $\iff$ $\plus_{j_{t-1}} = \Pa(W_{j_t})$ for $t=1,\dots,r$. We start with the ``$\Rightarrow$'' direction. Statements (a)--(c) imply that $\Pa(W_{j_t}) \subseteq \Pa(W_{j_{t-1}}) \cup \{W_{j_{t-1}}\}$ and $\left[\Pa(W_{j_{t-1}}) \cup \{W_{j_{t-1}}\}\right] \setminus \Pa(W_{j_t}) \indep_{\g} \Oset \mid \Pa(W_{j_t})$. Then, by definition of $\plus_{j_{t-1}}$, we know $\plus_{j_{t-1}} \subseteq \Pa(W_{j_t})$. We continue to show that $\Pa(W_{j_t}) \subseteq \plus_{j_{t-1}}$. By (a) and $W_{j_{t-1}} \in \plus_{j_{t-1}}$ according to \cref{lem:plus-minus-W}(i), it remains to be shown that for every $Z \in \Pa(W_{j_t}) \setminus \{W_{j_{t-1}}\}$, we have $Z \in \plus_{j_{t-1}}$. This is true because, by topological ordering, the causal path $Z \rightarrow W_{j_t} \rightarrow \dots \rightarrow \Oset$ cannot be blocked by any subset of $\{W_{j_{t-1}}\} \cup \Pa(W_{j_{t-1}}) \setminus \{Z\}$. The ``$\Leftarrow$'' direction is immediate from the definition of $\plus_{j_{t-1}}$ and \cref{lem:plus-minus-W}(i). 
\end{proof}

\medskip \begin{proof}[of \cref{lem:W-criterion}]
This follows from \cref{lem:app-W-plus-minus-equiv}.
\end{proof}

\medskip \begin{proof}[of \cref{lem:M-criterion}]
By the reasoning at the beginning of \cref{sec:criteria}, it suffices to show that for every $P \in \model(\g, \Vset)$, $\Psi_{a,P,\text{eff}}^{1}(\Vset; \g)$ does not depend on $M_i$. By \cref{lem:EIF}, $\Psi_{a,P,\text{eff}}^{1}(\Vset; \g)$ can depend on $M_i$ only through
\begin{equation*}
\begin{split}
\Gamma(M_i) &\equiv \E\left\{ T_{a,P} \mid M_{i},\Pa(M_{i})\right\} + \sum_{l=1}^{k} \big[ \E\left\{ T_{a,P} \mid M_{i_l},\Pa(M_{i_l})\right\} - \E\left\{ T_{a,P}\mid \Pa(M_{i_l})\right\} \big] \\
&= \sum_{t=1}^{k} \big[ \E\left\{ T_{a,P} \mid M_{i_{t-1}},\Pa(M_{i_{t-1}}) \right\} - \E\left\{ T_{a,P} \mid \Pa(M_{i_{t}}) \right\} \big ] + \E\left\{ T_{a,P} \mid M_{i_k},\Pa(M_{i_k})\right\},
\end{split}
\end{equation*}
where $M_{i_0} \equiv M_i$, $T_{a,P} \equiv \I_{a}(A)Y / P\left( A=a \mid \Omin \right)$ is a function of $\{A, Y\} \cup \Omin$. 
It then suffices to show that (I) $\E\left\{ T_{a,P} \mid M_{i_k},\Pa(M_{i_k})\right\}$ does not depend on $M_i$ and (II) $\E\left\{ T_{a,P} \mid M_{i_{t-1}},\Pa(M_{i_{t-1}}) \right\}$ cancels with $\E\left\{ T_{a,P} \mid \Pa(M_{i_{t}}) \right\} $ for $t=1,\dots,k$.

To see (I), by condition (i) of the lemma, we have 
\[ \E\left\{ T_{a,P} \mid M_{i_k},\Pa(M_{i_k})\right\} = \E\left[ T_{a,P} \mid M_{i_k},\Pa(M_{i_k}) \setminus \{M_i\}\right], \]
which does not depend on $M_i$. 

To see (II), note that for $t=1,\dots,k$, 
\[ \E\left\{ T_{a,P} \mid M_{i_{t-1}},\Pa(M_{i_{t-1}}) \right\} = \E\left[ T_{a,P} \mid \Pa(M_{i_{t}}), \Pa(M_{i_{t-1}}) \setminus \Pa(M_{i_{t}}) \right] = \E\left\{ T_{a,P} \mid \Pa(M_{i_{t}}) \right\}, \]
where the first equality follows from condition (ii)(a) and (ii)(b), the second equality follows from condition (ii)(c).
\end{proof}

\subsection{Proof of \cref{thm:criteria}}
\begin{proof}
We prove the result by showing that $\Vset^{\ast}(\g)$ given in the theorem fulfills conditions (i)--(iii) in \cref{lem:info}. 
\begin{enumerate}[(i)]
\item We claim that for every $P \in \model(\g, \Vset)$, with probability one, $\Psi_{a,P,\text{eff}}^{1}(\Vset; \g)$ given by \cref{lem:EIF} depends on $\Vset$ only through $\Vset^{\ast}(\g)$. This is true because every variable in 
\begin{multline*}
\Vset \setminus \Vset^{\ast}(\g) = \Nset(\g) \cup \Iset(\g) \\
\cup \{W_{j}\in \Wset\setminus \Oset: \text{$W_{j}$ satisfies the W-criterion}\} \\
\cup \{M_{i}\in \Mset\setminus \{Y\}:\text{$M_{i}$ satisfies the M-criterion}\}
\end{multline*}
has been shown to vanish from $\Psi_{a,P,\text{eff}}^{1}(\Vset; \g)$ by \cref{lem:N-I-uninformative,lem:W-criterion,lem:M-criterion}.

\item Since $\{A, Y\} \cup \Oset \subseteq \Vset^{\ast}(\g)$, consider functional $\Psi_{a, \Oset}^{\text{ADJ}}(P^{\ast}; \g^{\ast})$ that agrees with $\Psi_{a}(P; \g)$ for $P \in \model(\g, \Vset)$. 

\item We shall show that, for every $V_j \in \Vset^{\ast}$, there exists some $P_j \in \model(\g, \Vset)$ such that $\Psi_{a,P,\text{eff}}^{1}(\Vset; \g)$ depends on $V_j$ non-trivially. 
\begin{enumerate}[(a)]
\item For $V_j \in \{A, Y\} \cup \Oset$, this is shown in the proof of \cref{lem:Oset-informative}; see \cref{sec:proof-lem:Oset-informative}.
\item For $V_j \in \Wset\setminus \Oset$ that fails the W-criterion, it is shown by \cref{lem:W-complete}; see \cref{sec:app-W-complete}.
\item For $V_j \in \Mset \setminus \{Y\}$ that fails the M-criterion, it is shown by \cref{lem:M-complete}; see \cref{sec:app-M-complete}.
\end{enumerate}
\end{enumerate}
\end{proof}

\section{Completeness proof of W-criterion} \label{sec:app-W-complete}
This section provides the following supporting result for the proof of \cref{thm:criteria}.  

\medskip \begin{lemma} \label{lem:W-complete}
Under the assumptions of \cref{thm:criteria}, suppose that variable $W_j \in \Wset(\g) \setminus \Oset(\g)$ fails the W-criterion in \cref{lem:W-criterion}. Then, there exists a non-degenerate law $P \in \model(\g, \Vset)$, under which $\Psi_{a,P,\text{eff}}^{1}(\Vset; \g)$ non-trivially depends on $W_j$.
\end{lemma}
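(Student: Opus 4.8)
The plan is to follow the template of the proof of \cref{lem:Oset-informative}: construct a (possibly degenerate) law $P$ that is Markov to a carefully chosen subgraph $\g'$ of $\g$, evaluate in closed form the quantity $\Gamma(W_j)$ of \cref{eqs:depends-Wj-plus-minus} — which is the only part of $\Psi_{a,P,\text{eff}}^{1}(\Vset;\g)$ that can involve $W_j$, since $W_j\in\Wset\setminus\Oset$ is a parent of no mediator — check that it is a non-constant function of $W_j$, and then pass to a weakly convergent sequence of non-degenerate laws $P_n\weaklyconvto P$ in $\model(\g,\Vset)$ for which the dependence on $W_j$ survives for all large $n$.

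First I would translate the hypothesis. By \cref{lem:app-W-plus-minus-equiv}, failure of the W-criterion means that either (a) $W_j\in\plus_{j_r}$, or (b) $\plus_{j_{t-1}}\neq\Pa(W_{j_t})$ for some $t\in\{1,\dots,r\}$ (recall $\minus_{j_t}=\Pa(W_{j_t})$ by \cref{lem:plus-minus-W}(iii)); in case (b) I would take the least such $t$. The structural results of \cref{sec:app-aug-graph} then supply, from such a failure, an explicit witness: in case (a) an active path from $W_j$ to some $O_i\in\Oset$ relative to $\{W_{j_r}\}\cup\Pa(W_{j_r})\setminus\{W_j\}$, which I would refine to a directed path $W_j\mapsto O_i$ avoiding that conditioning set, to be used together with the edge $W_j\to W_{j_r}$; and in case (b) a vertex $Z\in\big(\{W_{j_{t-1}}\}\cup\Pa(W_{j_{t-1}})\big)\setminus\Pa(W_{j_t})$ joined to some $O_i\in\Oset$ by a directed path avoiding $\Pa(W_{j_t})$, witnessing that $Z\in\plus_{j_{t-1}}$ but $Z\notin\minus_{j_t}=\Pa(W_{j_t})$. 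Either way, the soundness cancellation pattern of \cref{eqs:depends-Wj-plus-minus} — each $\E\{b_a(\Oset)\mid\plus_{j_{t-1}}\}$ against $\E\{b_a(\Oset)\mid\Pa(W_{j_t})\}$, leaving only $\E\{b_a(\Oset)\mid\plus_{j_r}\}$ — breaks at exactly one place.

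Next I would build the law. Let $\g'$ be the subgraph of $\g$ retaining only the edges along the witnessing directed paths together with the edges $W_j\to W_{j_1},\dots,W_j\to W_{j_r}$, and let $P$ be Markov to $\g'$ with: every variable on a retained directed path equal (a.s.) to its predecessor on that path, so that $b_{a,P}(\Oset)=\E_P(Y\mid A=a,\Oset)$ collapses to a function $b(O_i)$ of the single coordinate $O_i$, which in turn functionally equals a function of $W_j$; the treatment draw $P\{A=a\mid\Pa(A,\g')\}$ bounded away from $0$ and $1$; and the remaining coordinates chosen to make the conditional expectations in \cref{eqs:depends-Wj-plus-minus} explicit. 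Since $P$ is Markov to a subgraph $\g'$ of $\g$ it is automatically Markov to $\g$, hence lies in $\model(\g,\Vset)$ once positivity holds. Computing $\Gamma(W_j)$ term by term then gives, in case (a), that every $\plus_{j_{t-1}}/\Pa(W_{j_t})$ pair cancels as in the sound case while the surviving term $\E\{b_a(\Oset)\mid\plus_{j_r}\}$ retains $W_j$ through $b(O_i)$; and in case (b), that the cancellation of the $t$-th pair fails because conditioning additionally on $Z$ (which distinguishes $\plus_{j_{t-1}}$ from $\Pa(W_{j_t})$) does not screen off $b(O_i)$, so that a term depending on $W_j$ is left uncancelled. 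In both cases $\Gamma(W_j)$ reduces to an explicit expression, affine in the indicator $\I_a(A)$, whose value genuinely varies with $W_j$.

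Finally, since the $P$ just constructed is degenerate, I would choose non-degenerate $P_n\in\model(\g',\Vset)\subseteq\model(\g,\Vset)$ with $P_n\weaklyconvto P$; by continuity of the conditional expectations entering $\Gamma(W_j)$ along this sequence (exactly as in the closing step of the proof of \cref{lem:Oset-informative}), $\Gamma(W_j)$, and hence $\Psi_{a,P_n,\text{eff}}^{1}(\Vset;\g)$, depends non-trivially on $W_j$ for all sufficiently large $n$. The main obstacle I anticipate is purely combinatorial: extracting clean directed-path witnesses from the two modes of failure of the d-separation conditions in the W-criterion (the content deferred to \cref{sec:app-aug-graph}) and checking that the deterministic identifications used to collapse $b_{a,P}(\Oset)$ are mutually consistent and keep $\g'$ acyclic; the remaining analytic points — Markovness with respect to $\g$ and compatibility with \cref{assump:positivity} after perturbation — are routine, since a law Markov to a subgraph is Markov to $\g$.
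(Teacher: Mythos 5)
Your top-level template (reduce the question to $\Gamma(W_j)$ in \cref{eqs:depends-Wj-plus-minus}, build a degenerate law Markov to a subgraph $\g'\subseteq\g$, compute term by term, then pass to non-degenerate laws weakly converging to it) is exactly the paper's, but the two steps that carry the actual content of the completeness proof are not secured. The first gap is your extraction of witnesses: from $W_j\in\plus_{j_r}$, i.e.\ $W_j\not\indep_{\g}\Oset\mid\{W_{j_r}\}\cup\Pa(W_{j_r})\setminus\{W_j\}$, you cannot in general ``refine'' the witnessing d-connecting path to a \emph{directed} path from $W_j$ to $\Oset$ avoiding the conditioning set, and failures of (ii)(a)/(ii)(b) need not produce any directed witness at all: they produce two non-adjacent children of $W_j$, or a configuration $W_j\to W_{j_k}\leftarrow W_i$ with $W_i\notin\Adj(W_j,\g)$, and the d-connecting path in a failure of (ii)(c) may owe its activeness to colliders that are merely ancestors of the conditioning set and may intersect the causal path from $W_{j_k}$ to $\Oset$. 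Handling these non-causal structures is precisely the content of \cref{lem:cond1-w-graph,lem:cond2-w-graph} (the taxonomy (W-a), (W-b), (W-c) and the control of colliders, their ancestral paths to $\Pa(W_j,\g)\cup\{W_j\}$, and the possible merging of the two paths), and of the four sub-cases treated in \cref{lem:W-c}; your sketch offers no substitute for this case analysis.

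The second gap is that, even where a directed witness exists, the ``identity copy'' construction (each vertex on the retained path equal to its predecessor, so that $b_{a,P}(\Oset)$ becomes a function of $W_j$) does not by itself prevent the alternating sum from cancelling the $W_j$-dependence. For instance, if you copy along $W_j\to W_{j_k}\to\dots\to O$ so that $b=W_{j_k}$ a.s., then
\begin{equation*}
\E\{b\mid W_j,\Pa(W_j,\g)\}+\E\{b\mid W_{j_k},\Pa(W_{j_k},\g)\}-\E\{b\mid \Pa(W_{j_k},\g)\}=W_j+W_{j_k}-W_j=W_{j_k},
\end{equation*}
since $W_j\in\Pa(W_{j_k},\g)$, and the dependence on $W_j$ disappears — a cancellation that nearby non-degenerate laws inherit. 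This is why the paper's constructions deliberately install interaction terms and then verify survival of exactly one term: $b(\Oset)=W_{j_r}W_{j_k}$ together with $\E[W_{j_k}\mid W_j]=0$ in case (W-a) (\cref{lem:W-a}), $\E[W_{j_k}\mid W_i,W_j]=W_iW_j$ in case (W-b) (\cref{lem:W-b}), and $b=f(O_1)\,O_2$ combined with an inducing-path argument and strong completeness of d-separation in case (W-c1) (\cref{lem:W-c1}), with analogous devices when the paths $p$ and $q$ intersect or pass through other children of $W_j$. Without such interactions (or some other argument ruling out cancellation), your claim that ``the cancellation pattern breaks at exactly one place'' is unsubstantiated, and in simple configurations it is false.
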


\smallskip \begin{proof}
By \cref{lem:cond1-w-graph}, $W_j$ must be of one of the following cases. 
\begin{enumerate}
\item[(W-a)] $W_j$ has two children $W_{j_r}$ and $W_{j_k}$ that are not adjacent.
\item[(W-b)] $W_{j} \to W_{j_k} \leftarrow W_i$ with $W_j$ not adjacent to $W_i$. 
\item[(W-c)] $W_{i} \to  W_{j} \to W_{j_k}$, $i \in \{1, \dots j\}$, $W_i \in \Pa(W_j,\g) \setminus \Pa(W_{j_k},\g) $, and there is a path $p = \langle W_i, \dots , O_1 \rangle$, $O_1 \in \opt(\g)\setminus \Pa(W_{j_k},\g)$  that is d-connecting given $\Pa(W_{j_k},\g)$. If $W_i \in \opt(\g)$, then $W_i \equiv O_1$ and $|p|=0$.
\end{enumerate}
By splitting into these cases, the result is shown to hold by \cref{lem:W-a,lem:W-b,lem:W-c}.
\end{proof}

\medskip To show that $\Psi_{a,P,\text{eff}}^{1}(\Vset; \g)$ non-trivially depends on $W_j$, by \cref{lem:EIF}, it is suffices to show under $P$, 
\begin{equation} \label{eqs:ast-W}
\Gamma(W_j) \equiv \E \left\{ b(\Oset) \mid W_{j}, \Pa(W_{j}) \right\} + \sum_{t=1}^{r} \left[ \E \left\{ b(\Oset) \mid W_{j_t}, \Pa(W_{j_t}) \right\} - \E \left\{ b(\Oset) \mid  \Pa(W_{j_t}) \right\}  \right]
\end{equation}
non-trivially depends on $W_j$, where $\Ch(W_{j})\cap \Wset=\{W_{j_{1}},\dots ,W_{j_{r}}\}$ for $r \geq 1$. For convenience, in this section we suppose $A$ is binary and choose $a=1$. We write $b(\Oset) \equiv b_{1}(\Oset)$ for short.

\medskip For the proofs, we will use the property of an inducing path. 

\begin{definition}[inducing path] \label{def:inducing-path}
Path $p = \langle A, \dots, B \rangle$ between $A$ and $B$ is called an inducing path with respect to set $C$ for $A, B \notin C$, if (i) no non-collider on $p$ is in $C$ and (ii) every collider on $p$ is ancestral to $\{A, B\}$. An edge between $A$ and $B$ is a trivial inducing path.
\end{definition}

\begin{lemma}[Lemma 1, \citealp{richardson2003markov}] \label{lem:inducing-path}
If there exists an inducing path between $A$ and $B$ with respect to $C$, then $A$ and $B$ are d-connected given $C$. 
\end{lemma}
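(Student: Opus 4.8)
The plan is to reduce the statement to the construction of a path between $A$ and $B$ that is d-connecting (active) given $C$; the conclusion then follows by the definition of d-separation. Recall that a path is active given $C$ exactly when every non-collider on it avoids $C$ and every collider on it lies in $\An(C,\g)$. Given an inducing path $p$ between $A$ and $B$ relative to $C$, condition~(i) of \cref{def:inducing-path} says every interior non-collider avoids $C$, and $A,B\notin C$ by hypothesis, so \emph{all} non-colliders of $p$ already avoid $C$. Hence $p$ can fail to be active given $C$ only through colliders that are not ancestors of $C$. I would therefore argue by induction on the number of colliders of an inducing path relative to $C$ between $A$ and $B$.

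If every collider of $p$ lies in $\An(C,\g)$ — in particular if $p$ has no collider, which covers the case $|p|=1$ — then $p$ itself is active given $C$ and there is nothing more to do. Otherwise, fix a collider $V$ on $p$ with $V\notin\An(C,\g)$. By condition~(ii) of \cref{def:inducing-path}, $V\mapsto A$ or $V\mapsto B$; assume without loss of generality $V\mapsto A$, the other case being symmetric. Since an endpoint of $p$ carries only one incident path-edge, $V\neq A$ and $V\neq B$. Let $d=\langle V,\dots,A\rangle$ be a shortest directed path from $V$ to $A$. Every vertex of $d$ is a descendant of $V$, so (as $V\notin\An(C,\g)$, hence $V\notin C$, and $A\notin C$) no vertex of $d$ lies in $C$. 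Let $r=\langle V,\dots,B\rangle$ be the subpath of $p$ from $V$ to $B$; because $V$ is a collider on $p$, the $r$-edge at $V$ points into $V$. Let $w$ be the vertex of $d$ nearest $A$ along $d$ that also lies on $r$ (it exists, since $V$ lies on both), and let $p'$ be the concatenation of the $A$-to-$w$ segment of $d$ with the $w$-to-$B$ segment of $r$.

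I would then verify that $p'$ is a genuine path from $A$ to $B$ and is again an inducing path relative to $C$ with strictly fewer colliders than $p$. It is a path because, by the choice of $w$, no vertex strictly between $A$ and $w$ on $d$ lies on $r$, so no vertex repeats. Its interior vertices coming from $d$ are chains (non-colliders) avoiding $C$; the vertex $w$ has its $d$-edge pointing out of it, so $w$ is a non-collider on $p'$, and $w$ avoids $C$; each interior vertex on the retained $w$-to-$B$ part of $r$ keeps exactly its $p$-incident edges, hence its collider status, so if it is a collider it is an ancestor of $\{A,B\}$ and if it is a non-collider it avoids $C$, both because $p$ is inducing. Finally every collider of $p'$ is a collider of $p$ strictly between $V$ and $B$, so $p'$ has lost at least the collider $V$. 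Applying the induction hypothesis to $p'$ produces an active path between $A$ and $B$ given $C$, completing the argument.

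I expect the only real obstacle to be this last bookkeeping step: making sure the spliced object is an honest path and that splicing does not manufacture a non-ancestral collider. This is precisely why the splice vertex $w$ is taken to be the vertex of $d$ nearest $A$ that re-meets $r$, and why one must track the collider status at $w$ and along the kept portion of $r$; once those checks are in place, everything else is routine. (The argument is carried out for directed acyclic graphs, which is all that is needed here; the statement holds more generally as in \citealp{richardson2003markov}.)
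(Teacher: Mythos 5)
Your proposal is correct. Note that the paper does not actually prove \cref{lem:inducing-path}; it imports it from \citet{richardson2003markov}, so there is no in-paper argument to compare against. Your induction on the number of colliders, with the rerouting step that replaces the segment of $p$ from $A$ to a non-$\An(C,\g)$ collider $V$ by a directed path $d$ from $V$ to $A$ (spliced at the vertex $w$ of $d$ nearest $A$ that meets the retained subpath $r$), is essentially the standard proof of this fact for directed acyclic graphs, and all the delicate points are handled: no vertex of $d$ can lie in $C$ because $V\notin\An(C,\g)$; $w$ is a non-collider on the new path since its $d$-edge points out of it; interior vertices of the kept $w$-to-$B$ segment retain their collider status from $p$, so conditions (i) and (ii) of \cref{def:inducing-path} persist; and $V$ is lost as a collider, so the collider count strictly decreases and the induction terminates with a path that is d-connecting given $C$.
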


\medskip In what follows, we will typically choose $P$ that is Markov to a subgraph $\g'$ of $\g$, and hence also Markov to $\g$. Note that in \cref{eqs:ast-W}, $\Pa(\cdot)$ is defined with respect to the original graph $\g$ instead of the subgraph  $\g'$.
We will use symbol `$\stackrel{\g}{=}$' or `$\stackrel{\g'}{=}$' to signify an equality that follows from d-separations on $\g$ or $\g'$. Besides, for two paths $p = \langle V_i, \dots, V_j \rangle $ and $q = \langle V_j, \dots, V_k \rangle$, notation $p \oplus q$ denotes the path formed by concatenating $p$ and $q$. 

\subsection{Case (W-a)}
\begin{figure}[!htb]
\centering
\begin{tikzpicture}
\tikzset{rv/.style={circle,inner sep=1pt,fill=gray!20,draw,font=\sffamily}, 
redv/.style={circle,inner sep=1pt,fill=white,draw,dashed,font=\sffamily}, 
ov/.style={circle,inner sep=1pt,fill=gray!20,draw=red,thick,font=\sffamily}, 
sv/.style={circle,inner sep=1pt,fill=gray!20,draw,font=\sffamily,minimum size=1mm}, 
node distance=12mm, >=stealth, every node/.style={scale=0.8}}
\begin{scope} \node[name=Wj, rv]{$W_j$};
\node[name=Wr, rv, below of=Wj, xshift=-17mm, yshift=-5mm]{$W_{j_r}$};
\node[name=Wk, rv, right of=Wr]{$W_{j_k}$};
\node[name=Wl, rv, right of=Wk, xshift=5mm]{$W_{j_l}$};
\node[name=Wi, rv, above of=Wr, yshift=5mm]{$W_i$};
\node[name=O1, rv, below of=Wr]{$O_1$};
\node[name=O2, rv, below of=Wk]{$O_2$};
\node[name=M, rv, below of=O1]{$M$};
\node[name=Y, rv, below of=O2]{$Y$};
\node[name=A, rv, left of=M]{$A$};
\node[below=3mm of M, xshift=10mm]{(W-a)};
\draw[->,very thick,>=stealth,color=blue] (Wj) to (Wr);
\draw[->,very thick,>=stealth,color=blue] (Wi) to (Wj);
\draw[->,very thick,>=stealth,color=blue, bend right] (Wi) to (A);
\draw[->,dashed,very thick,>=stealth,color=blue] (Wi) to (Wr);
\draw[->,very thick,>=stealth,color=blue] (Wj) to (Wk);
\draw[->,very thick,>=stealth,color=blue] (Wj) to (Wl);
\draw[->,very thick,>=stealth,color=blue] (Wr) to (O1);
\draw[->,very thick,>=stealth,color=blue] (Wk) to (O2);
\draw[->,dashed,very thick,>=stealth,color=blue] (Wl) to (Wk);
\draw[->,very thick,>=stealth,color=blue] (O2) to (O1);
\draw[->,very thick,>=stealth,color=blue] (Wl) to (O2);
\draw[->,very thick,>=stealth,color=blue] (O1) to (A);
\draw[->,very thick,>=stealth,color=blue] (O1) to (M);
\draw[->,very thick,>=stealth,color=blue] (O1) to (Y);
\draw[->,very thick,>=stealth,color=blue] (A) to (M);
\draw[->,very thick,>=stealth,color=blue] (M) to (Y);
\draw[->,very thick,>=stealth,color=blue, bend right] (A) to (Y);
\end{scope}
\begin{scope}[xshift=4cm] \node[rv, yshift=-30mm] (Wjk) {$W_{j_k}$};
\node[rv, left of=Wjk, yshift=-8mm] (Wi) {$W_i$};
\node[rv, left of=Wjk, yshift=8mm] (Wj) {$W_j$};
\node[right of=Wjk] (e1) {$\dots$};
\node[rv, right of=e1] (O1) {$O_1$};
\node[right of=O1] (e2) {$\dots$};
\node[rv, right of=e2] (Y) {$Y$};
\draw[->, very thick, color=blue] (Wi) -- (Wjk);
\draw[->, very thick, color=blue] (Wj) -- (Wjk);
\draw[->, very thick, color=blue] (Wjk) -- (e1);
\draw[->, very thick, color=blue] (e1) -- (O1);
\draw[->, very thick, color=blue] (O1) -- (e2);
\draw[->, very thick, color=blue] (e2) -- (Y);
\node[below of=O1, xshift=-2mm, yshift=-9mm] {(W-b)};
\end{scope}
\end{tikzpicture}
\caption{Examples of (W-a) and (W-b) for showing the dependency on $W_j$. In (W-a), dashed edges are removed from $\g$ to form $\g'$.}
\label{fig:proof-nec-W}
\end{figure}

\begin{lemma} \label{lem:W-a}
Under the assumptions of \cref{thm:criteria}, suppose that variable $W_j \in \Wset(\g) \setminus \Oset(\g)$ satisfies condition (W-a) in $\g$. Then, there exists a non-degenerate law $P \in \model(\g, \Vset)$, under which $\Psi_{a,P,\text{eff}}^{1}(\Vset; \g)$ non-trivially depends on $W_j$.
\end{lemma}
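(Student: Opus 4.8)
The plan is to mirror the argument used for \cref{lem:Oset-informative}: we exhibit a single, possibly degenerate, law $P$ that is Markov with respect to a carefully chosen subgraph $\g'$ of $\g$ (hence also Markov with respect to $\g$), show that under $P$ the quantity $\Gamma(W_j)$ in \cref{eqs:ast-W} is a nonconstant function of $W_j$, and then pass to a sequence of nondegenerate laws $P_n \in \model(\g', \Vset) \subseteq \model(\g, \Vset)$ with $P_n \weaklyconvto P$. Since $\Gamma(W_j)$ depends on the law only through finitely many conditional expectations, it will remain nonconstant in $W_j$ under $P_n$ for $n$ large, and by \cref{lem:EIF} so will $\Psi_{a,P_n,\text{eff}}^{1}(\Vset;\g)$.

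The construction of $\g'$ exploits the defining feature of case (W-a): $W_j$ has two non-adjacent children $W_{j_r}, W_{j_k}$ in $\Wset(\g)$. Using that every vertex in $\Wset$ is an ancestor of $\Oset$ (\cref{lem:osetandw}), I would fix a shortest causal path from $W_{j_r}$ to some $O_1 \in \Oset$, shortest causal paths from the remaining children of $W_j$ in $\Wset$ to $\Oset$, a shortest causal path from $\Oset$ through $A$ to $Y$, and the edge $W_j \to W_{j_r}$ (together with whatever edge into $A$ is forced, e.g.\ from a parent of $W_j$, to keep $A$ dependent on $O_1$); $\g'$ keeps only these edges. Then I would choose $P$ Markov to $\g'$ so that, after setting $a=1$ and replacing $\I_a(A)$ by $A$: consecutive vertices along each retained causal path are (noisy) copies of one another; $P(A=1 \mid \Omin) \equiv c$ for a constant $c \in (0,1)$; and $b(\Oset) = b(O_1)$ $P$-almost surely for a chosen nonconstant $b$, with $O_1$ a descendant of $W_j$ only through $W_{j_r}$.

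It then remains to evaluate $\Gamma(W_j)$ term by term under $P$, using d-separations read off $\g$ rather than $\g'$. The terms attached to children of $W_j$ other than $W_{j_r}$ should contribute nothing, because conditioning on such a child together with its $\g$-parents d-separates it from $O_1$; this is exactly where the non-adjacency of $W_{j_r}$ and $W_{j_k}$ is used, since it prevents the compensating conditioning set from already carrying the information of $W_j$, so the telescoping cancellation underlying the W-criterion fails. The term $\E\{b(\Oset)\mid W_{j_r},\Pa(W_{j_r},\g)\}$ collapses to a function of $W_{j_r}$, and together with the leading term $\E\{b(\Oset)\mid W_j,\Pa(W_j,\g)\}$ leaves a residual that is a genuine function of $W_j$ through its effect on $W_{j_r}$ and hence on $O_1$. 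I expect the main obstacle to be the bookkeeping in this last step: one must choose $\g'$ and $P$ so that every spurious conditional-expectation term in \cref{eqs:ast-W} vanishes or cancels exactly while verifying the requisite independences in $\g$ — in particular ruling out inducing paths between $O_1$ and the conditioning variables via \cref{def:inducing-path} and \cref{lem:inducing-path} — and so that the surviving term is manifestly nonconstant in $W_j$; the degeneracy of $P$ must then be removed by the weak-limit argument without disturbing these (in)equalities.
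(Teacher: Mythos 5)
Your overall scaffolding---pick a subgraph $\g'$ of $\g$, a possibly degenerate law $P$ Markov to $\g'$, evaluate $\Gamma(W_j)$ in \cref{eqs:ast-W} term by term with parent sets taken in $\g$, then remove the degeneracy by a weak-approximation argument---is exactly the paper's. But the core construction does not work: you take $b(\Oset)=b(O_1)$ with $O_1$ a descendant of $W_j$ only through $W_{j_r}$. Under any law Markov to your $\g'$, the leading term $\E\{b(O_1)\mid W_j,\Pa(W_j,\g)\}=\E\{b(O_1)\mid W_j\}$ is cancelled exactly by the subtracted term $-\E\{b(O_1)\mid\Pa(W_{j_r},\g)\}$, because $W_j\in\Pa(W_{j_r},\g)$ and the remaining parents of $W_{j_r}$ carry no information about $O_1$ under your $P$; the term $\E\{b(O_1)\mid W_{j_r},\Pa(W_{j_r},\g)\}$ collapses to a function of $W_{j_r}$ alone; and the terms for the other children vanish, as you say. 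Hence $\Gamma(W_j)=\E\{b(O_1)\mid W_{j_r}\}$, which is free of $W_j$: whenever $b(\Oset)$ factors through descendants of a single child, the telescoping cancellation underlying the W-criterion goes through, so no construction of this single-chain form can exhibit dependence on $W_j$.

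The paper's proof avoids this by using both non-adjacent children jointly. Its $\g'$ keeps $W_j\to W_{j_r}$ and $W_j\to W_{j_k}$ and deletes all other edges into $W_{j_r}$ and $W_{j_k}$; the law is chosen so that $b(\Oset)=W_{j_r}W_{j_k}$ (possible because both children are ancestors of $\Oset$) together with the centering $\E(W_{j_k}\mid W_j)=0$. Then the $W_j$- and $W_{j_r}$-terms vanish, the remaining children's terms vanish by the local Markov property in $\g'$ (not because of non-adjacency, as you suggest), and the $W_{j_k}$-term survives as $W_{j_k}\,\E(W_{j_r}\mid W_j)$. This is where non-adjacency actually enters: since $W_{j_r}\notin\Pa(W_{j_k},\g)$, conditioning on $\{W_{j_k}\}\cup\Pa(W_{j_k},\g)$ cannot recover $W_{j_r}$ itself and must fall back on $\E(W_{j_r}\mid W_j)$, leaving genuine $W_j$-dependence; had the children been adjacent this term would reduce to $W_{j_r}W_{j_k}$ and the dependence would again disappear. (Incidentally, the propensity ingredients $P(A=a\mid\Omin)=c$ and the path through $A$ to $Y$ that you import from the proof of \cref{lem:Oset-informative} are unnecessary here: for $W_j\in\Wset\setminus\Oset$ only the $b_{a,P}(\Oset)$ terms of \cref{lem:EIF} are in play.)
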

\begin{proof}
Let $\g'$ be the subgraph of $\g$ by removing the edges into $W_{j_r}$ and $W_{j_k}$ other than $W_j \rightarrow W_{j_r}$ and $W_j \rightarrow W_{j_k}$; see \cref{fig:proof-nec-W} for an example. Thus, $\Pa(W_{j_r}, \g') = \Pa(W_{j_k}, \g') = \{W_j\}$. 

Choose $P$ that is Markov to $\g'$ such that the following hold almost surely:
\begin{enumerate}
\item $\E[W_{j_k} \mid W_j] = 0.$
\item $b(\opt) = W_{j_r} W_{j_k}$. This is always possible because $W_{j_r}$ and $W_{j_k}$ go to $Y$ through $\opt$ (either the same vertex or two different vertices from $\opt$).
\end{enumerate}
Rewriting \cref{eqs:ast-W}, our goal is to show 
\begin{multline*}
\Gamma(W_j) = \E[ W_{j_r} W_{j_k} \mid W_j, \Pa(W_j, \g)] \\
+ \E[ W_{j_r} W_{j_k} \mid W_{j_r}, \Pa(W_{j_r}, \g)] - \E[ W_{j_r} W_{j_k} \mid \Pa(W_{j_r}, \g)] \\
+ \E[ W_{j_r} W_{j_k} \mid W_{j_k}, \Pa(W_{j_k}, \g)] - \E[ W_{j_r} W_{j_k} \mid \Pa(W_{j_k}, \g)] \\
+ \sum_{l \neq r, k} \left\{ \E[W_{j_r} W_{j_k} \mid W_{j_l}, \Pa(W_{j_l}, \g)] - \E[W_{j_r} W_{j_k} \mid \Pa(W_{j_l}, \g)] \right\}
\end{multline*}
depends on $W_j$ under $P$.

Let us compute term by term. Using local Markov properties on $\g'$, it is easy to see that
\[ \E[ W_{j_r} W_{j_k} \mid W_j, \Pa(W_j, \g)] \stackrel{\g'}{=} \E[ W_{j_r} \mid W_j] \E[W_{j_k} \mid W_j] = 0, \]
We also have
\begin{equation*}
\begin{split}
& \quad \E[ W_{j_r} W_{j_k} \mid W_{j_r}, \Pa(W_{j_r}, \g)] - \E[ W_{j_r} W_{j_k} \mid \Pa(W_{j_r}, \g)]\\
&= \E[W_{j_r} W_{j_k} \mid W_{j_r}, W_j, \Pa(W_{j_r}, \g) \setminus \{W_j\}] - \E[W_{j_r} W_{j_k} \mid W_j, \Pa(W_{j_r}, \g) \setminus \{W_j \}] \\
&\stackrel{\g'}{=} W_{j_r} \E[W_{j_k} \mid W_j] - \E[W_{j_r} \mid W_j] \E[W_{j_k} \mid W_j] = 0,
\end{split}
\end{equation*}
and 
\begin{equation*}
\begin{split}
& \quad \E[W_{j_r} W_{j_k} \mid W_{j_k}, \Pa(W_{j_k}, \g)] - \E[W_{j_r} W_{j_k} \mid \Pa(W_{j_k}, \g)] \\
&\stackrel{\g'}{=}  W_{j_k} \E[W_{j_r} \mid W_j] - \E[W_{j_r} \mid W_j] \E[W_{j_k} \mid W_j] = W_{j_k} \E[W_{j_r} \mid W_j].
\end{split}
\end{equation*}
For any other child $W_{j_l}$ of $W_j$ ($l \neq k,r$) (if any), we claim that 
\[ \E[W_{j_r} W_{j_k} \mid W_{j_l}, \Pa(W_{j_l}, \g)] - \E[W_{j_r} W_{j_k} \mid \Pa(W_{j_l}, \g)] \stackrel{\g'}{=} 0.\]
This holds because $W_{j_r}$ and $W_{j_k}$ are non-descendants of $W_{j_l}$ on $\g'$ and hence 
\[ W_{j_l} \indep_{\g'} W_{j_r}, W_{j_k}, \Pa(W_{j_l}, \g) \setminus \Pa(W_{j_l}, \g') \mid \Pa(W_{j_l}, \g'),\]
which further implies
\[ W_{j_l} \indep_{\g'} W_{j_r}, W_{j_k} \mid \Pa(W_{j_l}, \g).\]
Finally, we are left with
\[ \Gamma(W_j) = W_{j_k} \E[W_{j_r} \mid W_j], \]
which can be chosen to depend on $W_j$. Finally, to finesse the fact that $P$ is degenerate, consider a sequence of non-degenerate laws $P_n$ that weakly converges to $P$ in $\model(\g', \Vset)$. Then, $\Gamma(W_j)$ depends on $W_j$ under $P_n$ for a large enough $n$. 
\end{proof}

\subsection{Case (W-b)}
\begin{lemma} \label{lem:W-b}
Under the assumptions of \cref{thm:criteria}, suppose that variable $W_j \in \Wset(\g) \setminus \Oset(\g)$ satisfies condition (W-b) in $\g$. Then, there exists a non-degenerate law $P \in \model(\g, \Vset)$, under which $\Psi_{a,P,\text{eff}}^{1}(\Vset; \g)$ non-trivially depends on $W_j$.
\end{lemma}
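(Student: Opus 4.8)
The plan is to follow the template of \cref{lem:W-a} and of the proof of \cref{lem:Oset-informative}: I will build an explicit, nearly-degenerate law $P$ that is Markov to a sparse subgraph $\g'$ of $\g$, evaluate the quantity $\Gamma(W_j)$ of \cref{eqs:ast-W} under $P$, exhibit a genuine dependence on $W_j$, and then perturb $P$ to a non-degenerate law in $\model(\g',\Vset)$ by the same weak-convergence device used at the end of \cref{sec:proof-lem:Oset-informative}. By \cref{lem:EIF}, $\Psi_{a,P,\text{eff}}^{1}(\Vset;\g)$ depends on $W_j$ only through $\Gamma(W_j)$, so it suffices to make $\Gamma(W_j)$ depend nontrivially on $W_j$.

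First I unpack case (W-b): by \cref{lem:cond1-w-graph} there are a child $W_{j_k}\in\Ch(W_j)\cap\Wset$ and a vertex $W_i\in\Pa(W_{j_k})$ with $W_i$ not adjacent to $W_j$, so that in particular $W_i\notin\Pa(W_j)\cup\{W_j\}$. Since $W_{j_k},W_i\in\Wset=\An(\Oset)$ by \cref{lem:osetandw}, I fix a causal path $\sigma$ from $W_{j_k}$ to some $O_1\in\Oset$ (trivial when $W_{j_k}\in\Oset$), a causal path from $O_1$ to $Y$, and a causal path from $A$ to $Y$. Let $\g'$ be the subgraph of $\g$ retaining only the edges $W_j\to W_{j_k}$, $W_i\to W_{j_k}$, the edge $W_j\to W_{j_t}$ for every other $W_{j_t}\in\Ch(W_j)\cap\Wset$, and the edges along $\sigma$, along the $O_1$-to-$Y$ path, and along the $A$-to-$Y$ path; delete every other edge. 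Then $\model(\g',\Vset)\subseteq\model(\g,\Vset)$; in $\g'$ the vertices $W_j,W_i$ are parentless, $\Pa(W_{j_k},\g')=\{W_j,W_i\}$, and each $W_{j_t}$ with $t\ne k$ is a leaf whose only $\g'$-parent is $W_j$, hence $W_{j_t}\indep_{\g'}W_{j_k}\mid W_j$.

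Next I choose $P$ Markov to $\g'$ with: $W_j\indep W_i$, both of positive variance and with $\E(W_i)\neq 0$; $W_{j_k}=W_jW_i+\xi$ almost surely for a mean-zero noise $\xi$ independent of $(W_j,W_i)$ --- the crux is this genuine $W_j$--$W_i$ interaction inside $W_{j_k}$; every non-endpoint of $\sigma$ and of the $O_1$-to-$Y$ chain a copy of its predecessor, so that $O_1=W_{j_k}$ and $\E_P(Y\mid A=a,\Oset)=W_{j_k}$, i.e.\ $b_{a,P}(\Oset)=W_{j_k}$; each mediator on the $A$-to-$Y$ path a copy of $A$; $A$ an independent Bernoulli draw (giving positivity); and all remaining vertices governed by non-degenerate Gaussian mechanisms consistent with $\g'$. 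Evaluating \cref{eqs:ast-W} under $P$, and using that any parent present in $\g$ but not in $\g'$ is independent noise that drops from the conditional expectations, the term $\E\{b(\Oset)\mid W_j,\Pa(W_j)\}$ equals $\E(W_{j_k}\mid W_j)=W_j\,\E(W_i)$; the $t=k$ summand equals $W_{j_k}-\E(W_{j_k}\mid W_j,W_i)=W_{j_k}-W_jW_i$; and every $t\ne k$ summand vanishes by $W_{j_t}\indep_{\g'}W_{j_k}\mid W_j$. Hence
\[ \Gamma(W_j)=W_j\{\E(W_i)-W_i\}+W_{j_k}, \]
which on the positive-probability event $\{W_i\ne\E(W_i)\}$ is a non-constant function of $W_j$, and is not $P$-a.s.\ equal to a function of $\Vset\setminus\{W_j\}$ because, given the remaining coordinates (which fix $W_i$ and $W_{j_k}$), the conditional law of $W_j$ is non-degenerate as $W_{j_k}=W_jW_i+\xi$ with $\xi$ a noise not among the coordinates of $\Vset$. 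Thus $\Psi_{a,P,\text{eff}}^{1}(\Vset;\g)$ depends nontrivially on $W_j$. Finally, replacing each deterministic copy relation by a small-noise version yields non-degenerate $P_n\in\model(\g',\Vset)$ with $P_n\weaklyconvto P$; by continuity the same nontrivial dependence persists for $n$ large, and that $P_n$ is the required law.

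The main obstacle is the design of $P$ in the previous paragraph. A naive additively-separable choice such as $W_{j_k}=W_j+W_i+\xi$ with $b$ linear makes the $W_j$-contributions of the three terms cancel, leaving $\Gamma(W_j)$ a function of $(W_i,W_{j_k})$ alone, so $W_j$ would spuriously look uninformative. One must inject a true interaction between $W_j$ and $W_i$ into the mechanism generating $b$ (equivalently $W_{j_k}$); this is legitimate precisely because $W_i\in\Pa(W_{j_k})$ while $W_i\notin\Pa(W_j)\cup\{W_j\}$, which is exactly the graphical configuration produced by the failure of condition (ii)(b) of \cref{lem:W-criterion} in case (W-b). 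The remaining bookkeeping --- checking that the other-child terms and the $\Pa(\cdot,\g)$-versus-$\Pa(\cdot,\g')$ discrepancy contribute nothing --- is routine, along the lines of \cref{lem:W-a}.
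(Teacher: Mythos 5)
Your proposal is correct and follows essentially the same route as the paper's own proof: the same sparse subgraph retaining $W_i \to W_{j_k} \leftarrow W_j$ together with a causal chain propagating $W_{j_k}$ to $O_1$ and on to $Y$, the same multiplicative interaction $\E[W_{j_k}\mid W_i,W_j]=W_iW_j$ yielding $\Gamma(W_j)=W_j\,\E[W_i]-W_iW_j+W_{j_k}$, and the same weak-convergence perturbation to obtain a non-degenerate law in the model. The extra retained edges (the other children of $W_j$, an $A$-to-$Y$ chain for positivity) and the requirement $\E(W_i)\neq 0$ are harmless cosmetic additions; only non-constancy of $W_i$ is needed.
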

\begin{proof}
Let $p = \langle W_{j_k}, \dots, O_1 \rangle$ for $O_1 \in \opt$ be the shortest causal path from $W_{j_k}$ to $\opt$. Let $q$ to be the shortest causal path from $O_1$ to $Y$. Let $\g'$ be a subgraph of $\g$ that consists of the same set of vertices but only includes the edges on paths $p$ and $q$; see \cref{fig:proof-nec-W}.

Choose $P$ that is Markov to $\g'$ such that the following holds almost surely:
\begin{enumerate}[(i)]
\item $W_{j_k} = \dots = O_1$ along path $p$
\item $b(\opt) = O_1$,
\item $\E[W_{j_k} \mid W_i, W_j] = W_i W_j$.
\end{enumerate}
It then follows that $b(\opt) = W_{j_k}$. 
Now we shall show that $\Gamma(W_j)$ depends on $W_j$ under $P$. It holds that 
\begin{equation*}
\begin{split}
\E[b(\Oset) \mid W_j, \Pa(W_j, \g)] &= \E[W_{j_k} \mid W_j, \Pa(W_j, \g)]  \\
&\stackrel{\g'}{=} \E[W_{j_k} \mid W_j] \\
& = \E\left[ \E[W_{j_k} \mid W_i, W_j] \mid W_j \right]  \\
& = \E[ W_i W_j \mid W_j]  = W_j \E[W_i].
\end{split}
\end{equation*}
We also have
\begin{equation*}
\begin{split}
\E[b(\Oset) \mid W_{j_k}, \Pa(W_{j_k}, \g)] - \E[b(\Oset) \mid \Pa(W_{j_k}, \g)] &= \E[W_{j_k} \mid W_{j_k}, \Pa(W_{j_k}, \g)] - \E[W_{j_k} \mid \Pa(W_{j_k}, \g)] \\
&\stackrel{\g'}{=} W_{j_k} - \E[W_{j_k} \mid W_i, W_j] = W_{j_k} - W_i W_j.
\end{split}
\end{equation*}
For any $W_{j_l} \in \Ch(W_j, \g)$, $l \neq k$ (if any), it holds that 
\begin{equation*}
\E[b(\Oset) \mid W_{j_l}, \Pa(W_{j_l}, \g)] - \E[b(\Oset) \mid \Pa(W_{j_l}, \g)] = \E[W_{j_k} \mid W_{j_l}, \Pa(W_{j_l}, \g)] - \E[W_{j_k} \mid \Pa(W_{j_l}, \g)] \stackrel{\g'}{=} 0
\end{equation*}
by $W_{j_k}$ being a non-descendant of $W_{j_l}$ on $\g'$ and the Markov property. Hence, we are left with
\[ \Gamma(W_j) = W_j \E[W_i] - W_i W_j + W_{j_k},\]
which depends on $W_j$. Finally, to finesse the fact that $P$ is degenerate, consider a sequence of non-degenerate laws $P_n$ that weakly converges to $P$ in $\model(\g', \Vset)$. Then, $\Gamma(W_j)$ depends on $W_j$ under $P_n$ for a large enough $n$. 
\end{proof}

\subsection{Case (W-c)}
\begin{lemma} \label{lem:W-c}
Under the assumptions of \cref{thm:criteria}, suppose that variable $W_j \in \Wset(\g) \setminus \Oset(\g)$ satisfies (W-c), but neither (W-a) nor (W-b) in $\g$. Then, there exists a non-degenerate law $P \in \model(\g, \Vset)$, under which $\Psi_{a,P,\text{eff}}^{1}(\Vset; \g)$ non-trivially depends on $W_j$.
\end{lemma}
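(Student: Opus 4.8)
The plan is to follow the template of the proofs of \cref{lem:W-a} and \cref{lem:W-b}: construct a (degenerate) law $P$ that is Markov with respect to a suitable subgraph $\g'$ of $\g$, verify that the part $\Gamma(W_j)$ of $\Psi_{a,P,\text{eff}}^{1}(\Vset;\g)$ displayed in \eqref{eqs:ast-W} that is allowed to involve $W_j$ actually depends on $W_j$ under $P$, and then pass to a non-degenerate law by taking a weak limit $P_n \Rightarrow P$ with $P_n \in \model(\g',\Vset) \subseteq \model(\g,\Vset)$, exactly as at the end of those proofs. The input from case (W-c) is the parent $W_i \in \Pa(W_j) \setminus \Pa(W_{j_k})$ together with the path $p = \langle W_i, \dots, O_1 \rangle$ into $O_1 \in \opt(\g) \setminus \Pa(W_{j_k})$ that is d-connecting given $\Pa(W_{j_k})$ (with $W_i \equiv O_1$, $|p|=0$ when $W_i \in \opt(\g)$). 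Because $W_j$ satisfies neither (W-a) nor (W-b), the children of $W_j$ lying in $\Wset$ are pairwise adjacent and so form a directed chain; this is precisely what forces every term of $\Gamma(W_j)$ not involving $W_i$ or $W_{j_k}$ to cancel, just as in \cref{lem:W-a,lem:W-b}.

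Next I would assemble $\g'$. It is to contain the edges $W_i \to W_j$ and $W_j \to W_{j_k}$, the edges of $p$, a shortest causal path from $W_{j_k}$ to some $O_2 \in \opt(\g)$, shortest causal paths from $O_1$ and from $O_2$ to $Y$, the edge $A \to Y$, and --- crucially --- for every collider on $p$ a causal path from it into $\Pa(W_{j_k})$, so that $p$ stays d-connecting given $\Pa(W_{j_k})$ after the remaining edges of $\g$ are deleted; one then checks acyclicity. I would take $P$ Markov to $\g'$ with deterministic propagation along the causal paths, arranging that $O_2$ is a copy of $W_{j_k}$, that $O_1$ and $O_2$ are propagated to $Y$ so that $b(\opt) = \E(Y \mid A=1, \opt)$ equals a product of a nonconstant function of $O_1$ and of $O_2$, and leaving the marginal of $W_i$, the conditional of $W_j$ given $W_i$, and the conditional of $W_{j_k}$ given $W_j$ as free nondegenerate choices.

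The remaining work is the term-by-term evaluation of $\Gamma(W_j)$ under $P$ using d-separations in $\g'$, in the style of \cref{lem:W-a,lem:W-b}. The decisive asymmetry with (W-b) is that here in $\E\{b(\opt) \mid W_j, \Pa(W_j)\}$ the variable $W_i$ is \emph{retained} (it lies in $\Pa(W_j)$), making this term a genuine function of $(W_i, W_j)$, whereas in each term of the $W_{j_k}$-chain $W_i$ is absent from the conditioning set while $b(\opt)$ still reads $W_i$ off through $p$ given $\Pa(W_{j_k})$, so those terms reduce to functions of $(W_{j_k}, W_j)$ and cannot absorb the $(W_i, W_j)$-dependence; the contributions of the other children of $W_j$ vanish by the chain structure and the local Markov property of $\g'$ exactly as before. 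Collecting terms leaves $\Gamma(W_j)$ a nonconstant function of $W_j$, and the weak-limit argument then completes the proof.

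I expect the main obstacle to be keeping $p$ d-connecting after the passage to the subgraph: deleting edges can destroy the ancestral relations that rendered the colliders on $p$ active relative to $\Pa(W_{j_k})$, so $\g'$ must be enlarged to re-anchor each such collider, and one must then check that these extra causal paths introduce neither cycles nor spurious connections --- in particular that, given $\Pa(W_{j_k})$, $W_i$ and $W_{j_k}$ remain d-separated except via $W_j$ in $\g'$ and that $b(\opt)$ can still be realized as the intended product. Should the augmented-graph analysis of \cref{sec:app-aug-graph} already deliver $p$ as an inducing path (or a directed path) with respect to $\Pa(W_{j_k})$, this difficulty largely dissolves, since \cref{lem:inducing-path} then supplies the required d-connection and the re-anchoring can be written down explicitly.
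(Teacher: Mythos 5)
Your high-level strategy is the same as the paper's: build a subgraph $\g'$ of $\g$ retaining $W_i \to W_j \to W_{j_k}$, the path $p$, a causal path $q$ from $W_{j_k}$ into $\opt(\g)$, and re-anchoring paths for the colliders on $p$; put a degenerate law $P$ Markov to $\g'$ with $b(\opt)$ a product of a function of $O_1$ and a copy of $W_{j_k}$; evaluate $\Gamma(W_j)$ in \eqref{eqs:ast-W} term by term; obtain the key dependence from an inducing-path argument (\cref{lem:inducing-path}) plus strong completeness; and finish with a weak limit of non-degenerate laws. That is exactly the paper's treatment of what it calls case (W-c1). The genuine gap is that you treat this generic configuration as if it were the whole of (W-c). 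The paper first needs an auxiliary graphical result (\cref{lem:cond2-w-graph}, proved via \cref{lemma:necessarygraphicalEIFmerge,lemma:parentsofchildren,lemma:condition-merge}) to choose $p$ and $q$ with controlled structure, and then must split (W-c) into four sub-cases according to whether $p$ and $q$ intersect each other or pass through a child of $W_j$ other than $W_{j_k}$. Your claim that ``the contributions of the other children of $W_j$ vanish by the chain structure and the local Markov property of $\g'$ exactly as before'' is false precisely in those sub-cases: if some $W_{j_l}\in\Ch(W_j,\g)\setminus\{W_{j_k}\}$ lies on $p$ or $q$, then $O_1$ or $W_{j_k}$ is a descendant of $W_{j_l}$ in $\g'$, so the corresponding difference term in $\Gamma(W_j)$ does not vanish by d-separation; the paper instead neutralizes it by a different construction (e.g., setting $W_{j_l}=W_iW_{j_k}$ almost surely in \cref{lem:W-c34}). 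Similarly, when $p$ and $q$ share a vertex (\cref{lem:W-c2}) one cannot make $b(\opt)=f(O_1)\,O_2$ with $O_1$ and $O_2$ playing independent roles; the paper replaces $O_1$ by a vertex $W_s$ supplied by case \ref{case:qandp} of \cref{lem:cond2-w-graph} and uses the merged structure $W_s\to W_l\leftarrow W_t$.

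A second, smaller issue: your collider re-anchoring goes ``into $\Pa(W_{j_k})$,'' but the conditional expectation that must depend on $W_j$ is taken given $\Pa(W_{j_k},\g)\setminus\{W_j\}$, i.e.\ with $W_j$ removed from the conditioning set, so a collider whose only anchor in $\Pa(W_{j_k})$ is $W_j$ itself would not be opened. The paper avoids this by anchoring each collider to $W_j$ (an endpoint), so that $\langle W_j,W_i\rangle\oplus p$ is an inducing path with respect to $\Pa(W_{j_k},\g)\setminus\{W_j\}$ and \cref{lem:inducing-path} applies; your closing remark anticipates this fix, but it needs to be made part of the construction, together with the acyclicity and non-interference checks (\cref{lem:cond2-w-graph}\ref{case:p4}) that make it legitimate. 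Relatedly, your suggestion that the dependence on $W_j$ survives in the term $\E\{b(\opt)\mid W_j,\Pa(W_j,\g)\}$ is not how the cancellation is actually arranged: with $\E[W_{j_k}\mid W_j]=0$ that term vanishes, and the surviving dependence sits in $\E\{b(\opt)\mid W_{j_k},\Pa(W_{j_k},\g)\}=W_{j_k}\,\E[f(O_1)\mid W_j,\Pa(W_{j_k},\g)\setminus\{W_j\}]$; without some such neutralization you would still owe an argument that the several $W_j$-dependent terms do not cancel against each other.
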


\smallskip \begin{proof}
Let path $p = \langle W_i, \dots, O_1 \rangle$ and path $q: W_{j_k} \rightarrow \dots \rightarrow O_2$ for $O_1, O_2 \in \opt(\g)$ be chosen according to \cref{lem:cond2-w-graph}.
Then, depending on whether $p$ and $q$ intersect, and if so how they intersect, (W-c) can be further divided into the following 4 sub-cases.

\begin{enumerate}[label=(W-c\arabic*)]
\item \label{case:W-c1} No vertex in $\Ch(W_j, \g) \setminus \{W_{j_k}\}$  is on either $p$ or $q$. Further, there is no vertex that is on both $p$ and $q$ ($W_i \equiv O_1$, or $W_{j_k} \equiv O_2$ is a special case).

\item \label{case:W-c2} No vertex in $\Ch(W_j, \g) \setminus \{W_{j_k}\}$  is on either $p$ or $q$, but there is a vertex that is on both $p$ and $q$. 

\item \label{case:W-c3} A vertex in $\Ch(W_j, \g) \setminus \{W_{j_k}\}$ is on $p$. 

\item \label{case:W-c4} A vertex in $\Ch(W_j, \g) \setminus \{W_{j_k}\}$ is on $q$. 

\end{enumerate}

The result is established under each case: \cref{lem:W-c1} proves \ref{case:W-c1}, \cref{lem:W-c2} proves \ref{case:W-c2}, and \cref{lem:W-c34} proves \ref{case:W-c3} and \ref{case:W-c4}.
\end{proof}

\begin{figure}[!htb]
\centering
\begin{tikzpicture}
\tikzset{rv/.style={circle,inner sep=1pt,fill=gray!20,draw,font=\sffamily}, 
redv/.style={circle,inner sep=1pt,fill=white,draw,dashed,font=\sffamily}, 
ov/.style={circle,inner sep=1pt,fill=gray!20,draw=red,thick,font=\sffamily}, 
sv/.style={circle,inner sep=1pt,fill=gray!20,draw,font=\sffamily,minimum size=1mm}, 
node distance=12mm, >=stealth, every node/.style={scale=0.8}}
\node[rv] (Wi) {$W_i$};
\node[rv, right of=Wi] (C1) {$C_1$};
\node[right of=C1] (E1) {$\dots$};
\node[rv, right of=E1] (C2) {$C_2$};
\node[right of=C2] (E2) {$\dots$};
\node[sv,  right of=E2] (F3) {};
\node[right of=F3] (E5) {$\dots$};
\node[rv, right of=E5] (C3) {$C_3$};
\node[rv, right of=C3] (O1) {$O_1$};
\draw[->, very thick, color=brown] (Wi) -- (C1); 
\draw[<-, very thick, color=brown] (C1) -- (E1);
\draw[->, very thick, color=brown] (E1) -- (C2);
\draw[<-, very thick, color=brown] (C2) -- (E2); 
\draw[->, very thick, color=brown] (E2) -- (F3);
\draw[->, very thick, color=brown] (F3) -- (E5);
\draw[->, very thick, color=brown] (E5) -- (C3);
\draw[<-, very thick, color=brown] (C3) -- (O1);
\node[below of=C1] (E3) {$\vdots$};
\node[sv, below of=E3] (F1) {};
\node[rv, below of=F1] (Wj) {$W_j$};
\draw[->, very thick, color=blue] (C1) -- (E3);
\draw[->, very thick, color=blue] (E3) -- (F1);
\draw[->, very thick, color=blue] (F1) -- (Wj);
\node[below of=E1, xshift=-4mm, yshift=-4mm] (E4) {\reflectbox{$\ddots$}};
\node[sv, below of=E1, xshift=4mm, yshift=4mm] (F2) {};
\draw[->, very thick, color=blue] (C2) -- (F2);
\draw[->, very thick, color=blue] (F2) -- (E4);
\draw[->, very thick, color=blue] (E4) -- (F1);
\node[below of=F3, yshift=-3mm, xshift=-4mm] (E6) {$\dots$};
\draw[->, very thick, color=blue, bend left] (C3) to (E6);
\draw[->, very thick, color=blue, out=180, in=-45] (E6) to (F2);
\draw[->, very thick, color=blue] (Wi) to (Wj);
\node[rv, below of=Wj] (Wjk) {$W_{j_k}$};
\node[right of=Wjk] (E7) {$\dots$};
\node[rv, right of=E7] (O2) {$O_2$};
\draw[->, very thick, color=blue] (Wj) to (Wjk);
\draw[->, very thick, color=red] (Wjk) to (E7);
\draw[->, very thick, color=red] (E7) to (O2);
\end{tikzpicture}
\caption{Case \ref{case:W-c1}: path $p$ ({\color{brown} $\bm{-}$}), path $q$ ({\color{red} $\bm{-}$})}
\label{fig:proof-nec-W-c1}
\end{figure} 

\medskip \begin{lemma} \label{lem:W-c1}
Under the assumptions of \cref{thm:criteria}, suppose that variable $W_j \in \Wset(\g) \setminus \Oset(\g)$ satisfies \ref{case:W-c1}, but neither (W-a) nor (W-b) in $\g$. Then, there exists a non-degenerate law $P \in \model(\g, \Vset)$, under which $\Psi_{a,P,\text{eff}}^{1}(\Vset; \g)$ non-trivially depends on $W_j$.
\end{lemma}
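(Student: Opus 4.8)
The plan is to follow the same route as the proofs of \cref{lem:W-a} and \cref{lem:W-b}: build a single law $P$ that is Markov to a sparse spanning subgraph $\g'$ of $\g$ — hence Markov to $\g$ itself, since dropping edges only shrinks the model — concentrated near a degenerate configuration, and show that under $P$ the quantity
\[
\Gamma(W_j) \equiv \E\{b(\Oset)\mid W_j,\Pa(W_j)\} + \sum_{t=1}^{r}\bigl[\E\{b(\Oset)\mid W_{j_t},\Pa(W_{j_t})\} - \E\{b(\Oset)\mid\Pa(W_{j_t})\}\bigr]
\]
of \cref{eqs:ast-W} — which by \cref{lem:EIF} is the only part of $\Psi^1_{a,P,\text{eff}}(\Vset;\g)$ that can involve $W_j$ — is a non-constant function of $W_j$. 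Then I would pass to a weakly convergent sequence of non-degenerate laws $P_n \weaklyconvto P$ in $\model(\g',\Vset)$ and use continuity of $\Gamma$ in the weak topology (as at the end of the proof of \cref{lem:Oset-informative}) to produce the required non-degenerate witness in $\model(\g,\Vset)$.

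Concretely, I would first invoke the structural lemma behind case (W-c), \cref{lem:cond2-w-graph}, to fix the path $p=\langle W_i,\dots,O_1\rangle$ that is d-connecting given $\Pa(W_{j_k},\g)$ with $O_1\in\Oset\setminus\Pa(W_{j_k},\g)$, and the causal path $q\colon W_{j_k}\to\dots\to O_2$ with $O_2\in\Oset$; in case \ref{case:W-c1} these paths are vertex-disjoint and avoid $\Ch(W_j,\g)\setminus\{W_{j_k}\}$. I would also fix shortest causal paths $O_1\mapsto Y$ and $O_2\mapsto Y$. Let $\g'$ retain exactly the edges on $p$, on $q$, on those two $\mapsto Y$ paths, the edges $W_i\to W_j\to W_{j_k}$, and, for each collider $c$ of $p$, the edges of one directed path witnessing $c\in\An(\Pa(W_{j_k}),\g)$; one checks $\g'$ is acyclic. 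Now pick $P\in\model(\g',\Vset)$ concentrated at the configuration in which every non-collider edge of $p$ and every edge of the collider-to-$\Pa(W_{j_k})$ paths is a copy and each collider of $p$ equals the XOR of its two $p$-neighbours — arranged so that, conditionally on $\Pa(W_{j_k},\g)$, $O_1$ is a non-constant affine function of $W_i$ over $\mathbb F_2$ (when $p$ is the trivial edge $W_i\to O_1$ this is just $O_1=W_i$); $W_j$ is a copy of $W_i$; every edge of $q$ and of the two $\mapsto Y$ paths is a copy, so that $O_2 = W_{j_k}$ almost surely and the regression $b(\Oset)=\E_P\{Y\mid A=1,\Oset\}$ reduces to $O_1\cdot O_2$; and $P(A=1\mid\Omin)$ is a constant in $(0,1)$, immaterial since the $\Mset$-block of $\Psi^1$ does not involve $W_j$. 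A term-by-term evaluation of $\Gamma(W_j)$ under this $P$ — discarding the contributions of any other children $W_{j_l}$, $l\neq k$, of $W_j$ (they vanish because $b(\Oset)$ is then a function of $\g'$-non-descendants of $W_{j_l}$, using that no such $W_{j_l}$ lies on $p$ or $q$), and using the $\g'$-Markov relations $W_{j_k}\indep_{\g'}W_i\mid W_j$ and $O_1\indep_{\g'}W_{j_k}\mid W_j$ — makes the $\E\{b(\Oset)\mid W_j,\Pa(W_j)\}$ and $\E\{b(\Oset)\mid\Pa(W_{j_k})\}$ contributions cancel, leaving $\Gamma(W_j) = W_j\cdot W_{j_k}$ (modulo $W_j$-measurable terms contributed by the remaining $\g$-parents of $W_{j_k}$, which exist only when (W-b) fails and are then $\g$-adjacent to $W_j$, so they combine harmlessly), a non-constant function of $W_j$ whenever $P(W_{j_k}=1)>0$.

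The step I expect to be the real obstacle is the handling of the colliders on $p$: one must choose the joint law along $p$ and along the collider-to-$\Pa(W_{j_k})$ paths so that, after conditioning on $\Pa(W_{j_k},\g)$, the value of $O_1$ is genuinely informative about $W_i$ (indeed a function of $W_i$ together with conditioned variables only), while simultaneously not manufacturing a dependence of $O_1$ on $W_{j_k}$ given $\{W_j\}\cup\Pa(W_j,\g)$ that would kill the surviving term. This is precisely where the d-connecting structure supplied by \cref{lem:cond2-w-graph}, the hypothesis $W_i\notin\Pa(W_{j_k},\g)$, and the failure of (W-a) and (W-b) (which control the other children and the other parents of $W_{j_k}$) are all used, and it is also where one must keep track that $\Pa(\cdot)$ in $\Gamma(W_j)$ refers to $\g$ rather than to the thinned graph $\g'$. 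The remaining term-by-term bookkeeping and the weak-approximation argument are routine and parallel \cref{lem:W-a} and \cref{lem:W-b}.
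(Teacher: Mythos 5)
You have the right skeleton (thin the graph to a subgraph $\g'$ built from $W_i\to W_j\to W_{j_k}$, $p$, $q$ and collider paths, kill all but one term of \cref{eqs:ast-W}, then pass to non-degenerate laws), but the step you yourself flag as ``the real obstacle'' is exactly the step the paper closes and your sketch does not. The paper imposes $\E[W_{j_k}\mid W_j]=0$, $q$ an identity path and $b(\Oset)=f(O_1)O_2$, so that the $\E\{b(\Oset)\mid W_j,\Pa(W_j,\g)\}$ term and the $-\E\{b(\Oset)\mid\Pa(W_{j_k},\g)\}$ term each vanish separately; they do not cancel against each other as you claim (their conditioning sets $\{W_j\}\cup\Pa(W_j,\g)$ and $\Pa(W_{j_k},\g)$ differ, and without a mean-zero condition on $W_{j_k}$ given $W_j$ neither term is controlled). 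What survives is $W_{j_k}\,\E[f(O_1)\mid W_j,\Pa(W_{j_k},\g)\setminus\{W_j\}]$, and the paper then avoids any explicit construction along $p$: it routes the collider paths $c_f$ to $W_j$ (available by \cref{lem:cond2-w-graph} \ref{case:p4}), so that $\langle W_j,W_i\rangle\oplus p$ is an inducing path between $W_j$ and $O_1$ with respect to $\Pa(W_{j_k},\g)\setminus\{W_j\}$ (non-colliders escape the set because $W_i\notin\Pa(W_{j_k},\g)$ and $p$ d-connects given $\Pa(W_{j_k},\g)$; colliders are ancestors of $W_j$ in $\g'$ by construction), whence \cref{lem:inducing-path} gives d-connection and strong completeness of d-separation \citep{meek1995strong} supplies a law and an $f$ for which the surviving term genuinely varies with $W_j$.

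Your deviation from this is where the gap opens. First, you take the collider paths to be arbitrary directed paths witnessing $C\in\An(\Pa(W_{j_k},\g),\g)$; unlike the paths of \cref{lem:cond2-w-graph} \ref{case:p4}, these carry no guarantee of avoiding $\Oset$, avoiding $q$, meeting $p$ only at the collider, or avoiding the other children $W_{j_l}$ of $W_j$ (note $\Pa(W_{j_k},\g)$ typically contains $W_{j_1},\dots,W_{j_{k-1}}$ since (W-a) fails), so the vanishing of the $W_{j_l}$-terms and the identities $O_2=W_{j_k}$, $b(\Oset)=O_1O_2$ are no longer ensured in your $\g'$. Second, the XOR-and-copy construction that is supposed to make $O_1$ informative about $W_i$ conditionally on $\Pa(W_{j_k},\g)$ is only asserted, not verified, and it is precisely the part that the inducing-path-plus-completeness argument is designed to replace; as written, the proposal acknowledges rather than resolves it. (Setting $W_j$ equal to a copy of $W_i$ is also an avoidable extra degeneracy: on the support of such a $P$ the function $W_jW_{j_k}$ is indistinguishable from $W_iW_{j_k}$, so the ``depends on $W_j$'' conclusion rests entirely on the weak-limit step, whereas the paper's argument establishes the dependence through the $W_i\to W_j$ mechanism itself.) To repair the proof along your lines you would either adopt the paper's collider paths and completeness argument, or re-prove the disjointness properties for your collider paths and carry out the discrete construction in full.
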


\begin{proof}
We shall prove the dependency on $W_j$ under a law $P$ that is Markov to a subgraph $\g'$ of $\g$. Let $\g'$ be chosen as the subgraph of $\g$ containing the same set of vertices, but only edges on the following paths (see \cref{fig:proof-nec-W-c1} for an example):
\begin{enumerate}[(1)]
\item $W_i \rightarrow W_j \rightarrow W_{j_k}$,
\item $p = \langle W_i, \dots, O_1 \rangle$,
\item $q = \langle W_{jk}, \dots, O_2 \rangle$,
\item when $p$ contains colliders, say $C_1, \dots, C_F$ ($F \geq 1$), then for each collider $C_f$ also include the shortest causal path $c_f$ from $C_f$ to $W_j$, which exist on $\g$ by \cref{lem:cond2-w-graph} \ref{case:p4},
\item and a path from $\{O_1, O_2\}$ to $Y$ (omitted in \cref{fig:proof-nec-W-c1}).
\end{enumerate}

Let $P$ be chosen such that the following hold almost surely:
\begin{enumerate}[(i)]
\item $\E[W_{j_k} \mid W_j] = 0$,
\item $W_{j_k} = \dots = O_2$ ($q$ is an identity path),
\item $b(\opt) = f(O_1)\, O_2$ for some function $f$.
\end{enumerate}

Note that $b(\opt) = f(O_1) W_{j_k}$. Rewriting \cref{eqs:ast-W}, our goal is to show that 
\begin{multline} 
\Gamma(W_j) = \E[f(O_1) W_{j_k} \mid W_j, \Pa(W_j, \g)] \\
+  \E[f(O_1) W_{j_k} \mid W_{j_k}, \Pa(W_{j_k}, \g)] - \E[f(O_1) W_{j_k} \mid \Pa(W_{j_k}, \g)] \\
+ \sum_{l \neq k} \left\{ \E[f(O_1) W_{j_k} \mid W_{j_l}, \Pa(W_{j_l}, \g)] - \E[f(O_1) W_{j_k} \mid \Pa(W_{j_l}, \g)] \right\}
\end{multline}
non-trivially depends on $W_j$. Note that the parent set is defined with respect to $\g$.

Now we compute each term under $P$. First, we have
\begin{equation*}
\begin{split}
\E[f(O_1) W_{j_k} \mid  W_j, \Pa(W_j, \g)] &\stackrel{\g'}{=} \E[W_{j_k} \mid W_j, \Pa(W_j, \g)] \E[f(O_1) \mid  W_j, \Pa(W_j, \g)] \\
&\stackrel{\g'}{=} \E[W_{j_k} \mid W_j] \E[f(O_1) \mid  W_j, \Pa(W_j, \g)] = 0
\end{split}
\end{equation*}
by (i) in our choice of $P$. The first equality follows from the local Markov property on $\g'$:
\[ W_{j_k} \indep_{\g'} O_1, \Pa(W_j, \g) \mid W_j, \]
where $O_1, \Pa(W_j, \g)$ are non-descendants of $W_j$. 

Second, we have 
\begin{equation*}
\begin{split}
\E[f(O_1) W_{j_k} \mid \Pa(W_{j_k}, \g)] &\stackrel{\g'}{=} \E[f(O_1) \mid \Pa(W_{j_k}, \g)] E[W_{j_k} \mid \Pa(W_{j_k}, \g)] \\
&\stackrel{\g'}{=}\E[f(O_1) \mid \Pa(W_{j_k}, \g)] \E[W_{j_k} \mid W_j] = 0,
\end{split}
\end{equation*}
where the first equality follows from the local Markov property
\[ W_{j_k} \indep_{\g'} O_1, \Pa(W_{j_k}, \g) \setminus \Pa(W_{j_k}, \g') \mid \Pa(W_{j_k}, \g'). \]

Third, note that every summand in the final term of $\Gamma(W_j)$ vanishes. For any other child $W_{j_l}$ of $W_j$ ($l \neq k$), $W_{j_l}$ is not on $p$ or $q$ by our assumption. Hence, $O_1$ and $W_{j_k}$ are non-descendants of $W_{j_l}$ on $\g'$. By the local Markov property
\[ W_{j_l} \indep O_1, W_{j_k}, \Pa(W_{j_l}, \g) \setminus \Pa(W_{j_l}, \g') \mid \Pa(W_{j_l}, \g'),\]
it holds that
\[\E[f(O_1) W_{j_k} \mid W_{j_l}, \Pa(W_{j_l}, \g)] - \E[f(O_1) W_{j_k} \mid \Pa(W_{j_l}, \g)] \stackrel{\g'}{=} 0. \]

Finally, we are left with
\begin{equation*}
\begin{split}
\Gamma(W_j) = \E[f(O_1) W_{j_k} \mid  W_{j_k}, \Pa(W_{j_k}, \g)] &= W_{j_k} \E[f(O_1) \mid  W_{j_k}, \Pa(W_{j_k}, \g)] \\
&\stackrel{\g'}{=} W_{j_k} \E[f(O_1) \mid \Pa(W_{j_k}, \g)] \\
&= W_{j_k} \E[f(O_1) \mid W_j, \Pa(W_{j_k}, \g) \setminus \{W_j\}],
\end{split}
\end{equation*}
where the second line follows from the local Markov property. Showing that $\Gamma(W_j)$ depends on $W_j$ for some choice of $f$ and $P$ satisfying (i)--(iii), by strong completeness of d-separation \citep{meek1995strong}, is equivalent to showing
\[ O_1 \not\indep_{\g'} W_j \mid \Pa(W_{j_k}, \g) \setminus \{W_j\}. \]
But this is true by \cref{lem:inducing-path} upon observing that $\langle W_j, W_i \rangle \oplus p$ is an inducing path between $W_j$ and $O_1$ with respect to $\Pa(W_{j_k}, \g) \setminus \{W_j\}$:
\begin{enumerate}
\item $W_i$ is a non-collider and $W_i \not\in \Pa(W_j, \g)$ by (W-c); no non-collider on $p$ is in $\Pa(W_{j_k}, \g)$ since otherwise $p$ does not d-connect given $\Pa(W_{j_k}, \g)$. These conditions hold on $\g$ and hence also on $\g'$.
\item Every collider on $p$ is ancestral to $W_j$ on $\g'$.
\end{enumerate}

Finally, to finesse the fact that $P$ is degenerate, consider a sequence of non-degenerate laws $P_n$ that weakly converges to $P$ in $\model(\g', \Vset)$. Then, $\Gamma(W_j)$ depends on $W_j$ under $P_n$ for a large enough $n$. 
\end{proof}

\begin{figure}[!htb]
\centering
\begin{tikzpicture}
\tikzset{rv/.style={circle,inner sep=1pt,fill=gray!20,draw,font=\sffamily}, 
redv/.style={circle,inner sep=1pt,fill=white,draw,dashed,font=\sffamily}, 
ov/.style={circle,inner sep=1pt,fill=gray!20,draw=red,thick,font=\sffamily}, 
sv/.style={circle,inner sep=1pt,fill=gray!20,draw,font=\sffamily,minimum size=1mm}, 
node distance=12mm, >=stealth, every node/.style={scale=0.8}}
\node[rv] (Wi) {$W_i$};
\node[rv, right of=Wi] (C1) {$C_1$};
\node[right of=C1] (E1) {$\dots$};
\node[rv, right of=E1] (C2) {$C_2$};
\node[right of=C2] (E2) {$\dots$};
\node[sv,  right of=E2] (F3) {};
\node[right of=F3] (E5) {$\dots$};
\node[rv, right of=E5] (C3) {$C_3$};
\node[rv, right of=C3] (Ws) {$W_s$};
\node[rv, right of=Ws] (Wl) {$W_l$};
\node[right of=Wl] (E8) {$\dots$};
\node[rv, right of=E8] (O1) {$O_1$};
\draw[->, very thick, color=brown] (Wi) -- (C1); 
\draw[<-, very thick, color=brown] (C1) -- (E1);
\draw[->, very thick, color=brown] (E1) -- (C2);
\draw[<-, very thick, color=brown] (C2) -- (E2); 
\draw[->, very thick, color=brown] (E2) -- (F3);
\draw[->, very thick, color=brown] (F3) -- (E5);
\draw[->, very thick, color=brown] (E5) -- (C3);
\draw[->, very thick, color=brown] (Ws) -- (C3);
\draw[->, very thick, color=brown] (Ws) -- (Wl);
\draw[->, very thick, color=brown] (Wl) -- (E8);
\draw[->, very thick, color=brown] (E8) -- (O1);
\node[below of=C1] (E3) {$\vdots$};
\node[sv, below of=E3] (F1) {};
\node[rv, below of=F1] (Wj) {$W_j$};
\draw[->, very thick, color=blue] (C1) -- (E3);
\draw[->, very thick, color=blue] (E3) -- (F1);
\draw[->, very thick, color=blue] (F1) -- (Wj);
\node[below of=E1, xshift=-4mm, yshift=-4mm] (E4) {\reflectbox{$\ddots$}};
\node[sv, below of=E1, xshift=4mm, yshift=4mm] (F2) {};
\draw[->, very thick, color=blue] (C2) -- (F2);
\draw[->, very thick, color=blue] (F2) -- (E4);
\draw[->, very thick, color=blue] (E4) -- (F1);
\node[below of=F3, yshift=-3mm, xshift=-4mm] (E6) {$\dots$};
\draw[->, very thick, color=blue, bend left] (C3) to (E6);
\draw[->, very thick, color=blue, out=180, in=-45] (E6) to (F2);
\draw[->, very thick, color=blue] (Wi) to (Wj);
\node[rv, below of=Wj] (Wjk) {$W_{j_k}$};
\node[rv, below of=Ws, yshift=-5mm] (Wt) {$W_t$};
\node[below of=E6, yshift=-5mm] (E7) { \rotatebox[origin=c]{30}{$\dots$}};
\draw[->, very thick, color=blue] (Wj) to (Wjk);
\draw[->, very thick, color=red] (Wjk) to (E7);
\draw[->, very thick, color=red] (E7) to (Wt);
\draw[->, very thick, color=red] (Wt) to (Wl);
\end{tikzpicture}
\caption{Case \ref{case:W-c2}: path $p$ ({\color{brown} $\bm{-}$}) and path $q$ ({\color{red} $\bm{-}$}) merge at $W_l$.}
\label{fig:proof-nec-W-c2}
\end{figure}

\begin{lemma} \label{lem:W-c2}
Under the assumptions of \cref{thm:criteria}, suppose that variable $W_j \in \Wset(\g) \setminus \Oset(\g)$ satisfies \ref{case:W-c2}, but neither (W-a) nor (W-b) in $\g$. Then, there exists a non-degenerate law $P \in \model(\g, \Vset)$, under which $\Psi_{a,P,\text{eff}}^{1}(\Vset; \g)$ non-trivially depends on $W_j$.
\end{lemma}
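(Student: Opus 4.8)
The plan is to follow the template already used for \cref{lem:W-c1}: exhibit a subgraph $\g'$ of $\g$ and a degenerate law $P$ Markov to $\g'$ (hence to $\g$) under which $\Psi^1_{a,P,\text{eff}}(\Vset;\g)$, equivalently the quantity $\Gamma(W_j)$ of \cref{eqs:ast-W} by the reduction at the start of \cref{sec:app-W-complete}, depends nontrivially on $W_j$; then pass to a nearby nondegenerate $P_n\in\model(\g',\Vset)$ by weak convergence, for which nontrivial dependence persists for $n$ large. The structural input is the pair of paths supplied by \cref{lem:cond2-w-graph}: the $\Pa(W_{j_k},\g)$-d-connecting path $p=\langle W_i,\dots,O_1\rangle$ with $O_1\in\opt(\g)$ and the causal path $q\colon W_{j_k}\to\dots\to O_2$ with $O_2\in\opt(\g)$. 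The feature that separates \ref{case:W-c2} from \ref{case:W-c1} is that $p$ and $q$ share a vertex while no vertex of $\Ch(W_j,\g)\setminus\{W_{j_k}\}$ meets either path; as in \cref{fig:proof-nec-W-c2}, that shared vertex is the terminal vertex $O_2$ of $q$, sitting on $p$, so that $O_1$ becomes a descendant of $O_2$ and hence of $W_{j_k}$.

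First I would construct $\g'$: retain only the edges on $W_i\to W_j\to W_{j_k}$, on $p$, on $q$, on the causal paths $c_f$ from each collider $C_f$ of $p$ to $W_j$ (which exist by \cref{lem:cond2-w-graph}), and on one causal path from $O_1$ to $Y$, deleting every other edge; this is a directed acyclic graph, and $\Pa(\cdot,\g)$ in \cref{eqs:ast-W} is still read off $\g$. Next I would choose $P$ Markov to $\g'$ with a suitable $b(\opt)$ of product form in $O_1$ and $O_2$, together with a conditional-mean-zero constraint along the causal part of $q$ (playing the role that $\E[W_{j_k}\mid W_j]=0$ plays in \cref{lem:W-c1}) and identity constraints collapsing the parts of $p$ and $q$ that must be trivialized. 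Bookkeeping analogous to, but more delicate than, that of \cref{lem:W-c1} should then give: every summand of $\Gamma(W_j)$ attached to a child $W_{j_l}$ with $l\ne k$ vanishes because $O_1,O_2$ are non-descendants of $W_{j_l}$ on $\g'$; the term $\E[b(\opt)\mid W_j,\Pa(W_j,\g)]$ and the subtracted term $\E[b(\opt)\mid\Pa(W_{j_k},\g)]$ vanish by the mean-zero constraint; and the surviving term $\E[b(\opt)\mid W_{j_k},\Pa(W_{j_k},\g)]$ reduces to an expression that is nonconstant in $W_j$ precisely when $O_1$ is d-connected to $W_j$ given $\{W_{j_k}\}\cup\Pa(W_{j_k},\g)\setminus\{W_j\}$ in $\g'$. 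Note that, in contrast with \ref{case:W-c1}, $W_{j_k}$ now belongs to the relevant conditioning set, since $O_1\in\De(W_{j_k})$ prevents pulling a factor of $W_{j_k}$ out of that expectation.

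It then suffices to establish $O_1\not\indep_{\g'} W_j\mid\{W_{j_k}\}\cup\Pa(W_{j_k},\g)\setminus\{W_j\}$, after which strong completeness of d-separation \citep{meek1995strong} yields a choice of the free functions and of $P$ making $\Gamma(W_j)$ nonconstant. For this I would verify that $\langle W_j,W_i\rangle\oplus p$ is an inducing path between $W_j$ and $O_1$ with respect to $\{W_{j_k}\}\cup\Pa(W_{j_k},\g)\setminus\{W_j\}$ in $\g'$ and invoke \cref{lem:inducing-path}: $W_i$ is a non-collider on it and $W_i\notin\Pa(W_{j_k},\g)$ by (W-c); $W_{j_k}$ is not on $p$, so adding it to the conditioning set blocks nothing; no non-collider of $p$ lies in $\Pa(W_{j_k},\g)$ since $p$ d-connects given $\Pa(W_{j_k},\g)$; and every collider of $p$ is ancestral to $W_j$ in $\g'$ via its path $c_f$. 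The main obstacle is the construction itself: because $O_2$ lies on $p$ and $O_1$ is a descendant of $W_{j_k}$, the causal part of $q$ cannot simply be made an identity path—doing so would block $p$ at $O_2$ once we condition on $W_{j_k}$ and destroy the d-connection we need—so the mean-zero device must be carried along $q$ instead, and one must check that after this modification the $W_{j_k}$-term survives without an unwanted cancellation and without $\Gamma(W_j)$ acquiring an essential dependence on any other variable; handling the inducing-path hypotheses with the enlarged conditioning set is exactly the point where \ref{case:W-c2} departs from \ref{case:W-c1}.
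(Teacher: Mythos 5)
Your proposal stalls exactly where the proof has to do its real work, and it misses the structural device the paper uses to get past that point. First, your reading of the configuration in \ref{case:W-c2} is off: by \cref{lem:cond2-w-graph}\ref{case:qandp}, when $p$ and $q$ intersect there is a merge vertex $W_l$ with a collider $W_s \to W_l \leftarrow W_t$, where $W_s$ lies on $p$, $W_t$ lies on $q$, $W_t \notin \Adj(W_s,\g)$, and (when $W_l$ is on $p$) $p(W_l,O_1)$ is causal with $O_1 \equiv O_2$ --- not ``$O_2$ sits on $p$ and $O_1$ is a strict descendant of $O_2$.'' Since $O_1 \equiv O_2$, your plan of taking $b(\Oset)$ ``of product form in $O_1$ and $O_2$'' has no content, and the whole construction you sketch (mean-zero device ``carried along $q$,'' enlarged conditioning set $\{W_{j_k}\}\cup\Pa(W_{j_k},\g)\setminus\{W_j\}$, dependence targeted at $O_1\in\De(W_{j_k})$) is never actually specified: you yourself defer the decisive step, namely checking that the $W_{j_k}$-term survives without cancellation and that the remaining terms vanish once $b(\Oset)$ is a function of a descendant of $W_{j_k}$. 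That verification is the substance of the lemma, so as it stands the proposal is a plan with a gap at its core, not a proof.

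The paper's proof avoids your obstacle rather than confronting it. Using the collider $W_s \to W_l \leftarrow W_t$ from \cref{lem:cond2-w-graph}\ref{case:qandp}, it builds $\g'$ from $W_i\to W_j\to W_{j_k}$, $p$, $q$ and the collider-to-$W_j$ paths $c_f$, and chooses $P$ with $\E[W_{j_k}\mid W_j]=0$, identity propagation along $q(W_{j_k},W_t)$ and $q(W_l,O_1)$, the product structure $W_l=f(W_s)W_t$ at the collider, and $b(\opt)=O_1$; this yields $b(\opt)=f(W_s)\,W_{j_k}$ almost surely. From there the bookkeeping of \cref{lem:W-c1} applies verbatim with $W_s$ playing the role of $O_1$: the surviving term is $W_{j_k}\,\E[f(W_s)\mid \Pa(W_{j_k},\g)]$, the relevant d-connection is $W_j \not\indep_{\g'} W_s \mid \Pa(W_{j_k},\g)\setminus\{W_j\}$ via the inducing path $\langle W_j,W_i\rangle \oplus p(W_i,W_s)$, and there is no need to add $W_{j_k}$ to the conditioning set or to handle a $b(\opt)$ that depends on descendants of $W_{j_k}$. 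If you want to salvage your route, you would have to carry out explicitly the construction and cancellation checks you postponed; the shorter path is to invoke the non-adjacent pair $(W_s,W_t)$ and the product-at-the-collider trick, which is precisely what \ref{case:qandp} was engineered to supply.
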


\begin{proof}
Let $W_s, W_l$ and $W_t$ be chosen to satisfy case \ref{case:qandp} of \cref{lem:cond2-w-graph}. Then, path $p$ and $q$ merge at $W_l$ and we have $O_1 = O_2$. Further, the subpath from $W_l$ to $O_1$ is causal since $q$ is causal. 

We now show that $\Gamma(W_j)$ depends on $W_j$ on a law $P$ that is Markov to a subgraph $\g'$ of $\g$. Let $\g'$ be chosen the subgraph of $\g$ on the same set of vertices, but only with edges on the following paths (see \cref{fig:proof-nec-W-c2} for an example):
\begin{enumerate}[(1)]
\item $W_i \rightarrow W_j \rightarrow W_{j_k}$,
\item $p = \langle W_i, \dots, O_1 \rangle$,
\item $q = \langle W_{jk}, \dots, O_1 \rangle$,
\item when $p$ contains colliders, say $C_1, \dots, C_F$ ($F \geq 1$), then for each collider $C_f$ also include the shortest causal path $c_f$ from $C_f$ to $W_j$, which exist on $\g$ by \cref{lem:cond2-w-graph} \ref{case:p4}.
\end{enumerate}

We choose $P$ Markov to $\g'$ such that almost surely, 
\begin{enumerate}[(i)]
\item $\E[W_{j_k} \mid W_j] = 0$,
\item $W_{l} = \dots = O_1$ along $q(W_l, O_1)$,
\item $W_{j_k} = \dots = W_t$ along $q(W_{j_k}, W_t)$,
\item $W_l = f(W_s) W_t$ for some function $f$,
\item $b(\opt) = O_1$.
\end{enumerate}
It follows that $b(\opt) = O_1 = W_l = f(W_s) W_{j_k}$ almost surely. 
Then, the rest of the proof follows similarly to that of \cref{lem:W-c1} with $W_s$ playing the role of $O_1$. In particular, it is easy to see that $\langle W_j, W_i \rangle \oplus p(W_i, W_s)$ is an inducing path between $W_j$ and $W_s$ on $\g'$ with respect to $\Pa(W_{j_k}, \g) \setminus \{W_j\}$. 

Finally, to finesse the fact that $P$ is degenerate, consider a sequence of non-degenerate laws $P_n$ that weakly converges to $P$ in $\model(\g', \Vset)$. Then, $\Gamma(W_j)$ depends on $W_j$ under $P_n$ for a large enough $n$. 
\end{proof}

\begin{figure}[!htb]
\centering
\begin{tikzpicture}
\tikzset{rv/.style={circle,inner sep=1pt,fill=gray!20,draw,font=\sffamily}, 
redv/.style={circle,inner sep=1pt,fill=white,draw,dashed,font=\sffamily}, 
ov/.style={circle,inner sep=1pt,fill=gray!20,draw=red,thick,font=\sffamily}, 
sv/.style={circle,inner sep=1pt,fill=gray!20,draw,font=\sffamily,minimum size=1mm}, 
node distance=12mm, >=stealth, every node/.style={scale=0.8}}
\begin{scope}
\node[rv] (Wi) {$W_i$};
\node[rv, below of=Wi] (Wj) {$W_j$};
\node[rv, below of=Wj] (Wjk) {$W_{j_k}$};
\node[rv, right of=Wi] (Wjl) {$W_{j_l}$};
\node[right of=Wjl] (E1) {$\dots$};
\node[rv, right of=E1] (O1) {$O_1$};
\draw[->, very thick, color=blue] (Wi) -- (Wj); 
\draw[->, very thick, color=blue] (Wj) -- (Wjk); 
\draw[->, very thick, color=brown] (Wi) -- (Wjl); 
\draw[->, very thick, color=blue] (Wj) -- (Wjl); 
\draw[->, very thick, color=blue] (Wjk) -- (Wjl); 
\draw[->, very thick, color=brown] (Wjl) -- (E1);
\draw[->, very thick, color=brown] (E1) -- (O1);
\node[below of=Wjk, xshift=20mm] {(W-c3)};
\end{scope}
\begin{scope}[xshift=5cm] \node[rv] (Wi) {$W_i$};
\node[rv, below of=Wi] (Wj) {$W_j$};
\node[rv, below of=Wj] (Wjk) {$W_{j_k}$};
\node[rv, right of=Wjk] (Wjl) {$W_{j_l}$};
\node[right of=Wjl] (E1) {$\dots$};
\node[rv, right of=E1] (O2) {$O_2$};
\draw[->, very thick, color=blue] (Wi) -- (Wj); 
\draw[->, very thick, color=blue] (Wj) -- (Wjk); 
\draw[->, very thick, color=blue] (Wi) -- (Wjl); 
\draw[->, very thick, color=blue] (Wj) -- (Wjl); 
\draw[->, very thick, color=red] (Wjk) -- (Wjl); 
\draw[->, very thick, color=red] (Wjl) -- (E1);
\draw[->, very thick, color=red] (E1) -- (O2);
\node[below of=Wjl, xshift=3mm] {(W-c4)};
\end{scope}
\end{tikzpicture}
\caption{Case \ref{case:W-c3} and \ref{case:W-c4}: path $p$ ({\color{brown} $\bm{-}$}), path $q$ ({\color{red} $\bm{-}$})}
\label{fig:proof-nec-W-c34}
\end{figure} 

\medskip \begin{lemma} \label{lem:W-c34}
Under the assumptions of \cref{thm:criteria}, suppose that variable $W_j \in \Wset(\g) \setminus \Oset(\g)$ satisfies either \ref{case:W-c3} or \ref{case:W-c4}, but neither (W-a) nor (W-b) in $\g$. Then, there exists a non-degenerate law $P \in \model(\g, \Vset)$, under which $\Psi_{a,P,\text{eff}}^{1}(\Vset; \g)$ non-trivially depends on $W_j$.
\end{lemma}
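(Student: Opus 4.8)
The plan is to follow the template already used for \cref{lem:W-c1,lem:W-c2}, with one extra ingredient forced by \ref{case:W-c3} and \ref{case:W-c4}: there is a child $W_{j_l}\in\{\Ch(W_j,\g)\cap\Wset(\g)\}\setminus\{W_{j_k}\}$ lying on one of the two connecting paths, so the summand $\E\{b(\Oset)\mid W_{j_l},\Pa(W_{j_l})\}-\E\{b(\Oset)\mid\Pa(W_{j_l})\}$ in \cref{eqs:ast-W}, which was identically zero in \ref{case:W-c1} and \ref{case:W-c2}, is now potentially nonzero. The two cases are treated in parallel, with the d-connecting path $p=\langle W_i,\dots,O_1\rangle$ and the causal path $q\colon W_{j_k}\to\dots\to O_2$ swapping roles: in \ref{case:W-c3} the special child sits on $p$, in \ref{case:W-c4} on $q$. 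First I would invoke the augmented-graph lemma of \cref{sec:app-aug-graph} (\cref{lem:cond2-w-graph}) to pin down the relevant configuration---the edge $W_j\to W_{j_l}$, the position of $W_{j_l}$ on $p$ or on $q$, and, for every collider $C_f$ on $p$, a causal path $c_f$ from $C_f$ to $W_j$.

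Next I would build a sparse subgraph $\g'\subseteq\g$ on the full vertex set, keeping only the edges $W_i\to W_j$, $W_j\to W_{j_k}$, $W_j\to W_{j_l}$, the paths $p$ and $q$, the collider-completion paths $c_f$, and one causal path from $\{O_1,O_2\}$ to $Y$; any law Markov to $\g'$ is Markov to $\g$, while $\Pa(\cdot,\g)$ in \cref{eqs:ast-W} is still taken in $\g$. On $\g'$ I would choose a degenerate $P$ imposing identity relations along the causal subpaths (collapsing $O_1$ and $O_2$ onto earlier vertices), $\E\{W_{j_k}\mid W_j\}=0$, $P(A=a\mid\Omin)$ constant, and $b(\Oset)$ equal to a product of the two ``incoming'' signals at $Y$. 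In \ref{case:W-c4} the identity relation along $q$ makes $W_{j_l}$ a deterministic copy of its $q$-parent, which also lies in $\Pa(W_{j_l},\g)$, so conditioning additionally on $W_{j_l}$ adds nothing and its summand vanishes; in \ref{case:W-c3} the summand is kept and is what carries part of the dependence on $W_j$ through the edge $W_j\to W_{j_l}$. I would then evaluate $\Gamma(W_j)$ term by term exactly as in \cref{lem:W-c1}: the term $\E\{b(\Oset)\mid W_j,\Pa(W_j)\}$ and the summands for children of $W_j$ not on $p$ or $q$ vanish by the local Markov property on $\g'$, the $W_{j_k}$-summand simplifies, and after cancellation the survivor is a conditional expectation whose nontrivial dependence on $W_j$ is, by strong completeness of d-separation \citep{meek1995strong}, equivalent to a d-connection statement of the form $O_1\not\indep_{\g'}W_j\mid\Pa(W_{j_k},\g)\setminus\{W_j\}$ (or its \ref{case:W-c4} analogue). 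That d-connection I would certify with \cref{lem:inducing-path}: either $\langle W_j,W_i\rangle\oplus p$ is an inducing path between $W_j$ and $O_1$ with respect to $\Pa(W_{j_k},\g)\setminus\{W_j\}$---no non-collider on it is in that set, using $W_i\notin\Pa(W_j,\g)$ and that $p$ d-connects given $\Pa(W_{j_k},\g)$, and every collider on $p$ is ancestral to $W_j$ in $\g'$ via the $c_f$---or, in \ref{case:W-c3}, the shorter inducing path through $W_j\to W_{j_l}$. Finally, passing to a sequence of non-degenerate laws $P_n\in\model(\g',\Vset)$ converging weakly to $P$ transfers the dependence into the model, and combining with \cref{lem:EIF} finishes the proof.

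The step I expect to be the main obstacle is controlling the $W_{j_l}$ summand: because $W_{j_l}$ is simultaneously a child of $W_j$ and a transmitter of the $O$-signal along a connecting path, its contribution is precisely the kind of term that, in a criterion-satisfying configuration, cancels the dependence on $W_j$, so the degenerate law must be arranged so that this cancellation fails while remaining genuinely Markov to $\g'$ (no hidden constraint arising from the several roles of $W_{j_l}$), and the inducing path used in the last step must be verified to survive in the sparse graph $\g'$ and not merely in $\g$. Once these points are settled, the remaining computations are routine repetitions of the \cref{lem:W-c1,lem:W-c2} arguments.
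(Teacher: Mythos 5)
You have correctly located the crux---the summand for the child $W_{j_l}$ lying on $p$ or $q$---but your plan stops exactly where the paper's proof begins, and the route you sketch around it does not go through as stated. The paper does not keep both $p$ and $q$, and does not invoke inducing paths or strong completeness in this case at all. Instead it first reduces \ref{case:W-c3} and \ref{case:W-c4} to a single configuration in which both $W_i$ and $W_{j_k}$ are parents of $W_{j_l}$: in \ref{case:W-c3} this is immediate from \ref{case:p2} of \cref{lem:cond2-w-graph} ($p$ is the causal path $W_i \to W_{j_l} \to \dots \to O_1$ with $W_{j_l} \in \Ch(W_{j_k},\g)$), while in \ref{case:W-c4} it requires the non-obvious adjacency $W_i \to W_{j_l}$, which the paper extracts from the maximality of the index $k$ (otherwise $W_{j_l}$ would have been chosen as $W_{j_k}$); your proposal neither establishes nor uses this fact. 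The degenerate law is then built on a subgraph that routes everything through $W_{j_l}$ (discarding $q$ entirely in \ref{case:W-c3} and $p$ entirely in \ref{case:W-c4}), with $W_{j_l} = W_i W_{j_k}$ propagated by identities to a single element of $\Oset$ and $b(\Oset)$ equal to that element. Because $W_i, W_{j_k} \in \Pa(W_{j_l},\g)$, $b(\Oset) = W_i W_{j_k}$ is a deterministic function of $\Pa(W_{j_l},\g)$, so the troublesome $W_{j_l}$ summand in \cref{eqs:ast-W} vanishes identically, and the remaining terms evaluate in closed form to $\Gamma(W_j) = W_i \E[W_{j_k} \mid W_j] + W_{j_k} \E[W_i \mid W_j] - \E[W_i \mid W_j]\,\E[W_{j_k}\mid W_j]$, which after imposing $\E[W_{j_k}\mid W_j]=0$ equals $W_{j_k}\,\E[W_i \mid W_j]$---manifestly non-constant in $W_j$ (e.g.\ bivariate normal $(W_i,W_j)$), with no appeal to \citet{meek1995strong}.

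By contrast, your plan leaves two genuine holes. First, in \ref{case:W-c3} you keep the $W_{j_l}$ summand and assert it ``carries part of the dependence,'' but you never exhibit a law under which the several $W_j$-dependent terms fail to cancel; a d-connection statement certified by an inducing path shows that one conditional expectation can depend on $W_j$, not that the signed sum in \cref{eqs:ast-W} does, and exact cancellation is precisely what the W-criterion produces, so this step cannot be waved through---it is the whole difficulty, and the product construction above is what resolves it. Second, in \ref{case:W-c4} collapsing $q$ to an identity and then arguing ``as in \cref{lem:W-c1}'' presupposes the disjointness built into \ref{case:W-c1}; but \ref{case:W-c4} does not preclude $p$ and $q$ intersecting, $O_1 = O_2$, or another child of $W_j$ lying on $p$, so the term-by-term bookkeeping and the choice $b(\Oset) = f(O_1)\,O_2$ are not available without further case analysis (which the paper's construction avoids by never using $p$ in this case). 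Until these points are filled, the proposal is a plan rather than a proof.
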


\begin{proof}
The same graphical structure (see \cref{fig:proof-nec-W-c34}) can be established in $\g$ under either \ref{case:W-c3} or \ref{case:W-c4}.

First, suppose $W_j$ fulfills \ref{case:W-c3}. Then, by \ref{case:p2} of \cref{lem:cond2-w-graph}, $p$ is of the form $W_i \to W_{j_l} \to \dots \to O_1$, and $W_{j_l} \in \Ch(W_{j_k}, \g) $ is the only vertex in $\Ch(W_j, \g) \setminus \{W_{j_k}\}$ on $p$. In this case, let $q' := \langle W_{j_k}, W_{j_l} \rangle \oplus p(W_{j_l}, O_1)$ and $p':=p$. 

Otherwise, suppose $W_j$ fulfills \ref{case:W-c4}. By \ref{case:q3} of \cref{lem:cond2-w-graph}, $q$ is of the form $W_{j_k} \to W_{j_l} \to \dots \to O_2$ and $W_{j_l}$ is the only vertex in $\Ch(W_j, \g) \setminus \{W_{j_k}\}$ on $q$. Further, we argue that $W_i$ and $W_{j_l}$ are adjacent by the choice of $k$, since otherwise $W_{j_l}$ would have been chosen as $W_{j_k}$ instead. Further, by acyclicity, we know $W_i \rightarrow W_{j_l}$. Now, let $p' := \langle W_i, W_{j_l} \rangle \oplus q(W_{j_l}, O_2)$ and $q':=q$. 

We now show that $\Gamma(W_j)$ depends on $W_j$ under a law $P$ that is Markov to subgraph $\g'$ of $\g$. Let $\g'$ be chosen as the maximal subgraph of $\g$ such that $\g'_{\mathbf{W},O_1}$ under \ref{case:W-c3} (or $\g'_{\mathbf{W},O_2}$ under \ref{case:W-c4}) only contains edges appearing on the following paths: (1) $W_i \rightarrow W_j \rightarrow W_{j_k}$, (2) $p'$, and (3) $q'$.

We choose $P \in \mathcal{M}_{\g'}$ such that the following hold almost surely:
\begin{enumerate}
\item $b(\opt) = O_1$ under \ref{case:W-c3}, or $b(\opt) = O_2$ under \ref{case:W-c4},
\item $W_{j_l} = \dots = O_1$ along $p'$ under \ref{case:W-c3} or $W_{j_l} = \dots = O_2$ along $q'$ under \ref{case:W-c4},
\item $W_{j_l} = W_{i} W_{j_k}$.
\end{enumerate}
Then it follows that $b(\opt) = W_i W_{j_k}$ almost surely. 

To show dependency, it suffices to show that 
\begin{multline*}
\Gamma(W_j) = \E[W_i W_{j_k} \mid W_j, \Pa(W_j, \g)] + \E[W_i W_{j_k} \mid W_{j_k}, \Pa(W_{j_k}, \g)] - \E[W_i W_{j_k} \mid \Pa(W_{j_k}, \g)] \\
+ \E[W_i W_{j_k} \mid W_{j_l}, \Pa(W_{j_l}, \g)] - \E[W_i W_{j_k} \mid \Pa(W_{j_l}, \g)] \\
+ \sum_{m \neq k,l} \left\{ \E[W_i W_{j_k} \mid W_{j_m}, \Pa(W_{j_m}, \g)] - \E[W_i W_{j_k} \mid \Pa(W_{j_m}, \g)] \right\}
\end{multline*}
depends on $W_{j}$ non-trivially, where the expectations are taken with respect to $P$. The terms are computed as follows. 

First, we have
\begin{equation*}
\begin{split}
\E[W_i W_{j_k} \mid W_j, \Pa(W_j, \g)] &= \E[W_i W_{j_k} \mid W_j, W_i, \Pa(W_j, \g) \setminus \{W_i\}] \\
&= W_i  \E[W_{j_k} \mid W_j, W_i, \Pa(W_j, \g) \setminus \{W_i\}] \\
&\stackrel{\g'}{=} W_i  \E[W_{j_k} \mid W_j]. 
\end{split}
\end{equation*}

Then, it holds that
\begin{equation*}
\begin{split}
& \quad \E[W_i W_{j_k} \mid W_{j_k}, \Pa(W_{j_k}, \g)] - \E[W_i W_{j_k} \mid \Pa(W_{j_k}, \g)] \\
&\stackrel{\g}{=} W_{j_k} \E[W_i \mid W_{j_k}, W_j, \Pa(W_{j_k}, \g) \setminus \{W_j\}] - \E[W_i  \mid \Pa(W_{j_k}, \g)] \E[W_{j_k} \mid \Pa(W_{j_k}, \g)] \\
&\stackrel{\g'}{=}  W_{j_k} \E[W_i \mid W_j] - \E[W_i \mid W_j] \E[W_{j_k} \mid W_j],
\end{split}
\end{equation*}
where the second step uses the local Markov property. 

Due to $W_{j_l} = W_i W_{j_k}$, it is clear that 
\begin{equation*}
\E[W_i W_{j_k} \mid W_{j_l}, \Pa(W_{j_l}, \g)] - \E[W_i W_{j_k} \mid \Pa(W_{j_l}, \g)] = 0.
\end{equation*} 

And for any other child $W_{j_m} \in \Ch(W_j)$ ($m \neq k,l$), we know
\begin{equation*}
\E[W_i W_{j_k} \mid W_{j_m}, \Pa(W_{j_m}, \g)] - \E[W_i W_{j_k} \mid \Pa(W_{j_m}, \g)] = 0,
\end{equation*}
because $W_{j_l}$ is a non-descendant of $W_{j_m}$ on $\g'$ and the local Markov property holds. 

Putting the terms together, we see 
\begin{equation*}
\Gamma(W_j) =  W_i  \E[W_{j_k} \mid W_j] + W_{j_k} \E[W_i \mid W_j] - \E[W_i \mid W_j] \E[W_{j_k} \mid W_j],
\end{equation*}
which, upon further choosing $P$ such that $\E[W_{j_k} \mid W_j]=0$, reduces to 
\begin{equation*}
\Gamma(W_j) = W_{j_k} \E[W_i \mid W_j].
\end{equation*}
Clearly, this non-trivially depends on $W_j$, e.g., when $W_i, W_j$ are bivariate normal. Finally, to finesse the fact that $P$ is degenerate, consider a sequence of non-degenerate laws $P_n$ that weakly converges to $P$ in $\model(\g', \Vset)$. Then, $\Gamma(W_j)$ depends on $W_j$ under $P_n$ for a large enough $n$. 
\end{proof}

\section{Completeness proof of M-criterion} \label{sec:app-M-complete}
Similar to the previous section, this section provides the following supporting result for the proof of \cref{thm:criteria}. 

\medskip \begin{lemma} \label{lem:M-complete}
Under the assumptions of \cref{thm:criteria}, suppose that variable $M_i \in \Mset(\g) \setminus \{Y\}$ fails the M-criterion in \cref{lem:M-criterion}. Then, there exists a non-degenerate law $P \in \model(\g, \Vset)$, under which $\Psi_{a,P,\text{eff}}^{1}(\Vset; \g)$ non-trivially depends on $M_i$.
\end{lemma}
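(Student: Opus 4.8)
The plan mirrors the completeness proof of the W-criterion in \cref{sec:app-W-complete}. By \cref{lem:EIF}, for any $P \in \model(\g,\Vset)$ the efficient influence function $\Psi_{a,P,\text{eff}}^{1}(\Vset;\g)$ depends on $M_i$ only through
\begin{multline*}
\Gamma(M_i) \equiv \E\{T_{a,P} \mid M_i, \Pa(M_i)\} \\
+ \sum_{t=1}^{k}\big[\E\{T_{a,P} \mid M_{i_t}, \Pa(M_{i_t})\} - \E\{T_{a,P} \mid \Pa(M_{i_t})\}\big],
\end{multline*}
where $\Ch(M_i)\cap\Mset = \{M_{i_1},\dots,M_{i_k}\}$ is topologically ordered, $M_{i_0}\equiv M_i$, and $T_{a,P}=\I_a(A)Y/P(A=a\mid\Omin)$ is a function of $\{A,Y\}\cup\Omin$. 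It therefore suffices to exhibit, for a given $M_i$ that fails the M-criterion, a nondegenerate $P\in\model(\g,\Vset)$ under which $\Gamma(M_i)$ is not a constant function of $M_i$. As in the W-case, I would first produce such a $P$ that is Markov to a suitable subgraph $\g'$ of $\g$ (hence to $\g$), allowing degenerate relations among the variables, and then pass to a weakly convergent sequence of nondegenerate laws $P_n\weaklyconvto P$ in $\model(\g',\Vset)$; since $\Gamma(M_i)$ varies continuously along this sequence, it depends on $M_i$ under $P_n$ for all large $n$.

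The first step is a graphical dichotomy, the analogue for mediators of the structural lemmas established in \cref{sec:app-aug-graph} for $\Wset$: if $M_i$ fails the M-criterion then, writing $M_{i_k}$ for its topologically last $\Mset$-child, one of the following configurations must occur in $\g$ --- (M-a) $M_i$ has two non-adjacent children in $\Mset$; (M-b) there is a collider configuration $M_i\to M_{i_k}\leftarrow M_l$ with $M_i$ and $M_l$ non-adjacent; or (M-c) there is a parent $M_l\in\Pa(M_i)\setminus\Pa(M_{i_k})$ with $M_l\to M_i\to M_{i_k}$ together with a path $p$ from $M_l$ to some $Z\in(\{A,Y\}\cup\Omin)\setminus\Pa(M_{i_k})$ that is d-connecting given $\Pa(M_{i_k})$. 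As in the W-case, (M-c) splits further according to whether, and how, $p$ meets a fixed shortest causal path $q$ from $M_{i_k}$ to $\{A,Y\}\cup\Omin$ --- here necessarily a causal path to $Y$, since every mediator is an ancestor of $Y$ --- yielding sub-cases parallel to those handled by \cref{lem:W-c1,lem:W-c2,lem:W-c34}.

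For each case the construction transcribes its W-analogue with $T_{a,P}$ playing the role of $b(\Oset)$. One takes $\g'$ to consist of the edges on $p$, $q$, the edges $M_l\to M_i\to M_{i_k}$, the short ``return'' causal paths from the colliders of $p$ down to $M_i$ (needed to keep those colliders ancestral), and a causal path from $Z$ or from $M_{i_k}$ on to $Y$; one chooses $P$ Markov to $\g'$ so that $P(A=a\mid\Omin)\equiv c$ for a constant $c\in(0,1)$ (so the weight factors out of $T_{a,P}$), the propagated variables are almost surely equal along the chosen causal paths, $\E\{M_{i_k}\mid M_i\}=0$, and the target value of $T_{a,P}$ is an explicit product of a few of these variables. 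Evaluating $\Gamma(M_i)$ term by term, the summands from children other than $M_{i_k}$ vanish because those children are non-descendants (on $\g'$) of the propagated variables, the $\Pa(M_{i_t})$-terms cancel the corresponding $M_{i_t}$-terms by the relations forced in each case, and one is left with a single surviving term of the form $M_{i_k}\,\E\{g(Z)\mid M_i,\Pa(M_{i_k})\setminus\{M_i\}\}$ (up to a constant multiple). Non-triviality in $M_i$ then reduces, by strong completeness of d-separation \citep{meek1995strong}, to the d-connection $Z\not\indep_{\g'}M_i\mid\Pa(M_{i_k})\setminus\{M_i\}$, which \cref{lem:inducing-path} supplies once one checks that $\langle M_i, M_l\rangle\oplus p$ (or its case-appropriate analogue) is an inducing path with respect to $\Pa(M_{i_k})\setminus\{M_i\}$: no non-collider on it lies in the conditioning set, and every collider on it is an ancestor of $M_i$ on $\g'$.

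The main obstacle, relative to the W-criterion proof, is that the conditioning/target set is $\{A,Y\}\cup\Omin$ rather than $\Oset$ and that $T_{a,P}$ carries the weight $1/P(A=a\mid\Omin)$. Keeping this weight constant forces extra care when choosing $\g'$ and $P$, so that $A$ stays d-separated from $\Omin$ while the intended product structure for $T_{a,P}$ is realized; moreover, since $M_i\in\De(A)$ and $A\notin\De(M)$ for every mediator $M$, several W-configurations in which $A$ or a covariate sits at an endpoint of a connecting path either cannot arise here or must be treated with $A$ in a distinguished endpoint role, and acyclicity of the constructed $\g'$ must be rechecked in each such case. The final approximation step --- finding nondegenerate laws in $\model(\g',\Vset)$ that satisfy \cref{assump:positivity} and converge weakly to the degenerate $P$ --- is routine once $c$ is taken in the open interval $(0,1)$.
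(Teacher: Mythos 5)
Your overall architecture matches the paper's: a graphical dichotomy (M-a)/(M-b)/(M-c) analogous to \cref{lem:cond1-w-graph} (this is \cref{lem:cond1-m-graph} in the paper), a further split of (M-c) according to how the connecting path $p$ meets the causal path $q$ from the last $\Mset$-child to $Y$, subgraph constructions with propagated values and $\E\{M_{i_r}\mid M_i\}=0$, cancellation of all but one term of $\Gamma(M_i)$, an inducing-path argument plus strong completeness of d-separation, and a weak-approximation step. However, there is one genuine gap in your prescription for handling the weight: you propose to always choose $P$ so that $P(A=a\mid\Omin)\equiv c$ is constant, ``so the weight factors out of $T_{a,P}$.'' This fails precisely in the subcase of (M-c) where the d-connection guaranteed by failure of the M-criterion runs from the offending parent (and hence from $M_i$) to a vertex $O_1\in\Omin$, and \emph{not} to $A$ or $Y$ (the analogue of the paper's case (M-c1) with $S\equiv O_1$). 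If the weight is constant, $T_{a,P}$ is a function of $(A,Y)$ alone, and for $\Gamma(M_i)$ to depend on $M_i$ you would need a d-connection from $M_i$ to $A$ or to $Y$ given $\Pa(M_{i_r},\g)\setminus\{M_i\}$; such a connection is exactly what is \emph{not} guaranteed in this subcase (if it were, one would be in the $S\equiv A$ or $S\equiv Y$ situation), and since any constructed law must be Markov to $\g$, you cannot manufacture it. The paper's proof (\cref{lem:M-c1}) instead makes the propensity a nonconstant function, $P(A=1\mid\Omin)=\rho(O_1)$, so that the weight $1/\rho(O_1)$ inside $T_{a,P}$ is itself the channel through which $M_i$ enters: the surviving term is $M_{i_r}\,\E[A\mid\Pa(M_{i_r},\g)]\,\E[1/\rho(O_1)\mid M_i, A=1,\Pa(M_{i_r},\g)\setminus\{M_i\}]$, and the inducing path $\langle M_i,B_j\rangle\oplus p$ to $O_1$ delivers the nontrivial dependence. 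Your alternative of encoding $O_1$ into $Y$ (e.g.\ forcing $Y=f(O_1)M_{i_r}$) is not justified either: it would require wiring a causal path from $O_1$ to $Y$ into $\g'$ without disturbing the term-by-term cancellations, and no such construction is given.

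Two smaller points: in your dichotomy you write the second parent as $M_l$, suggesting a mediator, whereas the relevant parent of $M_i$ or $M_{i_r}$ may be $A$ or a vertex of $\Oset$ (the paper's $B_j\in\{A\}\cup\Oset\cup\Mset$); you flag this informally but it must be carried through the case analysis, since the paper's endpoint set for $p$ is $\{A,Y\}\cup\Omin$, not $\{A,Y\}$. Also note that in the remaining cases the paper does not keep the propensity at an interior constant but simply sets $A=1$ almost surely (so $T_{a,P}=Y$) and restores nondegeneracy only in the final approximation step; this is consistent with your plan and is not a gap, but the $S\in\Omin$ subcase above does need the nonconstant-weight device.
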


\smallskip \begin{proof}
By \cref{lem:cond1-m-graph}, $M_i$ must of one of the following cases. 
\begin{enumerate}
\item[(M-a)] $M_i$ has two children $M_{i_m}$ and $M_{i_r}$ that are not adjacent. 
\item[(M-b)] $M_{i} \to M_{i_r} \leftarrow B_j$ for $B_j \in \{A\} \cup \opt(\g) \cup \Mset(\g)$. $M_i$ and $B_j$ are non-adjacent.
\item[(M-c)] We have $B_{j} \rightarrow  M_{i} \rightarrow M_{i_r}$, but $B_j \not\rightarrow M_{i_r}$. There is a path $p = \langle B_j, \dots , S \rangle$ for $S \in \{A,Y\} \cup  \Omin(\g) \setminus \Pa(M_{j_r},\g)$  that is d-connecting given $\Pa(M_{i_r},\g)$. As a special case, if $B_j \in \{A\} \cup  \Omin(\g)$, then $B_j \equiv S$ and $|p| = 0$.
\end{enumerate}
By splitting into these cases, the result is shown to hold by \cref{lem:M-a,lem:M-b,lem:M-c}.
\end{proof}

\medskip Let us write $\rho(\Omin) \equiv P(A=1 \mid \Omin)$ for short. To show that $\Psi_{a,P,\text{eff}}^{1}(\Vset; \g)$ non-trivially depends on $M_i$, by \cref{lem:EIF}, it is suffices to show under $P$, 
\begin{multline} \label{eqs:ast-M}
\Gamma(M_i) \equiv \E[AY / \rho(\Omin) \mid M_i, \Pa(M_i, \g)] + \\
\sum_{l=1}^{k} \left\{ \E[AY / \rho(\Omin) \mid M_{i_l}, \Pa(M_{i_l}, \g)] - \E[AY / \rho(\Omin) \mid \Pa(M_{i_l}, \g)] \right \},
\end{multline}
non-trivially depends on $M_i$, where $\Ch(M_i) \cap \Mset = \{M_{i_1}, \dots, M_{i_k}\}$ for $k \geq 1$.
For convenience, in this section we suppose $A$ is binary and choose $a=1$. 

\subsection{Case (M-a)}
\begin{figure}[!htb]
\centering
\begin{tikzpicture}
\tikzset{rv/.style={circle,inner sep=1pt,fill=gray!20,draw,font=\sffamily}, 
redv/.style={circle,inner sep=1pt,fill=white,draw,dashed,font=\sffamily}, 
ov/.style={circle,inner sep=1pt,fill=gray!20,draw=red,thick,font=\sffamily}, 
sv/.style={circle,inner sep=1pt,fill=gray!20,draw,font=\sffamily,minimum size=1mm}, 
node distance=12mm, >=stealth, every node/.style={scale=0.8}}
\begin{scope}
\node[rv] (Mj) {$M_i$};
\node[rv, right of=Mj, yshift=7mm] (Mjl) {$M_{i_m}$};
\node[rv, right of=Mj, yshift=-7mm] (Mjr) {$M_{i_r}$};
\node[right of=Mjl, yshift=-2mm] (E1) {\rotatebox[origin=c]{-15}{$\cdots$}};
\node[right of=Mjr, yshift=2mm] (E2) {\rotatebox[origin=c]{15}{$\cdots$}};
\node[rv, right of=Mj, xshift=30mm] (U) {$U$};
\node[right of=U] (E3) {$\dots$};
\node[rv, right of=E3] (Y) {$Y$};
\draw[->, very thick, color=blue] (Mj) -- (Mjl); 
\draw[->, very thick, color=blue] (Mj) -- (Mjr); 
\draw[->, very thick, color=blue] (Mjl) -- (E1);
\draw[->, very thick, color=blue] (Mjr) -- (E2);
\draw[->, very thick, color=blue] (E1) -- (U);
\draw[->, very thick, color=blue] (E2) -- (U);
\draw[->, very thick, color=blue] (U) -- (E3);
\draw[->, very thick, color=blue] (E3) -- (Y);
\node[below of=Mjr, xshift=20mm] {(M-a)};
\end{scope} \begin{scope}[xshift=8cm] \node[rv] (Mjr) {$M_{i_r}$};
\node[rv, left of=Mjr, yshift=5mm] (Mj) {$M_i$};
\node[rv, left of=Mjr, yshift=-5mm] (Bi) {$B_j$};
\node[right of=Mjr] (E1) {$\cdots$};
\node[rv, right of=E1] (Y) {$Y$};
\draw[->, very thick, color=blue] (Mj) -- (Mjr);
\draw[->, very thick, color=blue] (Bi) -- (Mjr);
\draw[->, very thick, color=blue] (Mjr) -- (E1);
\draw[->, very thick, color=blue] (E1) -- (Y);
\node[below of=Mjr, xshift=5mm, yshift=-8mm] {(M-b)};
\end{scope}
\end{tikzpicture}
\caption{Case (M-a) and (M-b)}
\label{fig:proof-nec-M-ab}
\end{figure}

\begin{lemma} \label{lem:M-a}
Under the assumptions of \cref{thm:criteria}, suppose that variable $M_i \in \Mset(\g) \setminus \{Y\}$ satisfies satisfies (M-a) in $\g$. Then, there exists a non-degenerate law $P \in \model(\g, \Vset)$, under which $\Psi_{a,P,\text{eff}}^{1}(\Vset; \g)$ non-trivially depends on $M_i$.
\end{lemma}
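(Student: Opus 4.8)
The plan is to mirror the proof of \cref{lem:W-a}, with the inverse-probability-weighted outcome $T_{a,P}$ playing the role that $b(\Oset)$ played there. Take $a=1$, write $T = AY/\rho(\Omin)$ with $\rho(\Omin) \equiv P(A=1\mid\Omin)$, and recall from \cref{lem:EIF} and \cref{eqs:ast-M} that $\Psi_{1,P,\text{eff}}^{1}(\Vset;\g)$ can depend on $M_i$ only through $\Gamma(M_i)$, a sum of terms indexed by $M_i$ itself and by the mediator-children $\Ch(M_i,\g)\cap\Mset = \{M_{i_1},\dots,M_{i_k}\}$. By hypothesis (M-a) supplies two of these children, $M_{i_m}$ and $M_{i_r}$, that are non-adjacent; each is a mediator, so it has a causal path to $Y$, and by choosing the pair and these paths appropriately (as in \cref{lem:W-a}, where the two signals reach $Y$ either at a common vertex or at two distinct vertices, cf.\ \cref{fig:proof-nec-M-ab}) we may assume the chosen paths first meet at a vertex $U$, possibly $U=Y$, from which an identity path runs to $Y$.

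First I would take $\g'$ to be the subgraph of $\g$ retaining exactly the edges on (1) one causal path $A\mapsto M_i$, (2) the edges $M_i\to M_{i_m}$ and $M_i\to M_{i_r}$, and (3) the chosen causal paths from $M_{i_m}$ and from $M_{i_r}$ into $U$ together with the identity path $U\mapsto Y$, and deleting all other edges. In particular every vertex not on one of these paths --- including the remaining mediator-children $M_{i_l}$ ($l\ne m,r$), the extra $\g$-parents of $M_{i_m}$ and $M_{i_r}$, and the vertices of $\Omin(\g)$ --- becomes isolated in $\g'$, and $A$ becomes exogenous. Since $\g'$ is a subgraph of $\g$, any $P\in\model(\g',\Vset)$ lies in $\model(\g,\Vset)$. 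Because $A$ is exogenous in $\g'$, $A\indep_{\g'}\Omin(\g)$, so $\rho(\Omin)$ is a constant $c\in(0,1)$ under any such $P$; and because $M_{i_m},M_{i_r}$ are children of $M_i$ while $A\mapsto M_i$, the local Markov property of $\g'$ gives $M_{i_m}\indep_{\g'}(A,M_{i_r},\Pa(M_{i_m},\g))\mid M_i$, $M_{i_r}\indep_{\g'}(A,M_{i_m},\Pa(M_{i_r},\g))\mid M_i$, and $A\indep_{\g'}M_{i_r}\mid M_i$.

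Next I would pick a (degenerate) law $P$ Markov to $\g'$ under which, almost surely, (i) $\E_P[M_{i_m}\mid M_i]=0$; (ii) the values propagate along the identity sub-paths so that $U=M_{i_m}M_{i_r}$ and $Y=U$; and (iii) $\rho(\Omin)=c$; these are mutually consistent by the independences above. Then $T = A M_{i_m}M_{i_r}/c$, and evaluating $\Gamma(M_i)$ term by term exactly as in \cref{lem:W-a}: the $M_i$-term vanishes since $\E_P[M_{i_m}\mid M_i]=0$; each $M_{i_l}$-term with $l\ne m,r$ vanishes since $M_{i_l}$ is isolated in $\g'$; the $M_{i_r}$-term vanishes since $\E_P[AM_{i_m}\mid M_i]=\E_P[A\mid M_i]\,\E_P[M_{i_m}\mid M_i]=0$; and the $M_{i_m}$-term equals $c^{-1}M_{i_m}\,\E_P[AM_{i_r}\mid M_i]=c^{-1}M_{i_m}\,\E_P[A\mid M_i]\,\E_P[M_{i_r}\mid M_i]$. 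For a generic choice of the structural equations along $A\mapsto M_i$ and along $M_i\to M_{i_r}$ this is a nonconstant function of $M_i$, so $\Gamma(M_i)$ --- and hence $\Psi_{1,P,\text{eff}}^{1}(\Vset;\g)$ --- depends nontrivially on $M_i$. Finally, I would replace $P$ by a sequence of nondegenerate laws $P_n$ converging weakly to $P$ in $\model(\g',\Vset)$; for $n$ large enough $\Gamma(M_i)$ still depends on $M_i$ under $P_n$, completing the proof.

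The main obstacle I anticipate is the bookkeeping analogous to that in \cref{lem:W-a}: verifying that when the causal paths from $M_{i_m}$ and $M_{i_r}$ to $Y$ share vertices --- possibly even that one of $M_{i_m},M_{i_r}$ lies on the other's path --- the pair of children and the subgraph $\g'$ can still be chosen so that the two signals reach $Y$ multiplicatively without creating a cycle, and so that conditioning on the extra variables of $\Pa(M_{i_m},\g)$ in $\Gamma(M_i)$ does not reinstate a cancellation of the surviving $M_{i_m}$-term; both points reduce to the local Markov property of $\g'$ once the interfering vertices have been isolated.
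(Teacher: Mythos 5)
Your construction is essentially the paper's own proof of \cref{lem:M-a}: the same sparse subgraph $\g'$ built from $M_{i_m}\leftarrow M_i \rightarrow M_{i_r}$ together with causal paths to $Y$ merging at $U$, the same (degenerate) law with $\E[M_{i_m}\mid M_i]=0$ and $U=M_{i_m}M_{i_r}$ propagated to $Y$, the same term-by-term cancellation in $\Gamma(M_i)$ leaving a surviving term proportional to $M_{i_m}\,\E[M_{i_r}\mid M_i]$, and the same weak-approximation step to obtain a non-degenerate law; the only deviation is that the paper simply sets $A=1$ almost surely (so $T_{a,P}=Y$ and $\rho(\Omin)\equiv 1$, with $A$ isolated in $\g'$) rather than retaining a stochastic $A$ with a path into $M_i$ and a constant propensity. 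Your minor imprecisions---the other children $M_{i_l}$, $l\neq m,r$, need not be isolated in $\g'$ since they may lie on the chosen causal paths (their terms still vanish by the local Markov/non-descendant argument the paper uses), and with extra parents of $M_{i_m}$ the surviving term is $c^{-1}M_{i_m}\,\E[A\mid \Pa(M_{i_m},\g)]\,\E[M_{i_r}\mid M_i]$ rather than $c^{-1}M_{i_m}\,\E[A\mid M_i]\,\E[M_{i_r}\mid M_i]$---are exactly the bookkeeping points the paper itself treats informally and do not affect the conclusion.
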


\smallskip \begin{proof}
By definition of a mediator, let $p$ and $q$ be the shortest causal path from $M_{i_m}$ and $M_{i_r}$ to $Y$ respectively. Let $\g'$ be the subgraph of $\g$ on the same set of vertices but only with edges from $p$, $q$ and $M_{i_m} \leftarrow M_i \rightarrow M_{i_r}$; see \cref{fig:proof-nec-M-ab}.

We choose $P \in \mathcal{M}_{\g'}$ such that the following hold almost surely:
\begin{enumerate}
\item $A =1$.
\item Suppose $p$ and $q$ merge at $U$, which could be $Y$ or a vertex preceding $Y$. Let $M_{i_m} = \dots = U$ on $p$ and $M_{i_r} = \dots = U$ on $q$.
\item $U = M_{i_m} M_{i_r}$. If $U \neq Y$, further let $U = \dots = Y$. 
\item $\E[M_{i_m} \mid M_i] = 0$.
\end{enumerate}
It then follows that $AY / \rho(\Omin) = M_{i_m} M_{i_r}$ almost surely. 
Rewriting \cref{eqs:ast-M}, our goal is to show 
\begin{equation*}
\begin{split}
\Gamma(M_i) &= \E[M_{i_m} M_{i_r} \mid M_i, \Pa(M_i, \g)] \\
& \quad + \E[M_{i_m} M_{i_r} \mid M_{i_m}, \Pa(M_{i_m}, \g)] - \E[M_{i_m} M_{i_r} \mid \Pa(M_{i_m}, \g)] \\
& \quad + \E[M_{i_m} M_{i_r} \mid M_{i_r}, \Pa(M_{i_r}, \g)] - \E[M_{i_m} M_{i_r} \mid \Pa(M_{i_r}, \g)] \\
& \quad + \sum_{m \neq l,r} \left\{\E[M_{i_m} M_{i_r} \mid M_{i_m}, \Pa(M_{i_m}, \g)] - \E[M_{i_m} M_{i_r} \mid \Pa(M_{i_m}, \g)] \right\}
\end{split}
\end{equation*}
depends on $M_i$ non-trivially under $P$. 

Invoking local Markov properties on $\g'$, it is easy to show that 
\begin{equation*}
\E[M_{i_m} M_{i_r} \mid M_i, \Pa(M_i, \g)] \stackrel{\g'}{=} \E[M_{i_m} \mid M_i] \E[M_{i_r} \mid M_i]  = 0.
\end{equation*}
Then, we have
\begin{equation*}
\E[M_{i_m} M_{i_r} \mid M_{i_m}, \Pa(M_{i_m}, \g)] \stackrel{\g'}{=} M_{i_m} \E[M_{i_r} \mid M_i],
\end{equation*}
and 
\begin{equation*}
\E[M_{i_m} M_{i_r} \mid \Pa(M_{i_m}, \g)] \stackrel{\g'}{=} \E[M_{i_m} \mid M_i] \E[M_{i_r} \mid M_i] = 0.
\end{equation*}
Similarly, 
\begin{equation*}
\E[M_{i_m} M_{i_r} \mid M_{i_r}, \Pa(M_{i_r}, \g)] \stackrel{\g'}{=}  M_{i_r} \E[M_{i_m} \mid M_i] = 0,
\end{equation*}
and 
\begin{equation*}
\E[M_{i_m} M_{i_r} \mid \Pa(M_{i_r}, \g)] \stackrel{\g'}{=} \E[M_{i_m} \mid M_i] \E[M_{i_r} \mid M_i] = 0.
\end{equation*}
Finally, for any other child $M_{i_l}$ ($l \neq m,r$), it holds that
\begin{equation*}
\E[M_{i_m} M_{i_r} \mid M_{i_l}, \Pa(M_{i_l}, \g)] - \E[M_{i_m} M_{i_r} \mid \Pa(M_{i_l}, \g)] \stackrel{\g'}{=} 0
\end{equation*}
by $M_{i_m}, M_{i_r}$ being non-descendants of $M_{i_l}$ on $\g'$ and the local Markov property.
Hence, the final sum in $\Gamma(M_i)$ vanishes. Finally, we are left with
\begin{equation*}
\Gamma(M_i) = M_{i_m} \E[M_{i_r} \mid M_i],
\end{equation*}
which can be chosen to depend on $M_i$ non-trivially. Finally, to finesse the fact that $P$ is degenerate, consider a sequence of non-degenerate laws $P_n$ that weakly converges to $P$ in $\model(\g', \Vset)$. Then, $\Gamma(M_i)$ depends on $M_i$ under $P_n$ for a large enough $n$.  
\end{proof}

\subsection{Case (M-b)}

\begin{lemma} \label{lem:M-b}
Under the assumptions of \cref{thm:criteria}, suppose that variable $M_i \in \Mset(\g) \setminus \{Y\}$ satisfies satisfies (M-b) in $\g$. Then, there exists a non-degenerate law $P \in \model(\g, \Vset)$, under which $\Psi_{a,P,\text{eff}}^{1}(\Vset; \g)$ non-trivially depends on $M_i$.
\end{lemma}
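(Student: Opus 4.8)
The plan is to follow the template of \cref{lem:W-b} and \cref{lem:M-a}: construct a (generally degenerate) law $P$ that is Markov to a suitably sparse subgraph $\g'$ of $\g$ — hence Markov to $\g$ — evaluate $\Gamma(M_i)$ from \cref{eqs:ast-M} under $P$, show it is not almost surely constant in $M_i$, and finally pass to a sequence of non-degenerate laws $P_n$ weakly converging to $P$ in $\model(\g', \Vset)$ to obtain the claimed non-degenerate witness. The mechanism is the collider $M_i \to M_{i_r} \leftarrow B_j$ supplied by (M-b), with $M_{i_r} \in \Ch(M_i,\g)\cap\Mset(\g)$. A useful preliminary remark is that, since $M_i$ and $B_j$ are non-adjacent, $B_j$ is neither a parent nor a child of $M_i$; in particular $B_j$ does not appear among the children $M_{i_1},\dots,M_{i_k}$ that index the telescoping sum in \cref{eqs:ast-M}, so the only child of $M_i$ that matters is $M_{i_r}$ itself.

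First I would fix a shortest causal path $p = \langle M_{i_r}, \dots, Y\rangle$, which exists because $M_{i_r}\in\Mset(\g)$, and split on whether $B_j = A$. When $B_j \neq A$, I would let $\g'$ retain only the edges $M_i \to M_{i_r}$, $B_j \to M_{i_r}$ and those of $p$, and choose $P$ Markov to $\g'$ with $A \equiv 1$ almost surely (so $\rho(\Omin)\equiv 1$ and $T_{a,P} = Y$), with $p$ an identity path (so $Y = M_{i_r}$), with $B_j \indep M_i$ and both non-degenerate, and with $\E[M_{i_r}\mid M_i, B_j] = M_i B_j$. Local Markov properties on $\g'$ then give: $\E[T_{a,P}\mid M_i, \Pa(M_i,\g)] = M_i\,\E[B_j]$ (because $\Pa(M_i,\g)$ avoids $M_{i_r}$ and its descendants, and $B_j\notin\Pa(M_i,\g)$); the $M_{i_r}$-contribution to the telescoping sum equals $M_{i_r} - M_i B_j$ (conditioning on $\Pa(M_{i_r},\g)\supseteq\{M_i,B_j\}$ recovers the product); and each remaining child's contribution vanishes, since such a child is either isolated in $\g'$ or collapsed onto $M_{i_r}$ by the identity path. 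Hence $\Gamma(M_i) = M_i\bigl(\E[B_j]-B_j\bigr) + M_{i_r}$, which is not a.s. constant in $M_i$ once $B_j$ is non-degenerate.

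When $B_j = A$ the substitution $A\equiv 1$ is unavailable, so I would instead let $\g'$ retain the edges $M_i\to M_{i_r}$, $A\to M_{i_r}$ and those of $p$, take $A$ binary with $P(A=1)=c\in(0,1)$ and $A\indep M_i$ (so $A\indep_{\g'}\Omin$ and $\rho(\Omin)\equiv c$), keep $p$ an identity path, and set $\E[M_{i_r}\mid A, M_i] = A M_i$. The same bookkeeping yields $\Gamma(M_i) = M_i(1 - A/c) + A M_{i_r}/c$ — the shape already encountered in the proof of \cref{lem:Oset-informative} — which again depends non-trivially on $M_i$ since $1 - A/c$ is never zero. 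The main obstacle, and the part needing genuine care, is this last batch of verifications: checking the auxiliary d-separations used in both cases (notably that the contributions of children $M_{i_l}\neq M_{i_r}$ lying on $p$ really cancel, via the identity-path device), confirming positivity of $P$ at level $a$ even though $A$ has no parents in $\g'$, and — where a strict dependence rather than an identity is needed — invoking strong completeness of d-separation \citep{meek1995strong}. Everything else is a routine transcription of the arguments in \cref{lem:W-b} and \cref{lem:M-a}.
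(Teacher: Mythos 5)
Your construction coincides with the paper's: the same sparse subgraph $\g'$ (keeping only $M_i \to M_{i_r}$, $B_j \to M_{i_r}$ and a shortest causal path from $M_{i_r}$ to $Y$), the same degenerate law with $A \equiv 1$, an identity path and $\E[M_{i_r} \mid M_i, B_j] = M_i B_j$, the same term-by-term evaluation giving $\Gamma(M_i) = M_i(\E[B_j] - B_j) + M_{i_r}$, and the same passage to non-degenerate laws $P_n$ weakly converging to $P$. The only departure is your separate treatment of the sub-case $B_j \equiv A$: the paper applies the single construction with $A \equiv 1$ to every $B_j \in \{A\} \cup \Oset \cup \Mset$, which would render $B_j$ constant in that sub-case, so your variant with $P(A=1)=c\in(0,1)$, $\rho(\Omin)\equiv c$ and $\E[M_{i_r}\mid A, M_i]=AM_i$, yielding $\Gamma(M_i)=M_i(1-A/c)+AM_{i_r}/c$, is a sound (and arguably needed) refinement of the same argument rather than a different route.
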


\begin{proof}
Since $M_{i_r}$ is a mediator, let $p=\langle M_{i_r}, \dots, Y \rangle$ be the shortest path from $M_{i_r}$ to $Y$. Let $\g'$ be a subgraph of $\g$ that is on same set of vertices but only contains edges $M_i \rightarrow M_{i_r}$, $B_j \rightarrow M_{i_r}$ and all edges on $p$; see \cref{fig:proof-nec-M-ab}.

Choose $P$ that is Markov to $\g'$ such that the following hold almost surely:
\begin{enumerate}
\item $A = 1$.
\item $M_{i_r} = \dots = Y$ along path $p$.
\item $\E[M_{i_r} \mid M_i, B_j] = M_i B_j$.
\end{enumerate}
Hence, $AY / \rho(\Omin) = M_{i_r}$ almost surely.
By \cref{eqs:ast-M}, our goal is to show that
\begin{equation*}
\begin{split}
\Gamma(M_i) &= \E[ M_{i_r} \mid M_i, \Pa(M_i, \g)]  + \E[ M_{i_r} \mid M_{i_r}, \Pa(M_{i_r}, \g)] - \E[ M_{i_r} \mid \Pa(M_{i_r}, \g)] \\
& \quad + \sum_{l \neq r} \left\{\E[ M_{i_r} \mid M_{j_l}, \Pa(M_{j_l}, \g)] - \E[ M_{i_r} \mid \Pa(M_{j_l}, \g)] \right\}
\end{split}
\end{equation*}
depends on $M_i$ non-trivially under $P$. 
By the local Markov property on $\g'$, it is easy to see that
\begin{equation*}
\begin{split}
\E[M_{i_r} \mid M_i, \Pa(M_i, \g)] &= \E\left\{\E[M_{i_r} \mid M_i, B_j, \Pa(M_i, \g)] \mid M_i, \Pa(M_i, \g) \right\}\\
&\stackrel{\g'}{=} \E\left\{\E[M_{i_r} \mid M_i, B_j] \mid M_i, \Pa(M_i, \g) \right\}\\
&= \E[M_i B_j \mid M_i, \Pa(M_i, \g)] \stackrel{\g'}{=} M_i \E[B_j],
\end{split}
\end{equation*}
where in the last step we used the fact that $B_j \notin \Pa(M_i, \g)$. 
We also have
\[\E[M_{i_r} \mid M_{i_r}, \Pa(M_{i_r}, \g)] - \E[M_{i_r} \mid \Pa(M_{i_r}, \g)] \stackrel{\g'}{=} M_{i_r} - \E[M_{i_r} \mid M_i, B_j] = M_{i_r} - M_i B_j\]
Further, for any other child $M_{i_l}$ of $M_i$ ($l \neq r$), 
\begin{equation*}
\E[M_{i_r} \mid M_{i_l}, \Pa(M_{i_l}, \g)] - \E[M_{i_r} \mid \Pa(M_{i_l}, \g)] \stackrel{\g'}{=} 0
\end{equation*}
by $M_{i_r}$ being a non-descendant of $M_{i_l}$ on $\g'$ and the local Markov property.
Finally, we have 
\[ \Gamma(M_i) = M_i (\E[B_j] - B_j) + M_{i_r} ,\]
which depends on $M_i$ whenever $B_j$ is not a constant. Finally, to finesse the fact that $P$ is degenerate, consider a sequence of non-degenerate laws $P_n$ that weakly converges to $P$ in $\model(\g', \Vset)$. Then, $\Gamma(M_i)$ depends on $M_i$ under $P_n$ for a large enough $n$. 
\end{proof}

\subsection{Case (M-c)}

\begin{lemma} \label{lem:M-c}
Under the assumptions of \cref{thm:criteria}, suppose that variable $M_i \in \Mset(\g) \setminus \{Y\}$ satisfies (M-c), but neither (M-a) nor (M-b) in $\g$. Then, there exists a non-degenerate law $P \in \model(\g, \Vset)$, under which $\Psi_{a,P,\text{eff}}^{1}(\Vset; \g)$ non-trivially depends on $M_i$.
\end{lemma}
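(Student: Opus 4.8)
The plan is to carry over, to mediators, the completeness argument used for Case (W-c) in \cref{lem:W-c}. Under hypothesis (M-c) we are handed the edges $B_j\to M_i\to M_{i_r}$ with $B_j\not\to M_{i_r}$, a path $p=\langle B_j,\dots,S\rangle$ with $S\in\{A,Y\}\cup\Omin(\g)$ and $S\notin\Pa(M_{i_r},\g)$ that d-connects given $\Pa(M_{i_r},\g)$, and, since $M_{i_r}\in\Ch(M_i)\cap\Mset$ is a mediator, a causal path $q$ from $M_{i_r}$ to $Y$. First I would invoke the mediator counterpart of \cref{lem:cond2-w-graph} (in \cref{sec:app-aug-graph}) to fix $p,q$ as short as possible, with every collider of $p$ joined to $M_i$ by a short causal path $c_f$, and then split into four sub-cases (M-c1)--(M-c4) according to whether $p$ and $q$ share a vertex and whether some child of $M_i$ other than $M_{i_r}$ lies on $p$ or on $q$ --- exactly parallel to (W-c1)--(W-c4).

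In each sub-case I would take a degenerate law $P$ Markov to the subgraph $\g'$ of $\g$ that keeps only the edges on $B_j\to M_i\to M_{i_r}$, on $p$, on $q$, on the $c_f$, and on a causal extension carrying $S$ and $M_{i_r}$ forward into $Y$. I would impose, $P$-almost surely, $A\equiv 1$ and $P(A=1\mid\Omin)\equiv c$ constant (so $T_{a,P}\equiv Y/c$), identity maps along $q$ and along the $S$-to-$Y$ extension, $Y=c\,g(S)\,M_{i_r}$ for a function $g$ to be chosen, and $\E[M_{i_r}\mid M_i]=0$; in the intersecting sub-cases an interior vertex plays the surrogate role that $W_s$ plays in \cref{lem:W-c2,lem:W-c34}. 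Then I would compute $\Gamma(M_i)$ of \cref{eqs:ast-M} term by term via the local Markov property of $\g'$: the $\E[\,\cdot\mid M_i,\Pa(M_i,\g)]$ term is $0$ because $\E[M_{i_r}\mid M_i]=0$; for every child $M_{i_l}$ with $l\neq r$ the paired difference is $0$ because $M_{i_l}$ is a non-descendant on $\g'$ of every variable entering $T_{a,P}$; and the surviving part of $\Gamma(M_i)$ equals $M_{i_r}\,\E[g(S)\mid M_i,\Pa(M_{i_r},\g)\setminus\{M_i\}]$, up to a summand that does not involve $M_i$.

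It then suffices to exhibit $g$ and $P$ making this residual a non-constant function of $M_i$. By strong completeness of d-separation \citep{meek1995strong} this is equivalent to $S\not\indep_{\g'}M_i\mid\Pa(M_{i_r},\g)\setminus\{M_i\}$, which follows from \cref{lem:inducing-path} once $\langle M_i,B_j\rangle\oplus p$ is checked to be an inducing path between $M_i$ and $S$ relative to $\Pa(M_{i_r},\g)\setminus\{M_i\}$: $B_j$ is a non-collider and $B_j\notin\Pa(M_{i_r},\g)$ by (M-c); no non-collider of $p$ lies in $\Pa(M_{i_r},\g)$, since $p$ d-connects given that set; and every collider of $p$ is ancestral to $M_i$ on $\g'$ through the $c_f$. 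As in \cref{lem:M-a,lem:M-b}, a last step replaces the degenerate $P$ by non-degenerate $P_n\to P$ in $\model(\g',\Vset)$, for which $\Gamma(M_i)$ still depends on $M_i$ and positivity continues to hold.

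I expect the main obstacle to be the case analysis (M-c2)--(M-c4): when $p$ and $q$ overlap, or when an additional child of $M_i$ falls on one of them, the subgraph $\g'$ and the surrogate vertex must be reselected, the inducing-path property re-established on the modified $\g'$, and the cancellation of every unwanted term of $\Gamma(M_i)$ re-checked. This is further complicated by features peculiar to mediators --- $T_{a,P}$ carries the factor $A$, and $S$ may be the endpoint $A$ or $Y$ rather than an interior $\Omin$-vertex --- which alter the $S$-to-$Y$ extension and the bookkeeping around the constant $c$.
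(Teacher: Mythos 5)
Your overall architecture matches the paper's: the same split into sub-cases (M-c1)--(M-c4) via \cref{lem:cond2-m-graph}, a degenerate law Markov to a stripped-down subgraph $\g'$, term-by-term cancellation of $\Gamma(M_i)$ via the local Markov property, an inducing-path argument (\cref{lem:inducing-path}) plus strong completeness to force dependence, and a final approximation by non-degenerate laws. Your instinct to set $A\equiv 1$ so that $T_{a,P}$ collapses to $Y$ is also exactly what the paper does in sub-cases (M-c2)--(M-c4) (\cref{lem:M-c2,lem:M-c34}), where the relevant connection runs into $Y$.

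The gap is in sub-case (M-c1), which is precisely the case where $p$ and $q$ do not meet, so necessarily $S\equiv A$ or $S\equiv O_1\in\Omin$ rather than $S\equiv Y$. Your blanket prescription ``$A\equiv 1$ a.s.\ and $P(A=1\mid\Omin)\equiv c$'' is internally inconsistent (degenerate $A$ forces $c=1$) and, more importantly, it kills the very dependence you need to exhibit: if $S\equiv A$ (including the special case $B_j\equiv A$), then $g(S)=g(A)$ is a constant under your law, the surviving term $M_{i_r}\,\E[g(S)\mid M_i,\Pa(M_{i_r},\g)\setminus\{M_i\}]$ is constant in $M_i$, and no choice of $g$ can rescue it; if $S\equiv O_1\in\Omin$, then $\rho(\Omin)\equiv 1$, so the $\Omin$-dependence cannot enter through the weight, and your substitute device --- adding a causal extension $S\to\dots\to Y$ and setting $Y=c\,g(S)\,M_{i_r}$ --- is a new construction whose cancellations are not covered by the (M-c1) hypothesis (nothing prevents other children of $M_i$, or vertices of $p$, from lying on that extension, which would re-introduce uncontrolled terms). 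The paper instead keeps $A$ random in (M-c1): for $S\equiv A$ it takes $\rho(\Omin)=c\in(0,1)$ constant and repeats the argument of \cref{lem:W-c1} with $(W_i,W_j,W_{j_k})\mapsto(B_j,M_i,M_{i_r})$ and $(O_1,O_2)\mapsto(A,Y)$, so the surviving term is $M_{i_r}\E[A\mid M_i,\Pa(M_{i_r},\g)\setminus\{M_i\}]/c$; for $S\equiv O_1\in\Omin$ it keeps $Y=M_{i_r}$ along $q$ and routes the $O_1$-dependence through the propensity, setting $\rho(\Omin)=\rho(O_1)$ and adding a path between $O_1$ and $A$ to $\g'$, so the surviving term involves $\E[1/\rho(O_1)\mid M_i,A=1,\Pa(M_{i_r},\g)\setminus\{M_i\}]$ and the inducing path must be checked relative to $\{A\}\cup\Pa(M_{i_r},\g)\setminus\{M_i\}$ (which is why the paper also argues $A$ is not on $p$). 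Without this case-specific handling of $A$ and of the weight, your proof does not go through in (M-c1).
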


\smallskip \begin{proof}
Let path $p = \langle B_j, \dots, S\rangle$ and $q: M_{i_r} \rightarrow \dots \rightarrow Y$ be chosen according to \cref{lem:cond2-m-graph}. Then, depending on whether $p$ and $q$ intersect, and if so how they intersect, they are 4 further subcases.

\begin{enumerate}[label=(M-c\arabic*)]
\item \label{case:M-c1}  No vertex in $\Ch(M_i, \g) \setminus \{M_{i_r}\}$ is on either $p$ or $q$. Further, there is no vertex that is on both $p$ and $q$.

Then, necessarily, $S \equiv A$ or $S \equiv O_1 \in \Omin$. This also entails the special case when $B_j \in \{A\} \cup  \Omin$ with $|p|=0$.

\item \label{case:M-c2} No vertex in $\Ch(M_i, \g) \setminus \{M_{i_r}\}$ is on either $p$ or $q$, but there is a vertex that is on both $p$ and $q$. 

\item \label{case:M-c3} A vertex in $\Ch(M_i, \g) \setminus \{M_{i_r}\}$ is on $p$. 

\item \label{case:M-c4} A vertex in $\Ch(M_i, \g) \setminus \{M_{i_r}\}$ is on $q$. 
\end{enumerate}

The result is established under each case: \cref{lem:M-c1} proves \ref{case:M-c1}, \cref{lem:M-c2} proves \ref{case:M-c2}, and \cref{lem:M-c34} proves \ref{case:M-c3} and \ref{case:M-c4}.
\end{proof}

\begin{figure}[!htb]
\centering
\begin{tikzpicture}
\tikzset{rv/.style={circle,inner sep=1pt,fill=gray!20,draw,font=\sffamily}, 
redv/.style={circle,inner sep=1pt,fill=white,draw,dashed,font=\sffamily}, 
ov/.style={circle,inner sep=1pt,fill=gray!20,draw=red,thick,font=\sffamily}, 
sv/.style={circle,inner sep=1pt,fill=gray!20,draw,font=\sffamily,minimum size=1mm}, 
node distance=12mm, >=stealth, every node/.style={scale=0.8}}
\begin{scope} \node[rv] (Bi) {$B_j$};
\node[rv, right of=Bi] (C1) {$C_1$};
\node[right of=C1] (E1) {$\dots$};
\node[rv, right of=E1] (C2) {$C_2$};
\node[right of=C2] (E2) {$\dots$};
\node[rv, right of=E2] (C3) {$C_3$};
\node[rv, right of=C3] (O1) {$A$};
\draw[->, very thick, color=brown] (Bi) -- (C1); 
\draw[<-, very thick, color=brown] (C1) -- (E1);
\draw[->, very thick, color=brown] (E1) -- (C2);
\draw[<-, very thick, color=brown] (C2) -- (E2); 
\draw[->, very thick, color=brown] (E2) -- (C3);
\draw[<-, very thick, color=brown] (C3) -- (O1);
\node[below of=C1] (E3) {$\vdots$};
\node[sv, below of=E3] (F1) {};
\node[rv, below of=F1] (Mj) {$M_i$};
\draw[->, very thick, color=blue] (C1) -- (E3);
\draw[->, very thick, color=blue] (E3) -- (F1);
\draw[->, very thick, color=blue] (F1) -- (Mj);
\node[below of=E1, xshift=-4mm, yshift=-4mm] (E4) {\reflectbox{$\ddots$}};
\node[sv, below of=E1, xshift=4mm, yshift=4mm] (F2) {};
\draw[->, very thick, color=blue] (C2) -- (F2);
\draw[->, very thick, color=blue] (F2) -- (E4);
\draw[->, very thick, color=blue] (E4) -- (F1);
\node[below of=E2, yshift=-3mm, xshift=-4mm] (E6) {$\dots$};
\draw[->, very thick, color=blue, bend left] (C3) to (E6);
\draw[->, very thick, color=blue, out=180, in=-45] (E6) to (F2);
\draw[->, very thick, color=blue] (Bi) to (Mj);
\node[rv, below of=Mj] (Mjr) {$M_{i_r}$};
\node[right of=Mjr] (E7) {$\dots$};
\node[rv, right of=E7] (Y) {$Y$};
\draw[->, very thick, color=blue] (Mj) to (Mjr);
\draw[->, very thick, color=red] (Mjr) to (E7);
\draw[->, very thick, color=red] (E7) to (Y);
\node[below of=Y] {(a) $S \equiv A$};
\end{scope}
\begin{scope}[xshift=7cm] \node[rv] (Bi) {$B_j$};
\node[rv, right of=Bi] (C1) {$C_1$};
\node[right of=C1] (E1) {$\dots$};
\node[rv, right of=E1] (C2) {$C_2$};
\node[right of=C2] (E2) {$\dots$};
\node[rv, right of=E2] (C3) {$C_3$};
\node[rv, right of=C3] (O1) {$O_1$};
\draw[->, very thick, color=brown] (Bi) -- (C1); 
\draw[<-, very thick, color=brown] (C1) -- (E1);
\draw[->, very thick, color=brown] (E1) -- (C2);
\draw[<-, very thick, color=brown] (C2) -- (E2); 
\draw[->, very thick, color=brown] (E2) -- (C3);
\draw[<-, very thick, color=brown] (C3) -- (O1);
\node[below of=C1] (E3) {$\vdots$};
\node[sv, below of=E3] (F1) {};
\node[rv, below of=F1] (Mj) {$M_i$};
\draw[->, very thick, color=blue] (C1) -- (E3);
\draw[->, very thick, color=blue] (E3) -- (F1);
\draw[->, very thick, color=blue] (F1) -- (Mj);
\node[below of=E1, xshift=-4mm, yshift=-4mm] (E4) {\reflectbox{$\ddots$}};
\node[sv, below of=E1, xshift=4mm, yshift=4mm] (F2) {};
\draw[->, very thick, color=blue] (C2) -- (F2);
\draw[->, very thick, color=blue] (F2) -- (E4);
\draw[->, very thick, color=blue] (E4) -- (F1);
\node[below of=E2, yshift=-3mm, xshift=-4mm] (E6) {$\dots$};
\draw[->, very thick, color=blue, bend left] (C3) to (E6);
\draw[->, very thick, color=blue, out=180, in=-45] (E6) to (F2);
\draw[->, very thick, color=blue] (Bi) to (Mj);
\node[rv, below of=Mj] (Mjr) {$M_{i_r}$};
\node[right of=Mjr] (E7) {$\dots$};
\node[rv, right of=E7] (Y) {$Y$};
\draw[->, very thick, color=blue] (Mj) to (Mjr);
\draw[->, very thick, color=red] (Mjr) to (E7);
\draw[->, very thick, color=red] (E7) to (Y);
\node[below of=Y] {(b) $S \equiv O_1 \in \Omin$};
\end{scope}
\end{tikzpicture}
\caption{Case \ref{case:M-c1}: path $p$ ({\color{brown} $\bm{-}$}), path $q$ ({\color{red} $\bm{-}$})}
\label{fig:proof-nec-M-c1}
\end{figure}

\medskip \begin{lemma} \label{lem:M-c1}
Under the assumptions of \cref{thm:criteria}, suppose that variable $M_i \in \Mset(\g) \setminus \{Y\}$ satisfies \ref{case:M-c1}, but neither (M-a) nor (M-b) in $\g$. Then, there exists a non-degenerate law $P \in \model(\g, \Vset)$, under which $\Psi_{a,P,\text{eff}}^{1}(\Vset; \g)$ non-trivially depends on $M_i$.
\end{lemma}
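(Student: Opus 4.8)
The plan is to mirror the strategy of \cref{lem:W-c1}: exhibit an explicit, possibly degenerate, law $P$ that is Markov to a carefully trimmed subgraph $\g'$ of $\g$, under which every summand of $\Gamma(M_i)$ in \cref{eqs:ast-M} except one vanishes while the surviving summand depends nontrivially on $M_i$; nondegeneracy is then recovered by replacing $P$ with a sequence $P_n \rightsquigarrow P$ inside $\model(\g', \Vset)$. First I would fix the paths: using \cref{lem:cond2-m-graph}, take $p = \langle B_j, \dots, S \rangle$ d-connecting given $\Pa(M_{i_r}, \g)$ and $q\colon M_{i_r} \to \dots \to Y$ a shortest causal path; in case \ref{case:M-c1} these are vertex-disjoint, no vertex of $\Ch(M_i,\g) \setminus \{M_{i_r}\}$ lies on either, and $S \in \{A\} \cup \Omin(\g)$. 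Let $\g'$ be the subgraph of $\g$ on the same vertex set keeping only the edges of $B_j \to M_i \to M_{i_r}$, of $p$, of $q$, and, for every collider $C_f$ on $p$, of a shortest causal path $c_f$ from $C_f$ to $M_i$, which exists on $\g$ by \cref{lem:cond2-m-graph}. Then $M_i$ is a non-descendant in $\g'$ of $M_{i_r}$, of $S$, and of every $M_{i_l}$ with $l \ne r$, while every collider of $p$ is an ancestor of $M_i$ in $\g'$.

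Next I would choose $P$ Markov to $\g'$ enforcing, almost surely: $A = 1$ (so $\I_a(A) \equiv 1$); $q$ an identity path, i.e. $M_{i_r} = \dots = Y$; a prescribed form for $\E[M_{i_r} \mid M_i, \Pa(M_{i_r},\g) \setminus \{M_i\}]$; and control of the denominator $\rho(\Omin) \equiv P(A=1 \mid \Omin)$. Here the two sub-cases diverge. When $S \equiv A$, I would collapse $p$ so that $A = \dots = U$ up to the vertex $U$ preceding $M_i$ on $p$ and set $\rho(\Omin)$ to a constant, exactly as in the optimality-of-$\Oset$ argument of \cref{sec:proof-lem:Oset-informative}, absorbing the factor $A$ of $T_{a,P}$ into the endpoint of $p$. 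When $S \equiv O_1 \in \Omin$, I would instead choose $\rho$ to depend only on $\Omin \setminus \{O_1\}$ (legitimate because $\Omin$ is the inclusion-minimal set with $A \indep_\g \Oset \setminus \Omin \mid \Omin$, so $O_1$ can be made conditionally irrelevant for $A$), so that $T_{a,P} = \I_a(A)Y/\rho(\Omin)$ again reduces, up to a multiplicative constant, to a function of $M_{i_r}$. With this setup I would compute each conditional expectation in \cref{eqs:ast-M} via the local Markov property on $\g'$: the $M_i$-term vanishes; for every $M_{i_l}$ with $l \ne r$ the ``plus'' and ``minus'' terms cancel because $M_{i_r}$ (hence $T_{a,P}$) is a non-descendant of $M_{i_l}$ in $\g'$; and the remainder collapses to a constant multiple of a conditional expectation of a function of $S$ (equivalently of $M_{i_r}$) given $M_i$ and $\Pa(M_{i_r},\g) \setminus \{M_i\}$.

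It then remains to show the surviving term genuinely depends on $M_i$ for some admissible $P$. By strong completeness of d-separation \citep{meek1995strong}, this is equivalent to the d-connection $S \not\indep_{\g'} M_i \mid \Pa(M_{i_r},\g) \setminus \{M_i\}$, which I would obtain from \cref{lem:inducing-path} by checking that $\langle M_i, B_j \rangle \oplus p$ is an inducing path between $M_i$ and $S$ with respect to $\Pa(M_{i_r},\g) \setminus \{M_i\}$: $B_j$ is a non-collider and $B_j \notin \Pa(M_i,\g)$ by (M-c); no non-collider of $p$ lies in $\Pa(M_{i_r},\g)$, else $p$ would fail to d-connect given $\Pa(M_{i_r},\g)$; and every collider of $p$ is an ancestor of $M_i$ in $\g'$ by construction. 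The hardest part will be the bookkeeping that makes the two sub-cases $S \equiv A$ and $S \equiv O_1 \in \Omin$ go through uniformly, in particular keeping $T_{a,P}$ tractable despite the denominator $\rho(\Omin)$, and verifying that the imposed almost-sure identities are jointly realizable by a single $P$ Markov to $\g'$ (this is exactly what the auxiliary paths $c_f$ from the colliders of $p$ to $M_i$ secure, by ensuring the relevant colliders are activated). The closing weak-convergence step $P_n \rightsquigarrow P$ within $\model(\g', \Vset)$ is routine and identical to the earlier completeness lemmas.
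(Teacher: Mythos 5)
Your overall architecture (trimmed subgraph $\g'$ with the collider paths $c_f$, term-by-term cancellation via local Markov properties, inducing path $\langle M_i, B_j\rangle \oplus p$, strong completeness, then a weakly convergent non-degenerate sequence) matches the paper, and your treatment of the sub-case $S \equiv A$ is essentially the paper's (it reduces to the argument of \cref{lem:W-c1} with $B_j, M_i, M_{i_r}$ in place of $W_i, W_j, W_{j_k}$ and $A, Y$ in place of $O_1, O_2$, with $\rho(\Omin)$ a constant). However, your sub-case $S \equiv O_1 \in \Omin$ has a genuine gap: you propose to choose $\rho$ to depend only on $\Omin \setminus \{O_1\}$ so that $T_{a,P}$ ``reduces, up to a multiplicative constant, to a function of $M_{i_r}$.'' But if $T_{a,P}$ is a function of $M_{i_r}$ alone, the only surviving summand of $\Gamma(M_i)$ in \cref{eqs:ast-M}, namely $\E[T_{a,P} \mid M_{i_r}, \Pa(M_{i_r},\g)]$, is then itself just that function of $M_{i_r}$ (indeed with $\E[M_{i_r}\mid M_i]=0$ one gets $\Gamma(M_i)=M_{i_r}$ exactly, as in the degenerate $A\equiv 1$ computation), and it cannot depend on $M_i$. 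The inducing-path d-connection you invoke, $S \not\indep_{\g'} M_i \mid \Pa(M_{i_r},\g)\setminus\{M_i\}$, is real but vacuous for your construction, because $S=O_1$ no longer appears anywhere in the expression whose $M_i$-dependence you must exhibit: case \ref{case:M-c1} only guarantees a d-connection from $M_i$ (through $B_j$ and $p$) to $S$, not to $A$ or to $\Omin\setminus\{O_1\}$, so removing $O_1$ from $\rho$ severs the only channel available.

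The paper does the opposite: it keeps $A$ random, adds to $\g'$ a path between $O_1$ and $A$, and chooses the law so that $\rho(\Omin)=\rho(O_1)$ is a non-constant function of $O_1$; then $T_{a,P} = A M_{i_r}/\rho(O_1)$ and the surviving term becomes $M_{i_r}\,\E[A \mid \Pa(M_{i_r},\g)]\,\E[1/\rho(O_1) \mid M_i, A=1, \Pa(M_{i_r},\g)\setminus\{M_i\}]$, whose $M_i$-dependence is precisely what the inducing path to $O_1$ (with respect to $\{A\}\cup\Pa(M_{i_r},\g)\setminus\{M_i\}$, noting $A$ cannot lie on $p$ by minimality of $p$) plus strong completeness delivers. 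Two further points: your appeal to the minimality of $\Omin$ to justify making $O_1$ ``conditionally irrelevant for $A$'' is backwards (minimality says no proper subset of $\Omin$ suffices for the defining d-separation); one can of course build a particular law in which $A$ ignores $O_1$, but, as above, that is self-defeating here. And your blanket stipulation ``$A=1$ almost surely'' in the common list is incompatible with both sub-cases: with $A\equiv 1$ the factor $\I_a(A)/\rho(\Omin)$ is identically $1$, so neither the $S\equiv A$ mechanism (dependence through $\E[A\mid M_i,\cdot]$) nor the $S\equiv O_1$ mechanism (dependence through $\E[1/\rho(O_1)\mid M_i,\cdot]$) can operate; $A\equiv 1$ is the right device only in the later cases \ref{case:M-c2}--\ref{case:M-c4}, where the dependence flows through $Y$ itself.
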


\smallskip \begin{proof}
There are two cases.
\begin{enumerate}
\item $S \equiv A$ (\cref{fig:proof-nec-M-c1}(a)): Let $P$ be chosen such that $\rho(\Omin) = c$ for constant $c \in (0,1)$. Then the proof follows from that of \cref{lem:W-c1} for \ref{case:W-c1} by  (i) replacing $W_i, W_j, W_{j_k}$ with $B_j, M_i, M_{i_r}$, and (ii) replacing $O_1, O_2$ with $A, Y$.

\item $S \equiv O_1 \in \Omin$ (\cref{fig:proof-nec-M-c1}(b)): We shall prove the dependency on $M_i$ under a law $P$ that is Markov to a subgraph $\g'$ of $\g$. Let $\g'$ be chosen as a subgraph of $\g$ on the same set of vertices, but only with edges on the following paths:
\begin{enumerate}[(1)]
\item $B_j \rightarrow M_i \rightarrow M_{i_r}$,
\item $p = \langle B_j, \dots, O_1 \rangle$,
\item $q = \langle M_{i_r}, \dots, Y \rangle$,
\item when $p$ contains colliders, say $C_1, \dots, C_F$ ($F \geq 1$), then for each collider $C_f$ also include the shortest causal path $c_f$ from $C_f$ to $W_j$, which exist on $\g$ by \cref{lem:cond2-m-graph} \ref{case:p4},
\item also a path between $O_1$ and $A$ (omitted from \cref{fig:proof-nec-M-c1}(b)).
\end{enumerate}

Let $P$ be chosen such that the following hold almost surely:
\begin{enumerate}[(i)]
\item $\E[M_{i_r} \mid M_i] = 0$,
\item $M_{i_r} = \dots = Y$ ($q$ is an identity path),
\item $\rho(\Omin) = \rho(O_1)$ for some function $\rho$.
\end{enumerate}
It follows that $AY / \rho(\Omin) = AM_{i_r} / \rho(O_1)$ almost surely.
Rewriting \cref{eqs:ast-M}, we shall prove that 
\begin{multline*}
\Gamma(M_i) = \E[AM_{i_r} / \rho(O_1) \mid M_i, \Pa(M_i, \g)] \\
 + \E[AM_{i_r} / \rho(O_1) \mid M_{i_r}, \Pa(M_{i_r}, \g)] - \E[AM_{i_r} / \rho(O_1) \mid \Pa(M_{i_r}, \g)]\\
+ \sum_{l \neq r} \left\{ \E[AM_{i_r} / \rho(O_1) \mid M_{i_l}, \Pa(M_{i_l}, \g)] - \E[AM_{i_r} / \rho(O_1) \mid \Pa(M_{i_l}, \g)] \right \}
\end{multline*}
depends on $M_i$ under $P$. Invoking local Markov properties on $\g'$, with a similar argument to that of \cref{lem:W-c1}, one can show that 
\[ \E[AM_{i_r} / \rho(O_1) \mid M_i, \Pa(M_i, \g)] =  \E[AM_{i_r} / \rho(O_1) \mid \Pa(M_{i_r}, \g)] = 0\]
and 
\[ \E[AM_{i_r} / \rho(O_1) \mid M_{i_l}, \Pa(M_{i_l}, \g)] - \E[AM_{i_r} / \rho(O_1) \mid \Pa(M_{i_l}, \g)] = 0, \quad l \neq r.\]
Hence, we are left with
\begin{equation*}
\begin{split}
\Gamma(M_i) &= \E[AM_{i_r} / \rho(O_1) \mid M_{i_r}, \Pa(M_{j_r}, \g)] \\
&= M_{i_r} \E[A / \rho(O_1) \mid M_{i_r}, \Pa(M_{i_r}, \g)] \\
&= M_{i_r} \E[A / \rho(O_1) \mid \Pa(M_{i_r}, \g)] \\
&= M_{i_r} \E[A \mid \Pa(M_{i_r}, \g)] \E[1 / \rho(O_1) \mid M_i, A=1, \Pa(M_{i_r}, \g) \setminus \{M_i\}],
\end{split}
\end{equation*}
where the third step follows from $A, O_1$ being non-descendants of $M_{i_r}$ on $\g'$ and the local Markov property. Note that path $\langle M_i, B_j \rangle \oplus p$ is an inducing path between $M_i$ and $O_1$ with respect to $\{A\}\cup \Pa(M_{i_r}, \g) \setminus \{M_i\}$:
\begin{enumerate}
\item $B_j$ is a non-collider and $B_j \notin \Pa(M_{i_r}, \g)$ by (M-c); $A$ is not on path $p$ since otherwise a shorter $p$ can be chosen for $S \equiv A$; no non-collider on $p$ is in $\Pa(M_{i_r}, \g)$ by (M-c). These conditions holds on $\g$ and hence also on $\g'$.
\item Every collider on $p$ is an ancestor of $M_i$ in $\g'$.
\end{enumerate}
Hence, by \cref{lem:inducing-path} and strong completeness of d-separations \citep{meek1995strong}, $\E[1 / \rho(O_1) \mid M_i, A=1, \Pa(M_{i_r}, \g) \setminus \{M_i\}]$ depends on $M_i$ for some choice of $\rho(\cdot)$. 
Finally, to finesse the fact that $P$ is degenerate, consider a sequence of non-degenerate laws $P_n$ that weakly converges to $P$ in $\model(\g', \Vset)$. Then, $\Gamma(M_i)$ depends on $M_i$ under $P_n$ for a large enough $n$. 
\end{enumerate}
\end{proof}

\medskip \begin{lemma} \label{lem:M-c2}
Under the assumptions of \cref{thm:criteria}, suppose that variable $M_i \in \Mset(\g) \setminus \{Y\}$ satisfies \ref{case:M-c2}, but neither (M-a) nor (M-b) in $\g$. Then, there exists a non-degenerate law $P \in \model(\g, \Vset)$, under which $\Psi_{a,P,\text{eff}}^{1}(\Vset; \g)$ non-trivially depends on $M_i$.
\end{lemma}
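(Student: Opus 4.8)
The plan is to dispatch case \ref{case:M-c2} by reduction to the already-settled case \ref{case:M-c1} (\cref{lem:M-c1}), in exact analogy with the way \cref{lem:W-c2} was reduced to \cref{lem:W-c1}. First I would invoke the graphical configuration lemma for mediators, \cref{lem:cond2-m-graph}, to extract vertices $M_s, M_l, M_t$ witnessing that the d-connecting path $p = \langle B_j, \dots, S\rangle$ and the causal path $q\colon M_{i_r}\to\dots\to Y$ merge: at the merge vertex $M_l$ the tail of $q$ towards $Y$ remains causal, so $M_l$ is a mediator, while the tail of $p$ from $M_l$ onward still reaches $S \in \{A,Y\}\cup\Omin(\g)\setminus\Pa(M_{i_r},\g)$. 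I would then split according to whether $S\equiv A$ or $S\in\Omin(\g)$, since only in the latter branch does the propensity factor $\rho(\Omin)\equiv P(A=1\mid\Omin)$ appearing in $T_{a,P}$ interact nontrivially with $p$; in the former branch one sets $\rho\equiv c\in(0,1)$ constant and the computation collapses to the $T_{a,P}$ analysis of \cref{lem:W-c1}.

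Next I would build the subgraph $\g'$ of $\g$ on the same vertex set retaining only the edges of $B_j\to M_i\to M_{i_r}$, of $p$, of $q$, and---for each collider $C_f$ on $p$---of the shortest causal path from $C_f$ to $M_i$ supplied by \cref{lem:cond2-m-graph}. Then I choose a degenerate law $P$ Markov to $\g'$ that enforces deterministic identifications along the merged portions of $p$ and $q$ (so that $AY/\rho(\Omin)$ collapses, up to the $\rho$ factor, to $f(M_s)\,M_{i_r}$ for a free function $f$), together with $\E[M_{i_r}\mid M_i]=0$ and $M_l=f(M_s)\,M_t$, with $M_s$ playing the role that $O_1$ played in \cref{lem:W-c1} (and $S\equiv A$ or an element of $\Omin$ handled by the case split above). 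With these choices I would read off $\Gamma(M_i)$ from \cref{eqs:ast-M} and evaluate it term by term: the $M_i$-term and every term for children other than $M_{i_r}$ vanish by the local Markov property on $\g'$ (the relevant variables being non-descendants of the conditioned child), leaving a single surviving term proportional to $M_{i_r}\,\E[f(M_s)\mid M_i,\Pa(M_{i_r},\g)\setminus\{M_i\}]$ in the $S\equiv A$ branch, and to $M_{i_r}\,\E[A\mid\Pa(M_{i_r},\g)]\,\E[1/\rho\mid M_i,A{=}1,\Pa(M_{i_r},\g)\setminus\{M_i\}]$ in the $S\in\Omin$ branch. To finish, I would observe that $\langle M_i, B_j\rangle \oplus p(B_j,M_s)$ is an inducing path between $M_i$ and $M_s$ (resp.\ between $M_i$ and $O_1$) with respect to $\{A\}\cup\Pa(M_{i_r},\g)\setminus\{M_i\}$ in $\g'$: $B_j$ is a non-collider outside $\Pa(M_{i_r},\g)$ by (M-c), no non-collider of $p$ is in $\Pa(M_{i_r},\g)$ (else $p$ would not d-connect), $A$ is not on $p$ (else a shorter path to $S\equiv A$ would have been selected), and every collider of $p$ is ancestral to $M_i$ in $\g'$ by construction. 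Then \cref{lem:inducing-path} yields the required d-connection, strong completeness of d-separation \citep{meek1995strong} produces a law in $\model(\g',\Vset)$ realizing a genuine dependence for suitable $f$ (resp.\ $\rho$), and a sequence of non-degenerate $P_n$ weakly converging to $P$ in $\model(\g',\Vset)$ upgrades this to the desired non-degenerate $P\in\model(\g,\Vset)$ under which $\Psi_{a,P,\text{eff}}^{1}(\Vset;\g)$ depends nontrivially on $M_i$.

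The main obstacle I anticipate is the bookkeeping around the merge structure rather than any new conceptual ingredient: one must check that the deterministic identifications imposed along $p$ and $q$ are mutually consistent (no vertex is forced to equal two incompatible expressions), that $M_s, M_l, M_t$ can always be located so that the portion of $p$ past $M_l$ remains causal and terminates in $\{A\}\cup\Omin(\g)$, and---most delicately---that the inducing-path verification survives restriction to $\g'$, in particular that every collider on $p$ retains a causal ancestral route to $M_i$ within $\g'$ and not merely within $\g$. The $S\equiv A$ versus $S\in\Omin$ split must be carried through with care because the propensity factor enters differently, but in both branches the argument follows the template of \cref{lem:W-c1} and \cref{lem:M-c1}.
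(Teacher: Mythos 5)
Your overall template (the subgraph $\g'$ built from $B_j \to M_i \to M_{i_r}$, $p$, $q$ and the collider-to-$M_i$ paths; deterministic identifications along the merged portions; $M_l = f(M_s)M_t$; vanishing of all remaining terms by the local Markov property; an inducing path plus strong completeness; approximation by non-degenerate laws) is the right one and matches the paper's. The genuine problem is your case split. In case \ref{case:M-c2} the endpoint $S$ of $p$ cannot be $A$ or an element of $\Omin(\g)$: every vertex of $q$ is a mediator, so a common vertex of $p$ and $q$ is a descendant of $M_{i_r}$ (hence of $A$), and by \cref{lem:cond2-m-graph}\ref{case:qandpm}(c) the tail of $p$ from the merge vertex is causal and coincides with the tail of $q$, so it terminates at $Y$, i.e.\ $S \equiv Y$. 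Your dichotomy ``$S\equiv A$ versus $S\in\Omin(\g)$'' therefore covers no instance of \ref{case:M-c2} and omits the only configuration that actually occurs; in particular, your anticipated requirement that the portion of $p$ past $M_l$ ``terminates in $\{A\}\cup\Omin(\g)$'' is precisely what fails, and the ``$S\in\Omin$'' computation involving $\E[1/\rho(O_1)\mid M_i, A=1,\dots]$, which needs an inducing path into $\Omin(\g)$, cannot arise here.

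The missing (and simple) step, which is how the paper argues, is to neutralize the propensity factor outright by choosing $P$ with $A=1$ almost surely, so that $AY/\rho(\Omin)=Y$; then the proof of \cref{lem:W-c2} applies verbatim with $O_1$ replaced by $Y$ (and $M_s, M_l, M_t$ in the roles of $W_s, W_l, W_t$), and the surviving term is $M_{i_r}\,\E[f(M_s)\mid M_i,\Pa(M_{i_r},\g)\setminus\{M_i\}]$ --- essentially the expression you wrote in your ``$S\equiv A$'' branch, but obtained without any constant-$\rho$ bookkeeping. With that correction the rest of your argument goes through; as written, however, neither of your two branches proves the lemma for the case it is stated for.
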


\begin{proof}
In this case, $p$ and $q$ will merge and eventually lead to $Y$. Choose $P$ such that $A=1$ almost surely, under which $AY/\rho(\Omin) = Y$. Then the proof goes similarly to that of \cref{lem:W-c2} for \ref{case:W-c2}, where $O_1$ is replaced by $Y$. 
\end{proof}

\medskip \begin{lemma} \label{lem:M-c34}
Under the assumptions of \cref{thm:criteria}, suppose that variable $M_i \in \Mset(\g) \setminus \{Y\}$ satisfies either \ref{case:M-c3} or \ref{case:M-c4}, but neither (M-a) nor (M-b) in $\g$. Then, there exists a non-degenerate law $P \in \model(\g, \Vset)$, under which $\Psi_{a,P,\text{eff}}^{1}(\Vset; \g)$ non-trivially depends on $M_i$
\end{lemma}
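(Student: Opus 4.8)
The plan is to mirror closely the argument already carried out for \cref{lem:W-c34}, under the substitution $(W_i, W_j, W_{j_k}) \mapsto (B_j, M_i, M_{i_r})$ and with the role played there by $b(\Oset)$ and the optimal adjustment set now taken over by $T_{a,P} \equiv \I_a(A)\,Y/\rho(\Omin)$, where $\rho(\Omin) \equiv P(A = 1 \mid \Omin)$. First I would appeal to the M-analogue \cref{lem:cond2-m-graph} of \cref{lem:cond2-w-graph} to extract, from either \ref{case:M-c3} or \ref{case:M-c4}, one and the same graphical configuration in $\g$ — the M-version of the one drawn in \cref{fig:proof-nec-W-c34}: a single vertex $M_{i_l} \in \Ch(M_i, \g) \setminus \{M_{i_r}\}$ with edges $B_j \to M_{i_l}$, $M_i \to M_{i_l}$, $M_{i_r} \to M_{i_l}$, together with a causal path from $M_{i_l}$ to $Y$. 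In case \ref{case:M-c3} this path is the tail of $p$, and since $M_{i_l}$ is a mediator while $p$ is causal, its endpoint $S$ can be neither $A$ nor an element of $\Omin(\g)$, forcing $S \equiv Y$; in case \ref{case:M-c4} it is the tail of $q$, and one additionally invokes acyclicity together with the minimality in the choice of $M_{i_r}$ to deduce the extra edge $B_j \to M_{i_l}$. I would then take $\g'$ to be the sparse subgraph of $\g$ obtained, as in \cref{lem:W-c34}, by keeping only the edges lying on $B_j \to M_i \to M_{i_r}$, on $p$ and on $q$, so that $\Pa(M_i, \g') = \{B_j\}$ and the parents of $M_{i_l}$ in $\g'$ inside this configuration are exactly $\{B_j, M_i, M_{i_r}\}$.

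Next I would build a degenerate $P$ Markov to $\g'$ that (i) collapses the causal path from $M_{i_l}$ to $Y$ to an identity path, so $Y = M_{i_l}$ almost surely, (ii) sets $M_{i_l} = B_j\,M_{i_r}$, (iii) makes $\rho(\Omin)$ equal to a constant $c \in (0,1)$ — by arranging $A \indep_{\g'} \Omin$ — and, when $B_j \neq A$, additionally fixes $A = 1$ almost surely, and (iv) imposes $\E[M_{i_r} \mid M_i] = 0$. Using $\I_a(A)\,A = A$ for binary $A$, these choices render $T_{a,P}$ a constant multiple of $B_j\,M_{i_r}$. I would then evaluate $\Gamma(M_i)$ from \cref{eqs:ast-M} term by term, exactly as in \cref{lem:W-c34}: using $B_j \in \Pa(M_i, \g)$ and $\E[M_{i_r}\mid M_i] = 0$, the first term vanishes; the summand at $M_{i_l}$ cancels because $M_{i_l}$ is a deterministic function of $\{B_j, M_{i_r}\} \subseteq \Pa(M_{i_l},\g)$; the summands at the remaining children $M_{i_m}$ ($m \neq r, l$) vanish by the local Markov property on $\g'$, those children being non-descendants of $M_{i_l}$; and what survives collapses to $\Gamma(M_i) = c^{-1} M_{i_r}\,\E[B_j \mid M_i]$. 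This depends non-trivially on $M_i$ as soon as $\E[B_j \mid M_i]$ is non-constant, which is arranged by taking $(B_j, M_i)$ bivariate Gaussian when $B_j \in \Mset(\g)$, or a suitably correlated binary–continuous pair when $B_j \equiv A$. Finally, since $P$ is degenerate, I would pass to a sequence of non-degenerate laws in $\model(\g', \Vset)$ converging weakly to $P$, along which the non-trivial dependence persists for all large $n$.

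The step I expect to demand the most care is not the term-by-term computation — that is routine once the picture is set up — but the two preliminary reductions: (a) showing that the superficially different hypotheses \ref{case:M-c3} and \ref{case:M-c4} both produce the single configuration above, which rests on the precise form of \cref{lem:cond2-m-graph} and on the acyclicity-plus-minimality argument that pins down the edge $B_j \to M_{i_l}$ in case \ref{case:M-c4}; and (b) choosing $P$ so that the constraints do not force $A$ or $\rho(\Omin)$ to degenerate in the sub-case $B_j \equiv A$, where one cannot set $A = 1$ and must instead exploit $\I_a(A)A = A$ together with $A \indep_{\g'} \Omin$ to keep $\E[B_j \mid M_i] = \E[A \mid M_i]$ non-constant. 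Keeping the parent sets, which in \cref{eqs:ast-M} are taken in $\g$ and not in $\g'$, correctly tracked through the merge of $p$ and $q$ at $M_{i_l}$ — and checking that the passage to $\g'$ introduces no spurious edge among the children of $M_i$ — is the remaining bookkeeping point.
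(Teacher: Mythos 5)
Your proposal is correct and follows essentially the same route as the paper: both sub-cases are reduced, via \cref{lem:cond2-m-graph}, to the single configuration $B_j \to M_{i_l} \leftarrow M_{i_r}$ with $M_i \to M_{i_l}$ and a causal path from $M_{i_l}$ to $Y$, and the paper's own proof is precisely ``adapt \cref{lem:W-c34} with $(W_i,W_j,W_{j_k})$ replaced by $(B_j,M_i,M_{i_r})$ and $O_1/O_2$ by $Y$,'' yielding the same surviving term $M_{i_r}\,\E[B_j\mid M_i]$ and the same weak-limit repair of the degenerate law. The one place you go beyond the written proof is the sub-case $B_j\equiv A$: the paper's blanket instruction to take $A=1$ almost surely would there force $\E[B_j\mid M_i]\equiv 1$ and kill the dependence on $M_i$, and this sub-case can genuinely arise under \ref{case:M-c4} (e.g.\ $A\to M_1\to M_2\to M_3\to Y$ with $M_1\to M_3$ and $A\to M_3$, where $p$ is the trivial path at $B_j=S=A$ while $M_3\in\Ch(M_1,\g)$ lies on $q$); your fix---keep $A$ non-degenerate, make $\rho(\Omin)$ constant, and use $\I_1(A)A=A$ to get $\Gamma(M_i)=c^{-1}M_{i_r}\,\E[A\mid M_i]$, non-constant because $A\to M_i$---is exactly the device the paper uses for the $S\equiv A$ sub-case of \ref{case:M-c1}, so your separate treatment is not optional care but the needed argument. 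Two small bookkeeping points: the derived edge $B_j\to M_{i_l}$ must actually be included in $\g'$ (the paper routes it through $p'$), since otherwise $M_{i_l}=B_jM_{i_r}$ is incompatible with a law Markov to $\g'$ when that edge is not on $p$ or $q$; and the cancellation of the $M_{i_l}$ summand is best justified by noting that $T_{a,P}$ is measurable with respect to $\Pa(M_{i_l},\g)$ (because $B_j,M_{i_r}\in\Pa(M_{i_l},\g)$ and $A$ is either degenerate or equal to $B_j$), rather than by saying $M_{i_l}$ is a function of its parents.
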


\begin{proof}
Path $p$ (under \ref{case:M-c3}) or $q$ (under \ref{case:M-c4}) eventually leads to $Y$. Choose $P$ such that $A=1$ almost surely, under which $AY/\rho(\Omin) = Y$. Then, the proof of \cref{lem:W-c34} for \ref{case:W-c3} and \ref{case:W-c4} can be adapted by replacing $O_1$ or $O_2$ by $Y$.
\end{proof}

\section{Auxiliary graphical results for completeness proofs of W- and M-criterion} \label{sec:app-aug-graph}
In this section, we establish certain graphical configurations should a vertex in $\Wset$ or $\Mset$ fail the corresponding criterion. Then, these configurations are exploited in \cref{sec:app-W-complete,sec:app-M-complete} for proving the completeness of W-criterion and M-criterion. 
The following additional notations are used. For a vertex $A$ in graph $\g$, $\Adj(A,\g) \equiv \Pa(A,\g) \cup \Ch(A, \g)$.
For a path $p = \langle V_1, \dots , V_k \rangle, k > 1$, $p(V_i, V_j), 1 \le i < j \le k$ denotes the subpath $\langle V_i, \dots , V_j \rangle $ of $p$ consisting of exactly the same sequence of vertices as $p$ on the segment between $V_i$ and $V_j$.

\medskip \begin{lemma}\label{lem:cond1-w-graph}
Suppose that $\g$ satisfies \cref{assump:A-goes-to-Y}. Let  $(W_1, \dots, W_J)$, $J \ge 1$ be a topological ordering of $\Wset$ in $\g$.
Suppose that $W_j \in \Wset \setminus \Oset, j \in \{1, \dots , J\}$ fails the W-criterion (\cref{lem:W-criterion}) and let $(W_{j_1}, \dots, W_{j_r})$ be a topological ordering of $\Ch(W_j, \g) \cap \Wset$ in $\g$.
Then one of the following graphical configurations holds in $\g$:

\begin{enumerate}[(a)]
\item\label{case:a1} $W_{j_s}$ and $W_{j_k}$ are not adjacent in $\g$, for some $k,s \in \{1, \dots, r\}, k \neq s$.
\item\label{case:b1} $W_{j} \to W_{j_k} \leftarrow W_i$, $i \neq j$, $k \in  \{1, \dots, r\}$, and $W_j \notin \Adj(W_{i}, \g)$.
\item\label{case:c1} $W_{i} \to  W_{j} \to W_{j_k}$, $k \in \{1, \dots r\}$, $W_i \in \Pa(W_j,\g) \setminus \Pa(W_{j_k},\g)$, and there is a path $p = \langle W_i, \dots , O' \rangle$, $O' \in \Oset \setminus \Pa(W_{j_k},\g)$  that is d-connecting given $\Pa(W_{j_k},\g)$. If $W_i \in \Oset$, then $W_i \equiv O'$ and $|p|=0$.  
\end{enumerate}
\end{lemma}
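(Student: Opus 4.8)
The plan is to branch on which clause of the W-criterion (\cref{lem:W-criterion}) is violated by $W_j$. To begin, if two children $W_{j_s},W_{j_k}$ of $W_j$ lying in $\Wset$ are non-adjacent, then configuration \ref{case:a1} holds and there is nothing more to do; so I would assume henceforth that $\Ch(W_j,\g)\cap\Wset$ is a clique. Combined with the topological indexing, this forces $W_{j_a}\to W_{j_b}$ for all $0\le a<b\le r$, where $W_{j_0}\equiv W_j$; in particular clause (ii)(a) holds automatically (and $r\ge 1$, since $W_j\in\Wset\setminus\Oset$ has a child in $\Wset$), so the violation must occur in clause (i), (ii)(b) or (ii)(c). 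Throughout I will use $\Wset=\An(\Oset,\g)$ from \cref{lem:osetandw}: any vertex with a directed path into $\Oset$ lies in $\Wset$.

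First I would dispose of the case where (ii)(b) or (ii)(c) fails. Then there are an index $t$ and a vertex $Z$ with either $Z\in\Pa(W_{j_t},\g)\setminus(\Pa(W_{j_{t-1}},\g)\cup\{W_{j_{t-1}}\})$, or $Z\in\Pa(W_{j_{t-1}},\g)\setminus\Pa(W_{j_t},\g)$ admitting a path to some $O'\in\Oset\setminus\Pa(W_{j_t},\g)$ that d-connects given $\Pa(W_{j_t},\g)$. One checks $Z\ne W_j$ (as $W_j$ is a parent of every child $W_{j_s}$, $s\ge1$), so $Z$ and $W_j$ satisfy a trichotomy. If $Z\notin\Adj(W_j,\g)$, then $W_j\to W_{j_t}\leftarrow Z$ (resp.\ $W_j\to W_{j_{t-1}}\leftarrow Z$) is configuration \ref{case:b1}. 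If $W_j\to Z$, then $Z$ has a directed path into $\Oset$ through the relevant child, so $Z\in\Ch(W_j,\g)\cap\Wset$, and the clique structure forces $Z$ into $\Pa(W_{j_{t-1}},\g)$ (resp.\ $\Pa(W_{j_t},\g)$), contradicting the choice of $Z$. If $Z\to W_j$, then $Z\in\Pa(W_j,\g)$ and $Z\to W_j\to W_{j_k}$ with $W_{j_k}=W_{j_{t-1}}$ in the (ii)(b) case and $W_{j_k}=W_{j_t}$ in the (ii)(c) case; the path required by configuration \ref{case:c1} is the given one in the (ii)(c) case, and for (ii)(b) I would prepend $Z\to W_{j_t}$ to any causal path $W_{j_t}\to\cdots\to O'$ into $\Oset$: since $W_{j_{t-1}}\to W_{j_t}$, acyclicity forbids any descendant of $W_{j_t}$ from being a parent of $W_{j_{t-1}}$, so the resulting path avoids $\Pa(W_{j_{t-1}},\g)$ and hence d-connects given it. In every sub-case this yields configuration \ref{case:c1} (with $O'\equiv Z$, $|p|=0$ when $Z\in\Oset$).

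The hard part is the case where clause (i) fails, i.e.\ there is a path from $W_j$ to $\Oset$ that d-connects given $S:=\{W_{j_r}\}\cup\Pa(W_{j_r},\g)\setminus\{W_j\}$. I would fix a shortest such path $p$, ending at $O'\in\Oset\setminus S$, and first establish two structural facts: (1) $W_{j_r}\notin p$ — were $W_{j_r}$ on $p$ it could only be a collider (otherwise it is a non-collider in $S$ and blocks $p$), but then its neighbours on $p$ are parents of $W_{j_r}$, at least one of them distinct from $W_j$, hence in $S$ and a non-collider on $p$, again blocking it; (2) since $\An(S,\g)=\{W_{j_r}\}\cup\An(\Pa(W_{j_r},\g),\g)$ and $W_{j_r}\notin p$, every collider of $p$ has a descendant in $\Pa(W_{j_r},\g)$ and so stays active with respect to $\Pa(W_{j_r},\g)$, while every non-collider of $p$ other than $W_j$ avoids $\Pa(W_{j_r},\g)$. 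Now I would inspect the first edge of $p$. If it is $W_j\leftarrow V_1$, then $V_1$ is a non-collider on $p$, so $V_1\in\Pa(W_j,\g)\setminus\Pa(W_{j_r},\g)$, and by (1)–(2) the sub-path $p(V_1,O')$ d-connects given $\Pa(W_{j_r},\g)$: configuration \ref{case:c1} with $W_i=V_1$, $W_{j_k}=W_{j_r}$. If it is $W_j\to V_1$, I would argue $V_1\in\Wset$ — otherwise $V_1\not\mapsto\Oset$, yet following $p$ from $V_1$ one reaches either $O'$ or an active collider, each a descendant of $V_1$ in $\An(\Oset,\g)$, a contradiction — and then $V_1=W_{j_m}$ (with $m<r$ by (1)) must be a collider on $p$, say $W_j\to W_{j_m}\leftarrow V_2$, to which the earlier trichotomy applies: $V_2\notin\Adj(W_j,\g)$ gives configuration \ref{case:b1}; $V_2\to W_j$ gives configuration \ref{case:c1} via $p(V_2,O')$; and $W_j\to V_2$ is impossible, since then $V_2\in\Ch(W_j,\g)\cap\Wset$ is a non-collider on $p$ hence in $S$.

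The main obstacle is precisely this clause-(i) case: one must control a global object — a shortest active path — using only local adjacency data, and most of the labour lies in the bookkeeping of facts (1)–(2), which certifies that the residual sub-path of $p$ is exactly the active path demanded by configuration \ref{case:c1}. The clique reduction and the identity $\Wset=\An(\Oset,\g)$ are what keep the case tree finite and route every branch into one of \ref{case:a1}, \ref{case:b1}, \ref{case:c1}.
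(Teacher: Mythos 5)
Your proposal is correct, and at its core it is the same strategy the paper uses: split on which clause of \cref{lem:W-criterion} fails, settle the clause-(ii) failures by adjacency/topological-order arguments among $\Ch(W_j,\g)\cap\Wset$, and, for a clause-(i) failure, dissect the first edge(s) of a d-connecting path and transfer its activation status from the conditioning set $\{W_{j_r}\}\cup\Pa(W_{j_r},\g)\setminus\{W_j\}$ to $\Pa(W_{j_r},\g)$, exactly your facts (1)--(2). The differences are organizational rather than conceptual. The paper does not prove the lemma directly: it factors everything through the abstract \cref{lemma:necessarygraphicalEIFmerge} with sets $(U,S,D)$, which is then instantiated once with $(\Oset,\Oset,\Wset)$ here and once with $(\{Y\},\{A,Y\}\cup\Omin,\Mset)$ for \cref{lem:cond1-m-graph}; your direct argument would have to be repeated (with $\{A,Y\}\cup\Omin$ in place of $\Oset$) to get the M-side, which is the main thing the abstraction buys. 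Within the clause-(i) branch, the paper closes the collider sub-case $W_j\to W_{j_m}\leftarrow V_2$ by a shortest-path minimality argument forcing $V_2\notin\Adj(W_j,\g)$ (configuration (b)), whereas you replace minimality with an orientation trichotomy that routes $V_2\to W_j$ into configuration (c) via $p(V_2,O')$ and kills $W_j\to V_2$ by the clique; this is a genuine simplification in that your fixed path never actually needs to be shortest. Likewise, for the (ii)(b)/(ii)(c) failures the paper uses the chained structure (earlier clauses assumed to hold) and the cumulative inclusion $\Pa(W_{j_t},\g)\subseteq\Pa(W_j,\g)\cup\{W_j,W_{j_1},\dots,W_{j_{t-1}}\}$ to locate the offending parent, while you get the same effect from the up-front clique reduction together with $\Wset=\An(\Oset,\g)$ (\cref{lem:osetandw}).

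Two boundary points are worth tightening, though both resolve with facts you already have. First, in the (ii)(b)/(ii)(c) failures with $t=1$ the vertex $W_{j_{t-1}}\equiv W_j$ is not a child, so the parenthetical ``clique forces $Z\in\Pa(W_{j_{t-1}},\g)$'' and the collider $W_j\to W_{j_{t-1}}\leftarrow Z$ should be replaced by the observation that these sub-cases are vacuous there: for (ii)(b), $Z\in\Ch(W_j,\g)\cap\Wset\cap\Pa(W_{j_1},\g)$ contradicts the topological ordering of the children, and $Z\to W_j$ contradicts $Z\notin\Pa(W_j,\g)$; for (ii)(c), $Z\in\Pa(W_j,\g)$ holds by definition, so only the configuration-(c) branch occurs. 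Second, in the clause-(i) branch with first edge $W_j\to V_1$, you assert $V_1$ must be a collider; you should also rule out $V_1=O'$ (an endpoint cannot be a collider), which follows immediately since $m<r$ and the clique put $V_1=W_{j_m}\in\Pa(W_{j_r},\g)\subseteq S$ while $O'\notin S$.
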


\begin{proof}
Let $U = S = \Oset$, and $D = \Wset$. By \cref{assump:A-goes-to-Y}, $D \neq \emptyset$. Additionally, $\An(U, \g) = \An(\Oset, \g) \subseteq \An(Y,\g)$, and by \cref{lemma:basicpropertyWandM}, for all $W' \in \Wset$, $\Ch(W',\g) \cap \An(\Oset,\g) \subseteq \Wset$.
Similarly,$S = \Oset \subseteq \An(\Oset,\g) = \An(U, \g)$ and by  \cref{lem:osetandw}, $\De(W',\g) \cap \Oset \neq \emptyset$. Hence, our choice of $U, S$, and $D$ sets satisfies the properties required by 
\cref{lemma:necessarygraphicalEIFmerge}. The result then follows by \cref{lemma:necessarygraphicalEIFmerge}, while noting that $W_i \in \Wset$ because by Lemma \ref{lem:osetandw} $\An(\Wset, \g) = \Wset$ and $W_i$ is a parent of a node in $\Wset$. 
\end{proof}

\medskip \begin{lemma}\label{lem:cond2-w-graph}
Suppose that $\g$ satisfies \cref{assump:A-goes-to-Y}.  Let  $(W_1, \dots, W_J)$, $J \ge 1$ be a topological ordering of $\Wset$ in $\g$.
Suppose that $W_j \in \Wset \setminus \Oset$ does not satisfy cases \ref{case:a1} or \ref{case:b1}, but does satisfy case \ref{case:c1} of \cref{lem:cond1-w-graph} and let $(W_{j_1}, \dots, W_{j_r}), r \ge 1$ be a topological ordering of $\Ch(W_{j}, \g) \cap \Wset $ in $\g$. 

Let $k \in \{1, \dots, r\}$ be chosen as the largest index such that $\Pa(W_{j},\g) \setminus \Pa(W_{j_k}, \g) \not \indep_{\g} \Oset \setminus \Pa(W_{j_k},\g)|\Pa(W_{j_k},\g)$. 
Let path $q = \langle W_{j_k}, \dots , O_2\rangle$, $O_2 \in \Oset$ be chosen as a shortest causal path from $W_{j_k}$ to $\Oset$. If $W_{j_k} \in \Oset$, then $W_{j_k} \equiv O_2$ and $|q| = 0$.

Let $p = \langle W_i, \dots, O_1 \rangle$, $W_i \in  \Pa(W_{j},\g) \setminus \Pa(W_{j_k}, \g)$, $O_1 \in \Oset \setminus \Pa(W_{j_k},\g)$   be chosen as a shortest among all paths from $\Pa(W_{j},\g) \setminus \Pa(W_{j_k}, \g)$ to  $\Oset \setminus \Pa(W_{j_k},\g)$ that have a shortest distance-to-$\Pa(W_{j_k},\g)$.  If $W_i \in \Oset$, then $W_i \equiv O_1$ and $|p| = 0$.

Then paths $p$ and $q$ satisfy the following:

\begin{enumerate}[(i)]
\item All vertices on $q$ are in $\Wset$.
\item \label{case:p1} The only vertex in $\Oset$ that is on $p$ is $O_1$.
\item \label{case:q1} The only vertex in $\Oset$ that is on $q$ is $O_2$.
\item \label{case:q2} $q$ does not contain any vertices in $\Pa(W_j, \g) \cup \{W_j\}$.
\item \label{case:q3} If a vertex on $q$ is in $(\Ch(W_j,\g) \cap \Wset ) \setminus \{W_{j_k}\}$, then $|q| \ge 1$, $q = \langle W_{j_k}, W_{q_2}, \dots,  O_2 \rangle$, and the only vertex on $q$ in $(\Ch(W_j,\g) \cap \Wset ) \setminus \{W_{j_k}\}$ is $W_{q_2}$, and $W_{q_2} \in \Ch(W_{j_k},\g) \cap \Wset $.
\item \label{case:p2} if a vertex on $p$ is in $\Ch(W_j,\g) \cap \Wset$, then $|p| \ge 1$, $p$ is of the form $W_i \to W_{p_2} \to \dots O_1$, and the only vertex on $p$ in $\Ch(W_{j},\g) \cap \Wset$ is $W_{p_2}$ and $W_{p_2} \in \Ch(W_{j_k},\g)\cap \Wset$.
\item \label{case:p3} $p$ is d-connecting given $\Pa(W_{j}, \g) \cup \{W_j\} \setminus \{W_i\}$.
\item \label{case:p4} if there is a collider on $p$, then let $\{C_1, \dots, C_F\}$, $F \ge 1$ be the set of all collider on $p$, and let $c_f$ be a shortest path from $C_h$ to $\pa(W_j, \g) \cup \{W_j\}$ in $\g$ for all $f \in \{1, \dots, F\}$. Then
\begin{enumerate}
\item  Vertices from $\Oset$ are not on $c_f$, and
\item $c_f$ does not contain any vertex that is on $q$, and 
\item the only vertex that $p$ and $c_f$ have in common is $C_f$.
\end{enumerate}
\item \label{case:qandp}
\begin{enumerate}[(1)]
\item  If a vertex in $\Ch(W_{j_k},\g) $ is on $p$, or 
\item  if a vertex in $\Ch(W_{j_k},\g)$ is $q$, or 
\item if there is a vertex that is on both $p$ and $q$, then
\end{enumerate}
\begin{enumerate}
\item $W_i \neq W_l \neq W_{j_k}$,  and 
\item  $W_s \to W_{l} \leftarrow W_t$, $t \neq s$ is in $\g$, where $W_s$ is on $p$, $W_t$ is on $q$, and  $W_t \notin \Adj(W_s, \g)$.
\item if $W_l$ is on $p$, then $p(W_l,O_1)$ is a causal path and $O_1 \equiv O_2$.
\end{enumerate}
\end{enumerate}
\end{lemma}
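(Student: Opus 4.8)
The plan is to work through the ten claims in the order stated, extracting each from the extremal choices of $p$, $q$, and $k$, together with the ancestral structure of $\Wset$ (Lemmas~\ref{lem:osetandw} and the basic property that $\Ch(W',\g)\cap\An(\Oset,\g)\subseteq\Wset$ for $W'\in\Wset$). The first three claims (all vertices on $q$ are in $\Wset$; $O_1$ is the only $\Oset$-vertex on $p$; $O_2$ is the only $\Oset$-vertex on $q$) are immediate consequences of minimality: $q$ is a shortest causal path from $W_{j_k}$ to $\Oset$, so it cannot revisit $\Oset$ before its endpoint, and every intermediate vertex is an ancestor of $O_2\in\Oset$ hence in $\Wset$; similarly $p$ is chosen shortest among paths to $\Oset\setminus\Pa(W_{j_k},\g)$ with shortest distance-to-$\Pa(W_{j_k},\g)$, so if it met a second $\Oset$-vertex $O'$ we could truncate at $O'$ and contradict the shortest-distance or shortest-length requirement (distinguishing whether $O'\in\Pa(W_{j_k},\g)$ or not). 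Claim~(iv), that $q$ avoids $\Pa(W_j,\g)\cup\{W_j\}$, follows because any such vertex on a causal path out of $W_{j_k}$ would, together with $W_j\to W_{j_k}$, create a directed cycle.

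Next I would handle the ``child-of-$W_j$'' claims (v), (vi). For (vi): if $p$ contains a vertex in $\Ch(W_j,\g)\cap\Wset$, consider the first such vertex $W_{p_2}$; the choice of $W_i\notin\Pa(W_{j_k},\g)$ and maximality of $k$ force $W_{p_2}\in\Ch(W_{j_k},\g)$ (otherwise a later child of $W_j$ would have qualified as $W_{j_k}$), and the shortest-distance-to-$\Pa(W_{j_k},\g)$ criterion forces $W_i\to W_{p_2}$ to be the first edge of $p$ and $W_{p_2}$ to be the unique such vertex; acyclicity then makes the remainder $W_{p_2}\to\cdots\to O_1$ causal. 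Claim~(v) is the symmetric argument for $q$, using that $q$ is causal and the maximality of $k$. Claim~(vii), that $p$ is d-connecting given $\Pa(W_j,\g)\cup\{W_j\}\setminus\{W_i\}$, refines the hypothesis of case~\ref{case:c1} (which only gives d-connection given $\Pa(W_{j_k},\g)$): I would argue that adding the extra conditioning vertices cannot open a new collider on $p$ without contradicting minimality, and cannot block $p$ because any non-collider on $p$ lying in $\Pa(W_j,\g)\cup\{W_j\}$ would allow a strictly shorter replacement path. Claim~(viii) about the auxiliary collider-resolving paths $c_f$ is then a routine minimality bookkeeping: $c_f$ shortest from $C_f$ to $\Pa(W_j,\g)\cup\{W_j\}$ cannot touch $\Oset$ (else $C_f\mapsto\Oset$ off $p$, contradicting how $C_f$ sits), cannot meet $q$ (directed-cycle or length contradiction), and meets $p$ only at $C_f$ (truncate otherwise).

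The main obstacle, and the step I would spend the most care on, is claim~(ix)/\ref{case:qandp}: showing that whenever $p$ and $q$ ``interact'' --- a child of $W_{j_k}$ sits on $p$, or on $q$, or $p$ and $q$ share a vertex --- the interaction is forced to take the clean collider shape $W_s\to W_\ell\leftarrow W_t$ with $W_s$ on $p$, $W_t$ on $q$, $W_t\notin\Adj(W_s,\g)$, $W_i\ne W_\ell\ne W_{j_k}$, and (when $W_\ell$ lies on $p$) $p(W_\ell,O_1)$ causal with $O_1\equiv O_2$. Here I would take $W_\ell$ to be the interaction vertex nearest to $W_{j_k}$ along $q$ (or the relevant analogue), use that $W_j$ does \emph{not} satisfy cases~\ref{case:a1} or~\ref{case:b1} to rule out the bad adjacency patterns among children of $W_j$, and use acyclicity plus the shortest-path choices to pin down the edge orientations into $W_\ell$; the equality $O_1\equiv O_2$ comes from the fact that once $p$ merges into the causal tail of $q$ it must continue causally to the same $\Oset$-endpoint, by (ii), (iii), and minimality of both paths. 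The combinatorial case split here (first edge of $p$ causal vs.\ not, $W_\ell$ on $p$ vs.\ on $q$ vs.\ on both, $W_i\in\Oset$ degenerate case) is where the real work lies; the rest is minimality arithmetic. Throughout I would lean on Lemma~\ref{lem:inducing-path} only implicitly, since the genuine content is the graphical extraction, not the d-connection bookkeeping.
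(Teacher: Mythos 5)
Your plan reproduces, in specialized form, the content that the paper obtains by applying the general \cref{lemma:necessarygraphicalEIFmerge,lemma:parentsofchildren,lemma:condition-merge} with $U=S=\Oset$ and $D=\Wset$; the truncation/minimality arguments you sketch for claims (i)--\ref{case:p4} are essentially the ones carried out in the proof of \cref{lemma:condition-merge}. Two of your one-liners do understate the work: in \ref{case:p2} you must also rule out that a vertex of $\Ch(W_j,\g)\cap\Pa(W_{j_k},\g)$ is a \emph{collider} on $p$ (this needs the containment $\Pa(W_{j_t},\g)\subseteq\Pa(W_j,\g)\cup\{W_j,W_{j_1},\dots,W_{j_{t-1}}\}$ from \cref{lemma:parentsofchildren}), and the maximality of $k$ is used through the d-separation statement holding for every index $l>k$, which is what forces the edge $W_i\to W_{p_2}$ -- not through ``a later child of $W_j$ would have qualified as $W_{j_k}$''. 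These are repairable, and the same remark applies to \ref{case:p3}, where the conditioning set $\Pa(W_j,\g)\cup\{W_j\}\setminus\{W_i\}$ is not a superset of $\Pa(W_{j_k},\g)$, so the collider side also needs the parent-containment lemma rather than a pure minimality appeal.

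The genuine gap is claim \ref{case:qandp}, precisely the step you defer. The conclusion there is not merely that the paths interact at a collider $W_l$ with $p(W_l,O_1)$ causal and $O_1\equiv O_2$ (which your minimality argument does give): it is the existence of $W_s$ on $p$ and $W_t$ on $q$ with $W_s\to W_l\leftarrow W_t$ and $W_t\notin\Adj(W_s,\g)$. At the merge vertex the natural candidates -- the predecessor of $W_l$ on $p$ and on $q$ -- may perfectly well be adjacent, and the failure of cases \ref{case:a1}/\ref{case:b1} of \cref{lem:cond1-w-graph} does not help, because $W_s$ and $W_t$ are in general not children of $W_j$ nor parents of children of $W_j$. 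The paper's proof of \cref{lemma:condition-merge}, case \ref{case:qandpd}, closes this with an explicit backtracking iteration: if the current pair is adjacent, the orientation of that edge lets you replace the triple by one a step closer to the path endpoints (an edge $W_s\to W_t$ moves $W_t$ back along $q$; an edge $W_t\to W_s$ moves $W_s$ back along $p$, using that the tail of $p$ near the merge is causal), and the process must terminate because $W_i\notin\Adj(W_{j_k},\g)$ -- not a parent of $W_{j_k}$ by choice, and not a child by acyclicity through $W_j$. Without this descent argument (or a substitute) the non-adjacency in part (b) of \ref{case:qandp}, which is exactly what the constructions in \cref{sec:app-W-complete} (e.g.\ \cref{lem:W-c2}) consume, is unproven; your sketch flags this as ``where the real work lies'' but does not supply it, and the mechanism you point to would not close it.
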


\begin{proof}
Let $U = S = \Oset$, and $D = \Wset$. By \cref{assump:A-goes-to-Y}, $D \neq \emptyset$. Additionally, $\An(U, \g) = \An(\Oset, \g) \subseteq \An(Y,\g)$, and by \cref{lemma:basicpropertyWandM}, for all $W' \in \Wset$, $\Ch(W',\g) \cap \An(\Oset,\g) \subseteq \Wset$.
Similarly,$S = \Oset \subseteq \An(\Oset,\g) = \An(U, \g)$ and by  \cref{lem:osetandw}, $\De(W',\g) \cap \Oset \neq \emptyset$. Hence, our choice of $U, S$, and $D$ sets satisfies the properties required by 
\cref{lemma:necessarygraphicalEIFmerge,lemma:parentsofchildren} and \cref{lemma:condition-merge}. The result then follows by \cref{lemma:necessarygraphicalEIFmerge,lemma:parentsofchildren} and \cref{lemma:condition-merge}, while noting that $W_i, W_s \in \Wset$ because by Lemma \ref{lem:osetandw} $\An(\Wset, \g) = \Wset$ and $W_i,W_s$ are both parents of a node in $\Wset$.
\end{proof}

\medskip \begin{lemma}\label{lem:cond1-m-graph}
Suppose that $\g$ satisfies \cref{assump:A-goes-to-Y}. Let $(M_1, \dots, M_K)$, $K \ge 1$ be a topological ordering of $\Mset$ in $\g$.  Suppose that $M_i \in \Mset \setminus \{Y\}, i \in \{1, \dots, K\}$ fails the M-criterion (\cref{lem:M-criterion}) and let $(M_{i_1}, \dots, M_{i_k})$, $k \ge 1$ be a topological ordering of  $\Ch(M_{i}, \g) \cap \Mset$ in $\g$. Furthermore, let $M_i \equiv M_{i_0}$.
Then one of the following graphical configurations holds in $\g$:

\begin{enumerate}[(M-a)]
\item\label{case:am} $M_{i_m}$ and $M_{i_j}$ are not adjacent in $\g$, for some $m,r \in \{1, \dots, k\}$, $r \neq m$.
\item\label{case:bm} $M_{i} \to M_{i_r} \leftarrow B_j$, $B_j \in \{A\} \cup \Oset \cup \Mset \setminus \{M_i\}$, $r \in  \{1, \dots, k\}$, and $M_i \notin \Adj(B_{j}, \g)$.
\item\label{case:cm} $B_{j} \to  M_{i} \to M_{i_r}$, $B_j \in \in \{A\} \cup \Oset \cup \Mset \setminus \{M_i\}$,, $r \in  \{1, \dots, k\}$, $B_j \notin \Pa(M_{i_r},\g) \cup \{M_{i_r}\}$, and there is a path $p = \langle B_j, \dots , S \rangle$, $S \in \{A,Y\} \cup  \Omin \setminus \Pa(M_{i_k},\g)$  that is d-connecting given $\Pa(M_{i_r},\g)$. If $B_j \in \{A,Y\} \cup  \Omin$, then $B_j \equiv S$ and $|p| = 0$.
\end{enumerate}
\end{lemma}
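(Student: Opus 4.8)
The plan is to derive \cref{lem:cond1-m-graph} as the mediator-side instance of the generic graph-theoretic lemma \cref{lemma:necessarygraphicalEIFmerge} (stated below and already used to prove \cref{lem:cond1-w-graph}). The point is that the M-criterion has exactly the same shape as the W-criterion, but with the set $\Oset$ replaced everywhere by $\{A,Y\}\cup\Omin$: by \cref{lem:EIF} the dependence of $\Psi_{a,P,\text{eff}}^{1}(\Vset;\g)$ on $M_i$ is carried by conditional expectations of $T_{a,P}=\I_a(A)Y/P(A=a\mid\Omin)$, which is a function of $\{A,Y\}\cup\Omin$. Accordingly, I would invoke \cref{lemma:necessarygraphicalEIFmerge} with $D=\Mset$ and $U=S=\{A,Y\}\cup\Omin$, and with the distinguished vertex being $M_i$, which by hypothesis lies in $\Mset\setminus\{Y\}$ and fails the criterion.

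The only real content of the proof is verifying that this choice of $(U,S,D)$ meets the structural hypotheses of \cref{lemma:necessarygraphicalEIFmerge}, in direct parallel with the $W$ case. I would check: $D=\Mset\neq\emptyset$ by \cref{assump:A-goes-to-Y}, since a directed path from $A$ to $Y$ has a non-initial vertex, which is a mediator; $\An(U,\g)=\An(Y,\g)$ because $A\mapsto Y$ and every $O\in\Omin\subseteq\Oset$ is a parent of a mediator by \cref{eqs:Oset}, hence $O\mapsto Y$, so in particular $\An(U,\g)\subseteq\An(Y,\g)$; for every $M'\in\Mset$, $\Ch(M',\g)\cap\An(Y,\g)\subseteq\Mset$ (if $C\in\Ch(M',\g)$ and $C\mapsto Y$, then $A\mapsto M'\to C\mapsto Y$ and $C\neq A$ by acyclicity, so $C\in\Mset$) --- this is also available from \cref{lemma:basicpropertyWandM}; $S=\{A,Y\}\cup\Omin\subseteq\An(Y,\g)=\An(U,\g)$ by the same computation; and for every $M'\in\Mset$, $\De(M',\g)\cap S\neq\emptyset$ since $M'\mapsto Y\in S$. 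Feeding these hypotheses into \cref{lemma:necessarygraphicalEIFmerge} returns one of its three output configurations, which, after substituting $D=\Mset$ and $S=\{A,Y\}\cup\Omin$, are precisely (M-a), (M-b) and (M-c). The remaining bookkeeping is to record that the vertex $B_j$ appearing in (M-b) and (M-c) is a parent of a mediator, so by \cref{eqs:Oset} $B_j\in\Pa(\Mset,\g)\subseteq\{A\}\cup\Oset\cup\Mset$, and $B_j\neq M_i$ by construction of the configuration --- exactly as ``$W_i\in\Wset$'' is recorded in the $W$ case.

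Since all the genuine graph-theoretic difficulty --- constructing the d-connecting path $p$ of (M-c) and dissecting the ways the criterion can fail (at condition (i), involving the last child, or at one of the conditions (ii)(c)) --- is encapsulated in \cref{lemma:necessarygraphicalEIFmerge} and is not re-derived here, I expect the proof of \cref{lem:cond1-m-graph} itself to be short and essentially mechanical. The one delicate point will be matching the conditioning sets in the output configuration (M-c), which --- as in (W-c1) --- are parent sets of children of $M_i$, to the abstract conclusion, keeping straight at which child index each parent set is evaluated; this is purely notational but is the place where a slip is easiest.
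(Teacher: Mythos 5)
Your proposal is correct and matches the paper's own proof: the paper likewise derives the lemma by instantiating \cref{lemma:necessarygraphicalEIFmerge} with $D=\Mset$ and $S=\{A,Y\}\cup\Omin$, verifies the structural hypotheses via \cref{assump:A-goes-to-Y} and \cref{lemma:basicpropertyWandM}, and closes with the same observation that $B_j\in\Pa(\Mset,\g)\subseteq\{A\}\cup\Oset\cup\Mset$. The only (immaterial) difference is that the paper takes $U=\{Y\}$ whereas you take $U=S=\{A,Y\}\cup\Omin$; since $\An(U,\g)=\An(Y,\g)$ under either choice, the hypotheses and conclusions of the abstract lemma are unchanged.
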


\begin{proof} Let $U = Y$, $S = \{A,Y\} \cup \Omin$, $D = \Mset$. By \cref{assump:A-goes-to-Y}, $D \neq \emptyset$. Additionally, $\An(U, \g) = \An(Y, \g)$, and by \cref{lemma:basicpropertyWandM}, for all $M' \in \Mset$, we have $\Ch(M',\g) \cap \An(Y,\g) \subseteq \Mset$. Similarly, by \cref{assump:A-goes-to-Y} and by definitions of $\Mset$, $\Omin$, we have that $S = \{A,Y\} \cup \Omin \subseteq \An(Y,\g) = \An(U, \g)$, and  $\De(M',\g) \cap (\{A,Y\} \cup \Omin) = \{Y\} \neq \emptyset$. Hence, our choice of $U, S$, and $D$ sets satisfies the properties required by 
 \cref{lemma:necessarygraphicalEIFmerge}. The result then follows from \cref{lemma:necessarygraphicalEIFmerge}, while noting that $B_j \in \{A\} \cup \Oset \cup \Mset$ by definitions of $\Mset$ and $\Oset$ since $B_j$ is a parent of a node in $\Mset$.
\end{proof}

\medskip \begin{lemma}\label{lem:cond2-m-graph}
Suppose that $\g$ satisfies \cref{assump:A-goes-to-Y}. Let $(M_1, \dots, M_K)$, $K \ge 1$ be a topological ordering of $\Mset$ in $\g$.  
Suppose that $M_i \in \Mset \setminus \{Y\}, i \in \{1, \dots, K\}$ does not satisfy \ref{case:am} or \ref{case:bm}, but does satisfy \ref{case:cm} of  \cref{lem:cond1-m-graph} and let $(M_{i_1}, \dots, M_{i_k})$, $k \ge 1$ be a topological ordering of  $\Ch(M_{i}, \g) \cap \Mset$ in $\g$. Furthermore, let $M_i \equiv M_{i_0}$.

Let $r \in \{1, \dots, k\}$ be chosen as the largest index such that $\Pa(M_{i},\g) \setminus \Pa(M_{i_r}, \g) \not \indep_{\g} ( \Omin \cup \{A,Y\} )\setminus \Pa(M_{i_r},\g)|\Pa(M_{i_r},\g)$.  Let path $q = \langle M_{i_r}, \dots , Y\rangle$,  be chosen as a shortest causal path from $M_{i_r}$ to $Y$.  Possibly $M_{i_r} \equiv Y$  and $|q| = 0$.

Let $p = \langle B_j, \dots, S \rangle$, $B_j \in  \Pa(M_{i},\g) \setminus \Pa(M_{i_r}, \g)$, $S \in  (\Omin \cup \{A,Y\} )\setminus \Pa(M_{i_r},\g)$  be chosen as a shortest among all paths from $\Pa(M_{i},\g) \setminus \Pa(M_{i_r}, \g)$ to  $(\Omin \cup \{A,Y\} )\setminus \Pa(M_{i_r},\g)$ that have a shortest distance-to-$\Pa(M_{i_r},\g)$.  Note that $B_j \neq Y$, but it is possible that $B_j \in \Omin \cup \{A\} $, then $B_j \equiv S$ and $|p| = 0$.

Then there are paths $p$ and $q$  in $\g$ that satisfy the following:
\begin{enumerate}[(i)]
\item All vertices on $q$ are in $\Mset$.
\item \label{case:p1m} The only vertex in $\Omin \cup \{A,Y\} $ that is on $p$ is $S$.
\item \label{case:q1m} There is no vertex on $q$ that is in $\Omin \cup \{A\}$.
\item \label{case:q2m} $q$ does not contain any vertices in $\Pa(M_i, \g) \cup \{M_i\}$.
\item \label{case:q3m} If a vertex on $q$ is in $(\Ch(M_i,\g) \cap \Mset)  \setminus \{M_{i_r}\}$, then $|q| \ge 1$,  $q = \langle M_{i_r}, M_{q_2}, \dots,  Y \rangle$ and the only vertex on $q$ that is in  $(\Ch(M_i,\g) \cap \Mset)  \setminus \{M_{i_r}\}$ is $M_{q_2}$ and $M_{q_2} \in \Ch(M_{i_r},\g) \cap \Mset$.
\item \label{case:p2m} If  a vertex on $p$ is in $\Ch(M_i,\g) \cap \Mset$, then $|p| \ge 1$, and $p$ is of the form $B_j \to B_{p_2} \to \dots \to Y$, that is, $S=Y$. Furthermore, the only vertex on $p$ that is in   $\Ch(M_i,\g) \cap \Mset$ is $B_{p_2}$ and $B_{p_2} \in \Ch(M_{i_r},\g) \cap \Mset$.
\item \label{case:p3m} $p$ is d-connecting given $(\Pa(M_{i}, \g) \cup \{M_i\}) \setminus \{B_j\}$.
\item \label{case:p4m} if there is a collider on $p$, then let $\{C_1, \dots, C_F\}$, $F \ge 1$ be the set of all collider on $p$, and let $c_f$ be a shortest path from $C_f$ to $\pa(M_i, \g) \cup \{M_i\}$ in $\g$ for all $f \in \{1, \dots, F\}$. Then
\begin{enumerate}
\item  Vertices from $\{A,Y\} \cup \Omin$ are not on $c_f$, and
\item $c_f$ does not contain any vertex that is on $q$, and 
\item the only vertex that $p$ and $c_f$ have in common is $C_f$.
\end{enumerate}
\item \label{case:qandpm} 
\begin{enumerate}[(1)]
\item  If a vertex in $\Ch(M_{i},\g) $ is on $p$, or 
\item  if a vertex in $\Ch(M_{i},\g) \setminus \{M_{i_r}\}$ is $q$, or 
\item if there is a vertex that is on both $p$ and $q$, then
\end{enumerate}
 there exists a vertex  $M_l$ on $p$ or on $q$ such that
\begin{enumerate}
\item $M_j \neq M_l \neq M_{i_r}$,  and 
\item  $B_s \to M_{l} \leftarrow M_t$, $B_s \neq M_t$, is in $\g$, where $B_s$ is on $p$, $M_t \in \Mset$ is on $q$, and $M_t \notin \Adj(B_s, \g)$.
\item if $M_l$ is on $p$, then $p(M_l,S)$ is a causal path and $S \equiv Y$.
\end{enumerate}
\end{enumerate}
\end{lemma}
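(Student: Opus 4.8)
The plan is to derive \cref{lem:cond2-m-graph} from the same abstract graphical results that were used to prove \cref{lem:cond2-w-graph}, namely \cref{lemma:necessarygraphicalEIFmerge}, \cref{lemma:parentsofchildren} and \cref{lemma:condition-merge}, by instantiating their abstract set parameters $(U,S,D)$ as $U=Y$, $S=\{A,Y\}\cup\Omin(\g)$ and $D=\Mset(\g)$. Thus $\Mset$ plays the role that $\Wset$ played in the $\Wset$-version, the d-connecting path $p$ now terminates in the ``source'' set $S=\{A,Y\}\cup\Omin$, and the causal path $q$ now terminates in $U=Y$. Unlike the $\Wset$-case, where $U=S=\Oset$, here $U\subsetneq S$, which is the configuration already exercised in the proof of \cref{lem:cond1-m-graph}.

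First I would check that $(U,S,D)$ meets the structural hypotheses of the merge lemmas. We have $D=\Mset\neq\emptyset$ by \cref{assump:A-goes-to-Y}; $\An(U,\g)=\An(Y,\g)$; for every $M'\in\Mset$ the closure $\Ch(M',\g)\cap\An(Y,\g)\subseteq\Mset$ holds by \cref{lemma:basicpropertyWandM}; $S=\{A,Y\}\cup\Omin\subseteq\An(Y,\g)=\An(U,\g)$ because $A\mapsto Y$, $Y\in\An(Y,\g)$ trivially, and $\Omin\subseteq\Oset\subseteq\An(Y,\g)$ by definition of $\Oset$; and the reachability property $\De(M',\g)\cap S\neq\emptyset$ holds for every $M'\in\Mset$ --- indeed $\De(M',\g)\cap(\{A,Y\}\cup\Omin)=\{Y\}$, since $A\notin\De(M')$ by acyclicity and since $O\in\De(M')\cap\Omin$ together with $M'\in\De(A)$ would force $O\in\De(A)$, contradicting that $\Omin\subseteq\Oset$ consists of non-descendants of $A$. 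With these verified, the hypothesis ``$M_i$ satisfies case (M-c) but neither (M-a) nor (M-b)'' supplied by \cref{lem:cond1-m-graph} is precisely the abstract hypothesis of the merge lemmas under this substitution.

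Then I would invoke the three abstract lemmas and read off the conclusions of \cref{lem:cond2-m-graph} as their specializations under $\Wset\rightsquigarrow\Mset$, source slot $\Oset\rightsquigarrow\{A,Y\}\cup\Omin$, and causal-target slot $\Oset\rightsquigarrow Y$: \cref{lemma:necessarygraphicalEIFmerge} gives the existence and minimality of the refined paths $p$, $q$ together with items (i), \ref{case:p1m} and \ref{case:q1m}; \cref{lemma:parentsofchildren} gives the parent-containment items \ref{case:q2m}, \ref{case:q3m}, \ref{case:p2m}, \ref{case:p3m}, \ref{case:p4m}; and \cref{lemma:condition-merge} gives the collider-merge configuration \ref{case:qandpm}. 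Two mediator-specific points must be handled by hand. In item \ref{case:p2m} (and in \ref{case:qandpm}(c)): if a mediator-child of $M_i$ sits on $p$, then, since a successor of a mediator on a causal path is again a mediator, the causal continuation of $p$ through mediators forces its endpoint to be $Y$, i.e.\ $S\equiv Y$; this sharpening has no analogue in \cref{lem:cond2-w-graph}. Also, I must record $B_j,B_s\in\{A\}\cup\Oset\cup\Mset$: both are parents of vertices in $\Mset$, and by the partition $\Vset=\{A\}\dcup\Nset\dcup\Iset\dcup\Wset\dcup\Mset$ together with \eqref{eqs:Oset}, a parent of a mediator that is neither $A$ nor a mediator lies in $\Oset$.

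The main obstacle I anticipate is not conceptual but the bookkeeping of matching each of the nine itemized conclusions to the corresponding conclusion of the abstract lemmas, which are stated for general $(U,S,D)$; in particular one must be careful that the negations of configurations (M-a) and (M-b) translate into those lemmas' hypotheses after the substitution. The only genuinely new ingredient relative to \cref{lem:cond2-w-graph} is the mediator fact that a causal path leaving a mediator terminates at $Y$, which is precisely what makes items \ref{case:q3m}, \ref{case:p2m} and \ref{case:qandpm}(c) take their stated sharper form; beyond that, the argument is a near-verbatim transcription of the proof of \cref{lem:cond2-w-graph}.
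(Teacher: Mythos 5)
Your proposal is correct and matches the paper's own proof essentially verbatim: the paper also instantiates \cref{lemma:necessarygraphicalEIFmerge}, \cref{lemma:parentsofchildren} and \cref{lemma:condition-merge} with $U=Y$, $S=\{A,Y\}\cup\Omin$, $D=\Mset$, verifies the structural hypotheses via \cref{assump:A-goes-to-Y}, \cref{lemma:basicpropertyWandM} and $\De(M',\g)\cap(\{A,Y\}\cup\Omin)=\{Y\}$, and notes $B_j\in\{A\}\cup\Oset\cup\Mset$ since $B_j$ is a parent of a mediator. The mediator-specific sharpenings (e.g.\ $S\equiv Y$ in items \ref{case:p2m} and \ref{case:qandpm}) that you flag for separate handling are exactly the specializations implicit in the paper's appeal to those abstract lemmas, so no substantive difference remains.
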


\begin{proof} Let $U = Y$, $S = \{A,Y\} \cup \Omin$, $D = \Mset$. By \cref{assump:A-goes-to-Y}, $D \neq \emptyset$. Additionally, $\An(U, \g) = \An(Y, \g)$, and by \cref{lemma:basicpropertyWandM}, for all $M' \in \Mset$, we have $\Ch(M',\g) \cap \An(Y,\g) \subseteq \Mset$. Similarly, by \cref{assump:A-goes-to-Y} and by definitions of $\Mset$, $\Omin$, we have that $S = \{A,Y\} \cup \Omin \subseteq \An(Y,\g) = \An(U, \g)$, and  $\De(M',\g) \cap (\{A,Y\} \cup \Omin) = \{Y\} \neq \emptyset$. Hence, our choice of $U, S$, and $D$ sets satisfies the properties required by 
 \cref{lemma:necessarygraphicalEIFmerge,lemma:parentsofchildren} and \cref{lemma:condition-merge}. The result then follows from \cref{lemma:necessarygraphicalEIFmerge,lemma:parentsofchildren} and \cref{lemma:condition-merge}, while noting that $B_j \in \{A\} \cup \Oset \cup \Mset$ by definitions of $\Mset$ and $\Oset$ since $B_j$ is a parent of a node in $\Mset$. 
\end{proof}

\subsection{General Results}

To prove the results in this section we additionally rely on \cref{lemma:basicpropertyWandM} and \cref{def:distto}.

\medskip \begin{lemma}\label{lemma:basicpropertyWandM}
Suppose that $\g$ satisfies \cref{assump:A-goes-to-Y}. 
\begin{enumerate}[(i)]
\item Let $M_i \in \Mset$. Then $\Ch(M_i ,\g) \cap \An(Y, \g) \subseteq \Mset$ and $Y \in \De(M_i, \g)$.
\item Let $W_j \in \Wset$. Then $\Ch(W_{j},\g) \cap \An(\Oset, \g)  \subseteq \Wset$ and $\Oset \cap \De(W_j,\g) \neq \emptyset$.
\end{enumerate}
\end{lemma}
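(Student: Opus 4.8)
The plan is to prove both parts by directly unwinding the definitions of the vertex sets in \cref{sec:sets}, using acyclicity of $\g$ for part~(i) and \cref{lem:osetandw} for part~(ii). Since everything reduces to elementary manipulations with ancestor/descendant relations together with a single invocation of \cref{lem:osetandw}, I do not expect a genuine obstacle; the only point requiring attention is the use of acyclicity of $\g$ to exclude the degenerate configurations (a child of $M_i$ equal to $A$, or, when $M_i = Y$, a child of $Y$ that is an ancestor of $Y$), and the correct reading of the $\mapsto$, $\An$ and $\De$ notation.

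For part~(i), I would fix $M_i \in \Mset(\g)$, so that $A \mapsto M_i \mapsto Y$ by definition (recall $Y \in \Mset(\g)$ and $A \mapsto Y$ by \cref{assump:A-goes-to-Y}). The claim $Y \in \De(M_i,\g)$ is then immediate, since $M_i \mapsto Y$ is by definition the same as $Y \in \De(M_i,\g)$. For the inclusion $\Ch(M_i,\g) \cap \An(Y,\g) \subseteq \Mset(\g)$, I would take an arbitrary $V \in \Ch(M_i,\g) \cap \An(Y,\g)$ and verify the defining conditions of $\Mset(\g)$: from $A \mapsto M_i$ and $M_i \to V$ we get $A \mapsto V$; from $V \in \An(Y,\g)$ we have $V \mapsto Y$; and $V \neq A$, since $V = A$ together with $A \mapsto M_i \to V = A$ would form a directed cycle, contradicting acyclicity of $\g$. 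Hence $V \in \Mset(\g)$. The degenerate subcase $M_i = Y$ contributes nothing, since any $V \in \Ch(Y,\g) \cap \An(Y,\g)$ would likewise produce a directed cycle.

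For part~(ii), I would fix $W_j \in \Wset(\g)$ and invoke \cref{lem:osetandw}, which under \cref{assump:A-goes-to-Y} gives $\Wset(\g) = \An\{\Oset(\g),\g\}$. Thus $W_j \in \An\{\Oset(\g),\g\}$, so there exists $O \in \Oset(\g)$ with $W_j \mapsto O$, i.e.\ $O \in \Oset(\g) \cap \De(W_j,\g)$, which yields $\Oset(\g) \cap \De(W_j,\g) \neq \emptyset$. The inclusion $\Ch(W_j,\g) \cap \An\{\Oset(\g),\g\} \subseteq \Wset(\g)$ is then immediate, again by \cref{lem:osetandw}: since $\An\{\Oset(\g),\g\} = \Wset(\g)$, any element of the left-hand set already lies in $\Wset(\g)$. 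This completes the plan; all steps are routine, and the only care needed is the acyclicity argument flagged above.
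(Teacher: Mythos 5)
Your proof is correct and follows essentially the same route as the paper, whose proof is the one-line observation that the claims follow directly from the definitions of $\Wset$, $\Mset$, $\Oset$ and from \cref{lem:osetandw}; you simply spell out the transitivity-of-ancestry and acyclicity details for part (i) and the single invocation of \cref{lem:osetandw} for part (ii), all of which check out.
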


\begin{proof}
Follows directly from definitions of $\Wset, \Mset$, and $\Oset$ and by \cref{lem:osetandw}.
\end{proof}

\medskip \begin{definition}[c.f.\ \cite{zhang2006causal}]\label{def:distto} Let $\g$ be a directed acyclic graph and $A,B$ and $D$ pairwise disjoint vertex sets in $\g$ such that $A \not\indep_{\g} B | D$. 
For any path $p$ from $A$ to $B$ that is d-connecting given $D$ in $\g$ we define distance-to-$D$ in the following way:
\begin{enumerate}
\item If there are no colliders on $p$ then distance-to-$D$ of $p$ is zero.
\item If there are colliders on $p$, let $\{C_1, \dots C_H\}$ be the set of all collider on $p$ and let $c_h$ be a shortest causal paths from $C_h$ to $D$ in $\g$ for all $h \in \{1, \dots , H\}$. If $C_h \in D$, then $c_h$ is of length zero. 
The distance-to-$D$ of $p$ is then equal to $\sum_{h=1}^{H} (|c_h| +1) =\sum_{h=1}^{H} |c_h| +  H$. 
\end{enumerate}
\end{definition}

We can now introduce the general results \cref{lemma:necessarygraphicalEIFmerge}, \cref{lemma:parentsofchildren}, and \cref{lemma:condition-merge} from which \cref{lem:cond1-w-graph}, \cref{lem:cond2-w-graph}, \cref{lem:cond1-m-graph}, \cref{lem:cond2-m-graph} are derived.

\medskip \begin{lemma}\label{lemma:necessarygraphicalEIFmerge}
Let $Y$ be a vertex in a directed acyclic graph $\g$ and let $U \subseteq \An(Y,\g)$. Furthermore, let $(D_1, \dots, D_E)$, $E \ge 1$ be a topological ordering of a vertex set $D$ in $\g$. Suppose also that for all $D' \in D$, $\Ch(D', \g) \cap \An(U,\g) \subset D$.  Let $S$, $S \subseteq \An(U, \g)$, be a set such that for all $D' \in D$, $\De(D', \g) \cap S \neq \emptyset$.  
Let $D_e \in D \setminus U$, $e \in \{1, \dots, E\}$ and suppose $\Ch(D_e, \g) \cap D =  \{D_{e_1}, \dots, D_{e_f}\}$, $f \ge 1$  is indexed topologically in $\g$. Furthermore, let $D_e \equiv D_{e_0}$.

\begin{enumerate}[(i)]
\item\label{cond0:necessary0merge} If  $D_{e} \not \indep_{\g} S | \pa(D_{e_f},\g) \cup \{D_{e_f}\} \setminus \{D_e \}$, or 
\item\label{cond1:allnecessary} if there exists a $t \in \{1, \dots, f\}$ such that 
\begin{enumerate}[(1)]
\item\label{cond1:necessary1merge}  $D_{e_{t-1}}  \not\in \pa(D_{e_t}, \g) $, or
\item\label{cond1:necessary2merge}  $\pa(D_{e_t}, \g)   \not\subseteq \pa(D_{e_{t-1}}, \g) \cup \{D_{e_{t-1}}\}$, or
\item\label{cond1:necessary3merge}  $\pa(D_{e_{t-1}}, \g) \setminus \pa(D_{e_t} ,\g) \not \indep_{\g} S \  | \  \pa(D_{e_t}, \g)$.
\end{enumerate}
\end{enumerate}
then one of the following graphical configurations holds in $\g$:

\begin{enumerate}[(a)]
\item\label{case:a} $D_{e_m}$ and $D_{e_h}$ are not adjacent in $\g$, for some $h,m \in \{1, \dots, r\}$, $h \neq m$.
\item\label{case:b} $D_{e} \to D_{e_h} \leftarrow B_i$, $h \in  \{1, \dots, f\}$, $B_i \neq D_{e}$, and $D_e \notin \Adj(B_{i}, \g)$.
\item\label{case:c}  $B_{i} \to  D_{e} \to D_{e_h}$, $h \in  \{1, \dots, f\}$,  $B_i \in \Pa(D_{e},\g) \setminus \Pa(D_{e_h},\g)$, and there is a path $p = \langle B_i, \dots , S' \rangle$, $S' \in S$  that is d-connecting given $\Pa(D_{e_h},\g)$.  If $B_i \in S$, then $B_i \equiv S'$ and $|p| =0$.
\end{enumerate}
\end{lemma}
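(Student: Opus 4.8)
The plan is to run a case analysis driven by which of the hypotheses \ref{cond0:necessary0merge}, \ref{cond1:necessary1merge}, \ref{cond1:necessary2merge}, \ref{cond1:necessary3merge} holds, while carrying along two running reductions: if configuration \ref{case:a} holds we are done, so we may assume every two distinct vertices of $\Ch(D_e,\g)\cap D$ are adjacent; and if configuration \ref{case:b} holds we are done, so we may assume no such collider arises. Two preliminary observations set up the reductions. First, since $D_{e_1},\dots,D_{e_f}$ is a topological ordering and $\g$ is acyclic, any two of them that are adjacent have the edge oriented from the smaller to the larger index; so, under the ``no \ref{case:a}'' reduction, $\{D_{e_0},\dots,D_{e_{t-1}}\}\subseteq\pa(D_{e_t},\g)$ for every $t$ (the ``ladder'' structure), and in particular \ref{cond1:necessary1merge} cannot hold. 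Second, any parent of some $D_{e_t}$ lies in $\An(U,\g)$ because $D_{e_t}\in D\subseteq\An(U,\g)$; hence if such a parent is also a child of $D_e$ then by the hypothesis $\Ch(D_e,\g)\cap\An(U,\g)\subseteq D$ it equals some $D_{e_s}$, and the ladder structure together with acyclicity forces a contradiction whenever that parent is ``new''. This is the recurring device for ruling out witnesses that point away from $D_e$; I would also invoke \cref{lemma:basicpropertyWandM} for the corresponding membership facts in the specializations.

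If \ref{cond1:necessary2merge} holds, pick a witness $B_i\in\pa(D_{e_t},\g)\setminus(\pa(D_{e_{t-1}},\g)\cup\{D_{e_{t-1}}\})$. If $D_e\notin\Adj(B_i,\g)$ then $D_e\to D_{e_t}\leftarrow B_i$ is configuration \ref{case:b}. If $D_e\to B_i$, the device above contradicts the ladder. So $B_i\to D_e$; then necessarily $t\ge 2$ (for $t=1$ the definition of the witness would exclude $B_i\in\pa(D_e)$), and I claim configuration \ref{case:c} holds with $h=t-1$: the requirements $B_i\to D_e\to D_{e_{t-1}}$ and $B_i\in\pa(D_e)\setminus\pa(D_{e_{t-1}},\g)$ are immediate, and the needed d-connection $B_i\not\indep_\g S\mid\pa(D_{e_{t-1}},\g)$ is witnessed by a directed path from $B_i$ through $D_{e_t}$ into some $S''\in\De(D_{e_t},\g)\cap S$ — which cannot meet $\pa(D_{e_{t-1}},\g)$ without creating a directed cycle through $D_{e_{t-1}}\to D_{e_t}$. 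If \ref{cond1:necessary3merge} holds, the compositional graphoid property yields a witness $B_i\in\pa(D_{e_{t-1}},\g)\setminus\pa(D_{e_t},\g)$ with $B_i\not\indep_\g S\mid\pa(D_{e_t},\g)$; the same adjacency trichotomy gives \ref{case:a} or \ref{case:b}, or else $B_i\to D_e$, in which case configuration \ref{case:c} holds with $h=t$ (with $h=1$ when $t=1$), the d-connection being handed to us directly by the violated independence.

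The remaining case, \ref{cond0:necessary0merge}, is the delicate one. Here I would take a shortest path $p$ from $D_e$ to some $S'\in S$ that d-connects given $C:=\pa(D_{e_f},\g)\cup\{D_{e_f}\}\setminus\{D_e\}$, and analyze its first interior vertex $B_i$. When $B_i\to D_e$, $B_i$ is a non-collider on $p$, hence $B_i\notin C$, so $B_i\notin\pa(D_{e_f},\g)$ and $B_i\ne D_{e_f}$; configuration \ref{case:c} with $h=f$ should then follow from the subpath $p(B_i,S')$, where one checks that $D_{e_f}$ cannot lie on it (as a non-collider it would be in $C$ and block $p$; as a collider it would be opened by $C$ but closed by $\pa(D_{e_f},\g)$, by acyclicity). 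When $D_e\to B_i$, the device identifies $B_i$ with some $D_{e_s}$ carrying an outer parent along $p$; that outer parent is then either a new parent of $D_e$ — giving configuration \ref{case:c} — or not adjacent to $D_e$, giving configuration \ref{case:b}.

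The main obstacle is exactly the control of colliders in the \ref{cond0:necessary0merge} case: one must choose the index $h$ for configuration \ref{case:c} correctly and show that after replacing the conditioning set $\pa(D_{e_f},\g)\cup\{D_{e_f}\}$ by $\pa(D_{e_h},\g)$ the residual subpath is still d-connecting. This is precisely what the distance-to-$D$ notion of \cref{def:distto} — paired with the minimality of the chosen path — is designed to handle, and it is also the bookkeeping that \cref{lemma:parentsofchildren} and \cref{lemma:condition-merge} package up for the sharper path properties needed downstream; within this lemma, however, configuration \ref{case:c} only asserts \emph{existence} of some d-connecting path, so producing one suffices and keeps the argument lighter than in those companion results.
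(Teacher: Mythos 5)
Your proposal follows essentially the same route as the paper's proof: a case split on which hypothesis fails, the parent/child/non-adjacent trichotomy for the witness vertex relative to $D_e$, the closure property $\Ch(D_e,\g)\cap\An(U,\g)\subseteq D$ together with the topological ordering to dispose of the child branch, a causal path from $D_{e_t}$ into $S$ to supply the d-connection in the (ii)(2) branch, the violated independence itself in the (ii)(3) branch, and a shortest d-connecting path out of $D_e$ in case (i). The structural differences are minor: you work under the standing reductions ``no (a), no (b)'', and in the out-edge subcase of (i) you allow the collider's outer parent $B_l$ to be a parent of $D_e$ and route that branch to (c), whereas the paper kills it by minimality of $p$ (an edge between $D_e$ and $B_l$ would splice into a strictly shorter path d-connecting given the same set), landing in (b) directly. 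Your (ii)(3) handling of a possibly non-adjacent witness via (b) with $h=t-1$ replaces the paper's use of the nested-parent inclusions from the failure of (2); both are valid.

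Where your write-up does not yet carry the weight is in case (i), which you yourself single out as delicate. First, your exclusion of $D_{e_f}$ from the tail $p(B_i,S')$ is circular in the collider sub-step: saying it ``would be opened by $C$ but closed by $\pa(D_{e_f},\g)$'' only restates why its presence would be fatal, not why it cannot occur. The correct argument is the paper's: if $D_{e_f}$ were a collider on the tail, the path vertex immediately preceding it on the $B_i$ side has its edge oriented into $D_{e_f}$, hence is a non-collider on $p$ belonging to $\pa(D_{e_f},\g)$; it is neither $D_e$ (not on the tail) nor $B_i$ (you already showed $B_i\notin\pa(D_{e_f},\g)$), so it lies in $C$ and blocks $p$, a contradiction. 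One also needs the companion remark that every other collider on the tail, being an ancestor of $C$, is an ancestor of $\pa(D_{e_f},\g)$ once $D_{e_f}$ itself is excluded. Second, in the out-edge subcase your (c) conclusion for the ``new parent of $D_e$'' branch requires exactly the same unperformed check, namely that $p(B_l,S')$ d-connects given $\pa(D_{e_f},\g)$; and the identification of the second vertex with some $D_{e_s}$ does not follow from your ``device'' about parents of the $D_{e_t}$'s — you must argue $B_i\in\An(U,\g)$ from the path itself ($B_i$ either lies on a causal segment to $S'$ or is an ancestor of a collider on $p$, hence of the conditioning set, which sits in $\An(U,\g)$), after which $B_i\in\Ch(D_e,\g)\cap\An(U,\g)\subseteq D$. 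These are gaps of justification rather than of strategy, and each is repaired exactly as in the paper's proof.
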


\begin{proof}

\ref{cond0:necessary0merge} 
Let $p = \langle D_e ,\dots, S'\rangle $, $S' \in S \setminus \pa(D_{e_f},\g) \cup \{D_{e_f}\}$ be a shortest path from $D_{e}$ to $S$ that is d-connecting given $\pa(D_{e_f},\g) \cup \{D_{e_f}\} \setminus \{D_e \}$.

Suppose that $p$ starts with an edge $D_{e} \to B$. We first show that $B \in D$. Note that $p$ is either a causal path to $S'$, or $B$ is an ancestor of a collider on $p$. Since $S' \in \An(U,\g)$, and a collider on $p$ would be in  $\An(D_{e_f},\g) \subseteq \An(U,\g)$, in both cases $B \in \An(U, \g)$. 
Therefore, $B \in  \Ch(D_e, \g) \cap \An(U,\g)$, so by choice of set $D$, $B \in D$. 

Hence, let $B = D_{e_t}$ for some $t \in \{1, \dots , f-1\}$. If $D_{e_t} \notin \Pa(D_{e_f}, \g)$, then by the topological ordering of $\Ch(D_e,\g)\cap D$, $D_{e_t} \notin \Adj(D_{e_f},\g)$ and we are in case \ref{case:a} with $h=t$. 

If  $D_{e_t} \in \Pa(D_{e_f}, \g)$, then $D_{e_{t}}$ is a collider on $p$, that is $p$ is of the form $D_{e} \to D_{e_{t}} \leftarrow B_{l}$, and $B_{l} \notin \Pa(D_{e_f},\g) \cup \{D_{e_f}\}$ for $p$ to be d-connecting given $\Pa(D_{e_f},\g) \cup \{D_{e_f}\}  \setminus \{D_e\}$.  However, $B_l \in \An(\Pa(D_{e_f},\g) \cup \{D_{e_f}\} ,\g)$ because $D_e \rightarrow D_{e_t} \leftarrow B_l$ is on $p$. 
Now, it follows that there cannot be an edge between $D_e$ and $B_l$ in $\g$, since otherwise, we can choose the path made up of edge between $D_e$ and $B_l$ and subpath $p(B_l,S')$
as a path that is shorter than $p$ and d-connecting given $\Pa(D_{e_f},\g) \cup \{D_{e_f}\} \setminus \{D_e\}$. Hence, we are in case \ref{case:b}, with $i = l$ and $h =t$.

Lastly, suppose that $p$ starts with an edge $D_{e} \leftarrow B_l$ in $\g$. Then $B_l \in \Pa(D_e,\g) \setminus \Pa(D_{e_f},\g)$.  We then only need to show that $p$ is d-connecting given $ \Pa(D_{e_f},\g) $ for us to be in case \ref{case:c}.

Note that since $p(B_l, S')$ is d-connecting given  $\Pa(D_{e_f},\g) \cup \{D_{e_f} \}$, $D_{e_f}$ is not a non-collider on $p$. Additionally, $D_{e_f}$ cannot be a collider on $p(B_l, S')$, since that would imply that a non-collider on $p(B_l, S')$ is in $\Pa(D_{e_f},\g)$ (due to  $B_l \notin \Pa(D_{e_f},\g) \cup \{D_{e_f}\}$).  Therefore $p(B_l, S')$ is d-connecting given $\Pa(D_{e_f},\g)$ and we are in case \ref{case:c}, with $i =l$.

\ref{cond1:allnecessary}: \ref{cond1:necessary1merge} Suppose $D_{e_{t-1}}  \not\in \pa(D_{e_t}, \g) $. Note that in this case, $t =1$ is not possible, by assumption. 
Due to the topological ordering of $\Ch(D_e,\g)\cap D$, $D_{e_{t-1}} \notin \Adj(D_{e_t},\g)$, in which case we have reached case \ref{case:a}, with $m=t-1$ and $h = t$.

\ref{cond1:allnecessary}:  $\lnot$ \ref{cond1:necessary1merge} $\land$ \ref{cond1:necessary2merge}: There is a $t \in \{1, \dots, k\}$, such that $\pa(D_{e_t}, \g)   \not\subseteq \pa(D_{e_{t-1}}, \g) \cup \{D_{e_{t-1}}\}$ and $D_{e_{s-1}}  \in \pa(D_{e_s}, \g) $, for all $s \in \{1,\dots, f\}$ , since otherwise, we are back in \ref{cond1:necessary1merge}. 
 Therefore, $\pa(D_{e_t}, \g)   \not\subseteq \pa(D_{e_{t-1}}, \g) \cup \{D_{e_{t-1}}\}$, implies that $\pa(D_{e_t}, \g) \setminus \pa(D_{e_{t-1}}, \g) \neq \emptyset$.
 
 Let $B_l \in \pa(D_{e_t}, \g) \setminus ( \pa(D_{e_{t-1}}, \g) \cup \{D_{e_{t-1}}\})$. Since $\De(D_{e_t},\g) \cap S \neq \emptyset$ by assumption, $B_l \in \An(S, \g) \subseteq \An(U,\g)$.
 
Suppose first that $t=1$. Then $B_l \in \pa(D_{e_1}, \g) \setminus ( \pa(D_{e}, \g) \cup \{D_{e}\})$. Note that  $B_l \notin \Ch(D_e, \g)$, since $B_l \in \Ch(D_e, \g) \cap \An(U, \g)$, implies $B_l \in D \cap \Ch(D_e,\g)$ which together with $B_l \to D_{e_1}$ in $\g$ would contradict the topological ordering of $\Ch(D_e, \g) \cap D$. 
Hence, $B_l \notin \Pa(D_e,\g) \cup \Ch(D_e, \g)$ and therefore, $B_l \notin \Adj(D_{e},\g)$. Since additionally, $B_{l} \to D_{e_1} \leftarrow D_{e}$ is in $\g$, we are  in \ref{case:b}, with $B_i=B_l$ and $h=1$. 
 
 For the rest of this case, suppose that $t>1$.
 If $B_l \notin \Adj(D_e, \g)$, then since $B_l \to D_{e_t} \leftarrow D_e$ is in $\g$, we are in \ref{case:b}, with $B_i=B_l$ and $h=t$.
 Otherwise, suppose $B_l \in \Pa(D_e, \g)$. We can use that $B_l \to D_e \to D_{e_{t-1}}$ is in $\g$ and $B_l \notin \Pa(D_{e_{t-1}},\g)$ to conclude that $B_l \notin \Adj(D_{e_{t-1}},\g)$. Since additionally, there exists a path of the form $B_l \to D_{e_t} \to \dots \to S'$,  $S' \in S$ in $\g$ that is d-connecting given $\Pa(D_{e_{t-1}},\g)$, we are in \ref{case:c} with $B_i = B_l$ and $h = t-1$.
 Lastly, suppose that $B_l \in \Ch(D_e, \g)$. Then $B_l \in \Ch(D_e, \g) \cap \An(U, \g)$, so $B_l \in \Ch(D_e,\g) \cap D$ by properties of the set $D$. Then it must be that  $B_l \equiv D_{e_s}$, for some $s \in \{1, \dots , t-3 \}, t>3.$ Since additionally, $D_{e_s} \notin \Adj(D_{e_{t-1}},\g)$, we are in \ref{case:a} with $m=s$ and $h = t-1$.

\ref{cond1:allnecessary}:  $\lnot$ \ref{cond1:necessary1merge} $\land \lnot$ \ref{cond1:necessary2merge} $\land$ \ref{cond1:necessary3merge}: There is a $t \in \{1, \dots, f\}$, such that $\pa(D_{e_{t-1}}, \g) \setminus \pa(D_{e_t} ,\g) \not \indep_{\g}  S \  | \  \pa(D_{e_t}, \g)$, and $D_{e_{s-1}}  \in \pa(D_{e_s}, \g) $ and $\Pa(D_{e_s}, \g) \subseteq \Pa(D_{e_{s-1}},\g) \cup \{D_{e_{s-1}}\}$, for all $s \in \{1, \dots, f\}$. Note that in this case, we have $\pa(D_{e_t},\g) \subset \pa(D_{e_{t-1}},\g) \cup \{D_{e_{t-1}}\} \subseteq \dots \subseteq \pa(D_e, \g) \cup \{D_e, D_{e_1}, \dots , D_{e_{t-1}}\}$.

Let $B_l \in \pa(D_{e_{t-1}}, \g) \setminus \pa(D_{e_t} ,\g).$ Then 
$B_l \in \pa(D_e, \g) \cup \{ D_{e_1}, \dots, D_{e_{t-2}} \}$. Note that $B_l \neq D_e$, because $D_e \in \pa(D_{e_t},\g)$. If $B_l \in \{D_{e_1}, \dots, D_{e_{t-2}} \}$, then since $B_l \notin \Pa(D_{e_t},\g)$ it follows that 
$B_l \notin \Adj(D_{e_t},\g)$, meaning that we are in case \ref{case:a} with $h=t$ and $D_{e_m}= B_l$.

Otherwise, $B_l \in \pa(D_e, \g) \setminus \Pa(D_{e_t},\g)$, meaning that  $B_l \to D_e \to D_{e_{t}}$ is in $\g$ (possibly $t=1$) and since  there is a d-connecting path from $B_{l}$ to $S$ given $\Pa(D_{e_t}, \g)$ we are in case \ref{case:c} with $i = l$ and $h = t$.
\end{proof}

\medskip \begin{lemma}\label{lemma:parentsofchildren}
Let $Y$ be a vertex in a directed acyclic graph $\g$ and let $U \subseteq \An(Y,\g)$. Furthermore, let $(D_1, \dots, D_E)$, $E \ge 1$ be a topological ordering of a vertex set $D$ in $\g$. Suppose also that for all $D' \in D$, $\Ch(D', \g) \cap \An(U,\g) \subset D$.    
Let $D_e \in D \setminus U, e \in \{1, \dots, E\}$ suppose that $D_e$ does not satisfy \ref{case:a} or \ref{case:b} of \cref{lemma:necessarygraphicalEIFmerge}. Suppose further that $\Ch(D_e, \g) \cap D =  \{D_{e_1}, \dots, D_{e_f}\}$, $f \ge 1$  is indexed topologically in $\g$. Furthermore, let $D_e \equiv D_{e_0}$.
Then for every $D_{e_t}$, $t \in \{1, \dots, f\}$ the following hold:
\begin{enumerate}[(i)]
\item\label{1}  if $B \in \Pa(D_{e_i},\g)$, then $B \in \Adj(D_e, \g)$, and
\item\label{2}  for any $i,j \in \{1, \dots, f\}$, such that $i \neq j$,  $D_{e_i} \in \Adj(D_{e_j},\g)$.
\item  \label{case:parents} $ \{D_{e_0}, \dots, D_{e_{t-1}}\} \subseteq \Pa(D_{e_t}, \g) \subseteq \Pa(D_{e}, \g) \cup \{D_{e_0}, \dots, D_{e_{t-1}}\}$.
\item \label{case:children} $\Ch(D_e, \g) \cap D \subseteq \{D_{e_1}, \dots,  D_{e_t}\} \cup \Ch(D_{e_t},\g)$. 
\end{enumerate}
\end{lemma}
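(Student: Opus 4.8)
The plan is to obtain all four assertions from the single standing hypothesis that $D_e$ exhibits neither configuration \ref{case:a} nor configuration \ref{case:b} of \cref{lemma:necessarygraphicalEIFmerge}, used in tandem with the fact that $(D_{e_1}, \dots, D_{e_f})$ is a topological sub-ordering of $\Ch(D_e, \g) \cap D$. I would prove the two ``pointwise'' claims \ref{2} and \ref{1} first, since \ref{case:parents} and \ref{case:children} then fall out quickly once the direction of every edge among the children of $D_e$ is pinned down by the topological ordering.

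For \ref{2}: if two children $D_{e_i}, D_{e_j}$ ($i \neq j$) of $D_e$ lying in $D$ were non-adjacent in $\g$, that is verbatim configuration \ref{case:a}, contradicting the hypothesis on $D_e$; hence all such pairs are adjacent. For \ref{1}: suppose, for some $t$, that $B \in \Pa(D_{e_t},\g)$ with $B \notin \Adj(D_e,\g)$ (so in particular $B \neq D_e$). Since $D_{e_t} \in \Ch(D_e,\g)$, the graph contains the v-structure $D_e \to D_{e_t} \leftarrow B$ with $D_e$ and $B$ non-adjacent, which is exactly configuration \ref{case:b} (taking $h = t$, $B_i = B$) --- a contradiction. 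So every parent of every $D_{e_t}$ is adjacent to $D_e$.

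For \ref{case:parents}: the first inclusion holds because $D_{e_0} = D_e \in \Pa(D_{e_t},\g)$ by construction, and for $1 \le s \le t-1$ claim \ref{2} makes $D_{e_s}$ adjacent to $D_{e_t}$, with the edge necessarily oriented $D_{e_s} \to D_{e_t}$ by the topological ordering. For the second inclusion, take $B \in \Pa(D_{e_t},\g)$; by \ref{1}, $B \in \Adj(D_e,\g)$, so either $B \in \Pa(D_e,\g)$ (done) or $B \in \Ch(D_e,\g)$. In the second case I would argue $B \in D$, whence $B = D_{e_s}$ for some $s$, and then $D_{e_s} \to D_{e_t}$ forces $s < t$, giving $B \in \{D_{e_0}, \dots, D_{e_{t-1}}\}$. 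This last step is the only place I expect friction: to place $B$ in $D$ via $\Ch(D_e,\g) \cap \An(U,\g) \subseteq D$ one needs $B \in \An(U,\g)$, which follows from $B \to D_{e_t}$ provided $D_{e_t} \in \An(U,\g)$; this is automatic in every context where the lemma is invoked (there every vertex of $D$ is an ancestor of $U$, as guaranteed by the set $S$ in the hypotheses of \cref{lemma:necessarygraphicalEIFmerge}), and it can be adjoined explicitly to the hypotheses if a fully self-contained statement is wanted. Finally, \ref{case:children} is immediate from \ref{2}: any $D_{e_s} \in \Ch(D_e,\g) \cap D$ with $s > t$ is adjacent to $D_{e_t}$ by \ref{2}, and the topological ordering forces the edge to be $D_{e_t} \to D_{e_s}$, i.e.\ $D_{e_s} \in \Ch(D_{e_t},\g)$. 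The bulk of the work is thus careful edge-orientation bookkeeping together with one appeal to the ``children-into-$D$'' property; there is no computation.
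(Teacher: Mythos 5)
Your argument is correct and follows essentially the same route as the paper's proof: \ref{1} and \ref{2} are read off directly from the failure of configurations \ref{case:a} and \ref{case:b} of \cref{lemma:necessarygraphicalEIFmerge}, and \ref{case:parents}, \ref{case:children} are then orientation bookkeeping using the topological ordering of $\Ch(D_e,\g)\cap D$.

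The one point where you go beyond the paper is the ``friction'' you flag in the second inclusion of \ref{case:parents}, and your concern is justified. The paper's proof dismisses this step with ``follows directly from \ref{1}'', but \ref{1} only places a parent $B$ of $D_{e_t}$ in $\Pa(D_e,\g)\cup\Ch(D_e,\g)$; to exclude the case $B\in\Ch(D_e,\g)\setminus D$ one needs $B\in\An(U,\g)$, hence $D_{e_t}\in\An(U,\g)$, and this is not implied by the hypotheses as literally stated in \cref{lemma:parentsofchildren}. Indeed, the stated version admits a small counterexample: take vertices $\{D_e,B,C,Y\}$ with edges $D_e\to B$, $D_e\to C$, $B\to C$ and $Y$ isolated, $U=\{Y\}$, $D=\{D_e,C\}$; all hypotheses hold and $D_e$ satisfies neither \ref{case:a} nor \ref{case:b}, yet $\Pa(C,\g)=\{D_e,B\}\not\subseteq\Pa(D_e,\g)\cup\{D_e\}$. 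As you observe, the missing condition ($D\subseteq\An(U,\g)$, or just $D_{e_t}\in\An(U,\g)$) is supplied in every invocation of the lemma: in \cref{lem:cond2-w-graph,lem:cond2-m-graph} and in \cref{lemma:condition-merge} the set $S\subseteq\An(U,\g)$ with $\De(D',\g)\cap S\neq\emptyset$ for all $D'\in D$ forces $D\subseteq\An(U,\g)$ (concretely, $\Wset=\An(\Oset,\g)$ and $\Mset\subseteq\An(Y,\g)$). So your proposal to adjoin this hypothesis explicitly is the right fix, and with it your completion of the argument ($B\in\Ch(D_e,\g)$ implies $B\in D$, hence $B=D_{e_s}$ with $s<t$ by acyclicity) is exactly what the paper's terse step needs; your write-up is, if anything, more careful than the paper's.
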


\begin{proof}
Cases \ref{1} and \ref{2} follow directly from the fact that $D_e$ does not satisfy  \ref{case:a} or \ref{case:b} of \cref{lemma:necessarygraphicalEIFmerge}.

\ref{case:parents}: Consider claim $\{D_{e_0}, \dots, D_{e_{t-1}}\} \subseteq \Pa(D_{e_t}, \g) $. Note that $D_{e_t} \in \Ch(D_e,\g)$, so $D_{e_0} \in \Pa(D_{e_t},\g)$. If $t =1$, then we are done.
If $t>1$, then note that $D_{e_1}, \dots, D_{e_{t-1}}$ precede $D_{e_t}$ in the topological ordering of $\Ch(D_e,\g)$ in $\g$. Since all pairs of children of $D_e$ in $D$ must be adjacent, by \ref{2},  $D_{e_1}, \dots, D_{e_{t-1}}$ are parents of $D_{e_t}$ in $\g$. 

To prove the rest of case \ref{case:parents}, we only need to show that $\Pa(D_{e_t}, \g) \setminus \{D_{e_0}, \dots, D_{e_{t-1}}\} \subseteq \Pa(D_{e},\g)$. This follows directly from \ref{1}.

To show that case \ref{case:children}  holds, we only need to show that  $(\Ch(D_e, \g) \cap D) \setminus \{D_{e_1}, \dots,  D_{e_t}\} \subseteq \Ch(D_{e_t},\g)$. By \ref{2}, all pairs of children of $D_e$ in $D$ must be adjacent in $\g$. Since all vertices in $(\Ch(D_e,\g)  \cap D) \setminus \{D_{e_1} , \dots, D_{e_t} \}$ are after $D_{e_t}$ in the topological ordering of $(\Ch(D_e,\g) \cap D)$ in $\g$, it follows that $(\Ch(D_e, \g) \cap D) \setminus \{D_{e_1}, \dots,  D_{e_t}\} \subseteq \Ch(D_{e_t},\g)$.
\end{proof}

\medskip \begin{lemma}\label{lemma:condition-merge}
Let $Y$ be a vertex in a directed acyclic graph $\g$ and let $U \subseteq \An(Y,\g)$. Furthermore, let $(D_1, \dots, D_E)$, $E \ge 1$ be a topological ordering of a vertex set $D$ in $\g$. Suppose also that for all $D' \in D$, $\Ch(D', \g) \cap \An(U,\g) \subset D$.     Let $S$, $S \subseteq \An(U, \g)$, be a set such that for all $D' \in D$, $\De(D', \g) \cap S \neq \emptyset$.  

Suppose that for $D_e \in D \setminus U, e \in \{1, \dots, E\}$, $D_e$ does not satisfy \ref{case:a} or \ref{case:b}, but does satisfy \ref{case:c} of  \cref{lemma:necessarygraphicalEIFmerge}. Furthermore,  let $D_e \equiv D_{e_0}$ and suppose that  $\Ch(D_e, \g) \cap D =  \{D_{e_1}, \dots, D_{e_f}\}$, $f \ge 1$  is indexed  topologically in $\g$.

Let $t \in \{1, \dots, f\}$ be chosen as the largest index such that $\Pa(D_{e},\g) \setminus \Pa(D_{e_t}, \g) \not \indep_{\g} S \setminus \Pa(D_{e_t},\g) |\Pa(D_{e_t},\g)$.  Let path $q = \langle D_{e_t}, \dots , S'\rangle$,  be chosen as a shortest causal path from $D_{e_t}$ to $S$.  Possibly $D_{e_t} \equiv Y$  and $|q| = 0$.

Let $p = \langle B_j, \dots, S'' \rangle$, $B_j \in  \Pa(D_{e},\g) \setminus \Pa(D_{e_t}, \g)$, $S'' \in   S\setminus \Pa(D_{e_t},\g)$  be chosen as a shortest among all paths from $\Pa(D_{e},\g) \setminus \Pa(D_{e_t}, \g)$ to  $S \setminus \Pa(D_{e_t},\g)$ that have a shortest distance-to-$\Pa(D_{e_t},\g)$.  If $B_j \equiv S''$ then $|p| = 0$.

Then there are paths $p$ and $q$  in $\g$ that satisfy the following:
\begin{enumerate}[(i)]
\item \label{case:q0d} All vertices on $q$ are in $D$.
\item \label{case:p1d} The only vertex in $S$ that is on $p$ is $S''$.
\item \label{case:q1d}The only vertex in $S$ that is on $q$ is $S'$.
\item \label{case:q2d} $q$ does not contain any vertices in $\Pa(D_e, \g) \cup \{D_e\}$.
\item \label{case:q3d} If a vertex on $q$ is in $(\Ch(D_e,\g) \cap D)  \setminus \{D_{e_t}\}$, then $|q| \ge 1$, $q = \langle D_{e_t}, D_{q_2}, \dots,  S' \rangle$   and the only vertex on $q$ that is in $(\Ch(D_e,\g) \cap D) \setminus D_{e_t}$ is $D_{q_2}$, and  $D_{q_2} \in \Ch(D_{e_t},\g)  \cap D$.
\item \label{case:p2d} If  a vertex on $p$ is in $\Ch(D_e,\g) \cap D$, then $|p| \ge 1$, and $p$ is of the form $B_j \to B_{p_2} \to \dots \to S''$ and the only vertex on $p$ that is in  $\Ch(D_e,\g) \cap D$ is $B_{p_2}$ and $B_{p_2} \in \Ch(D_{e_t},\g) \cap D$.
\item \label{case:p3d} $p$ is d-connecting given $(\Pa(D_{e}, \g) \cup \{D_e\}) \setminus \{B_j\}$.
\item \label{case:p4d} if there is a collider on $p$, then let $\{C_1, \dots, C_H\}$, $H \ge 1$ be the set of all collider on $p$, and let $c_h$ be a shortest path from $C_h$ to $\pa(D_e, \g) \cup \{D_e\}$ in $\g$ for all $h \in \{1, \dots, H\}$. Then
\begin{enumerate}
\item  Vertices from $S$ are not on $c_h$, and
\item $c_h$ does not contain any vertex that is on $q$, and 
\item the only vertex that $p$ and $c_h$ have in common is $C_h$.
\end{enumerate}
\item \label{case:qandpd} 
\begin{enumerate}[(1)]
\item  If a vertex in $\Ch(D_{e},\g) $ is on $p$, or 
\item  if a vertex in $(\Ch(D_{e},\g) \cap D) \setminus \{D_{e_t}\}$ is $q$, or 
\item if there is a vertex that is on both $p$ and $q$, then
\end{enumerate}
 there exists a vertex  $D_l \in D$ on $p$ or on $q$ such that
\begin{enumerate}
\item $B_j \neq D_l \neq D_{e_t}$,  and 
\item  $B_s \to D_{l} \leftarrow D_r$, $B_s \neq D_r$ is in $\g$, where $B_s$ is on $p$, $D_r $ is on $q$, and $D_r \notin \Adj(B_s, \g)$.
\item if $D_l$ is on $p$, then $p(D_l,S'')$ is a causal path and $S' \equiv S''$.
\end{enumerate}
\end{enumerate}
\end{lemma}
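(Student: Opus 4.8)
The plan is to prove each of the nine conclusions by a path-surgery argument: assuming the claimed property fails, we produce either a strictly shorter or distance-closer path of the same kind (contradicting the minimality built into the choice of $p$ or $q$), or a directed cycle, or a configuration that forces $D_e$ to satisfy case (a) or (b) of \cref{lemma:necessarygraphicalEIFmerge}, which is excluded by hypothesis. Before that I record two preliminaries. The path $q$ exists because $\De(D_{e_t},\g)\cap S\neq\emptyset$ by the standing hypothesis on $D$, and we take $q$ of shortest causal length among all causal paths from $D_{e_t}$ to $S$. The maximal index $t$ is well defined since $D_e$ realizes case (c) of \cref{lemma:necessarygraphicalEIFmerge}, so at least one child $D_{e_h}$ satisfies the stated non-independence, and $p$ is then chosen among the d-connecting paths witnessing it at the index $t$ to be lexicographically minimal in $(\text{distance-to-}\Pa(D_{e_t},\g),\,\text{length})$, with distance as in \cref{def:distto}. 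Throughout I would use \cref{lemma:parentsofchildren}, which under our hypotheses gives $\{D_{e_0},\dots,D_{e_{t-1}}\}\subseteq\Pa(D_{e_t},\g)\subseteq\Pa(D_e,\g)\cup\{D_{e_0},\dots,D_{e_{t-1}}\}$ and $\Ch(D_e,\g)\cap D\subseteq\{D_{e_1},\dots,D_{e_t}\}\cup\Ch(D_{e_t},\g)$, together with pairwise adjacency of the $D_{e_i}$ and adjacency to $D_e$ of every parent of a $D_{e_i}$.

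Then I would dispatch the conclusions roughly in order. For (i), every vertex of the causal path $q$ is an ancestor of $S'\in S\subseteq\An(U,\g)$, so the successor of $D_{e_t}$ on $q$ lies in $\Ch(D_{e_t},\g)\cap\An(U,\g)\subseteq D$, and inductively all of $q$ lies in $D$. For (iii) and (iv): a vertex of $S$ strictly inside $q$, or $D_{e_t}$ itself in $S$ with $|q|\geq1$, would let us shorten $q$; and since $q$ is causal out of $D_{e_t}\in\Ch(D_e,\g)$, meeting $\Pa(D_e,\g)\cup\{D_e\}$ would create a directed cycle. For (ii): a vertex of $S\setminus\Pa(D_{e_t},\g)$ strictly inside $p$ gives a strictly shorter or distance-closer path of the same kind, while a vertex of $S\cap\Pa(D_{e_t},\g)$ inside $p$ that is a non-collider blocks $p$; ruling out a collider of $p$ lying in $S$ needs a short separate argument, relocating the $S$-endpoint of $p$ to that collider. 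For (v) and (vi): by \cref{lemma:parentsofchildren} any $D_{e_s}$ with $s\neq t$ lying on $q$ (resp.\ on $p$) is a strict ancestor of $D_{e_t}$ when $s<t$ — impossible on a causal path out of $D_{e_t}$ in the $q$-case, and in the $p$-case either blocking $p$ as a non-collider in $\Pa(D_{e_t},\g)$ or contradicting acyclicity via the path $c_h$ of (viii) if a collider — or a child of $D_{e_t}$ when $s>t$; minimality of $q$ (of $p$) then forces it to be the second vertex $D_{q_2}$ (resp.\ $B_{p_2}$), it is unique, and for $p$ one checks, using minimality and the impossibility of a causal path from a child of $D_e$ back into $\Pa(D_e,\g)\cup\{D_e\}$, that the first edge is $B_j\to B_{p_2}$ with the remainder causal to $S''$. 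For (vii): $p$ is d-connecting given $\Pa(D_{e_t},\g)$; to upgrade to $(\Pa(D_e,\g)\cup\{D_e\})\setminus\{B_j\}$ I would check that no non-collider of $p$ lies in the symmetric difference (using $D_e\notin p$ and that $D_{e_1},\dots,D_{e_{t-1}}$ can occur on $p$ only as the causal vertex $B_{p_2}$ by (vi), hence only as a collider endpoint), and that every collider of $p$ in $\An(\Pa(D_{e_t},\g))$ already lies in $\An(\Pa(D_e,\g)\cup\{D_e\})$ by the inclusion above and acyclicity. For (viii): for a collider $C_h$ of $p$, a shortest causal path $c_h$ to $\Pa(D_e,\g)\cup\{D_e\}$ cannot meet $S$ (else relocate the endpoint of $p$ for a distance-closer path), cannot meet $q$ (acyclicity plus minimality of $q$), and meets $p$ only at $C_h$ (a second common vertex would shorten $c_h$ or give a distance-closer $p$).

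The main obstacle is (ix), the merge structure. Here I would take the \emph{first} vertex $D_l$ — along $q$ from $D_{e_t}$, or along $p$ from $B_j$, whichever triggers first — at which one of the three events occurs, and analyze the orientations of the $p$- and $q$-edges incident to it. The crux is to show $D_l$ is a collider with one incoming edge $B_s\to D_l$ from $p$ and one incoming edge $D_r\to D_l$ from $q$, with $B_s\notin\Adj(D_r,\g)$: non-adjacency holds because an edge between $B_s$ and $D_r$ would splice $p$ and $q$ into a strictly shorter or distance-closer d-connecting path (or a shorter causal $q$); and $D_l$ must be a collider rather than a pass-through, since a non-collider at $D_l$ on the spliced path would either lie in the relevant conditioning set — via \cref{lemma:parentsofchildren} — and block it, or again allow a shortcut. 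Finally, when $D_l$ lies on $p$ one argues that a causal $q$ can reach $p$ only at a vertex from which $p$ continues causally to its $S$-endpoint, so $p(D_l,S'')$ is causal and $S'\equiv S''$. All of the difficulty is in the bookkeeping: in each of the many sub-branches one must track which minimality is being violated — distance-to-$\Pa(D_{e_t},\g)$ versus length of $p$, causal length of $q$, or length of $c_h$ — and invoke \cref{lemma:parentsofchildren} and the exclusion of cases (a) and (b) at precisely the right moments.
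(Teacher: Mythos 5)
Your plan for parts (i)--(viii) follows the same route as the paper: minimality of $q$ (shortest causal) and of $p$ (lexicographic in distance-to-$\Pa(D_{e_t},\g)$, then length), acyclicity, \cref{lemma:parentsofchildren}, and the exclusion of cases \ref{case:a} and \ref{case:b}. Two remarks there. First, in (vi) you claim the form $B_j \to B_{p_2} \to \dots \to S''$ follows from "minimality and the impossibility of a causal path from a child of $D_e$ back into $\Pa(D_e,\g)\cup\{D_e\}$"; acyclicity and minimality alone do not give this. The paper first uses the \emph{maximality of the index} $t$ to establish that the edge $B_j \to D_{e_l}$ is actually present in $\g$ (for the child $D_{e_l}$, $l>t$, lying on $p$), and only then does minimality force $D_{e_l}$ to be the second vertex of $p$. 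You set up the maximal $t$ but never use it; as sketched, this step is unjustified. Second, your flagged subtlety in (ii) (a collider of $p$ lying in $S$) is only resolved by truncation when that collider is in $S\setminus\Pa(D_{e_t},\g)$; the paper glosses over this too, so I do not count it against you.

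The genuine gap is in (ix), which is the substantial part of the lemma. Your crux is that the witness $D_l$ can be taken at the first "trigger" vertex and that $B_s \notin \Adj(D_r,\g)$ because an edge between $B_s$ and $D_r$ would splice $p$ and $q$ into a strictly shorter (or distance-closer) $p$ or a shorter causal $q$. This is false: at the merge vertex $D_l^1$ one has $B_{p_{l_1-1}} \to D_l^1 \leftarrow D_{q_{l_2-1}}$ with $p$ and $q$ coinciding from $D_l^1$ onward, and inserting the edge $B_{p_{l_1-1}} \to D_{q_{l_2-1}}$ (or $D_{q_{l_2-1}} \to B_{p_{l_1-1}}$) into either path produces a path that is \emph{longer}, not shorter, so no minimality is violated; such an edge can genuinely be present, and then the conclusion's triple $(B_s, D_l, D_r)$ does not sit at the merge vertex at all. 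The paper's proof accepts that adjacency may occur and instead performs an iterative backward walk: whenever the current candidate pair is adjacent, it shifts the triple one step back along $q$ (if $B_{p_{l_1-1}} \to D_{q_{l_2-1}}$) or along $p$ (if $B_{p_{l_1-1}} \leftarrow D_{q_{l_2-1}}$), and the process must terminate in a valid triple because $B_j \notin \Adj(D_{e_t},\g)$ --- a consequence of the exclusion of case \ref{case:b} --- anchors the two ends of the walk. Your proposal contains neither this walk nor any use of $B_j \notin \Adj(D_{e_t},\g)$ as a terminating anchor, so conclusion (ix)(b) is not established by your argument.
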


\begin{proof}
Cases \ref{case:q0d}, \ref{case:p1d}, \ref{case:q1d}, and \ref{case:q2d}  follow immediately by properties of $D$ and choice of $p$ and $q$. 

For case \ref{case:q3d}, note that,  by \ref{case:children} of \cref{lemma:parentsofchildren}, $\Ch(D_e, \g) \cap D  \subseteq (\Pa(D_{e_t}, \g) \cap D) \cup \{D_{e_t}\} \cup (\Ch(D_{e_t},\g) \cap D)$. Since $q$ consists of descendants of $D_{e_t}$, vertices in $\Pa(D_{e_t},\g) \cap D$ are not on $q$.
Hence, if a vertex from $(\Ch(D_e, \g) \cap D) \setminus \{D_{e_t}\}$ is on $q$, then this vertex can only be  in $\Ch(D_{e_t},\g) \cap \Ch(D_e, \g)  \cap D$. Additionally, if any vertex other than $D_{q_2}$ on $q$ is in $\Ch(D_{e_t},\g) \cap D$, that would contradict the choice of $q$ as a shortest causal path from $D_{e_t}$ to $S$.

To show case \ref{case:p2d}, note as above that by \ref{case:children} of  \cref{lemma:parentsofchildren}, $(\Ch(D_e, \g) \cap D)  \subseteq (\Pa(D_{e_t}, \g) \cap \Ch(D_e, \g) \cap D) \cup \{D_{e_t}\} \cup (\Ch(D_{e_t},\g) \cap \Ch(D_e, \g) \cap D )$. Also, $D_{e_t}$ is not on $p$, because $p$ is d-connecting given $\pa(D_{e_t},\g)$. For the same reason, any vertex in $ \Pa(D_{e_t},\g) \cap \Ch(D_e, \g) \cap D$ cannot be a non-collider on $p$. Additionally, a vertex in $\Ch(D_{e_t},\g) \cap \Ch(D_e, \g) \cap D$ cannot be a collider, or an ancestor of a collider on $p$ (due to $p$ being d-connecting given $\Pa(D_{e_t},\g)$), or even an ancestor of $B_j$ (due to acyclicity).

If a vertex  $D_{e_l} \in \Ch(D_{e_t},\g)\cap \Ch(D_e, \g) \cap D$, $t < l \le f$ is a non-collider on $p$, then $p(D_{e_l}, S'')$ is of the form $D_{e_l} \to \dots \to S''$ and is therefore, d-connecting path given $\Pa(D_{e_l},\g)$. By choice of $t$, since $l>t$, we have to have that $B_j \to D_{e_l}$. Hence, $D_{e_l} \equiv B_{p_2}$ otherwise, we can choose a shorter path $p$ with the same or shorter distance-to-$\pa(D_{e_t},\g)$. Thus, $p$ is of the form $B_{j} \to B_{p_2} \to \dots \to S''$.

It is only left to show that a vertex in  $\Ch(D_{e},\g) \cap \Pa(D_{e_t},\g) \cap D = \Ch(D_{e},\g) \cap \Pa(D_{e_t},\g)$ cannot be a collider on $p$. Hence, suppose that  $\Ch(D_{e},\g) \cap \Pa(D_{e_t},\g) \neq \emptyset$, meaning that $t \neq 1$, since by   \cref{lemma:parentsofchildren}, $\Ch(D_{e},\g) \cap \Pa(D_{e_t},\g) = \{D_{e_1},\dots , D_{e_{t-1}} \}$.
Suppose for a contradiction that for a collider $C$ on $p$, $C  = D_{e_s}, s \in \{1, \dots ,t-1\}$ and let $B \to C \leftarrow R$ be a subpath of $p$ that contains $C$. 
Then $B, R \in \Pa(D_{e_s},\g) \subset \Pa(D_{e}, \g) \cup \{D_{e_0}, \dots, D_{e_{s-1}}\}$ (\ref{case:parents} of  \cref{lemma:parentsofchildren}). 
Note that if $R \in \{D_{e_0}, \dots, D_{e_{s-1}}\} \cup (\Pa(D_{e},\g) \cap \Pa(D_{e_t},\g)) \subseteq \Pa(D_{e_t},\g)$. Then since $R$ is a non-collider on $p$ or $R \equiv S''$, we reach a contradiction with the choice of $p$ as a path that is d-connecting given $\Pa(D_{e_t},\g)$.
If however, $R \in \Pa(D_e, \g) \setminus \Pa(D_{e_t},\g)$, then since $p(R,S'')$ is d-connecting given $\Pa(D_{e_t},\g)$, we  have a contradiction with the choice of $p$ as a shortest path among all paths with the shortest distance-to-$\Pa(D_{e_t},\g)$.

For case \ref{case:p3d}, note as before  that $\{D_{e_0}, \dots, D_{e_{t-1}}\}  \subseteq \Pa(D_{e_t}, \g) \subseteq \Pa(D_e, \g) \cup \{D_{e_0}, \dots,D_{e_{r-1}}\}$, for $D_{e_0} \equiv D_e$, based on \ref{case:parents} of  \cref{lemma:parentsofchildren}. 
We have already concluded in the proof of case \ref{case:p2d} that a vertex in $\Pa(D_{e},\g) \setminus \Pa(D_{e_t},\g)$ is not a non-collider on $p$. Since $p$ is d-connecting given $\Pa(D_{e_t},\g)$, a vertex in $ \Pa(D_{e_t},\g)$ is also not a non-collider on $p$. Note also that $D_e \in \pa(D_{e_t},\g)$. Hence, a vertex in $\Pa(D_{e},\g) \cup \{D_e\}$ is not a non-collider on $p$.

By Lemma \ref{lemma:parentsofchildren}, every collider on $p$ is in $\An(\Pa(D_{e_t},\g), \g) \subseteq \An((\Pa(D_e, \g) \cup  \{D_{e_0}, \dots, D_{e_{t-1}}\}) \setminus \{B_j\} ,\g)$. If $t = 1$, then   $\An(\Pa(D_{e_t},\g), \g) \subseteq  \An((\Pa(D_e, \g) \cup \{D_e\}) \setminus \{B_j\}, \g)$ and we are done. 

Hence, suppose that $t \neq 1$. Then $ \An(\Pa(D_{e_t},\g), \g) \subseteq \An((\Pa(D_e, \g) \cup \{D_e\}) \setminus \{B_j\},\g) \cup \{D_{e_1},\dots, D_{e_{t-1}}\}$, by applying \ref{case:parents} of  \cref{lemma:parentsofchildren}. Hence, for $p$ to be d-connecting given $(\Pa(D_e,\g) \cup \{D_{e}\} )\setminus \{B_j\}$ it is enough to show that a collider on $p$ is not in $\{D_{e_1},\dots , D_{e_{t-1}}\}$ which is something we have already proved in case \ref{case:p2d}. 

\ref{case:p4d} Note that the choice of paths $c_1, \dots , c_H$ is possible due to \ref{case:p3d}. The fact that $q$ does not contain any vertex on $c_h$, for all $h \in \{1, \dots, H\}$ follows by acyclicity of $\g$. Similarly, $p$ and $c_h$ only intersect at $C_H$, and no vertex on $c_h$ is in $S$ since otherwise we could choose a path from $\Pa(D_{e}, \g) \setminus \Pa(D_{e_t},\g)$ to $S \setminus \Pa(D_{e_t},\g)$ that has a shorter distance-to-$\Pa(D_{e_t},\g)$, or a  shorter path with the same distance-to-$\Pa(D_{e_t},\g)$.

Now, we move on to case \ref{case:qandpd}. Note first that $B_j$ cannot be on $q$ and that $D_{e_t}$ is not on $p$ by case \ref{case:p2d}. 

(1) Suppose a vertex in $\Ch(D_{e_t},\g)$ is on $p$.  By case \ref{case:p2d}, $p$ is of the form $B_j \to B_{p_2} \to \dots \to S''$ and $B_{p_2} \in \Ch(D_{e_t},\g) \cap D$. Then, clearly, $D_{e_t} \to B_{p_2}$ is in $\g$ and since $B_j \notin \Adj(D_{e_t},\g)$, $D_l \equiv B_{p_2}, B_{s} \equiv B_j$, $D_r \equiv D_{e_t}$.

The proof for case $\lnot (1) \land  (2)$  is analogous using the result of case \ref{case:q2d}, and yields $D_l \equiv D_{q_2}, B_{s} \equiv B_j$, $D_r \equiv D_{e_t}$.

$\lnot (1) \land \lnot (2) \land (3)$ Since $D_{e_t}$ is not on $p$ and $B_j$ is not on $q$,  $|p| \neq 1 \neq |q|$. Hence, let $D_l^1$ be the closest vertex to $B_j$ on $p$ that is also on $q$.   Let $p(B_j, D_l^1) = \langle B_j =B_{p_1}, \dots, B_{p_{l_1}} = D_l^1 \rangle$ and $q(D_{e_t}, D_l^1) = \langle D_{e_t} =D_{q_1}, \dots, D_{q_{l_2}} = D_l^1 \rangle$.

Since $D_l^1 \in \De(D_{e_t},\g)$, it follows that $D_l^1$ is not a collider on $p$, nor an ancestor of a collider on $p$ (otherwise, $p$ would not be d-connecting given $\Pa(D_{e_t},\g)$), nor an ancestor of $B_j$ (due to acyclicity). Therefore, $p(B_{p_{l_1-1}}, S'')$ is of the form $B_{p_{l_1-1}} \to D_l^1 \to \dots \to S''$. Since $q(D_l^1, S')$ is also a causal path and d-connecting given $\pa(D_{e_t},\g)$,  it must be that $p(D_l^1, S'') \equiv q(D_l^1, S')$ (otherwise, we can choose a shorter path for $p$ or $q$).

Next, note that $D_l^1 \neq D_{e_t}$, because $D_{e_t}$ cannot be on $p$. Additionally, $D_l^1 \neq B_j$, as $B_j$ cannot be on $q$, due to acyclicity of $\g$.
Since $D_{l}^{1}\neq D_{e_t}$, edge  $D_{q_{l_2 -1}} \to D_{q_{l_2}}$ is on $q$. Hence, we can consider the possibility that $(B_s,D_l,D_r) \equiv (B_{p_{l_1-1}}, D_l^{1}, D_{q_{l_2 -1}})$ depending on whether the edge $\langle B_{p_{l_1-1}}, D_{q_{l_2 - 1}}\rangle$ is in $\g$.
 
 If $B_{p_{l_1 -1}} \to  D_{q_{l_2 - 1}}$  is in $\g$, then $l_2 \neq 2$, and $D_{q_{l_2-2}} \neq D_{e_t}$. Hence, we can consider the possibility that $(B_s,D_l,D_r) \equiv (B_{p_{l_1-1}}, D_{q_{l_2-1}}, D_{q_{l_2 -2}})$  depending on the existence of edge $\langle B_{p_{l_1-1}}, D_{q_{l_2 -2}}\rangle$ in $\g$.
 
Alternatively, $B_{p_{l_1 -1}} \leftarrow D_{q_{l_2 - 1}}$ is in $\g$, implying that $l_1 \neq 1$ meaning that $B_{p_{l_1-2}} \neq D_{e}$ and that $B_{p_{l_1-3}} \to B_{p_{l_1 -2}} \to B_{p_{l_1 -1}} \to \dots \to S''$ is in $\g$. Hence, in this case,  we can consider the possibility that $(B_s,D_l,D_r) \equiv (B_{p_{l_1-2}}, B_{p_{l_1-1}}, D_{q_{l_2 -1}})$  depending on the existence of edge $\langle B_{p_{l_1-2}}, D_{q_{l_2 -1}}\rangle$ in $\g$.
Since $p(B_j, D_l)$ and $q(D_{e_t},D_l)$ are of finite length, and since $B_j \notin \Adj(D_{e_t},\g)$, we can continue the above arguments until we find $B_s, D_l$ and $D_r$ described in the case \ref{case:qandpd}.
\end{proof}
  \end{document}